\documentclass[11pt, oneside]{article}

\usepackage{amsmath, amsthm, amscd, amsxtra}
\usepackage{mathtools}

\usepackage[T1]{fontenc}
\usepackage{newpxmath}      
\usepackage{bm} 
\usepackage{microtype}      
\usepackage[dvipsnames]{xcolor} 
\usepackage{tikz-cd}
\usepackage[shortlabels]{enumitem}
\usepackage{adjustbox}
\usepackage{tabularx}
\usepackage{marginnote}
\usepackage{pdfpages}
\usepackage[
  top=1in,
  textheight=8.5in,
  textwidth=6.3in
]{geometry}

\usepackage{hyperref}
\hypersetup{
  colorlinks=true,
  linkcolor=red,     
  citecolor=blue, 
  urlcolor=blue
}
\usepackage{cleveref}     
\usepackage{bookmark}

\newtheorem{theorem}{Theorem}[section]
\newtheorem{lemma}[theorem]{Lemma}
\newtheorem{proposition}[theorem]{Proposition}
\newtheorem{corollary}[theorem]{Corollary}

\theoremstyle{definition}

\newtheorem{definition}[theorem]{Definition}
\newtheorem{remark}[theorem]{Remark}

\newtheorem*{claim}{Claim}

\renewcommand{\leq}{\leqslant}
\renewcommand{\geq}{\geqslant}
\renewcommand{\emptyset}{\varnothing}
\NewDocumentCommand{\varprojlimone}{ e_ }{%
  \IfValueTF{#1}{%
    \varprojlim_{#1}\!{}^{\smash{\scriptstyle 1}}%
  }{%
    \varprojlim\!{}^{\smash{\scriptstyle 1}}%
  }%
}
\DeclareFontFamily{U}{mathx}{}
\DeclareFontShape{U}{mathx}{m}{n}{<-> mathx10}{}
\DeclareSymbolFont{mathx}{U}{mathx}{m}{n}
\DeclareMathAccent{\widehat}{0}{mathx}{"70}
\DeclareMathAccent{\widecheck}{0}{mathx}{"71}
\begin{document}

\title{Brown's Asymptotic Limit Functor and Proper Homology}

\author{Sumanta Das and Rekha Santhanam}
\date{} 

\maketitle

\begin{abstract}
    In 1974, E. M. Brown introduced the $\wp$-functor to study the proper homotopy groups of an end, suggesting a parallel proper homology theory that has since remained undeveloped. In this paper, we construct this missing proper homology and relate it to proper homotopy by developing a proper Hurewicz theorem. Bypassing abstract pro-categorical machinery, we show that $\wp$ has major advantages over the classical limits $\varprojlim$ and $\varprojlimone$: it is exact, detects pro-triviality, satisfies a cardinality dichotomy, and connects to these classical limits through a 4-term exact sequence. As an application, we prove that the Brown--Grossman proper fundamental group of any open contractible $3$-manifold other than $\mathbb{R}^3$ is uncountable and perfect.
\end{abstract}

\section{Introduction}
In \cite[pp. 489--491]{MR287542}, L. Siebenmann established convenient criteria for a proper map between finite-dimensional, locally finite simplicial complexes to be a proper homotopy equivalence. These criteria were subsequently formalized by F. T. Farrell, L. R. Taylor, and J. B. Wagoner \cite[Theorem 4.1]{MR334226}, and independently by E. M. Brown \cite[p. 43]{MR356041}. Brown's formalization relies on his development of the proper homotopy groups of an end. He showed that these groups can be extracted from an inverse system of classical homotopy groups---associated with a sequence of neighborhoods of the end---by applying a functor he introduced, denoted by $\wp$ \cite[p. 45]{MR356041}. In this paper, we refer to it as the \emph{asymptotic limit functor}.

These proper homotopy groups (often called \emph{Brown--Grossman proper homotopy groups} \cite{MR1361888}) are useful well beyond generalizing Whitehead's theorem to the proper category. For example, in \cite{MR334225}, Brown and Tucker used the $\wp$-functor to define \emph{end-irreducible} $3$-manifolds (the natural analog of boundary-irreducible manifolds), allowing them to prove non-compact version of Waldhausen's rigidity theorem \cite{MR224099} for compact Haken $3$-manifolds. In the same paper, they gave several criteria for deciding when a non-compact $3$-manifold is homeomorphic to a surface times an interval. One of these criteria---called the \emph{Open Collar Theorem} in Tucker's thesis \cite[p. 26]{MR2621405}---gives a clean characterization of $\mathbb{R}^3$: an open, irreducible, contractible $3$-manifold with one end has a trivial proper fundamental group if and only if it is homeomorphic to $\mathbb{R}^3$ \cite[Corollary 3.3]{MR334225}.

A theorem of McMillan \cite{MR137105}, generalizing a construction by Whitehead \cite{zbMATH02532945}, shows that there are uncountably many pairwise non-homeomorphic open, contractible $3$-manifolds. Any such exotic manifold (i.e., one not homeomorphic to $\mathbb{R}^3$) is wild at infinity, meaning in particular that it cannot be end-irreducible \cite[p. 35]{MR2621405}. Consequently, working with these exotic manifolds is inherently difficult unless one imposes additional conditions, such as Brown's notion of ``finite genus at infinity'' \cite{MR511426}. Our first theorem algebraically quantifies this wildness by showing a stark dichotomy: while the proper fundamental group of $\mathbb{R}^3$ is trivial, for any such exotic manifold it is uncountable and perfect. This also provides a sharp contrast to the ordinary fundamental group of a manifold, which is \emph{always countable}.

\begin{theorem}\label{introthm:wildness}
    Every open, contractible $3$-manifold not homeomorphic to $\mathbb{R}^3$ has exactly one end, and its Brown--Grossman proper fundamental group is both uncountable and perfect.
\end{theorem}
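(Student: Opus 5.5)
Exactly one end: I would use Poincaré--Lefschetz duality. For an open contractible $n$-manifold $M$ with $n = 3$, the number of ends is governed by $H^0_c$ and $H^1_c$. Duality gives $H^1_c(M) \cong H_2(M) = 0$. The exact sequence
\[
0 \to H^0_c(M) \to H^0(M) \to \varinjlim H^0(M\setminus K) \to H^1_c(M)
\]
then shows that $\varinjlim H^0(M\setminus K)\cong \mathbb{Z}$. Here $H^0_c(M)=0$ because $M$ is non-compact, and $H^0(M) = \mathbb{Z}$. Hence $M$ has exactly one end.

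Next, choose an exhaustion of $M$ by compact $3$-manifolds $K_1\subset K_2\subset\cdots$. I would arrange that each complement $U_i=M\setminus K_i$ is connected; this is possible because there is one end, after absorbing the bounded components of the complement. Pick a proper base ray, and let $G_i=\pi_1(U_i)$ with the bonding homomorphisms induced by inclusion. By Brown's description, the Brown--Grossman proper fundamental group is $\wp(\{G_i\})$. The key facts about $\wp$ stated in the paper's abstract then reduce the theorem to properties of the tower $\{G_i\}$:
\begin{itemize}
\item[(a)] the cardinality dichotomy: $\wp$ of a tower of countable groups is either pro-trivial-detecting trivial or uncountable;
\item[(b)] $\wp$ detects pro-triviality.
\end{itemize}
Uncountability therefore follows once I show that $\{G_i\}$ is not pro-trivial. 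Suppose it were pro-trivial. Then $M$ would be simply connected at infinity, and since $M$ is a contractible open $3$-manifold, it would be $\mathbb{R}^3$. This uses the standard fact (Edwards/Husch--Price, relying on Perelman's resolution of the Poincaré conjecture for irreducibility) that a contractible open $3$-manifold which is simply connected at infinity is homeomorphic to $\mathbb{R}^3$. Alternatively, one can invoke the Open Collar Theorem \cite[Corollary 3.3]{MR334225} together with irreducibility of contractible $3$-manifolds, which is again Perelman. This contradicts the hypothesis, so the dichotomy gives uncountability.

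For perfectness, I would use a proper Hurewicz theorem, which I expect the paper to develop: the abelianization of $\wp(\{G_i\})$ should be compared with $\wp(\{H_1(U_i)\})$. The key point is that $H_1(U_i)=0$. This follows from duality: $H_1(M\setminus K)\cong H^2_c$-type terms. Concretely, $H_1(U_i)\cong H^1(K_i,\partial K_i)$-related groups vanish because $M$ is acyclic and $U_i$ is connected; a Mayer--Vietoris and Lefschetz computation gives $H_1(\partial K_i)\to H_1(K_i)\oplus H_1(\overline{U_i})$ as an isomorphism, and one needs care here. A cleaner route is to show that the tower $\{H_1(U_i)\}$ is pro-trivial, which holds because $H^1_c=H^2_c=0$ in the relevant degree; exactness of $\wp$ then gives $\wp(\{H_1(U_i)\})=0$. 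Next, I would show that $\wp$ commutes with abelianization in the sense that the natural map $\wp(G)^{ab}\to\wp(G^{ab})$ is injective, or at least that every element of $\wp(G)$ is a product of commutators. I expect this to be the main obstacle. Elements of $\wp$ are represented by compatible sequences modulo tail-equivalence. Writing each coordinate as a product of commutators with uniformly bounded length is not automatic, so I would try to use exactness of $\wp$ applied to $1\to [G_i,G_i]\to G_i\to H_1(U_i)\to 1$. This yields $\wp([G_i,G_i])\cong\wp(G_i)$. I would then argue directly, from the definition of $\wp$ via telescoping, that $\wp$ of a tower of commutator subgroups lands in the commutator subgroup of $\wp(G)$, choosing commutator expressions coordinatewise along the base ray.
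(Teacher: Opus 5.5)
Your arguments for exactly one end and for uncountability are sound and essentially follow the paper. Lemma~\ref{lem:oneend} is the same end-cohomology and Poincar\'e-duality count. The paper likewise gets $\underline{\pi}_1(\mathcal W)\neq 1$ from Brown--Tucker plus irreducibility (Lemma~\ref{lem:irr}), then applies Theorem~\ref{thm:trivial_wp} and the dichotomy Theorem~\ref{uncountable}, which needs no countability hypothesis.

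The perfectness step, however, has a genuine gap. A smaller point first: $H_1(U_i)=0$ is false in general. For the Whitehead manifold $\mathcal W$, exhausted by solid tori $T_i$, one gets $H_1(\mathcal W\setminus T_i)\cong\operatorname{Hom}(H_1(T_i),\mathbb Z)\cong\mathbb Z$. Only pro-triviality of $\{H_1(U_i)\}$ holds. The vanishing of $H^1_c\cong H_2(\mathcal W)$ and $H^2_c\cong H_1(\mathcal W)$ is the right input, but pro-triviality needs the levelwise argument of Lemmas~\ref{lem:inclusionzero} and~\ref{lem:firstcontr}: natural duality $H_1(M\setminus K_r)\cong\operatorname{Hom}(H_1(K_r),\mathbb Z)$ plus finite generation of $H_1(K_r)$.

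The real problem is your last step. Exactness of $\wp$ only tells you that every class in $\wp(\underline G)$ is represented by a sequence whose terms lie in commutator subgroups. But $[\{g_{k(n)}\}]$ lies in $[\wp(\underline G),\wp(\underline G)]$ if and only if there are a fixed $N$ and some $m(n)\to\infty$ such that $p^{k(n)}_{m(n)}(g_{k(n)})$ is a product of at most $N$ commutators in $G_{m(n)}$ for all large $n$. Choosing commutator expressions coordinatewise produces no such $N$. So your proposed fix restates the obstacle you correctly identified rather than removing it. The principle you want is false in general:
\begin{itemize}
\item For the constant tower $G_n=F(a,b)$, the class of $\{[a,b]^n\}$ lies in $\wp(\{[G_n,G_n]\})$ but not in $[\wp(\underline G),\wp(\underline G)]$, because $\operatorname{cl}([a,b]^n)\to\infty$.
\item Even $\wp(\{G_n^{\mathrm{ab}}\})=0$ does not rescue it, as the constant tower on a perfect group of infinite commutator width shows.
\end{itemize}
In particular, the map $\operatorname{Ab}(\wp(\underline G))\to\wp(\underline G^{\mathrm{ab}})$ you hoped was injective is not, in general.

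Nor can you close the gap by quoting the paper. It deduces perfectness from $\underline H_1(\mathcal W)=0$ and the degree-one proper Hurewicz theorem (Theorem~\ref{thm:hur}). However, the proof of Lemma~\ref{lem:properH3} passes over exactly this point. Lemma~\ref{lem:H6} only makes $\ell_{\partial_2 b_k}$ trivial in $\operatorname{Ab}(\pi_1(U_{m(k)}))$, and the null-homotopy it provides contracts a rearranged representative, not $\ell_{\partial_2 b_k}$ itself. Assembling these into one germ homotopy would require a uniform bound on the number of commutators.

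Here is a concrete failure. Let $M$ be the interior of a genus-$2$ handlebody, so $\underline{\pi}_1(M)\cong\prod\pi_1(\Sigma_2)/\bigoplus\pi_1(\Sigma_2)$. The $1$-cycles of the loops $[a_1,b_1]^k$, placed at level $k$ of the collar, form a proper boundary. Yet the corresponding element of $\operatorname{Ab}(\underline{\pi}_1(M))$ is nonzero: retract $\pi_1(\Sigma_2)$ onto $F(a_1,b_1)$ via $a_2\mapsto b_1$, $b_2\mapsto a_1$, where $\operatorname{cl}([a_1,b_1]^k)\to\infty$. So in this case $\underline{\mathrm{Hu}}$ is not injective.

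What the perfectness claim actually needs is an additional input specific to open contractible $3$-manifolds. One sufficient form is a uniform $N$ such that for every $m$ there is $k>m$ with every element of the image of $\pi_1(U_k)\to\pi_1(U_m)$ a product of at most $N$ commutators in $\pi_1(U_m)$. Equivalently, far-out loops bound singular surfaces of genus at most $N$ in $U_m$. Given that, your telescoping argument does go through, but neither your proposal nor the paper supplies it.
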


The proof of uncountability relies on our algebraic study of the asymptotic limit functor $\wp$, specifically its comparison with the classical limit functors $\varprojlim$ and $\varprojlimone$. To show that this group is perfect, we develop a notion of \emph{proper homology}. While Brown suggested such a theory was possible \cite[p. 44]{MR356041}, it has remained undeveloped in the literature. We establish two equivalent formulations of this proper homology and prove a proper Hurewicz theorem, which provides the crucial step in deducing perfectness. In fact, we prove something stronger in Theorem~\ref{introthm:homologysphere} below.

To achieve these results, the remainder of the introduction pursues three main objectives: first, a systematic algebraic study of the $\wp$-functor; second, the development of proper homology; and finally, the computation of proper homology groups for various spaces, identifying when they reduce to familiar abelian groups such as those of the form $\prod_{n=0}^\infty G_n \big/ \bigoplus_{n=0}^\infty G_n$.

We first demonstrate the structural advantages of $\wp$ over $\varprojlim$ and $\varprojlimone$. It is well known that $\varprojlim$ fails to be right exact, and $\varprojlimone$ measures this failure (see Section~\ref{fe}). Moreover, these two classical limits do not detect whether an inverse system is isomorphic to the trivial system (see Section~\ref{protriv}). In contrast, we establish the following properties for $\wp$.

\begin{theorem}\label{introthm:exactanduncountable}
    The asymptotic limit functor $\wp$, which assigns a group to every countably indexed inverse system of groups, is exact. Moreover, for an inverse system $\underline{G}$, the group $\wp(\underline{G})$ is trivial if and only if $\underline{G}$ is (pro-)isomorphic to the inverse system of trivial groups; otherwise, $\wp(\underline{G})$ is uncountable. Furthermore, when $\underline{G}$ is an inverse system of abelian groups, there is an exact sequence
    \[0 \longrightarrow \varprojlim \underline{G} \longrightarrow \wp(\underline{G}) \xrightarrow{\Phi^\infty} \wp(\underline{G}) \longrightarrow \varprojlimone \underline{G} \longrightarrow 0,\]
    where $\Phi^\infty$ is an induced shift operator.
\end{theorem}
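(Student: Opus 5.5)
The plan is to work with a concrete model of $\wp$ and reduce all three assertions to elementary facts about products, direct sums and filtered colimits. Write $f_n\colon G_{n+1}\to G_n$ for the bonding maps, and let $\Phi\colon\prod_n G_n\to\prod_n G_n$ be the shift $(x_n)\mapsto(f_n(x_{n+1}))$. I will use $\wp(\underline G)$ in the form of a direct limit of copies of $\prod_n G_n\big/\!\sim$, where two sequences are identified when they agree for almost all $n$; the transition maps of this direct limit are induced by $\Phi$. In the abelian case this is $\mathrm{colim}\bigl(\prod G_n/\bigoplus G_n \xrightarrow{\Phi}\prod G_n/\bigoplus G_n\xrightarrow{\Phi}\cdots\bigr)$. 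This should be equivalent to Brown's definition of $\wp$, and I would first verify that equivalence. I would also check that the construction is invariant under pro-isomorphism, i.e.\ under passing to subsequences and reindexing. Once that is done, the computations below can be carried out on any convenient representative of $\underline G$. The induced operator $\Phi^\infty$ is then the self-map of $\wp(\underline G)$ obtained from $\Phi$ (or from $1-\Phi$, depending on the normalization the paper fixes).

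\emph{Exactness.} Let $1\to\underline A\to\underline B\to\underline C\to1$ be a levelwise short exact sequence of towers. Taking products gives an exact sequence. Taking direct sums, or the subgroups of eventually trivial sequences in the nonabelian case, also gives an exact sequence. The snake lemma, or a direct chase, then shows that the quotients ``modulo finitely many coordinates'' form a short exact sequence. Finally, filtered colimits of groups are exact, so $\wp$ is exact. A general short exact sequence of pro-groups can be levelwise represented after reindexing, and the invariance established at the start then gives exactness of $\wp$ on pro-groups.

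\emph{Four-term sequence.} Assume the groups are abelian and consider $0\to\bigoplus G_n\to\prod G_n\to\prod G_n/\bigoplus G_n\to0$ with the operator $1-\Phi$ applied to each term. On $\prod G_n$, the classical Milnor description gives $\ker=\varprojlim\underline G$ and $\mathrm{coker}=\varprojlimone\underline G$. On $\bigoplus G_n$, the operator $1-\Phi$ is injective. It is also surjective: for finitely supported $y$, solve $x_n=y_n+f_n(x_{n+1})$ downward, starting from $x_n=0$ beyond the support of $y$. The snake lemma therefore yields $0\to\varprojlim\to\ker\to\ker\to\varprojlimone\to\mathrm{coker}\to0$ on the quotient term, and this collapses to the displayed sequence at the level of $\prod/\bigoplus$. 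It remains to pass to the colimit along $\Phi$, which commutes with $1-\Phi$ and with the $\Phi$-invariant groups $\varprojlim$ and $\varprojlimone$. Filtered colimits preserve exactness, so the colimit of the four-term sequences is the four-term sequence for $\wp$.

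\emph{Pro-triviality and the dichotomy.} If $\underline G$ is pro-trivial, then after reindexing every bonding map is trivial. Then $\Phi=0$, so the colimit vanishes. Conversely, suppose $\underline G$ is not pro-trivial. Then there is $k$ such that for every $m\ge k$ the image of $G_m\to G_k$ is nontrivial. After reindexing, which is allowed by the invariance above, I may assume that every composite $G_{n+j}\to G_n$ has nontrivial image. For each $n$ choose $g_n\in G_{2n}$ with nontrivial image in $G_n$, and for every infinite subset $S\subseteq\mathbb N$ form the sequence supported on the coordinates indexed by $S$ with those entries. Taking an almost-disjoint family of $2^{\aleph_0}$ subsets, I would show that the resulting classes are pairwise distinct in $\wp(\underline G)$. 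The point is that they must stay distinct after applying any $\Phi^j$, and this is the main obstacle. The elements have to be chosen so that their images survive arbitrary finite shifts, which requires a diagonal choice of indices (using $G_{n+j(n)}$ with $j(n)\to\infty$) so that each $\Phi^j$ still sees infinitely many nontrivial coordinates. In the nonabelian case I would also need to replace ``difference'' by coordinatewise comparison. I expect this step to take the most care.
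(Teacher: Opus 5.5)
Your argument runs entirely through the sequential colimit $L=\operatorname{colim}\bigl(\prod G_n/\bigoplus G_n\xrightarrow{\Phi}\prod G_n/\bigoplus G_n\xrightarrow{\Phi}\cdots\bigr)$. The step you postpone, checking that $L$ agrees with Brown's definition, is false.

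Unwinding the definitions, $L$ is the colimit of the groups $Q_{\bm k}=\prod_n G_{k(n)}/\bigoplus_n G_{k(n)}$ over the chain of shifts $k(n)=\max(n-j,0)$ only. So $L$ sees only asymptotic sequences whose index lags $n$ by a bounded amount. It also identifies two sequences only when they agree after a bounded number of projections. Brown's relation, by contrast, lets the index $k(n)$ and the comparison levels $m(n)$ diverge arbitrarily slowly.

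Here is a concrete failure. Take $G_n=\mathbb Z/2^n$ with bonding maps $x\mapsto 2x$.
\begin{itemize}
\item Then $p^m_n$ is multiplication by $2^{m-n}$, which is zero exactly when $m\ge 2n$. So $\underline G$ is pro-trivial and $\wp(\underline G)=1$; for instance, the diagonal sequence $\{1\}$ dies at the levels $m(n)=\lfloor n/2\rfloor$.
\item Yet in $L$ the class of $x=(1,1,1,\dots)$ is nonzero. Indeed $(\Phi^i x)_n=2^i\neq 0$ in $\mathbb Z/2^n$ for every $n>i$, so no $\Phi^i x$ is finitely supported.
\item The same tower shows that $L$ is not invariant under passing to subsequences. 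Along $i_n=2^n$ every bonding map is zero and $L$ collapses, so the invariance check you plan also fails.
\end{itemize}
The comparison map $L\to\wp(\underline G)$ is not surjective either. For $\mathbb Z\xleftarrow{\times 2}\mathbb Z\xleftarrow{\times 2}\cdots$, the class of $\{1\in G_{\lfloor n/2\rfloor}\}$ has no representative of the form $\{z_{n-j}\}$, since that would force $2^{\lceil n/2\rceil-j}z_{n-j}=1$ in $\mathbb Z$. Consequently your exactness argument, four-term sequence and dichotomy, as set up, are statements about $L$, not about $\wp(\underline G)$.

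The repair is to take the colimit over all non-decreasing divergent $\bm k$, with transition maps the projections $p^{k(n)}_{l(n)}$. This is a direct rewriting of Brown's definition, namely the paper's $\wp(\underline G)\cong\varinjlim_{\bm k\in\mathscr I_{\operatorname{nd}}}Q_{\bm k}$. That directed set is filtered but has no countable cofinal subset, which is why no sequential colimit along a single shift can recover $\wp$. With this colimit, your stagewise ideas go through:
\begin{itemize}
\item Levelwise exactness passes to each $Q_{\bm k}$, and then to the filtered colimit.
\item Your snake-lemma argument for $1-\Phi$, using its bijectivity on finitely supported sequences, gives $0\to\varprojlim\to Q_{\bm k}\to Q_{\bm k}\to\varprojlimone\to 0$ for the tower $\{G_{k(n)}\}$. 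That tower has the same $\varprojlim$ and $\varprojlimone$ as $\underline G$, and the transition maps act on them by isomorphisms (pro-isomorphism invariance). Passing to the colimit yields the four-term sequence, more economically than the paper's coordinatewise kernel and cokernel computations.
\end{itemize}

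For the dichotomy, work with Brown's relation directly. Suppose $p^m_{n_0}$ is nontrivial for all $m\ge n_0$. Pick $x_m$ with $p^m_{n_0}(x_m)\neq 1$, and for $S\subseteq\mathbb N$ let $x^S$ equal $x_m$ at $m\in S$ with $m\ge n_0$, and $1$ elsewhere. Any asymptotic equivalence can be projected down to the fixed level $n_0$. Hence $[x^S]\neq[x^T]$ whenever $S\triangle T$ is infinite, which gives at least $\mathfrak c$ distinct elements. The obstacle you anticipated, keeping elements distinct under every $\Phi^j$, disappears, and this also bypasses the paper's Baire-category argument.
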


We call $\Phi^\infty$ an ``induced shift operator'' because it mirrors the classical shift operator, whose kernel and cokernel are precisely $\varprojlim \underline{G}$ and $\varprojlimone \underline{G}$. Furthermore, as emphasized above, $\wp(\underline{G})$ always admits a natural group structure. This stands in stark contrast to the first derived limit: if $\underline{G}$ is an inverse system of non-abelian groups, $\varprojlimone\underline{G}$ is generally just a pointed set, whereas $\wp(\underline{G})$ remains a group. To further illustrate how classical limits fall short geometrically---particularly in dimension three---we refer the reader to the discussion following Theorem~\ref{introthm:stringofspheres}.

Next, we develop proper homology groups and relate them to the Brown--Grossman proper homotopy groups. We restrict our attention---for simplicity---to a connected, non-compact, smooth manifold $M$. We also fix an end $e$ of $M$. The end $e$ can be described by a decreasing sequence $\mathcal{U} \coloneqq U_0 \supseteq U_1 \supseteq U_2 \supseteq \cdots$ of open connected subsets of $M$ satisfying $\bigcap_{i\geq 0} U_i = \varnothing$. Using $\mathcal{U}$, it is straightforward to define when a sequence of subsets of $M$ escapes to $e$. Conceptually, a \emph{proper $k$-cycle} for $(M,e)$ is a sequence of $k$-chains escaping to the end $e$ whose boundaries eventually vanish. Similarly, a \emph{proper $k$-boundary} for $(M,e)$ is a sequence of $k$-chains that eventually bounds a sequence of $(k+1)$-chains escaping to the end $e$. The \emph{proper homology group} of $(M,e)$ in degree $k$, denoted $\underline{H}_k(M,e)$, is then defined as the quotient of proper cycles by proper boundaries. We show that this construction has a connection to the Brown--Grossman proper homotopy groups associated to the pair $(M,e)$ via a proper Hurewicz theorem.

\begin{theorem}\label{introthm:properHurewicz}
    The abelianization of the proper fundamental group is isomorphic to the first proper homology group. Moreover, for any $k\geq 1$, if the first $k$ proper homotopy groups vanish, then the $(k+1)$-th proper homotopy group is isomorphic to the $(k+1)$-th proper homology group.
\end{theorem}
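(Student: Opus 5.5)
The plan is to reduce both statements to the classical Hurewicz theorem, applied levelwise to the inverse system of neighborhoods, and then pass to $\wp$ using its exactness (Theorem~\ref{introthm:exactanduncountable}).

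\textbf{Step 1: express both sides as $\wp$ of an inverse system.} Fix the neighborhood sequence $\mathcal{U}=(U_i)$ of the end $e$, together with a base ray, i.e.\ a proper ray $\gamma$ with $\gamma([i,\infty))\subseteq U_i$. For the homotopy side, Brown's description gives $\underline{\pi}_k(M,e)\cong \wp(\underline{\pi_k})$, where $\underline{\pi_k}=\{\pi_k(U_i,\gamma(i))\}$ is the inverse system with bonding maps given by inclusion followed by the change-of-basepoint along $\gamma$. For the homology side, I would first prove that $\underline{H}_k(M,e)\cong \wp(\underline{H_k})$ with $\underline{H_k}=\{H_k(U_i)\}$; this is the comparison between the two formulations of proper homology promised in the introduction. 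To build the map, send a proper cycle $(c_n)$ to the sequence of classes $[c_n]\in H_k(U_{i_n})$ for $n$ large, where $i_n\to\infty$ records how far $c_n$ escapes. One then checks that proper boundaries go exactly to the elements $\wp$ identifies with $0$. Here "eventually bounds escaping chains" should match "eventually trivial after finitely many bonding maps", and this requires a reindexing (diagonal) argument along the tower.

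\textbf{Step 2: compare the systems levelwise.} The classical Hurewicz maps $h_i:\pi_k(U_i)\to H_k(U_i)$ are natural, and compatible with change of basepoint along $\gamma$ after abelianizing or in the relevant degree. So they assemble into a morphism of inverse systems $h:\underline{\pi_k}\to \underline{H_k}$, which induces $\wp(h)$. I would check directly, in the formulation of Step 1, that $\wp(h)$ agrees with the geometric proper Hurewicz map: a proper sequence of spheres is sent to the sequence of their fundamental cycles.

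\textbf{Step 3, degree $1$.} For each $i$ we have the exact sequence $1\to[\pi_1(U_i),\pi_1(U_i)]\to \pi_1(U_i)\to H_1(U_i)\to 0$. Exactness of $\wp$ then gives
\[\wp(\underline{H_1})\cong \wp(\underline{\pi_1})/\wp(\underline{[\pi_1,\pi_1]}).\]
It remains to show that $\wp(\underline{[\pi_1,\pi_1]})=[\wp(\underline{\pi_1}),\wp(\underline{\pi_1})]$. The inclusion $\supseteq$ is clear. For $\subseteq$, an element of $\wp$ of the commutator system is represented levelwise by products of commutators of unbounded length, so it is not obviously a finite product of commutators in $\wp$. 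I expect this to be the main obstacle. The approach I would try first is an infinite-product reorganization: a sequence $(g_n)$ whose terms are products of commutators should be rewritten as a finite product of commutators of sequences. This uses the "telescoping" nature of $\wp$ elements, in which infinitely many tails can be absorbed into a single sequence, much as a locally finite infinite product of commutators in a proper setting is a single proper commutator. If that fails, I would restrict to the case where the length can be bounded, after passing to cofinal subsystems, by a pro-isomorphism argument.

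\textbf{Step 4, higher degrees.} Suppose $\underline{\pi}_j(M,e)=0$ for $1\leq j\leq k$. By Theorem~\ref{introthm:exactanduncountable}, each $\underline{\pi_j}$ is then pro-trivial. Pro-triviality means that after passing to a subsequence of the $U_i$, every bonding map $\pi_j(U_{i+1})\to\pi_j(U_i)$ is zero for $j\leq k$. The relative/pro-Hurewicz theorem (Artin–Mazur style), proved by applying cellular approximation to kill skeleta successively, then says that the pro-system $\underline{\pi_{k+1}}\to\underline{H_{k+1}}$ is a pro-isomorphism. Concretely, kernel and cokernel are pro-zero, possibly after reindexing by a bounded shift. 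Exactness of $\wp$ and its invariance under pro-isomorphism then give $\wp(\underline{\pi_{k+1}})\cong\wp(\underline{H_{k+1}})$, and Step 1 turns this into $\underline{\pi}_{k+1}(M,e)\cong\underline{H}_{k+1}(M,e)$. The delicate input here is the levelwise pro-Hurewicz with a uniform index shift, and I would prove it by induction on $k$ using CW approximations of the $U_i$.
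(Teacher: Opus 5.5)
Steps 1, 2 and 4 are fine. Step 4 is a legitimate alternative to the paper. The paper builds explicit ladders only for $k=2,3$ (and says higher $k$ is similar): Hurewicz maps in one direction, and in the other, lifts through universal covers, resp.\ through a $2$-connected cover pulled back from the path fibration over $K(\pi_2,2)$. Your pro-Hurewicz-plus-exactness route treats all $k$ uniformly, but it puts the real work into the pro-Hurewicz theorem, which you would still have to prove.

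The genuine gap is Step~3, and it cannot be repaired: the identity $\wp(\underline{[\pi_1,\pi_1]})=[\wp(\underline{\pi_1}),\wp(\underline{\pi_1})]$ is false in general. An element of $[\wp(\underline{G}),\wp(\underline{G})]$ is a product of some \emph{fixed} number $N$ of commutators. Projecting the $2N$ entries to a common level $m(n)\to\infty$ therefore gives a representative whose terms have commutator length at most $N$.

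Here is a counterexample. Take $M=\Sigma_2\times\mathbb{R}$ at the end $+\infty$, with $U_n=\Sigma_2\times(n,\infty)$ and a vertical ray. Then $\underline{\pi_1}$ is the constant system on $G=\pi_1(\Sigma_2)$, so $\underline{\pi}_1(M,\underline{r})\cong\prod G/\bigoplus G$ by Theorem~\ref{constant}. Since all bonding maps are identities, two representatives define the same element iff they eventually agree termwise. Put $g_n=[a_1,a_2]^n$. Under the map $G\to F_2$ with $a_1\mapsto x$, $a_2\mapsto y$, $b_1,b_2\mapsto 1$, $g_n$ goes to $[x,y]^n$. That element has commutator length at least $n/2$, since $\operatorname{scl}([x,y])=\tfrac12$, so $\operatorname{cl}_G(g_n)\to\infty$. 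Thus $[\{g_n\}]$ lies in $\wp(\underline{[G,G]})$, i.e.\ it is killed by the Hurewicz map to $\underline{H}_1(M,e)$, yet it is not in $[\underline{\pi}_1,\underline{\pi}_1]$. So no ``infinite-product reorganization'' exists: unbounded commutator length is exactly the obstruction. Your fallback of passing to cofinal subsystems is vacuous for a constant system.

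The paper proves degree one geometrically instead. It defines $\underline{\mathrm{Hu}}$ on proper maps $\underline{\mathbb{S}^1}\to M$ and an inverse $\underline{K}$ on proper $1$-cycles via the classical chain-to-loop construction, with connecting paths chosen in $U_{m(k)}$ so that all supports escape to $e$. That route does not get around your obstacle either. In Lemma~\ref{lem:properH3}, the only input from Lemma~\ref{lem:H6} is that the loop attached to $\partial_2 b_k$ is trivial in $\operatorname{Ab}(\pi_1(U_{m(k)},r(k)))$. The assembled null-homotopies therefore kill a germ that differs from the one representing $\underline{K}(\{c_k\})$ by levelwise products of commutators, whose number grows with $b_k$. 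This is the same non-uniformity, and in the example above $\underline{K}\circ\underline{\mathrm{Hu}}=\mathrm{id}$ would force $[\{g_n\}]=0$ in $\operatorname{Ab}(\underline{\pi}_1)$.

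What is actually provable in degree one is what your exactness argument already yields: $\underline{H}_1(M,e)\cong\wp(\underline{\pi_1})/\wp(\underline{[\pi_1,\pi_1]})\cong\wp(\underline{\pi_1^{\mathrm{ab}}})$. This is levelwise abelianization, as in the Artin--Mazur pro-Hurewicz theorem, and it is in general a proper quotient of $\operatorname{Ab}(\underline{\pi}_1(M,\underline{r}))$. The paper's later use of this theorem to conclude that $\underline{\pi}_1(\mathcal{W})$ is perfect relies on the stronger form. This example shows that stronger form cannot be obtained through the natural Hurewicz map.
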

It is natural to ask why we do not simply define proper homology as $\wp(\underline{H}_k(\mathcal{U}))$, where $\underline{H}_k(\mathcal{U})$ is the inverse system obtained by applying the $k$-th singular homology to each term of $\mathcal{U}$. Indeed, we prove that the algebraic construction $\wp(\underline{H}_k(\mathcal{U}))$ is isomorphic to our geometric definition of proper homology $\underline{H}_k(M,e)$ given above. To clarify their distinct roles, we will later refer to them as \emph{algebraic} and \emph{geometric} proper homology, respectively. We emphasize that despite their isomorphism, both perspectives are strictly necessary. While the algebraic formulation is powerful for computations, it is not well-suited for establishing properties such as functoriality, proper homotopy invariance, and long exact sequences. Furthermore, proving the degree-one case of Theorem~\ref{introthm:properHurewicz} fundamentally relies on the geometric formulation. Conversely, while the geometric formulation is better equipped to handle these theoretical aspects, we use the algebraic formulation to prove the higher-degree cases of Theorem~\ref{introthm:properHurewicz}.

Finally, we utilize $\wp$ and its various algebraic properties to compute the proper homology groups of several specific spaces. The next theorem proves that open, contractible $3$-manifolds serve as a natural non-compact counterpart to integral homology $3$-spheres.

\begin{theorem}\label{introthm:homologysphere}
    Open, contractible $3$-manifolds have the exact same proper homology groups as $\mathbb{R}^3$ in every degree---namely, $\left.\prod_{m\geq 0} \mathbb Z\right/\bigoplus_{m\geq 0} \mathbb Z$ in degrees $k=0$ and $k=2$, and the trivial group otherwise.
\end{theorem}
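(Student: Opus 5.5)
We will use the algebraic formulation $\underline{H}_k(M,e)\cong \wp(\underline{H}_k(\mathcal{U}))$ and reduce everything to the computation of the pro-isomorphism class of the inverse system $\underline{H}_k(\mathcal{U})$ for a well-chosen neighborhood system. Let $M$ be an open, contractible $3$-manifold. First, $M$ has exactly one end. If $C\subseteq M$ is compact, Lefschetz duality gives $\widetilde H_0(M\setminus C)\cong H^3_c$-type terms that vanish because $H^1(M)=0$ (cf.\ the first part of Theorem~\ref{introthm:wildness}). So we may choose $\mathcal{U}$ with $U_i=M\setminus K_i$, where $K_0\subseteq K_1\subseteq\cdots$ is an exhaustion by compact connected $3$-submanifolds, each $K_i\subseteq \operatorname{int}K_{i+1}$, with every $U_i$ connected.

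Next we compute $H_k(U_i)$ via Mayer--Vietoris for $M=K_i\cup \overline{U_i}$ and duality. Since $M$ is contractible, $H_k(U_i)\cong H_{k+1}(M,U_i)$ for $k\geq1$ (with $\widetilde H_0$ handled similarly). By excision and Lefschetz duality, $H_{k+1}(M,U_i)\cong H_{k+1}(K_i,\partial K_i)\cong H^{2-k}(K_i)$. Hence:
\begin{itemize}
\item $H_2(U_i)\cong H^0(K_i)\cong\mathbb{Z}$.
\item $H_1(U_i)\cong H^1(K_i)$.
\item $H_0(U_i)=\mathbb{Z}$.
\item $H_k(U_i)=0$ for $k\geq3$, since $U_i$ is an open $3$-manifold.
\end{itemize}
The bonding maps $H_k(U_{i+1})\to H_k(U_i)$ correspond to restrictions $H^{2-k}(K_{i+1})\to H^{2-k}(K_i)$. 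For $k=2$ this is the identity on $\mathbb{Z}$, since the generator is the class of a boundary sphere linking $K_0$. For $k=0$ it is also the identity. For $k=1$, the map $H^1(K_{i+1})\to H^1(K_i)$ factors as $H^1(K_{i+1})\to H^1(M)\to H^1(K_i)$ only up to passing further in the sequence. More precisely, we use that $H^1(M)=0$ and that every class in $H_1(K_i)$ dies in some $K_j$ by compactness of singular chains. We then pass to a subsequence so that $H_1(K_i)\to H_1(K_{i+1})$ is zero, and also $H^1(K_{i+1})\to H^1(K_i)$ is zero (by universal coefficients, using that $H_1(K_i)$ is finitely generated). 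Thus $\underline{H}_1(\mathcal{U})$ is pro-trivial, and so is $\underline{H}_k(\mathcal{U})$ for $k\geq3$. Here we use that $\wp$ is invariant under passing to subsequences, i.e.\ under pro-isomorphism.

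Finally, by Theorem~\ref{introthm:exactanduncountable}, $\wp$ of a pro-trivial system is trivial, giving $\underline{H}_k(M,e)=0$ for $k=1$ and $k\geq3$. For $k=0,2$ the system is the constant system $\mathbb{Z}$ with identity bonds. We then apply the definition of $\wp$ (sequences modulo eventual agreement, as in Brown), or the worked computation of $\wp$ on a constant system: $\wp$ of the constant system $G$ is $\prod_{m\ge0}G/\bigoplus_{m\ge0}G$. The same computation for $\mathbb{R}^3$, with $K_i$ balls, gives identical answers.

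The main obstacle is the naturality check. We must verify that the duality isomorphisms intertwine the bonding maps with the cohomology restrictions, and that the $k=2$ bonds are isomorphisms rather than merely nonzero. We would handle this by choosing the $K_i$ so that each $\partial K_i$ is connected. This is possible since $M$ has one end and we can fill in bounded complementary components. Then $H_2(U_i)$ is generated by $[\partial K_{i+1}]$, which is homologous in $U_i$ to $[\partial K_i]$-pushed-off, since $K_{i+1}\setminus\operatorname{int}K_i$ is a compact cobordism between them.
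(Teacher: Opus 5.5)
Your proposal is correct in substance and follows essentially the same route as the paper's proof of Theorem~\ref{thm:contra}. Both arguments use one end, a regular exhaustion with connected complements, and the duality identification $H_k(M\setminus K_i)\cong H^{2-k}(K_i)$. Degree~$1$ is pro-trivial because $H_1(K_i)\to H_1(K_j)$ eventually vanishes, combined with universal coefficients (the paper's Lemmas~\ref{lem:inclusionzero} and~\ref{lem:firstcontr}). Degrees~$0$ and~$2$ give constant $\mathbb{Z}$ systems, and Theorem~\ref{constant} finishes. The only difference is technical. You pass through excision and Lefschetz duality for $(K_i,\partial K_i)$, whereas the paper uses the connecting map of $(M,M\setminus K_i)$ and Alexander--Lefschetz duality with \v{C}ech cohomology. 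The paper's version has naturality in $K_i$ built in, which is exactly the check you flag as the main obstacle.

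One step should not be kept as written: your one-end sketch is false. $\widetilde H_0(M\setminus C)$ need not vanish for compact $C$: the complement of a thickened $2$-sphere in $\mathbb{R}^3$ is disconnected. Indeed, the same duality you use gives $\widetilde H_0(M\setminus K)\cong H^2(K)$, which can be nonzero. What rules out a second unbounded component is $H^1_c(M;\mathbb{Z}_2)\cong H_2(M;\mathbb{Z}_2)=0$, fed through the ends exact sequence as in Lemma~\ref{lem:oneend}. Cite that lemma rather than your sketch.

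Two smaller slips:
\begin{enumerate}
\item In degree~$1$, the compactness argument that classes of $H_1(K_i)$ die in some $K_j$ uses $H_1(M)=0$, not $H^1(M)=0$.
\item $\partial K_i$ need not be a sphere; for the standard exhaustion of the Whitehead manifold it is a torus. Your $[\partial K_{i+1}]$ cobordism argument does not need sphericity, so it goes through unchanged.
\end{enumerate}
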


Unlike finitely generated abelian groups, uncountable abelian groups lack a general classification theory. Consequently, a major challenge in these computations is showing that the proper homology computed via $\wp$ reduces to a recognizable abelian group, such as one of the form $\prod_{n=0}^\infty G_n/\bigoplus_{n=0}^\infty G_n$. We address this challenge in our next computation, which determines the proper homology of Brown's \emph{string of $k$-spheres} $\underline{\mathbb{S}^k}$---the space $[0,\infty)$ together with a distinct $k$-sphere attached at each integer point. This space is required to define the $k$-th Brown--Grossman proper homotopy group, playing a role analogous to that of a pointed $k$-sphere in defining the $k$-th ordinary homotopy group. A straightforward calculation shows that the $0$-th proper homology of $\underline{\mathbb{S}^k}$ is $\prod_{n \ge 0} \mathbb{Z} \big/ \bigoplus_{n \ge 0} \mathbb{Z}$, and that all other proper homology groups vanish except in degree $k$. By analogy with the singular homology of a single $k$-sphere, where $H_0(\mathbb{S}^k) \cong H_k(\mathbb{S}^k)$, one might naturally expect the $k$-th proper homology of $\underline{\mathbb{S}^k}$ to be isomorphic to its $0$-th proper homology. However, establishing this expected isomorphism is far from direct; in particular, a potential approach via a proper analogue of the Mayer--Vietoris sequence fails due to strict connectivity obstructions (as detailed in Section~\ref{section:stringofspheres}). We overcome this challenge by appealing to the classification of algebraically compact groups.

\begin{theorem}\label{introthm:stringofspheres}
    The $k$-th proper homology group $\underline{H}_k(\underline{\mathbb{S}^k})$ is a torsion-free, algebraically compact group. Its maximal divisible subgroup is isomorphic to $\bigoplus^{\mathfrak{c}} \mathbb{Q}$, and its $p$-th torsion-free number is $\mathfrak{c}$ for every prime $p$. Consequently, $\underline{H}_k(\underline{\mathbb{S}^k})$ is isomorphic to $\prod_{n \ge 0} \mathbb{Z} \big/ \bigoplus_{n \ge 0} \mathbb{Z}$.
\end{theorem}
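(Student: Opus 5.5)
We first make $\underline{H}_k(\underline{\mathbb{S}^k})$ explicit through the algebraic formulation, $\wp(\underline{H}_k(\mathcal{U}))$. Take the neighbourhoods of the end to be $U_m=(m-\tfrac12,\infty)$ together with the spheres attached at the integers $n\geq m$. Each $U_m$ is homotopy equivalent to a wedge of the spheres $S^k_n$, $n\geq m$. So for $k\geq 1$ the inverse system is the tower of projections
\[
\textstyle \bigoplus_{n\geq 0}\mathbb{Z} \leftarrow \bigoplus_{n\geq 1}\mathbb{Z}\leftarrow \bigoplus_{n\geq 2}\mathbb{Z}\leftarrow\cdots .
\]
Unwinding Brown's definition of $\wp$ (sequences of elements $g_m\in G_m$ modulo those that eventually agree after bonding maps), we expect to identify $\wp$ of this tower with $A\coloneqq\prod_{m}\bigl(\bigoplus_{n\geq m}\mathbb{Z}\bigr)/\bigoplus_m\bigl(\bigoplus_{n\geq m}\mathbb{Z}\bigr)$, or possibly a quotient of it by an eventual-compatibility relation. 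The first concrete task is to pin down this presentation. We will rewrite it as the group of $\mathbb{N}\times\mathbb{N}$ integer matrices $(a_{mn})$ with $a_{mn}=0$ for $n<m$ and each row finitely supported, modulo the matrices with only finitely many nonzero rows.

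Next we verify the hypotheses of the classification of algebraically compact groups, which we quote: a reduced part $\prod_p \widehat{J_p}$-type component is determined by the $p$-adic invariants, and the divisible part is determined by its rank.

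\textbf{Torsion-free.} If $N\cdot x$ has only finitely many nonzero rows, then so does $x$.

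\textbf{Algebraically compact.} We intend to show that $A$ is a quotient $P/S$ with $P$ a product and $S$ a direct sum. It suffices that $A$ is $\aleph_1$-saturated, or more precisely that every countable system of equations over $A$ which is finitely solvable is solvable. The standard argument that $\prod G_n/\bigoplus G_n$ is algebraically compact (a diagonal argument: solve the first $m$ equations in row $m$) should adapt row by row. Alternatively, we observe that $A$ is exactly a reduced product of countable groups modulo the Fréchet filter, and quote that such reduced products are algebraically compact. We expect to realise this by citing the cardinality and exactness results from Theorem~\ref{introthm:exactanduncountable}.

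\textbf{Divisible part.} We exhibit $\mathfrak{c}$ independent divisible elements. For each $p$, choose rows $m$ whose entries, as $m$ grows, become divisible by larger factorials; such a row becomes infinitely divisible modulo finite rows. Varying along an almost-disjoint family of $\mathfrak{c}$ subsets of $\mathbb{N}$ gives $\mathbb{Q}$-independence. Since $|A|=\mathfrak{c}$, the rank is exactly $\mathfrak{c}$.

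\textbf{$p$-invariants.} The torsion-free number at $p$ is the dimension of $\bigl(A/pA\bigr)/\bigl(\text{image of } p\text{-divisible part}\bigr)$, or of $p^\omega$-related quotients. We would show that $A/pA\cong \prod \mathbb{F}_p^{(\cdot)}/\bigoplus$ has dimension $\mathfrak{c}$, again using almost-disjoint families.

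Finally, the same invariants are known (or computed by the same argument) for $\prod_{n}\mathbb{Z}/\bigoplus_n\mathbb{Z}$. That group is torsion-free and algebraically compact, with divisible rank $\mathfrak{c}$ and all $p$-invariants $\mathfrak{c}$, so the classification yields the isomorphism.

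The main obstacle is the first step together with algebraic compactness. The precise description of $\wp$ on this tower must be handled carefully, since the bonding maps are surjective and not injective. Proving algebraic compactness of the resulting matrix quotient requires a careful diagonal argument, because entries within a row are only finitely supported. I would first try to show $A\cong\prod_m B_m/\bigoplus_m B_m$ for countable free $B_m$ directly. Reduced products over the Fréchet filter are $\aleph_1$-saturated, hence algebraically compact, which would settle it.
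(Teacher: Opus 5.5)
Your overall plan is the paper's: identify the group concretely, then apply the classification of algebraically compact groups, using an almost-disjoint family of factorial-diagonal matrices for the divisible rank, a mod-$p$ quotient for the torsion-free numbers, and Hulanicki for $\prod\mathbb Z/\bigoplus\mathbb Z$. However, the identification step is wrong, and the rest is built on it.

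\textbf{The bonding maps and the model of $\wp$.} The bonding maps $H_k(U_{m+1})\to H_k(U_m)$ are the inclusions $\bigoplus_{n\ge m+1}\mathbb Z\hookrightarrow\bigoplus_{n\ge m}\mathbb Z$. They are injective and not surjective, the opposite of what you assert. With injective bonding maps, two asymptotic sequences are equivalent exactly when they are eventually equal in $D=\bigoplus_{i\ge 0}\mathbb Z$. Then $\wp(\underline G)$ is strictly larger than the diagonal quotient $\prod_m G_m/\bigoplus_m G_m$, which is what your upper-triangular matrix group $A$ describes. Brown's definition lets the level $k(n)$ of the $n$-th term grow more slowly than $n$, and an element of $G_{k(n)}$ with $k(n)<n$ cannot be moved to $G_n$. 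For example, the class of $\{e_{k(n)}\}$ with $k(n)=\lfloor n/2\rfloor$ has no diagonal representative, because $e_{\lfloor n/2\rfloor}\notin G_n$ for $n\ge 1$. The correct model is $\mathcal Q=\operatorname{RCFM}/\operatorname{FSM}$ (Lemma~\ref{lem:rcfm_isomorphism}): row $n$ is $g_{k(n)}$, rows are finitely supported, and $k(n)\to\infty$ is exactly column-finiteness, not upper-triangularity. Your $A$ is the proper subgroup coming from $\bm k=\bm{id}$. This is the non-surjectivity of $P/\mathscr Z\to\wp(\underline G)$ that the paper remarks on.

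\textbf{Consequence for algebraic compactness.} This is the step you flag as the main obstacle, and it is where the error bites. Your argument ($A=\prod_m B_m/\bigoplus_m B_m$, hence algebraically compact by Hulanicki or reduced-product saturation) is valid for $A$ but says nothing about $\mathcal Q$. The natural presentation of $\mathcal Q$ is not of that form, since column-finiteness couples the rows. It is only a direct limit $\varinjlim_{\bm k}Q_{\bm k}$ of such reduced products, and direct limits of algebraically compact groups need not be algebraically compact. Exactness and the cardinality dichotomy do not help either.

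The paper instead proves directly that $\mathcal Q$ is torsion-free and $\mathbb Z$-adically complete. Given a Cauchy sequence, pass to a subsequence with $M_{k+1}-M_k=k!B_k+F_k$ and set $X_{i,j}=\sum_{m=1}^{\min(i,j)}m!(B_m)_{i,j}$. This $X$ is row-and-column-finite precisely because each $B_m$ is, and $X-M''_k\in k!\operatorname{RCFM}+\operatorname{FSM}$. Torsion-freeness then makes the first Ulm subgroup divisible, which gives algebraic compactness.

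\textbf{What survives.} Your divisible-rank argument still works: the factorial-diagonal matrices lie in $A\subseteq\mathcal Q$, divisibility and independence pass to $\mathcal Q$, and $|\mathcal Q|\le\mathfrak c$. The torsion-free number must be recomputed for $\mathcal Q$ itself, via $\mathcal Q/p\mathcal Q\cong\operatorname{RCFM}(\mathbb F_p)/\operatorname{FSM}(\mathbb F_p)$. As written, though, the proposal computes the invariants of the wrong group.
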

We conclude by illustrating how the inverse limit fails to capture the geometric wildness that the asymptotic limit successfully detects, at least in dimension three. To make this precise, we first establish some notation. Consider a nice space $X$ (see Definition~\ref{def:nice}; for example, any non-compact, connected topological manifold) with exactly one end $e$. This end can be represented by a nested sequence $\mathcal{U} = U_0 \supseteq U_1 \supseteq U_2 \supseteq \cdots$ of open, path-connected subsets of $X$ with compact complements such that $\bigcap_{n\geq 0} U_n = \varnothing$. Let $r\colon [0,\infty)\to X$ be a proper ray. After omitting an initial segment and reparametrizing, we may assume that $r([n,\infty))\subseteq U_n$ for every $n \ge 0$ \cite[Exercise 3.3.18]{MR3598162}. Consider the inverse system $\underline{\pi}_1(\mathcal{U},r)=\{\pi_1(U_n,r(n))\}$, where the bonding homomorphisms $\pi_1(U_{n+1}, r(n+1))\to \pi_1(U_n, r(n))$ are induced by inclusion followed by a change of basepoint along the path $r\vert{}_{[n,n+1]}$. We aim to compare how the inverse and asymptotic limits of this system capture the wildness at infinity.

The first group is called the \emph{fundamental group at infinity} of $X$ based at $r$, defined as $\varprojlim\underline \pi_1(\mathcal{U},r)$. If $\mathcal{U}'$ is another nested sequence describing the same end, then $\varprojlim\underline \pi_1(\mathcal{U}',r)$ (possibly after modifying $r$ as above) is isomorphic to $\varprojlim\underline \pi_1(\mathcal{U},r)$ \cite[\S 1]{MR1396771}. However, the dependence on $r$ is more delicate: if $r'\colon [0,\infty)\to X$ is another proper ray satisfying similar containment properties with respect to $\mathcal{U}$, then in general $\varprojlim\underline \pi_1(\mathcal{U},r)$ is not isomorphic to $\varprojlim\underline \pi_1(\mathcal{U},r')$ (see, e.g., \cite[Example 16.2.4]{MR2365352} or \cite[p. 132]{MR676973}). On the other hand, if $r$ and $r'$ are properly homotopic---which is guaranteed if we additionally assume that $X$ is \emph{strongly connected at infinity}---then these two groups are isomorphic \cite[Corollary 3.4.22]{MR3598162}. The famous Whitehead manifold provides an example of a manifold that is not strongly connected at infinity; see \cite[Proposition 16.1.2]{MR2365352} or \cite[Proposition 3.4.37]{MR3598162}.

In contrast, the Brown--Grossman proper fundamental group of $X$ based at $r$ (or more precisely, in Brown's terminology, based at the \emph{germ} of $r$), which is isomorphic to $\wp\left(\underline\pi_1(\mathcal{U},r)\right)$ \cite[p. 45]{MR356041}, does not depend on the choice of the ray $r$. That is, for any two proper rays in $X$---regardless of whether they are properly homotopic---their corresponding Brown--Grossman proper fundamental groups are isomorphic (see \cite[p. 42]{MR356041} or \cite[p. 8]{MR2621405}). We note that when $X=\mathbb R^n$ for $n\geq 3$, both the proper fundamental group and the fundamental group at infinity are trivial. This is because, in this setting, the inverse system $\underline \pi_1(\mathcal{U},r)$ is (pro-)isomorphic to the inverse system of trivial groups $\underline{\bm{1}}$ (see Theorem~\ref{proinv} and \cite[Example 3.2.13 and Exercise 3.4.3]{MR3598162}).

A closely related notion in the literature is that of simple connectivity at infinity: we say $X$ is \emph{simply connected at infinity} if for any compact subset $C\subseteq X$, there exists a compact subset $D\subseteq X$ containing $C$ such that every loop in $X\setminus D$ is contractible in $X\setminus C$. According to \cite[Proposition 3.4.36]{MR3598162}, $X$ is simply connected at infinity if and only if $\underline \pi_1(\mathcal{U},r)$ is (pro-)isomorphic to $\underline{\bm{1}}$. Furthermore, by Theorem~\ref{introthm:exactanduncountable}, it follows that $X$ is simply connected at infinity if and only if the proper fundamental group of $X$ is trivial. By a celebrated result---often referred to as \emph{Stallings' characterization of $\mathbb R^n$}---for any $n \geq 3$, an open, contractible $n$-manifold $M$ is homeomorphic to $\mathbb{R}^n$ if and only if $M$ is simply connected at infinity. This characterization was established by Stallings \cite{MR149457} and Luft \cite{MR221486} for $n \geq 5$, Freedman \cite{MR679066} for $n = 4$, and Edwards \cite{MR150745}, Wall \cite{MR181993}, and Brown--Tucker \cite{MR334225} for $n = 3$ (modulo the $3$-dimensional Poincaré conjecture, which was subsequently proved by Perelman).

While both of these groups are trivial for $\mathbb{R}^n$, their behavior differs significantly on exotic manifolds. The famous Whitehead manifold \cite{zbMATH02532945} is not simply connected at infinity, and hence is not homeomorphic to $\mathbb R^3$. In particular, its proper fundamental group is non-trivial. However, the fundamental group at infinity of the Whitehead manifold is trivial \cite[\S 1]{MR1396771}. More generally, any contractible genus-one $3$-manifold $\mathcal W$ (a notion introduced in \cite{MR137105}) is not simply connected at infinity, yet its fundamental group at infinity is trivial \cite[Remark 2.5]{MR4705657}. Any such $\mathcal W$ can be expressed as an ascending union of solid tori $\{T_i\}$ such that $T_i$ is contractible in $T_{i+1}$, and the minimal number of intersection points between the core of $T_i$ and a meridian disc of $T_{i+1}$ is at least two \cite[Theorem 2.8]{MR3739201}. In contrast, there are infinitely many open, contractible $3$-manifolds that do have a non-trivial fundamental group at infinity \cite[Appendix C]{MR4705657}. In conclusion, within the class of open, contractible $3$-manifolds, the triviality of the fundamental group at infinity does not guarantee that the manifold is homeomorphic to $\mathbb{R}^3$. On the other hand, the triviality of the proper fundamental group definitively characterizes the manifold as homeomorphic to $\mathbb{R}^3$.

\subsection{Relation to Previous Work}
While the majority of the results in this paper are, to the best of our knowledge, self-contained developments of the $\wp$-functor, Theorem~\ref{introthm:exactanduncountable} intersects directly with several known results. We detail these connections to the existing literature below.

The algebraic properties of the $\wp$-functor were first investigated in unpublished work of J. Chipman \cite{Chipman1976Isomorphism,Chipman1976Presentations}. Subsequently, Hernández-Paricio \cite{MR1324568} developed a categorical treatment of the functor in the setting of pro-categories and towers, using tools such as Gabriel--Zisman calculus of fractions, representable functors, and algebraic structures arising from endomorphism monoids, near-rings, and rings. However, the abstract categorical machinery used in \cite{MR1324568} makes it difficult to extract computable invariants for applications.

Our paper diverges from the categorical abstraction of previous literature by providing a concrete, element-wise algebraic toolkit for the $\wp$-functor, making these invariants directly computable. While some of the results in Sections~\ref{fe} and~\ref{protriv} parallel those established in \cite{MR1324568}, we reconstruct them here using explicit maps without invoking pro-categories. Specifically, our Theorem~\ref{fun}, Theorem~\ref{exactness}, Theorem~\ref{proinv}, Corollary~\ref{protrivdet}, and Theorem~\ref{colimrep} serve as explicit counterparts to the abstract categorical properties established in \cite[pp. 11--12, Propositions 4.2, 4.3, 4.6, Theorem 4.4, and Corollary 7.16]{MR1324568}, translating abstract existence theorems into tangible algebraic formulas. Notably, the quotient groups utilized in Theorem~\ref{colimrep} coincide exactly with Grossman's reduced power construction \cite[pp. 623--624]{MR413097}. However, while Grossman evaluates the asymptotic limit $\wp(\underline G)$ via an inverse limit of these sequence-based equivalence classes, we capture $\wp(\underline G)$ via a direct limit over diverging non-decreasing index sequences.

A central algebraic objective of this paper is to demonstrate the structural advantages of the asymptotic limit functor $\wp$ over the classical inverse limit $\varprojlim$ and its first derived functor $\varprojlimone$. For this reason, we systematically contrast the behavior of $\wp$ against the known limitations of these classical functors under identical hypotheses.

A fundamental algebraic property of $\varprojlimone$ is its cardinality dichotomy: for a tower of countable abelian groups, $\varprojlimone$ either vanishes (precisely when the Mittag-Leffler condition is satisfied) or is uncountably infinite. This uncountability phenomenon was initially established by Gray \cite{MR196743} and subsequently by Jensen \cite{MR407091}, while Geoghegan \cite{MR560787} later provided an elegant proof of this dichotomy by applying the Baire category theorem. We establish that the $\wp$-functor exhibits an analogous cardinality dichotomy (Section~\ref{dichotomy}). However, a direct application of the Baire category theorem fails in this setting. To secure the dichotomy for the $\wp$-functor, our proof fundamentally relies on first establishing the result for a suitably chosen subgroup.

To construct a direct homological bridge between the classical limits and the asymptotic limit, we introduce a $4$-term exact sequence (Theorem~\ref{4ses}). Within this sequence, the kernel representation ($\varprojlim\underline G\cong \ker \Phi^\infty$) was proven using an abstract categorical framework in \cite[Theorem 4.15]{MR1324568}. However, the cokernel representation ($\varprojlimone\underline G\cong \operatorname{coker}(\Phi^\infty)$)---while briefly mentioned in \cite[Remark 4.17]{MR1324568}---was provided without proof or explicit formulation. Our treatment bypasses this abstract categorical machinery entirely. By relying instead on direct coordinate algebra, we employ a fundamentally different, and fully explicit, proof strategy.

\subsection{Organization of the Paper}
The paper is organized as follows. In Section~\ref{section2}, we recall the necessary background on the $\wp$-functor and prove the first half of Theorem~\ref{introthm:exactanduncountable}. Specifically, we show that $\wp$ is exact, detects pro-triviality, and satisfies a cardinality dichotomy. In Section~\ref{connection}, we prove the remainder of Theorem~\ref{introthm:exactanduncountable} by constructing the 4-term exact sequence. Section~\ref{homology} introduces the geometric version of proper homology and proves that it is isomorphic to our algebraic construction. We then use this machinery in Section~\ref{section:Hur} to prove the proper Hurewicz theorem (Theorem~\ref{introthm:properHurewicz}). Finally, Section~\ref{section:computation} is dedicated to explicit computations, where we calculate the proper homology of open contractible $3$-manifolds (Theorems~\ref{introthm:wildness} and~\ref{introthm:homologysphere}) and Brown's string of $k$-spheres (Theorem~\ref{introthm:stringofspheres}).

\section{Definition and Properties}\label{section2}
Throughout this paper, we restrict our attention to inverse systems indexed by the non-negative integers. Such an \emph{inverse system of groups} (or \emph{tower}) is a collection $\underline{G} = \{ G_n, p^m_n \}_{m \ge n \ge 0}$, where each $G_n$ is a group and the maps $p^m_n \colon G_m \to G_n$ are homomorphisms such that $p^n_n = \mathrm{id}_{G_n}$ for all $n$, and $p^m_n \circ p^k_m = p^k_n$ for all $k \ge m \ge n$.

 Let $\mathsf{Grp}$ denote the category of groups, and let $\mathsf{InvGrp}$ denote the category of inverse systems of groups. The objects of $\mathsf{InvGrp}$ are inverse systems $\underline{G} = \{G_n, p_n^m\}_{m \ge n \ge 0}$, and the morphisms $\Phi \colon \underline{G} \to \underline{H}$ (where $\underline{H} = \{H_n, q_n^m\}$) are sequences of group homomorphisms $\Phi = \{\phi_n \colon G_n \to H_n\}_{n \ge 0}$ such that $q_n^m \circ \phi_m = \phi_n \circ p_n^m$ for all $m \ge n \ge 0$.

\begin{definition} \cite[p. 44]{MR356041}
    Let $\underline{G} = \{ G_n, p^m_n \}_{m \ge n \ge 0}$ be an inverse system of groups, and let $\mathscr{S}_{\underline{G}}$ denote the set of all sequences $\{ g_{k(n)} \}_{n \ge 0}$ such that $g_{k(n)} \in G_{k(n)}$ and $k(n) \to \infty$ as $n \to \infty$. Two sequences $\{ g_{k(n)} \}$ and $\{ g'_{l(n)} \}$ in $\mathscr{S}_{\underline{G}}$ are \emph{asymptotically equivalent}, denoted $\{ g_{k(n)} \} \sim \{ g'_{l(n)} \}$, if there exists a sequence of integers $m(n) \to \infty$ with $m(n) \le \min\{k(n), l(n)\}$ such that
    \[
    p^{k(n)}_{m(n)}(g_{k(n)}) = p^{l(n)}_{m(n)}(g'_{l(n)})
    \]
    for all sufficiently large $n$. Let $\wp(\underline{G}) \coloneqq \mathscr{S}_{\underline{G}} / {\sim}$ denote the set of equivalence classes of these sequences.
\end{definition}
Let $[\{g_{k(n)}\}]$ and $[\{h_{l(n)}\}]$ be two elements in $\wp(\underline{G})$. To define their product, we first set $m(n) = \min\{k(n), l(n)\}$ for each $n$. We project both sequences down to $G_{m(n)}$ and multiply them inside $G_{m(n)}$, defining the product as: \begin{align*}
    [\{g_{k(n)}\}] \cdot [\{h_{l(n)}\}] &\coloneqq \left[ \left\{ p^{k(n)}_{m(n)}(g_{k(n)}) \cdot p^{l(n)}_{m(n)}(h_{l(n)}) \right\}_{n \ge 0} \right].
\end{align*}

To see that this product is well-defined, we first show that it does not depend on the choice of the bounding sequence. Suppose we choose another sequence $m'(n) \to \infty$ such that $m'(n) \le \min\{k(n), l(n)\}$. Since $m(n) = \min\{k(n), l(n)\}$, we clearly have $m'(n) \le m(n)$. Because $p^{m(n)}_{m'(n)}$ is a group homomorphism, projecting the terms of the $m(n)$-sequence down to $m'(n)$ yields
\begin{align*}
    p^{m(n)}_{m'(n)}\left( p^{k(n)}_{m(n)}(g_{k(n)}) \cdot p^{l(n)}_{m(n)}(h_{l(n)}) \right) 
    &= p^{m(n)}_{m'(n)}\left( p^{k(n)}_{m(n)}(g_{k(n)}) \right) \cdot p^{m(n)}_{m'(n)}\left( p^{l(n)}_{m(n)}(h_{l(n)}) \right) \\
    &= p^{k(n)}_{m'(n)}(g_{k(n)}) \cdot p^{l(n)}_{m'(n)}(h_{l(n)}).
\end{align*}
This is exactly the sequence we would obtain if we had used $m'(n)$ to define the product initially. By the definition of $\sim$, the two resulting sequences are equivalent, meaning the product is well-defined with respect to the choice of index sequence.

Next, we show that the product is independent of the choice of representatives. Suppose $\{g_{k(n)}\} \sim \{g'_{k'(n)}\}$ and $\{h_{l(n)}\} \sim \{h'_{l'(n)}\}$. By definition, there exist integer sequences $m_1(n) \to \infty$ and $m_2(n) \to \infty$ such that 
\[
p^{k(n)}_{m_1(n)}(g_{k(n)}) = p^{k'(n)}_{m_1(n)}(g'_{k'(n)}) \quad \text{and} \quad p^{l(n)}_{m_2(n)}(h_{l(n)}) = p^{l'(n)}_{m_2(n)}(h'_{l'(n)})
\]
for all sufficiently large $n$. Set $r(n) \coloneqq \min\{ m_1(n), m_2(n) \} \to \infty$. Projecting the product sequences down to $G_{r(n)}$, we have for all sufficiently large $n$:
\begin{align*}
p^{k(n)}_{r(n)}(g_{k(n)}) \cdot p^{l(n)}_{r(n)}(h_{l(n)})
&= p^{m_1(n)}_{r(n)}\bigl(p^{k(n)}_{m_1(n)}(g_{k(n)})\bigr) \cdot p^{m_2(n)}_{r(n)}\bigl(p^{l(n)}_{m_2(n)}(h_{l(n)})\bigr) \\
&= p^{m_1(n)}_{r(n)}\bigl(p^{k'(n)}_{m_1(n)}(g'_{k'(n)})\bigr) \cdot p^{m_2(n)}_{r(n)}\bigl(p^{l'(n)}_{m_2(n)}(h'_{l'(n)})\bigr) \\
&= p^{k'(n)}_{r(n)}(g'_{k'(n)}) \cdot p^{l'(n)}_{r(n)}(h'_{l'(n)}).
\end{align*}
Hence, the resulting product sequences are equivalent. This establishes that the operation is well-defined, leading to the following theorem.

\begin{theorem}\textup{\cite[p. 44]{MR356041}}
    If $\underline{G}$ is an inverse system of groups, then $\wp(\underline{G})$ admits a group structure where the product of two elements $[\{g_{k(n)}\}]$ and $[\{h_{l(n)}\}]$ is given by
    \[
    [\{g_{k(n)}\}] \cdot [\{h_{l(n)}\}] \coloneqq \left[ \left\{ p^{k(n)}_{m(n)}(g_{k(n)}) \cdot p^{l(n)}_{m(n)}(h_{l(n)}) \right\}_{n \ge 0} \right],
    \]
    for any sequence of integers $m(n) \to \infty$ satisfying $m(n) \le \min\{k(n), l(n)\}$.
\end{theorem}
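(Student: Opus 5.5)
Well-definedness of the product has already been checked in the preceding paragraphs: independence of the choice of $m(n)$ and of representatives. The remaining task is to verify the group axioms on $\wp(\underline{G})$. One preliminary point first: $\sim$ is an equivalence relation, which the statement presupposes. Reflexivity and symmetry are immediate. For transitivity, if $m_1(n)$ witnesses $\{g_{k(n)}\}\sim\{g'_{k'(n)}\}$ and $m_2(n)$ witnesses $\{g'_{k'(n)}\}\sim\{g''_{k''(n)}\}$, then $\min\{m_1(n),m_2(n)\}\to\infty$ witnesses $\{g_{k(n)}\}\sim\{g''_{k''(n)}\}$. To see this, project both equalities further down using $p^a_b\circ p^c_a=p^c_b$.

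The one general tool is the following projection lemma. If $\{g_{k(n)}\}\in\mathscr{S}_{\underline{G}}$ and $j(n)\to\infty$ with $j(n)\le k(n)$, then
\[
\{g_{k(n)}\}\sim\{p^{k(n)}_{j(n)}(g_{k(n)})\}.
\]
The witness is $m(n)=j(n)$. With this lemma, every computation can be pushed to a common level sequence, where the identities become coordinatewise identities in the groups $G_{m(n)}$.

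The axioms then follow in turn.

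\textbf{Associativity.} Given three classes with level sequences $k(n), l(n), s(n)$, set $m(n)=\min\{k(n),l(n),s(n)\}$. Because the product may be computed with any admissible $m(n)$, both $(xy)z$ and $x(yz)$ are represented at level $m(n)$. Both are the sequence of coordinatewise products of the three projections in $G_{m(n)}$, and these agree by associativity in $G_{m(n)}$ together with the homomorphism property of the $p$'s.

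\textbf{Identity.} The identity is $[\{e_n\}]$, where $e_n\in G_n$ is the identity. Multiplying $[\{g_{k(n)}\}]$ by it at level $\min\{k(n),n\}$ gives $\{p^{k(n)}_{\min\{k(n),n\}}(g_{k(n)})\}$, which is equivalent to $\{g_{k(n)}\}$ by the projection lemma.

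\textbf{Inverses.} The inverse of $[\{g_{k(n)}\}]$ is $[\{g_{k(n)}^{-1}\}]$. Their product at level $k(n)$ is $\{e_{k(n)}\}$, and this is equivalent to $\{e_n\}$ via the witness $\min\{k(n),n\}$, since homomorphisms preserve identities.

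There is no real obstacle here. The only care needed is the bookkeeping of the level sequences: each witness sequence must tend to infinity and be bounded by the relevant indices. Taking minima of finitely many sequences tending to infinity preserves this, so the verification is routine.
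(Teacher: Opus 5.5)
Your proposal is correct and matches the paper's treatment. The paper's argument consists of the two well-definedness checks you invoke: independence of $m(n)$, by projecting from $\min\{k(n),l(n)\}$ through the homomorphisms $p^{m(n)}_{m'(n)}$, and independence of representatives, via the minimum of the two witness sequences. After that it simply asserts the group structure. The equivalence-relation check and the verification of associativity, the identity $[\{e_n\}]$, and inverses $[\{g_{k(n)}^{-1}\}]$ that you carry out with your projection lemma are the routine steps the paper leaves implicit, and you do them correctly.
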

\begin{corollary}
    If $\underline{G}$ is an inverse system of abelian groups, then $\wp(\underline{G})$ is an abelian group.
\end{corollary}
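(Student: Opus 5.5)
The corollary follows from the explicit formula for the product in the preceding theorem. The group structure on $\wp(\underline{G})$ is already established there, so the only remaining point is commutativity. I will compute both products $[\{g_{k(n)}\}]\cdot[\{h_{l(n)}\}]$ and $[\{h_{l(n)}\}]\cdot[\{g_{k(n)}\}]$ using the same index sequence $m(n)=\min\{k(n),l(n)\}$. This choice is admissible for both orders, because $\min\{k(n),l(n)\}=\min\{l(n),k(n)\}$, and the theorem says any $m(n)\to\infty$ bounded by this minimum gives the same class.

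With this common choice, the $n$-th term of the first product is $p^{k(n)}_{m(n)}(g_{k(n)})\cdot p^{l(n)}_{m(n)}(h_{l(n)})$, and the $n$-th term of the second is $p^{l(n)}_{m(n)}(h_{l(n)})\cdot p^{k(n)}_{m(n)}(g_{k(n)})$. Both factors lie in the same group $G_{m(n)}$, which is abelian, so the two terms coincide for every $n$. Hence the representing sequences are literally equal, and so they are trivially asymptotically equivalent: take the bounding sequence to be $m(n)$ itself, with $p^{m(n)}_{m(n)}=\mathrm{id}$. The two classes are therefore equal.

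There is no real obstacle here. The only point needing care is that both products are computed with the same projection level $m(n)$, and that is exactly what the well-definedness argument preceding the theorem guarantees.
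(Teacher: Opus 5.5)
Your proposal is correct and is essentially the argument the paper intends: the corollary is stated without proof immediately after the theorem, because the product is computed termwise in the abelian groups $G_{m(n)}$ with the symmetric choice $m(n)=\min\{k(n),l(n)\}$. Your observation that the two representing sequences are then literally equal, hence trivially equivalent, is exactly the point.
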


The following theorem appears in Brown's paper \cite[p. 46]{MR356041} without proof. We include a proof here for completeness. The theorem also illustrates that, unlike the inverse limit and the first derived limit, the asymptotic limit may be uncountable even in very simple situations.
\begin{theorem}[Asymptotic Limit of Constant System]\label{constant}
    Let $G$ be a group, and consider the constant inverse system $\underline{G}$ defined by 
    \[
    G_n = G \quad \text{and} \quad p^m_n = \mathrm{id}_G \quad \text{for all } m \ge n \ge 0.
    \]Then the map $\Phi \colon \left.\prod_{n \ge 0} G \right/ \bigoplus_{n \ge 0} G \longrightarrow \wp(\underline{G})$ defined by sending the equivalence class of a sequence $\{ g_n \}_{n \ge 0}$ to its asymptotic equivalence class $[\{ g_{k(n)=n} \}]$ in $\wp(\underline{G})$ is an isomorphism.
\end{theorem}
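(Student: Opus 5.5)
The strategy is to check directly that $\Phi$ is a well-defined homomorphism, and then prove injectivity and surjectivity separately. Since every bonding map is the identity, two sequences $\{g_{k(n)}\}$ and $\{g'_{l(n)}\}$ are asymptotically equivalent exactly when $g_{k(n)} = g'_{l(n)}$ in $G$ for all sufficiently large $n$. Indeed, given $m(n)\to\infty$ with $m(n)\le\min\{k(n),l(n)\}$, the projections $p^{k(n)}_{m(n)}$ and $p^{l(n)}_{m(n)}$ are both $\mathrm{id}_G$. Conversely, $m(n)=\min\{k(n),l(n)\}$ always tends to infinity. This reformulation does almost all the work.

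\emph{Well-definedness and homomorphism.} Suppose $\{g_n\}$ and $\{g'_n\}$ represent the same class in $\prod G/\bigoplus G$. Then $g_n^{-1}g'_n$ is trivial for all but finitely many $n$, so $g_n = g'_n$ eventually. By the reformulation, $[\{g_n\}] = [\{g'_n\}]$ in $\wp(\underline{G})$. (Here $\bigoplus G$ is normal in $\prod G$, being the set of eventually trivial sequences, so the quotient is a group.) For the homomorphism property, take $k(n)=l(n)=n$ in the product formula of the preceding theorem; we may use $m(n)=n$. The product of $[\{g_n\}]$ and $[\{h_n\}]$ is then $[\{g_nh_n\}]$, which is $\Phi$ applied to the coordinatewise product.

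\emph{Injectivity.} If $[\{g_n\}]$ is the identity, then $\{g_n\}\sim\{e\}$. By the reformulation, $g_n=e$ for all large $n$, so $\{g_n\}\in\bigoplus G$.

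\emph{Surjectivity.} This is the only step needing care. The issue is that a general element $[\{g_{k(n)}\}]$ has its $n$-th term in $G_{k(n)}$, so re-indexing is not literally the identity. However, $G_{k(n)} = G$, so we may set $h_n \coloneqq g_{k(n)}\in G = G_n$. The sequences $\{h_n\}$ (with index function $n$) and $\{g_{k(n)}\}$ (with index function $k(n)$) have equal terms in $G$ for every $n$. By the reformulation, using $m(n)=\min\{n,k(n)\}\to\infty$, they are equivalent. Hence $\Phi([\{h_n\}]) = [\{g_{k(n)}\}]$.

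I expect no real obstacle. The only point to make explicit is that the index function $k(n)$ carries no information for a constant system, which is exactly what the reformulation in the first paragraph captures.
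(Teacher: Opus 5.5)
Your proposal is correct and follows essentially the same route as the paper: a direct check of well-definedness, the homomorphism property via $m(n)=n$, injectivity, and surjectivity via $g'_n \coloneqq g_{k(n)}$ with bounding sequence $m(n)=\min\{n,k(n)\}$. The only difference is presentational. You isolate at the outset the fact that, for a constant system, asymptotic equivalence is just eventual equality in $G$, whereas the paper re-derives this inline at each step.
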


    \begin{proof}To see that $\Phi$ is well-defined, suppose $\{ g_n \}$ and $\{ g'_n \}$ represent the same class in the quotient. This means $g_n = g'_n$ for all $n \ge N$ for some integer $N$. By choosing the bounding sequence $m(n) = n$, we see that $p^n_n(g_n) = g_n = g'_n = p^n_n(g'_n).$ for all $n \ge N$. Thus, $\{g_n\} \sim \{g'_n\}$, meaning $\Phi([\{ g_n \}]) = \Phi([\{ g'_n \}])$, so the map is independent of the choice of representative.

        Now, we check that $\Phi$ is a group homomorphism. Let $\{ g_n \}$ and $\{ h_n \}$ be sequences in $\prod G$. Applying $\Phi$ to their product yields:
        \[
        \Phi([\{ g_n \}] \cdot [\{ h_n \}]) = \Phi([\{ g_n h_n \}]) = [\{ g_n h_n \}].
        \]
        In $\wp(\underline{G})$, the product of $[\{ g_n \}]$ and $[\{ h_n \}]$ using the diagonal bounding sequence $m(n) = n$ gives exactly $[\{ p^n_n(g_n) p^n_n(h_n) \}] = [\{ g_n h_n \}]$. Thus, $\Phi$ preserves the group operation.

        To prove $\Phi$ is injective, suppose $\Phi([\{ g_n \}]) = [\{ 1 \}]$, the identity element of $\wp(\underline{G})$. By definition, there exists an index sequence $m(n) \to \infty$ with $m(n) \le n$ such that for all sufficiently large $n$:
        \[
        p^n_{m(n)}(g_n) = p^n_{m(n)}(1) = 1.
        \]
        Because every bonding map in the trivial system is the identity, $p^n_{m(n)}(g_n) = g_n$. Therefore, $g_n = 1$ for all sufficiently large $n$. This implies $\{ g_n \} \in \bigoplus_{n \ge 0} G$, meaning $[\{ g_n \}]$ is the identity element of the quotient. Hence, $\ker(\Phi)$ is trivial and $\Phi$ is injective.

        To prove $\Phi$ is surjective, let $[\{g_{k(n)}\}] \in \wp(\underline{G})$ be an arbitrary asymptotic class, where $k(n) \to \infty$. We construct a diagonal sequence $\{g'_n\}_{n \ge 0} \in \prod G$ by defining:
        \[
        g'_n \coloneqq g_{k(n)} \quad \text{for all } n \ge 0.
        \]
        To show $\{g'_n\} \sim \{g_{k(n)}\}$, we define the bounding index sequence $m(n) \coloneqq \min\{ n, k(n) \}$. Since both $n \to \infty$ and $k(n) \to \infty$, it follows that $m(n) \to \infty$. Projecting both sequences to level $m(n)$ using the identity bonding maps gives $p^n_{m(n)}(g'_n) = g'_n = g_{k(n)}$ and $ p^{k(n)}_{m(n)}(g_{k(n)}) = g_{k(n)}.$ The projections are identically equal for all $n$. Therefore, $\{g'_n\} \sim \{g_{k(n)}\}$, which means $\Phi([\{g'_n\}]) = [\{g_{k(n)}\}]$. This proves $\Phi$ is surjective. 
        
        Since $\Phi$ is a bijective homomorphism, it is an isomorphism.
    \end{proof}

\begin{remark}
    We note a typographical error in the survey by Porter \cite[p. 137]{MR1361888} regarding the definition of Brown's $\wp$ functor. The original text defines equivalence as follows:
    \begin{quote}
        ``Given two such sequences, $\{g_{k(n)}\}$ and $\{g'_{l(n)}\}$, we say that they are equivalent if there is a third sequence $m(n)$, $m(n)\to\infty$ as $n\to\infty$, $m(n)\leq \min(k(n),l(n))$, and $p^{k(n)}_{m(n)} g_{k(n)}=p^{l(n)}_{m(n)} g'_{l(n)}$ for all $n\in\mathbb{N}$.''
    \end{quote}
    The final equality should only be required to hold for all but finitely many $n$. If it were strictly enforced for all $n\in\mathbb{N}$, a calculation analogous to Theorem~\ref{constant} would yield $\prod_{n=0}^\infty G$ rather than the correct quotient $\prod_{n=0}^\infty G \big/ \bigoplus_{n=0}^\infty G$ established by Brown.
\end{remark}

\subsection{Asymptotic Limit as an Exact Functor}\label{fe}
We now establish some categorical properties of $\wp$. While the inverse limit fail to preserve right exactness, the asymptotic limit exhibits no such failure. In what follows, we show that $\wp$ is not only a well-defined covariant functor, but also preserves short exact sequences.
\begin{theorem}[Functoriality]\label{fun}
    The assignment $\wp$ defines a covariant functor from the category $\mathsf{InvGrp}$ to the category $\mathsf{Grp}$. Specifically, $\wp$ maps an inverse system $\underline{G}$ to the group $\wp(\underline{G})$, and it maps a morphism of inverse systems $\Phi = \{\phi_n\}_{n \ge 0} \colon \underline{G} \to \underline{H}$ to the induced group homomorphism $\wp(\Phi) \colon \wp(\underline{G}) \to \wp(\underline{H})$ given by
    \[
    \wp(\Phi)\left([\{g_{k(n)}\}]\right) \coloneqq \left[ \{ \phi_{k(n)}(g_{k(n)}) \}_{n \ge 0} \right].
    \]
\end{theorem}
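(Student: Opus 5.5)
The plan is to check four things in turn, directly from the definitions and in the same style as the well-definedness argument for the product: that $\wp(\Phi)$ is well defined on classes, that it is a homomorphism, that it preserves identities, and that it respects composition.

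\textbf{Well-definedness.} The formula makes sense at the level of sequences: $\phi_{k(n)}(g_{k(n)}) \in H_{k(n)}$ with $k(n)\to\infty$, so the image lies in $\mathscr{S}_{\underline{H}}$. Now suppose $\{g_{k(n)}\}\sim\{g'_{l(n)}\}$, witnessed by $m(n)\to\infty$ with $m(n)\le\min\{k(n),l(n)\}$ and $p^{k(n)}_{m(n)}(g_{k(n)}) = p^{l(n)}_{m(n)}(g'_{l(n)})$ for large $n$. We keep the same $m(n)$ and use the compatibility $q^{k(n)}_{m(n)}\circ\phi_{k(n)} = \phi_{m(n)}\circ p^{k(n)}_{m(n)}$. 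This gives
\[
q^{k(n)}_{m(n)}\bigl(\phi_{k(n)}(g_{k(n)})\bigr)=\phi_{m(n)}\bigl(p^{k(n)}_{m(n)}(g_{k(n)})\bigr)=\phi_{m(n)}\bigl(p^{l(n)}_{m(n)}(g'_{l(n)})\bigr)=q^{l(n)}_{m(n)}\bigl(\phi_{l(n)}(g'_{l(n)})\bigr)
\]
for all large $n$. Hence the image sequences are asymptotically equivalent.

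\textbf{Homomorphism.} Take classes $[\{g_{k(n)}\}]$ and $[\{h_{l(n)}\}]$ and set $m(n)=\min\{k(n),l(n)\}$. Their product is represented by $\{p^{k(n)}_{m(n)}(g_{k(n)})\,p^{l(n)}_{m(n)}(h_{l(n)})\}$. Applying $\phi_{m(n)}$ and using that it is a homomorphism together with compatibility, the image is $\{q^{k(n)}_{m(n)}\phi_{k(n)}(g_{k(n)})\cdot q^{l(n)}_{m(n)}\phi_{l(n)}(h_{l(n)})\}$. Since the image sequences are indexed by the same $k(n)$ and $l(n)$, this is exactly the product in $\wp(\underline{H})$ computed with the same bounding sequence $m(n)$. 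The earlier theorem allows any admissible $m(n)$, so the two agree.

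\textbf{Functor axioms.} $\wp(\mathrm{id})$ sends each representative to itself, so it is the identity. For $\Psi\circ\Phi=\{\psi_n\circ\phi_n\}$, both $\wp(\Psi\circ\Phi)$ and $\wp(\Psi)\circ\wp(\Phi)$ send $\{g_{k(n)}\}$ to $\{\psi_{k(n)}\phi_{k(n)}(g_{k(n)})\}$ on the nose. One also notes that $\Psi\circ\Phi$ is again a morphism of inverse systems, since compatibility squares compose.

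No step is a real obstacle. The only point needing care is well-definedness, and the key choice there is to reuse the same witnessing sequence $m(n)$ and apply the naturality squares levelwise.
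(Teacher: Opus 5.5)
Your proposal is correct and follows essentially the same route as the paper: well-definedness is proved by reusing the witnessing sequence $m(n)$ and applying the compatibility squares levelwise, and the functor axioms are checked directly on representatives. The only difference is that you write out the homomorphism check explicitly, which the paper leaves as a routine verification; your computation of it is correct.
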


\begin{proof}
    To verify that $\wp(\Phi)$ is well-defined, suppose $\{g_{k(n)}\} \sim \{g'_{l(n)}\}$. By definition, there exists a sequence $m(n) \to \infty$ with $m(n) \le \min\{k(n), l(n)\}$ such that $ p^{k(n)}_{m(n)}(g_{k(n)}) = p^{l(n)}_{m(n)}(g'_{l(n)})$ for all sufficiently large $n$. Applying the homomorphism $\phi_{m(n)}$ to both sides and utilizing the commutativity condition $q^b_a \circ \phi_b = \phi_a \circ p^b_a$ of the morphism $\Phi$, we obtain:
    $$q^{k(n)}_{m(n)} \left( \phi_{k(n)}(g_{k(n)}) \right) 
        = \phi_{m(n)}\left(p^{k(n)}_{m(n)}(g_{k(n)})\right) 
        = \phi_{m(n)}\left(p^{l(n)}_{m(n)}(g'_{l(n)})\right) 
        = q^{l(n)}_{m(n)} \left( \phi_{l(n)}(g'_{l(n)}) \right)$$for all sufficiently large $n$. This establishes that $\{ \phi_{k(n)}(g_{k(n)}) \} \sim \{ \phi_{l(n)}(g'_{l(n)}) \}$, proving that the map is independent of the choice of representative sequence.

    A routine verification, using the coordinate-wise definition of the product in $\wp$ and the fact that each $\phi_n$ is a homomorphism, confirms that $\wp(\Phi)$ is indeed a group homomorphism. Finally, it is evident directly from the definition that $\wp(\operatorname{id}_{\underline{G}}) = \operatorname{id}_{\wp(\underline{G})}$, and for morphisms $\Phi \colon \underline{G} \to \underline{H}$ and $\Psi \colon \underline{H} \to \underline{K}$, we have $\wp(\Psi \circ \Phi) = \wp(\Psi) \circ \wp(\Phi)$. Thus, $\wp$ is a well-defined functor.
\end{proof} 

 Let $\underline{A} = \{A_n, p^m_n\}$, $\underline{B} = \{B_n, q^m_n\}$, and $\underline{C} = \{C_n, r^m_n\}$ be three inverse systems of groups, and let $\Phi \colon \underline{A} \to \underline{B}$ and $\Psi \colon \underline{B} \to \underline{C}$ be morphisms of inverse systems given by level-wise maps $\phi_n$ and $\psi_n$. Suppose for every integer $n \ge 0$, the following sequence of groups is exact:
    \[
    1 \to A_n \xrightarrow{\phi_n} B_n \xrightarrow{\psi_n} C_n \to 1.
    \]
If $A_n, B_n,$ and $C_n$ are abelian groups for all $n \ge 0$, then the failure of right exactness of the inverse limit functor $\varprojlim$ is encoded by the first right derived functor, $\varprojlimone$. More precisely, we have an exact sequence of groups \cite[Proposition 2.12 (ii)]{MR1410261} \cite[Appendix]{MR488016}: 
\begin{equation}\label{rigtexactfail}
    1\to \varprojlim \underline A\to \varprojlim \underline B\to \varprojlim \underline C\to \varprojlimone \underline A\to \varprojlimone \underline B\to \varprojlimone\underline C\to 1.
\end{equation}In the non-abelian setting, we can only expect this to be an exact sequence of pointed sets \cite[Proposition 2.3, Chapter 9]{MR365573} \cite[p. 168]{MR676973}.  In contrast, the asymptotic limit functor yields an exact sequence of groups even if $\underline A, \underline B,$ and $\underline C$ are systems of non-abelian groups.
\begin{theorem}[Exactness]\label{exactness}The induced sequence of asymptotic groups is also exact:
    \[
    1 \to \wp(\underline{A}) \xrightarrow{\wp(\Phi)} \wp(\underline{B}) \xrightarrow{\wp(\Psi)} \wp(\underline{C}) \to 1.
    \]
\end{theorem}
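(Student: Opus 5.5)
We verify exactness at each of the three spots directly from the definition of asymptotic equivalence. Functoriality (Theorem~\ref{fun}) already gives the homomorphisms $\wp(\Phi)$ and $\wp(\Psi)$. Since $\psi_n\circ\phi_n$ is trivial for every $n$, the composite $\wp(\Psi)\circ\wp(\Phi)$ is trivial. So it remains to show three things: injectivity of $\wp(\Phi)$, surjectivity of $\wp(\Psi)$, and $\ker\wp(\Psi)\subseteq\operatorname{im}\wp(\Phi)$.

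\emph{Injectivity.} Suppose $\wp(\Phi)[\{a_{k(n)}\}]$ is trivial. Then there is $m(n)\to\infty$ with $m(n)\le k(n)$ and $q^{k(n)}_{m(n)}\phi_{k(n)}(a_{k(n)})=1$ for large $n$. By naturality this element equals $\phi_{m(n)}\bigl(p^{k(n)}_{m(n)}(a_{k(n)})\bigr)$. Since $\phi_{m(n)}$ is injective, $p^{k(n)}_{m(n)}(a_{k(n)})=1$ for large $n$, which says exactly that $\{a_{k(n)}\}$ is equivalent to the trivial sequence.

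\emph{Surjectivity.} Given $[\{c_{k(n)}\}]$, we choose a preimage $b_{k(n)}\in B_{k(n)}$ of $c_{k(n)}$ under $\psi_{k(n)}$ for each $n$. This uses only levelwise surjectivity and a choice for each $n$; no compatibility between levels is needed. This is precisely where $\wp$ beats $\varprojlim$, because a class in $\wp$ is a sequence of independent elements rather than a coherent thread.

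\emph{Exactness in the middle.} This is the only step with any content. Suppose $\wp(\Psi)[\{b_{k(n)}\}]$ is trivial. Then there is $m(n)\to\infty$, $m(n)\le k(n)$, with $r^{k(n)}_{m(n)}\psi_{k(n)}(b_{k(n)})=1$ for $n\ge N$. Set $b'_n:=q^{k(n)}_{m(n)}(b_{k(n)})\in B_{m(n)}$. Then $\{b'_n\}$, indexed by $m(n)$, is asymptotically equivalent to $\{b_{k(n)}\}$: take the bounding sequence $m(n)$ itself and use $q^{m(n)}_{m(n)}=\mathrm{id}$. By naturality, $\psi_{m(n)}(b'_n)=1$ for $n\ge N$, so by exactness at level $m(n)$ there is a unique $a_n\in A_{m(n)}$ with $\phi_{m(n)}(a_n)=b'_n$. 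For $n<N$ we set $a_n=1$. Then $\wp(\Phi)[\{a_n\}]=[\{\phi_{m(n)}(a_n)\}]$, and this sequence agrees with $\{b'_n\}$ for $n\ge N$, hence equals $[\{b_{k(n)}\}]$.

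The main subtlety is this middle step. The key idea is to first replace the representative by its projection to the level $m(n)$ at which the image vanishes; the finitely many exceptional indices are harmless because equivalence only requires agreement for large $n$. No abelian hypothesis is used anywhere, so the argument works for non-abelian groups.
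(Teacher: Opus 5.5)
Your proof is correct and follows the paper's argument essentially step for step. Injectivity uses naturality plus levelwise injectivity of $\phi_{m(n)}$; surjectivity chooses preimages independently at each level; and middle exactness projects the representative to the level $m(n)$ where its image becomes trivial, pulls back there, and fills the finitely many exceptional indices with identities — the only cosmetic difference being that you compare against the trivial sequence indexed by $k(n)$ rather than the diagonal $\{1_n\}$.
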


\begin{proof}
    The proof proceeds in three standard steps. The trivial element in any asymptotic group will be represented by the diagonal trivial sequence $[\{1_n\}]$.
\medskip

    \emph{Left Exactness:} Let $[\{a_{k(n)}\}] \in \wp(\underline{A})$ be an arbitrary class such that $\wp(\Phi)([\{a_{k(n)}\}]) = [\{1_n\}]$. By definition of the induced map, this means $[\{\phi_{k(n)}(a_{k(n)})\}] = [\{1_n\}]$ in $\wp(\underline{B})$. Thus, there exists a bounding sequence $m(n) \to \infty$ with $m(n) \le \min\{k(n), n\}$ such that for all sufficiently large $n$, we have $ q^{k(n)}_{m(n)}(\phi_{k(n)}(a_{k(n)})) = q^n_{m(n)}(1_n) = 1_{m(n)}.$ Because $\Phi$ is a morphism of inverse systems, it strictly commutes with the bonding maps. Swapping the order of operations yields $\phi_{m(n)}(p^{k(n)}_{m(n)}(a_{k(n)})) = 1_{m(n)}.$ By the level-wise exactness of the original sequence, $\phi_{m(n)} \colon A_{m(n)} \to B_{m(n)}$ is injective. This forces the argument to be the identity: $ p^{k(n)}_{m(n)}(a_{k(n)}) = 1_{m(n)}$ for all sufficiently large $n$. Because $p^n_{m(n)}(1_n) = 1_{m(n)}$, this is precisely the condition that $\{a_{k(n)}\} \sim \{1_n\}$ in $\underline{A}$. Therefore, $[\{a_{k(n)}\}] = [\{1_n\}]$, and $\wp(\Phi)$ is injective.

\medskip

    \emph{Middle Exactness:}
    Because $\psi_n \circ \phi_n = 1$ for every $n$, functoriality immediately implies $\wp(\Psi) \circ \wp(\Phi) = 1$, yielding $\operatorname{im}(\wp(\Phi)) \subseteq \ker(\wp(\Psi))$.
    
    Conversely, let $[\{b_{k(n)}\}] \in \ker(\wp(\Psi))$. By definition, $[\{\psi_{k(n)}(b_{k(n)})\}] = [\{1_n\}]$ in $\wp(\underline{C})$. There exists a bounding sequence $m(n) \to \infty$ with $m(n) \le \min\{k(n), n\}$ such that for all sufficiently large $n$, we have $r^{k(n)}_{m(n)}(\psi_{k(n)}(b_{k(n)})) = 1_{m(n)}.$
    Using the commuting property of $\Psi$, this becomes $\psi_{m(n)}(q^{k(n)}_{m(n)}(b_{k(n)})) = 1_{m(n)}$. Let $b'_{m(n)} \coloneqq q^{k(n)}_{m(n)}(b_{k(n)}) \in B_{m(n)}$. Since $\psi_{m(n)}(b'_{m(n)}) = 1_{m(n)}$, the exactness at level $m(n)$ guarantees that $b'_{m(n)} \in \operatorname{im}(\phi_{m(n)})$. Thus, for all sufficiently large $n$, there exists an element $a_{m(n)} \in A_{m(n)}$ such that $\phi_{m(n)}(a_{m(n)}) = b'_{m(n)}$. (For any initial terms where this fails, we simply set $a_{m(n)} = 1_{m(n)}$). 
    
    Because $m(n) \to \infty$, the sequence $\{a_{m(n)}\}$ defines a valid class in $\wp(\underline{A})$. Applying the forward map gives $\wp(\Phi)([\{a_{m(n)}\}]) = [\{\phi_{m(n)}(a_{m(n)})\}] = [\{b'_{m(n)}\}].$ However, $\{b'_{m(n)}\}$ is explicitly defined as the projection of the original sequence $\{b_{k(n)}\}$ down to level $m(n)$. By the property of asymptotic equivalence, $[\{b'_{m(n)}\}] = [\{b_{k(n)}\}]$. Substituting this equality yields $\wp(\Phi)([\{a_{m(n)}\}]) = [\{b_{k(n)}\}]$, proving that $\ker(\wp(\Psi)) \subseteq \operatorname{im}(\wp(\Phi))$.

\medskip

    \emph{Right Exactness:} Let $[\{c_{k(n)}\}] \in \wp(\underline{C})$ be an arbitrary class, represented by a sequence with $c_{k(n)} \in C_{k(n)}$ and $k(n) \to \infty$. By the exactness of the original sequence at the third node, the map $\psi_{k(n)} \colon B_{k(n)} \to C_{k(n)}$ is surjective for every integer $k(n)$. 
    
    Thus, for every $n \ge 0$, we can choose an element $b_{k(n)} \in B_{k(n)}$ such that $\psi_{k(n)}(b_{k(n)}) = c_{k(n)}.$ Because $k(n) \to \infty$, the term-by-term pullback sequence $\{b_{k(n)}\}$ is a valid asymptotic sequence that defines a class $[\{b_{k(n)}\}] \in \wp(\underline{B})$. Applying the induced map, we have $\wp(\Psi)([\{b_{k(n)}\}]) = [\{\psi_{k(n)}(b_{k(n)})\}] = [\{c_{k(n)}\}].$ Therefore, $\wp(\Psi)$ is surjective, completing the proof of exactness.
\end{proof}
The proof of the following theorem is also straightforward and is therefore omitted.
\begin{theorem}[Finite Products]
    Let $\underline{G} = \{G_n, p_n^m\}$ and $\underline{H} = \{H_n, q_n^m\}$ be inverse systems of groups. The product of the inverse systems is defined level-wise as $\underline{G} \times \underline{H} = \{G_n \times H_n, p_n^m \times q_n^m\}$. Then the map $\Phi\colon \wp(\underline{G} \times \underline{H}) \to \wp(\underline{G}) \times \wp(\underline{H})$ defined by
    \[
        \Phi\left([\{(g_{k(n)}, h_{k(n)})\}]\right) = \left( [\{g_{k(n)}\}], [\{h_{k(n)}\}] \right)
    \]
    is an isomorphism. 
\end{theorem}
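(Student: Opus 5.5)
The plan is to verify directly that $\Phi$ is well-defined, is a homomorphism, and is bijective. In each step we work with the product system's bonding maps $p^m_n\times q^m_n$, which act coordinatewise.

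\emph{Well-definedness and homomorphism.} Suppose $\{(g_{k(n)},h_{k(n)})\}\sim\{(g'_{l(n)},h'_{l(n)})\}$ via a bounding sequence $m(n)\to\infty$. Then
\[
(p\times q)^{k(n)}_{m(n)}(g_{k(n)},h_{k(n)})=(p\times q)^{l(n)}_{m(n)}(g'_{l(n)},h'_{l(n)})
\]
for large $n$. Reading off each coordinate with the same $m(n)$ gives $\{g_{k(n)}\}\sim\{g'_{l(n)}\}$ and $\{h_{k(n)}\}\sim\{h'_{l(n)}\}$. Alternatively, $\Phi=(\wp(\pi_G),\wp(\pi_H))$, where $\pi_G,\pi_H$ are the projection morphisms of inverse systems. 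Theorem~\ref{fun} then gives well-definedness and the homomorphism property at once.

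\emph{Injectivity.} Suppose $[\{g_{k(n)}\}]$ and $[\{h_{k(n)}\}]$ are both trivial, witnessed by bounding sequences $m_1(n)$ and $m_2(n)$. Set $m(n)=\min\{m_1(n),m_2(n)\}\to\infty$. Projecting further down to level $m(n)$ keeps both coordinates equal to $1$ for large $n$. Hence $(p\times q)^{k(n)}_{m(n)}(g_{k(n)},h_{k(n)})=(1,1)$ eventually, so the class is trivial. This min-of-bounding-sequences trick is the same one used in the well-definedness of the product.

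\emph{Surjectivity.} This is the only point needing care, because the two given sequences may live on different index sequences $k(n)$ and $l(n)$. Given $([\{g_{k(n)}\}],[\{h_{l(n)}\}])$, set $m(n)=\min\{k(n),l(n)\}\to\infty$. Replace each sequence by its projection to level $m(n)$; this projection is asymptotically equivalent to the original, using $m(n)$ itself as the bounding sequence. The pair sequence $\{(p^{k(n)}_{m(n)}g_{k(n)},\,q^{l(n)}_{m(n)}h_{l(n)})\}$ then lies in $\mathscr S_{\underline G\times\underline H}$ and maps to the given pair.

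Nothing here is a real obstacle; the reindexing to a common level $m(n)$ in the surjectivity step is the only step beyond unwinding definitions.
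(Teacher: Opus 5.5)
Your proof is correct and is the routine direct verification the paper has in mind; the paper omits the proof as straightforward. Your reindexing to $m(n)=\min\{k(n),l(n)\}$ in the surjectivity step correctly handles pairs whose components live on different index sequences, mirroring the paper's well-definedness argument for the product on $\wp(\underline G)$.
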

\subsection{Characterization of Pro-Triviality via \texorpdfstring{$\wp$}{℘}}\label{protriv}

The aim of this section is to investigate to what extent the limiting data derived from various functors---such as the inverse limit, the first derived limit, and the asymptotic limit---determine an inverse system of groups up to isomorphism. To motivate this specific notion of isomorphism before its formal definition, let us first consider a geometric example.

Consider the complex plane $\mathbb{C}$. The simplest way to compactify $\mathbb{C}$ is to add an extra abstract point, often denoted by $\infty$. We can represent this point purely using the topology of $\mathbb{C}$ by defining it via a nested sequence $\mathcal{U} \coloneqq (U_0 \supseteq U_1 \supseteq U_2 \supseteq \cdots)$ of connected, unbounded open subsets of $\mathbb{C}$ that eventually misses every compact subset. However, to ensure that the representation of $\infty$ is well-defined (i.e., independent of the specific choice of the nested sequence $\mathcal{U}$), it is natural to define an equivalence relation on all such sequences. If $\mathcal{V} = (V_0 \supseteq V_1 \supseteq V_2 \supseteq \cdots)$ is another such nested sequence, we say $\mathcal{V}$ is equivalent to $\mathcal{U}$ if, for every index $n$, there is a corresponding index $m$ such that $V_m \subseteq U_n$, and vice versa. Equivalently, $\mathcal{V}$ is isomorphic to $\mathcal{U}$ if we can form the following commutative ladder diagram, where all maps are inclusions:
\[
\begin{tikzcd}[column sep=1.5em] 
    U_{i_0} &                          & U_{i_1} \arrow[ll, hook'] \arrow[ld, hook] &                                            & U_{i_2} \arrow[ll, hook'] \arrow[ld, hook] &                                            & U_{i_3} \arrow[ll, hook'] \arrow[ld, hook] &                                            & \cdots \arrow[ll, hook'] \arrow[ld, hook] &                          \\         & V_{j_0} \arrow[lu, hook] &                                            & V_{j_1} \arrow[lu, hook] \arrow[ll, hook'] &                                            & V_{j_2} \arrow[lu, hook] \arrow[ll, hook'] &                                            & V_{j_3} \arrow[lu, hook] \arrow[ll, hook'] &                                           & \cdots \arrow[ll, hook'] 
\end{tikzcd}
\]

The above diagram motivates us to define a notion of isomorphism for inverse systems, which we will call pro-isomorphism. The prefix ``pro'' is derived from ``projective'', as some authors refer to inverse systems as projective systems. 

\begin{definition}\cite[p. 66]{MR3598162} \cite[p. 740]{MR784012} Two inverse systems $\underline{G} = \{G_n, p^m_n\}$ and $\underline{H} = \{H_n, q^m_n\}$ are said to be \emph{pro-isomorphic}, denoted $\underline{G} \cong \underline{H}$, if there exist strictly increasing sequences of indices $i_0 < i_1 < i_2 < \cdots$ and $j_0 < j_1 < j_2 < \cdots$, along with cross-homomorphisms $\lambda_n \colon G_{i_{n+1}} \to H_{j_n}$ and $\mu_n \colon H_{j_n} \to G_{i_n}$, such that the following mutually commuting ladder diagram can be formed:
    \[
    \begin{tikzcd}[column sep=1.5em]
        G_{i_0} & & G_{i_1} \arrow[ll, "p^{i_1}_{i_0}"'] \arrow[ld, "\lambda_0"'] & & G_{i_2} \arrow[ll, "p^{i_2}_{i_1}"'] \arrow[ld, "\lambda_1"'] & & G_{i_3} \arrow[ll, "p^{i_3}_{i_2}"'] \arrow[ld, "\lambda_2"'] & & \cdots \arrow[ll] \arrow[ld] & \\
                & H_{j_0} \arrow[lu, "\mu_0"] & & H_{j_1} \arrow[lu, "\mu_1"] \arrow[ll, "q^{j_1}_{j_0}"] & & H_{j_2} \arrow[lu, "\mu_2"] \arrow[ll, "q^{j_2}_{j_1}"] & & H_{j_3} \arrow[lu, "\mu_3"] \arrow[ll, "q^{j_3}_{j_2}"] & & \cdots \arrow[ll]
    \end{tikzcd}
    \]
    Explicitly, this requires that every triangle in the diagram commutes. That is, $\mu_n \circ \lambda_n = p^{i_{n+1}}_{i_n}$ and $\lambda_n \circ \mu_{n+1} = q^{j_{n+1}}_{j_n}$ for all integers $n \ge 0$. Pro-isomorphism is an equivalence relation on the class of all inverse systems of groups, namely $\operatorname{obj}(\mathsf{InvGrp})$. 
    
\end{definition}
\begin{remark}
    More generally, for any category $\mathcal{C}$, one can construct the category pro-$\mathcal{C}$ whose objects are inverse systems in $\mathcal{C}$ indexed by arbitrary directed sets. By defining morphisms as equivalence classes of compatible maps, pro-$\mathcal{C}$ is formed as a quotient category in which the categorical isomorphisms generalize the exact notion of pro-isomorphism defined above (see \cite[\S 11.2]{MR2365352}). 
\end{remark}   

It is a well-known fact that pro-isomorphic inverse systems of groups have isomorphic inverse limits ($\varprojlim$) and isomorphic first derived limits ($\varprojlimone$). However, the converse of either statement is generally false.

For the inverse limit, consider the trivial inverse system $\underline{\bm{1}}$ and the system $\underline{G}$, defined respectively by: $\underline{\bm{1}} \coloneqq 1 \longleftarrow 1 \longleftarrow 1 \longleftarrow \cdots$ and 
        $\underline{G} \coloneqq \mathbb{Z} \xleftarrow{\times 2} \mathbb{Z} \xleftarrow{\times 2} \mathbb{Z} \xleftarrow{\times 2} \cdots$ 
    Clearly, these two systems are not pro-isomorphic. However, for the trivial system, the shift operator $\Delta_{\underline{\bm{1}}}$ is the zero map, so $\varprojlim \underline{\bm{1}} =\ker\Delta_{\underline{\bm{1}}}= 1$. For $\underline{G}$, the shift operator $\Delta_{\underline{G}}\colon \prod_{n=0}^\infty \mathbb{Z} \to \prod_{n=0}^\infty \mathbb{Z}$ is given by  $\Delta_{\underline{G}}(\{x_n\}) = \{y_n\}$, where $ y_n=x_n - 2x_{n+1}$. So $\varprojlim \underline G=\ker \Delta_{\underline{G}}= 1$. Thus, both systems have trivial inverse limits, yet they are not pro-isomorphic.

    Similarly, the converse fails for the first derived limit, $\varprojlimone$, which is computed as the cokernel of the shift operator $\Delta$. Consider the system: $\underline{H} \coloneqq \mathbb{Z} \xleftarrow{\mathrm{id}} \mathbb{Z} \xleftarrow{\mathrm{id}} \mathbb{Z} \xleftarrow{\mathrm{id}} \cdots$. The shift operator $\Delta_{\underline{H}} \colon \prod_{n=0}^\infty \mathbb{Z} \to \prod_{n=0}^\infty \mathbb{Z}$ is given by $\{x_n\} \mapsto \{y_n\}$, where $y_n=x_n - x_{n+1}$. Because $\Delta_{\underline{H}}$ is surjective, $\varprojlimone \underline{H} = \operatorname{coker}(\Delta_{\underline{H}}) = 1$ and   $\varprojlimone \underline{\bm{1}}= \operatorname{coker}(\Delta_{\underline{\bm{1}}})=1 $. Although both systems have a trivial $\varprojlimone$, they are clearly not pro-isomorphic.
    
 In contrast, we have the following.
\begin{theorem}\label{thm:trivial_wp}
    Let $\underline{G} = \{G_n, p^m_n\}$ be an inverse system of groups. If $\wp(\underline{G})$ is trivial, then $\underline{G}$ is pro-isomorphic to the trivial inverse system $\underline{\bm{1}}$.
\end{theorem}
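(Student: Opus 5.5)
We first reduce pro-triviality to an elementwise condition: $\underline{G}$ is pro-isomorphic to $\underline{\bm{1}}$ if and only if for every $n$ there is $m\ge n$ with $p^m_n$ trivial. The sufficiency direction is the one we need. Given this condition, we choose inductively $i_0=0$ and $i_{k+1}>i_k$ with $p^{i_{k+1}}_{i_k}$ trivial, and set $j_k=k$ on the trivial side. All cross maps $\lambda_k\colon G_{i_{k+1}}\to 1$ and $\mu_k\colon 1\to G_{i_k}$ are trivial. The triangles $\mu_k\circ\lambda_k=p^{i_{k+1}}_{i_k}$ commute precisely because these bonding maps are trivial, and the triangles on the trivial side commute automatically. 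So the whole theorem reduces to the following claim: if $\wp(\underline{G})=1$, then for each $n$ some $p^m_n$ is the trivial homomorphism.

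We prove the claim by contraposition. Suppose there is an $n_0$ such that $p^m_{n_0}$ is nontrivial for every $m\ge n_0$. For each $m\ge n_0$ we pick $g_m\in G_m$ with $p^m_{n_0}(g_m)\neq 1$; for $m<n_0$ we set $g_m=1$. We then consider the diagonal sequence $\{g_{k(n)}\}$ with $k(n)=n$, which defines an element of $\wp(\underline{G})$, and aim to show it is not asymptotically equivalent to $\{1_n\}$.

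Suppose instead that it were equivalent. Then there would be $m(n)\to\infty$ with $m(n)\le n$ and $p^n_{m(n)}(g_n)=1$ for all large $n$. Since $m(n)\to\infty$, for large $n$ we have $m(n)\ge n_0$. Applying $p^{m(n)}_{n_0}$ then gives $p^n_{n_0}(g_n)=1$, contradicting the choice of $g_n$. Hence $\wp(\underline{G})\neq 1$.

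The only genuinely delicate point is the reduction step: one has to see that mere "eventual triviality" of the projections to each fixed level is the right witness, rather than trying to build the ladder from the abstract triviality of $\wp$. The key observation that makes this work is that equivalence to the trivial sequence forces triviality after projecting to any fixed level $n_0$, because $m(n)$ eventually exceeds $n_0$. Once that is in place, everything else is bookkeeping: choose the indices, choose one element per level via the nontriviality hypothesis, and check the triangles.
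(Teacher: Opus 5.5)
Your proof is correct and follows the paper's argument essentially step for step. The paper also first establishes pro-triviality by contradiction: it takes a diagonal sequence $\{x_m\}$ with $p^m_{n_0}(x_m)\neq 1$ and projects the equivalence to $\{1_m\}$ down to level $n_0$. It then builds the ladder with $j_n=n$, $i_0=0$, and trivial cross-homomorphisms. One small point in your reduction: passing from ``some $m\ge n$'' to a strictly larger index $i_{k+1}>i_k$ is immediate, because $p^{m'}_n=p^m_n\circ p^{m'}_m$ stays trivial for all $m'\ge m$.
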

\begin{proof}
    We proceed in two steps. First, we establish that $\underline{G}$ must be \emph{pro-trivial}, meaning that for every index $n$, there exists an integer $m > n$ such that the bonding map $p^m_n \colon G_m \to G_n$ is the trivial homomorphism (i.e., $p^m_n(x) = 1$ for all $x \in G_m$). 
    
    Suppose, for the sake of contradiction, that $\underline{G}$ is not pro-trivial. Then there exists some fixed index $n_0$ such that for all $m \ge n_0$, the bonding map $p^m_{n_0}$ is not trivial. Thus, for each $m \ge n_0$, we can choose an element $x_m \in G_m$ such that $p^m_{n_0}(x_m) \neq 1.$ For $m < n_0$, we define $x_m = 1$. This construction yields a diagonal sequence $\{x_m\}_{m \ge 0} \in \prod_{m=0}^\infty G_m$. Because we can view this as an asymptotic sequence with index function $k(m) = m$, it defines a class $[\{x_m\}] \in \wp(\underline{G})$. 
    
    By hypothesis, $\wp(\underline{G})$ is trivial, so this sequence must be asymptotically equivalent to the trivial sequence $\{1_m\}$. By definition of $\sim$, there exists a bounding sequence $c(m) \to \infty$ with $c(m) \le m$ such that $p^m_{c(m)}(x_m) = 1$ for all sufficiently large $m$. Because $c(m) \to \infty$, there exists some $M$ such that $c(m) \ge n_0$ for all $m \ge M$. For any such $m$, we have $n_0 \le c(m) \le m$. Projecting the relation down to the level $n_0$, we apply $p^{c(m)}_{n_0}$ to both sides:
    \[
    p^m_{n_0}(x_m) = p^{c(m)}_{n_0}\left(p^m_{c(m)}(x_m)\right) = p^{c(m)}_{n_0}(1) = 1.
    \]
    But this strictly contradicts our explicit choice of $x_m$, which guaranteed $p^m_{n_0}(x_m) \neq 1$. Therefore, the assumption fails, and $\underline{G}$ must be pro-trivial.

     For the second step, we use this pro-triviality to construct a pro-isomorphism between $\underline{G}$ and the trivial system $\underline{\bm{1}}$. Indeed, let $j_n=n$ for all $n \ge 0$. By pro-triviality, we can choose a strictly increasing sequence $0 = i_0 < i_1 < i_2 < i_3 < \cdots$ such that the bonding maps satisfy $p^{i_{n+1}}_{i_n}(x) = 1$ for all $x \in G_{i_{n+1}}$. Then, taking $\lambda_n \equiv 1$ and $\mu_n \equiv 1$ to be the constant trivial cross-homomorphisms, the required ladder diagram trivially commutes. Thus, $\underline{G} \cong \underline{\bm{1}}$.
\end{proof}
\begin{theorem}[Pro-isomorphism Invariance]\label{proinv}
    Let $\underline{G} = \{G_n, p^m_n\}$ and $\underline{H} = \{H_n, q^m_n\}$ be two inverse systems of groups. If $\underline{G}$ and $\underline{H}$ are pro-isomorphic, then $\wp(\underline{G}) \cong \wp(\underline{H})$.
\end{theorem}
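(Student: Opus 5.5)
The plan is to build explicit mutually inverse homomorphisms out of the ladder data, in two steps: first reduce to a cofinal subsystem, then use the cross-homomorphisms to pass between the two reindexed towers.

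\emph{Step 1 (cofinal subsequences).} Let $i_0<i_1<\cdots$ be strictly increasing, and write $\underline{G}'=\{G_{i_n}, p^{i_m}_{i_n}\}$ for the reindexed system. Define $\alpha\colon \wp(\underline{G}')\to\wp(\underline{G})$ by
\[
[\{g_{i_{k(n)}}\}]\longmapsto[\{g_{i_{k(n)}}\}],
\]
that is, we regard a sequence at levels $i_{k(n)}$ as a sequence in $\underline{G}$. This is legitimate because $i_{k(n)}\to\infty$.
\begin{itemize}[label={}]
\item \emph{Well-definedness.} A bounding sequence $m(n)$ for $\underline{G}'$ yields the bounding sequence $i_{m(n)}$ for $\underline{G}$.
\item \emph{Homomorphism.} This follows from the freedom in the choice of bounding sequence given in the product theorem.
\item \emph{Inverse.} Given $[\{g_{k(n)}\}]\in\wp(\underline{G})$, let $j(n)=\max\{j: i_j\le k(n)\}$ (set $j(n)=0$ if no such $j$ exists). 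Project $g_{k(n)}$ to level $i_{j(n)}$. Since $j(n)\to\infty$, this defines an element of $\wp(\underline{G}')$, and by definition of $\sim$ it is equivalent in $\wp(\underline{G})$ to the original sequence.
\item \emph{Compatibility.} The inverse is well defined: if two sequences are equivalent via $m(n)$, then their projections are equivalent via $\max\{j: i_j\le m(n)\}$, again using $m(n)\le\min$ of the levels.
\end{itemize}
Hence $\wp(\underline{G})\cong\wp(\underline{G}')$, and likewise $\wp(\underline{H})\cong\wp(\underline{H}')$ with $\underline{H}'=\{H_{j_n}\}$.

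\emph{Step 2 (the ladder).} Let $\lambda_n\colon G_{i_{n+1}}\to H_{j_n}$ and $\mu_n\colon H_{j_n}\to G_{i_n}$ be the cross-homomorphisms of the pro-isomorphism. Define
\[
\Lambda[\{g_{i_{k(n)}}\}]=[\{\lambda_{k(n)-1}(g_{i_{k(n)}})\}],\qquad
M[\{h_{j_{l(n)}}\}]=[\{\mu_{l(n)}(h_{j_{l(n)}})\}].
\]
For the finitely many $n$ with $k(n)=0$, first project to level $i_1$ or simply use $1$; this does not affect the class.
\begin{itemize}[label={}]
\item \emph{Naturality.} The ladder triangles give $\lambda_a\circ p^{i_{b+1}}_{i_{a+1}}=q^{j_b}_{j_a}\circ\lambda_b$. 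This follows by writing the tower map as a composite of successive triangles: $q^{j_{b}}_{j_{a}}\lambda_b=\lambda_a\mu_{a+1}\lambda_{a+1}\cdots\lambda_b=\lambda_a p^{i_{b+1}}_{i_{a+1}}$. The analogous identity holds for $\mu$.
\item \emph{Well-definedness and homomorphism.} Using naturality, an equivalence via $m(n)$ is carried to an equivalence via $m(n)-1$. The homomorphism property follows because the $\lambda$'s and $\mu$'s are homomorphisms compatible with projections.
\item \emph{Inverses.} We have $M\Lambda[\{g_{i_{k(n)}}\}]=[\{\mu_{k(n)-1}\lambda_{k(n)-1}(g_{i_{k(n)}})\}]=[\{p^{i_{k(n)}}_{i_{k(n)-1}}(g_{i_{k(n)}})\}]$. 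This is just a projection, so it equals the original class. Symmetrically, $\Lambda M=\mathrm{id}$ using $\lambda_n\mu_{n+1}=q^{j_{n+1}}_{j_n}$.
\end{itemize}
Composing with Step 1 gives $\wp(\underline{G})\cong\wp(\underline{G}')\cong\wp(\underline{H}')\cong\wp(\underline{H})$.

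The main obstacle is bookkeeping rather than ideas. The delicate points are the index shift by one in $\lambda$, together with checking that the naturality identities follow from the triangle relations alone. Note that there is no assumption that $\lambda$ commutes with bonding maps separately, only through the triangles. I would verify these identities first by induction on $b-a$.
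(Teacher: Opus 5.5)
Your proposal is correct and follows essentially the same route as the paper, which also proves the theorem by combining a subsequence-invariance lemma (Lemma~\ref{SI}, with the same projection to the largest $i_{l(n)}\le k(n)$) with a consecutive-ladder lemma (Lemma~\ref{AI}, with the same maps $[\{x_{k(n)}\}]\mapsto[\{\lambda_{k(n)-1}(x_{k(n)})\}]$ and $[\{z_{s(n)}\}]\mapsto[\{\mu_{s(n)}(z_{s(n)})\}]$, justified by the same intertwining identities obtained by chaining the triangles). The only blemish is cosmetic: for the finitely many $n$ with $k(n)=0$ (or with $k(n)<i_0$ in Step~1) you cannot project upward, so those terms should simply be replaced by $1$, which your alternative already allows.
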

\begin{proof}
    Follows from Lemma~\ref{SI} and Lemma~\ref{AI} below.
\end{proof}

\begin{remark} The converse of Theorem~\ref{proinv} is generally false. For instance, consider the constant inverse systems $\underline{G}$ and $\underline{H}$ defined by identity bonding maps:
    \begin{align*}
        \underline{G} &\coloneqq \mathbb{Z} \xleftarrow{\mathrm{id}} \mathbb{Z} \xleftarrow{\mathrm{id}} \mathbb{Z} \xleftarrow{\mathrm{id}} \cdots \\
        \underline{H} &\coloneqq (\mathbb{Z} \oplus \mathbb{Z}) \xleftarrow{\mathrm{id}} (\mathbb{Z} \oplus \mathbb{Z}) \xleftarrow{\mathrm{id}} (\mathbb{Z} \oplus \mathbb{Z}) \xleftarrow{\mathrm{id}} \cdots
    \end{align*}
    These systems are not pro-isomorphic. If they were, the definition of pro-isomorphism would require the existence of cross-homomorphisms $\mu_{n+1} \colon \mathbb{Z} \oplus \mathbb{Z} \to \mathbb{Z}$ and $\lambda_n \colon \mathbb{Z} \to \mathbb{Z} \oplus \mathbb{Z}$ such that the triangle commutes: $\lambda_n \circ \mu_{n+1} = q^{j_{n+1}}_{j_n} = \mathrm{id}_{\mathbb{Z} \oplus \mathbb{Z}}, $ which is impossible.

    By Theorem~\ref{constant}, we have $\wp(\underline{G}) \cong \left.\prod_{n=0}^\infty \mathbb{Z} \right/ \bigoplus_{n=0}^\infty \mathbb{Z}$ and $\wp(\underline{H}) \cong \left.\prod_{n=0}^\infty (\mathbb{Z} \oplus \mathbb{Z}) \right/ \bigoplus_{n=0}^\infty (\mathbb{Z} \oplus \mathbb{Z})$. Thus, their asymptotic groups are isomorphic. Indeed, the isomorphism from $\prod_{n=0}^\infty \mathbb{Z}$  to $\prod_{n=0}^\infty (\mathbb{Z} \oplus \mathbb{Z})$ given by $ (x_0, x_1, x_2, x_3, \dots) \longmapsto ((x_0, x_1), (x_2, x_3), \dots)$ induces an isomorphism from  $\left.\prod_{n=0}^\infty \mathbb{Z} \right/ \bigoplus_{n=0}^\infty \mathbb{Z}$ to $\left.\prod_{n=0}^\infty (\mathbb{Z} \oplus \mathbb{Z}) \right/ \bigoplus_{n=0}^\infty (\mathbb{Z} \oplus \mathbb{Z})$.
\end{remark}
However, unlike $\varprojlim$ and $\varprojlimone$, the functor $\wp$ perfectly detects pro-triviality.

\begin{corollary}\label{protrivdet}
    Let $\underline{G} = \{G_n, p^m_n\}$ be an inverse system of groups. Then $\wp(\underline{G})$ is trivial if and only if $\underline{G}$ is pro-isomorphic to the trivial inverse system $\underline{\bm{1}}$.
\end{corollary}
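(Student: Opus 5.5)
The corollary combines the two preceding results, so the proof splits into the two implications.

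For the forward direction, suppose $\wp(\underline{G})$ is trivial. Then Theorem~\ref{thm:trivial_wp} gives directly that $\underline{G}$ is pro-isomorphic to $\underline{\bm{1}}$.

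For the converse, suppose $\underline{G} \cong \underline{\bm{1}}$. By Theorem~\ref{proinv}, $\wp(\underline{G}) \cong \wp(\underline{\bm{1}})$, so it suffices to see that $\wp(\underline{\bm{1}})$ is trivial. Every sequence in $\mathscr{S}_{\underline{\bm{1}}}$ has all terms equal to the identity. Any two such sequences $\{1_{k(n)}\}$ and $\{1_{l(n)}\}$ are asymptotically equivalent via $m(n) = \min\{k(n), l(n)\}$, because both project to $1_{m(n)}$. Hence $\wp(\underline{\bm{1}})$ has exactly one element.

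The only step requiring care is the reliance on Theorem~\ref{proinv}, which rests on Lemmas~\ref{SI} and~\ref{AI}. If a more self-contained argument is preferred, one can avoid them in the pro-trivial case as follows. A pro-isomorphism with $\underline{\bm{1}}$ forces $p^{i_{n+1}}_{i_n} = \mu_n \circ \lambda_n$ to factor through the trivial group, so $p^{i_{n+1}}_{i_n}$ is trivial. Consequently $p^m_{n}$ is trivial whenever $m \ge i_{j+1}$ and $n \le i_j$.

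Now take any class $[\{g_{k(n)}\}]$ and define $m(n)$ to be the largest $i_j$ with $i_{j+1} \le k(n)$, or $0$ if there is none. Since $k(n) \to \infty$, also $m(n) \to \infty$, and $m(n) \le k(n)$. By the factorization above, $p^{k(n)}_{m(n)}(g_{k(n)}) = 1$ for all large $n$. This shows $\{g_{k(n)}\} \sim \{1_n\}$, so every class is the identity and $\wp(\underline{G})$ is trivial.
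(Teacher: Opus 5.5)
Your proof is correct and matches how the paper obtains the corollary: the forward direction is Theorem~\ref{thm:trivial_wp}, and the converse is Theorem~\ref{proinv} combined with the evident triviality of $\wp(\underline{\bm{1}})$. One small slip in your optional self-contained variant: equivalence with $\{1_n\}$ also requires $m(n)\le n$, which your $m(n)$ need not satisfy, so either compare with $\{1_{k(n)}\}$ instead (as the paper does in its analogous pro-triviality argument) or replace $m(n)$ by $\min\{m(n),n\}$.
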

To prove Theorem~\ref{proinv}, we first establish the following two lemmas.
\begin{lemma}[Subsequence Invariance]\label{SI}
    Let $\underline{G} = \{G_n, p^m_n\}$ be an inverse system of groups, and let $0 \le i_0 < i_1 < i_2 < \dots$ be a strictly increasing sequence of indices. Let $\underline{H} = \{H_n, q^m_n\}$ be the inverse system defined by the corresponding subsequence, where $H_n = G_{i_n}$ and $q^m_n = p^{i_m}_{i_n}$. Then $\wp(\underline{G}) \cong \wp(\underline{H}).$
\end{lemma}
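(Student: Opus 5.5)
We will write down an explicit homomorphism $\Theta\colon \wp(\underline{H}) \to \wp(\underline{G})$ and show directly that it is bijective. A class in $\wp(\underline{H})$ is represented by a sequence $\{h_{k(n)}\}$ with $h_{k(n)} \in H_{k(n)} = G_{i_{k(n)}}$ and $k(n) \to \infty$. Since $i$ is strictly increasing, $i_{k(n)} \ge k(n) \to \infty$. Hence the same sequence of elements, reindexed by $n \mapsto i_{k(n)}$, lies in $\mathscr{S}_{\underline{G}}$. We set
\[
\Theta\bigl([\{h_{k(n)}\}]\bigr) \coloneqq [\{g_{i_{k(n)}}\}], \qquad g_{i_{k(n)}} \coloneqq h_{k(n)}.
\]

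\textbf{Well-definedness and the homomorphism property.} Suppose $\{h_{k(n)}\} \sim \{h'_{l(n)}\}$ in $\underline{H}$, witnessed by $m(n) \to \infty$. Then $i_{m(n)} \to \infty$ and $i_{m(n)} \le \min\{i_{k(n)}, i_{l(n)}\}$. Because $q^{k(n)}_{m(n)} = p^{i_{k(n)}}_{i_{m(n)}}$, the sequence $i_{m(n)}$ witnesses the equivalence of the images in $\underline{G}$. The product in $\wp(\underline{H})$ may be computed with bounding sequence $\min\{k(n), l(n)\}$. Its image in $\underline{G}$ is the product computed with bounding sequence $i_{\min\{k,l\}} = \min\{i_k, i_l\}$. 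By the theorem above, which allows any admissible bounding sequence, $\Theta$ is therefore a homomorphism.

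\textbf{Injectivity.} If $\Theta([\{h_{k(n)}\}])$ is trivial, there is $c(n) \to \infty$ with $c(n) \le i_{k(n)}$ and $p^{i_{k(n)}}_{c(n)}(h_{k(n)}) = 1$ for large $n$. The difficulty is that $c(n)$ need not lie in the image of $i$. We round down: let $m(n)$ be the largest $j$ with $i_j \le c(n)$, and set $m(n) = 0$ if no such $j$ exists. Then $i_{m(n)} \le c(n)$, and $m(n) \to \infty$ because $c(n) \to \infty$. Also $m(n) \le k(n)$, since $i_{m(n)} \le i_{k(n)}$. Projecting further gives
\[
q^{k(n)}_{m(n)}(h_{k(n)}) = p^{c(n)}_{i_{m(n)}}\bigl(p^{i_{k(n)}}_{c(n)}(h_{k(n)})\bigr) = 1
\]
for large $n$, so the original class is trivial.

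\textbf{Surjectivity.} Given $[\{g_{k(n)}\}] \in \wp(\underline{G})$, define the same rounding-down index: $m(n)$ is the largest $j$ with $i_j \le k(n)$, or $m(n) = 0$ if there is none. For the finitely many $n$ with $k(n) < i_0$, the element is irrelevant, and we put the identity of $H_0$. Then $m(n) \to \infty$. Set $h_{m(n)} \coloneqq p^{k(n)}_{i_{m(n)}}(g_{k(n)}) \in H_{m(n)}$. The sequence $\Theta(\{h_{m(n)}\})$ is, by construction, the projection of $\{g_{k(n)}\}$ to levels $i_{m(n)} \to \infty$. A sequence is asymptotically equivalent to its own projection, witnessed by the lower index, so $\Theta$ hits $[\{g_{k(n)}\}]$.

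The only step needing care is this rounding down of an arbitrary divergent index sequence in $\underline{G}$ to the subsequence $\{i_j\}$, while preserving divergence and the inequality constraints. This step is used in both injectivity and surjectivity. Everything else is bookkeeping with $p^b_a \circ p^c_b = p^c_a$.
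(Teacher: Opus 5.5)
Your proof is correct and takes essentially the same route as the paper: your $\Theta$ is the paper's $\iota$, and your surjectivity argument is the paper's check that $\iota\circ\pi=\mathrm{id}$, where $\pi$ is defined by the same rounding-down $i_{m(n)}\le k(n)<i_{m(n)+1}$. The only difference is packaging: the paper constructs $\pi$ and verifies that it is well defined and that $\pi\circ\iota=\mathrm{id}$, whereas you prove injectivity of $\Theta$ directly, rounding down the witnessing index sequence just as the paper does when showing $\pi$ is well defined.
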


\begin{proof}
    We will construct canonical, mutually inverse homomorphisms $\iota \colon \wp(\underline{H}) \to \wp(\underline{G})$ and $\pi \colon \wp(\underline{G}) \to \wp(\underline{H})$.

    Let $[\{y_{l(n)}\}] \in \wp(\underline{H})$ be represented by an asymptotic sequence with $y_{l(n)} \in H_{l(n)}$ and $l(n) \to \infty$. Because $H_{l(n)} = G_{i_{l(n)}}$, we can view $\{y_{l(n)}\}$ as a sequence in $\underline{G}$ with the index sequence $k(n) \coloneqq i_{l(n)}$. Since $l(n) \to \infty$ and $\{i_n\}$ is strictly increasing, it follows that $k(n) \to \infty$. We define the inclusion map by taking the equivalence class of this sequence in the larger system:
    \[
    \iota([\{y_{l(n)}\}]) \coloneqq [\{\overline y_{k(n)}\}], \quad \text{where } k(n) = i_{l(n)} \text{ and } \overline y_{k(n)} = y_{l(n)}.
    \]
    This map is well-defined because any bounding sequence $m(n) \to \infty$ establishing an equivalence $\{y_{l(n)}\} \sim \{y'_{l'(n)}\}$ in $\underline{H}$ immediately yields a bounding sequence $i_{m(n)} \to \infty$ establishing $\{\overline y_{k(n)}\} \sim \{\overline y'_{k'(n)}\}$ in $\underline{G}$, where $k(n)=i_{l(n)}$ and $k'(n)=i_{l'(n)}$. It is straightforward to verify that $\iota$ preserves the group operation.

    Now, we construct the projection map $\pi$. Let $[\{x_{k(n)}\}] \in \wp(\underline{G})$ be represented by a sequence with $x_{k(n)} \in G_{k(n)}$ and $k(n) \to \infty$. We map this sequence onto the indices of $\underline{H}$. For all sufficiently large $n$, we can define $l(n)$ to be the unique non-negative integer such that $i_{l(n)} \le k(n) < i_{l(n)+1}$. Because $k(n) \to \infty$, the index $l(n)$ must also diverge to infinity. Let $z_{l(n)} \coloneqq p^{k(n)}_{i_{l(n)}}(x_{k(n)}) \in G_{i_{l(n)}} = H_{l(n)}$. We define the projection map by 
    \[
    \pi([\{x_{k(n)}\}]) \coloneqq [\{z_{l(n)}\}], \quad \text{where } i_{l(n)} \le k(n) < i_{l(n)+1}.
    \]

    To verify that $\pi$ is well-defined, suppose $\{x_{k(n)}\} \sim \{x'_{k'(n)}\}$ in $\underline{G}$. There exists a bounding sequence $b(n) \to \infty$ such that $p^{k(n)}_{b(n)}(x_{k(n)}) = p^{k'(n)}_{b(n)}(x'_{k'(n)})$ for all sufficiently large $n$. We choose an index sequence $r(n) \to \infty$ such that $i_{r(n)} \le \min\{b(n), i_{l(n)}, i_{l'(n)}\}$. Projecting the equivalence down to the level $i_{r(n)}$ yields $p^{k(n)}_{i_{r(n)}}(x_{k(n)}) = p^{k'(n)}_{i_{r(n)}}(x'_{k'(n)})$. Because $i_{r(n)} \le i_{l(n)} \le k(n)$ and $i_{r(n)} \le i_{l'(n)} \le k'(n)$, we can factor these projections through the respective subsequence levels:\begin{multline*}
    q^{l(n)}_{r(n)}(z_{l(n)}) = p^{i_{l(n)}}_{i_{r(n)}}(z_{l(n)}) = p^{i_{l(n)}}_{i_{r(n)}}\left(p^{k(n)}_{i_{l(n)}}(x_{k(n)})\right) = p^{k(n)}_{i_{r(n)}}(x_{k(n)}) = p^{k'(n)}_{i_{r(n)}}(x'_{k'(n)}) \\[1ex]
    = p^{i_{l'(n)}}_{i_{r(n)}}\left(p^{k'(n)}_{i_{l'(n)}}(x'_{k'(n)})\right) = p^{i_{l'(n)}}_{i_{r(n)}}(z'_{l'(n)}) = q^{l'(n)}_{r(n)}(z'_{l'(n)}).
\end{multline*}Thus, $\{z_{l(n)}\} \sim \{z'_{l'(n)}\}$ in $\underline{H}$, proving that $\pi$ is well-defined. A similar argument confirms $\pi$ is a homomorphism.

    We now show that $\iota$ and $\pi$ are mutual inverses. First, consider the composition $\iota \circ \pi$ applied to a class $[\{x_{k(n)}\}] \in \wp(\underline{G})$. Applying $\pi$ yields the sequence $z_{l(n)} = p^{k(n)}_{i_{l(n)}}(x_{k(n)})$. Applying $\iota$ embeds this sequence back into $\underline{G}$ at the index $j(n) \coloneqq i_{l(n)}$. The resulting sequence is $\{\overline z_{j(n)}\}$, where $\overline z_{j(n)} = z_{l(n)}$. To show $\{\overline z_{j(n)}\} \sim \{x_{k(n)}\}$ in $\underline{G}$, we choose the common bounding sequence $m(n) \coloneqq i_{l(n)}$. Since $l(n) \to \infty$, $m(n) \to \infty$. Evaluating both terms down to level $m(n)$ yields:
    \[
    p^{j(n)}_{m(n)}(\overline z_{j(n)}) = p^{i_{l(n)}}_{i_{l(n)}}(z_{l(n)}) = z_{l(n)} = p^{k(n)}_{i_{l(n)}}(x_{k(n)}).
    \]
    Because the projections are identical, $\{\overline z_{j(n)}\} \sim \{x_{k(n)}\}$, meaning $\iota(\pi([\{x_{k(n)}\}])) = [\{x_{k(n)}\}]$.

    Second, consider $\pi \circ \iota$ applied to a class $[\{y_{l(n)}\}] \in \wp(\underline{H})$. Applying $\iota$ yields the sequence $\overline y_{k(n)} = y_{l(n)}$ at index $k(n) = i_{l(n)}$. Applying $\pi$ requires finding the unique integer $t(n)$ such that $i_{t(n)} \le k(n) < i_{t(n)+1}$. Since $k(n) = i_{l(n)}$, it is immediate that $t(n) = l(n)$. The projection map then yields the sequence:
    \[
    z_{l(n)} = p^{k(n)}_{i_{l(n)}}(\overline y_{k(n)}) = p^{i_{l(n)}}_{i_{l(n)}}(y_{l(n)}) = y_{l(n)}.
    \]
    Thus, $\pi(\iota([\{y_{l(n)}\}])) = [\{y_{l(n)}\}]$. Since $\iota$ and $\pi$ are well-defined, mutually inverse homomorphisms, we conclude that $\wp(\underline{G}) \cong \wp(\underline{H})$.
\end{proof}

\begin{lemma}\label{AI}
    Let $\underline{G} = \{G_n, p^m_n\}$ and $\underline{H} = \{H_n, q^m_n\}$ be two inverse systems of groups. Suppose we have cross-homomorphisms $\lambda_n \colon G_{n+1} \to H_{n}$ and $\mu_n \colon H_{n} \to G_{n}$, such that $\mu_n \circ \lambda_n = p^{n+1}_{n}$ and $\lambda_n \circ \mu_{n+1} = q^{n+1}_{n}$ for all integers $n \ge 0$.
    \[
    \begin{tikzcd}[column sep=1.5em]
        G_{0} & & G_{1} \arrow[ll, "p^{1}_{0}"'] \arrow[ld, "\lambda_0"'] & & G_{2} \arrow[ll, "p^{2}_{1}"'] \arrow[ld, "\lambda_1"'] & & G_{3} \arrow[ll, "p^{3}_{2}"'] \arrow[ld, "\lambda_2"'] & & \cdots \arrow[ll] \arrow[ld] & \\
                & H_{0} \arrow[lu, "\mu_0"] & & H_{1} \arrow[lu, "\mu_1"] \arrow[ll, "q^{1}_{0}"] & & H_{2} \arrow[lu, "\mu_2"] \arrow[ll, "q^{2}_{1}"] & & H_{3} \arrow[lu, "\mu_3"] \arrow[ll, "q^{3}_{2}"] & & \cdots \arrow[ll]
    \end{tikzcd}
    \]Then $\wp(\underline{G}) \cong \wp(\underline{H})$.
\end{lemma}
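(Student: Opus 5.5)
I will build explicit mutually inverse homomorphisms from the cross-maps, in the style of the proof of Lemma~\ref{SI}. Define $\Lambda\colon \wp(\underline{G}) \to \wp(\underline{H})$ by
\[
\Lambda([\{g_{k(n)}\}]) \coloneqq [\{\lambda_{k(n)-1}(g_{k(n)})\}],
\]
which lands in $H_{k(n)-1}$. For the finitely many $n$ with $k(n)=0$, I will replace the term by $1$, which is harmless since $k(n)-1\to\infty$. Define $M\colon \wp(\underline{H})\to\wp(\underline{G})$ by
\[
M([\{h_{l(n)}\}]) \coloneqq [\{\mu_{l(n)}(h_{l(n)})\}].
\]

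The first step is to record the compatibility identities
\[
q^{m}_{n}\circ\lambda_{m} = \lambda_{n}\circ p^{m+1}_{n+1}
\quad\text{and}\quad
p^{m}_{n}\circ \mu_m = \mu_n\circ q^{m}_{n}
\quad (m \ge n).
\]
These follow by induction from the triangle relations. For instance,
\[
q^{n+1}_{n}\lambda_{n+1} = \lambda_n\mu_{n+1}\lambda_{n+1} = \lambda_n p^{n+2}_{n+1},
\qquad
p^{n+1}_n\mu_{n+1} = \mu_n\lambda_n\mu_{n+1} = \mu_n q^{n+1}_n.
\]
Composing these one-step relations gives the general statements. With them, well-definedness is exactly the argument of Theorem~\ref{fun}. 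If $p^{k(n)}_{m(n)}(g_{k(n)}) = p^{l(n)}_{m(n)}(g'_{l(n)})$ eventually, apply $\lambda_{m(n)-1}$ and push through the identity to get agreement of the $\Lambda$-images at level $m(n)-1\to\infty$. The case of $M$ is the same, without the shift. Both maps are homomorphisms because the $\lambda_n,\mu_n$ are homomorphisms and the product in $\wp$ is computed after projecting to a common level, which commutes with these maps by the identities above.

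The second step is to check the two composites. For $M\circ\Lambda$, the class $[\{g_{k(n)}\}]$ goes to $[\{\mu_{k(n)-1}\lambda_{k(n)-1}(g_{k(n)})\}] = [\{p^{k(n)}_{k(n)-1}(g_{k(n)})\}]$. This equals $[\{g_{k(n)}\}]$, using the bounding sequence $m(n)=k(n)-1$. Symmetrically, for $\Lambda\circ M$, the class $[\{h_{l(n)}\}]$ goes to $[\{\lambda_{l(n)-1}\mu_{l(n)}(h_{l(n)})\}] = [\{q^{l(n)}_{l(n)-1}(h_{l(n)})\}]$, which equals $[\{h_{l(n)}\}]$ by the same reasoning. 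Hence $\Lambda$ is an isomorphism.

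I expect the main obstacle to be bookkeeping rather than conceptual: handling the index shift by one in $\lambda$, including the degenerate terms with $k(n)=0$, and verifying that the general compatibility identities really follow from the one-step triangles. Once the identities above are established, every remaining verification reduces to the pattern already used in Theorem~\ref{fun} and Lemma~\ref{SI}.
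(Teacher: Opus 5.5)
Your proposal is correct and follows essentially the same route as the paper. It uses the same forward map via $\lambda_{k(n)-1}$ after discarding the finitely many terms with $k(n)=0$, the same reverse map via $\mu_{s(n)}$, and the same two intertwining identities obtained by induction from the triangle relations. You also spell out the composite checks, which reduce to the one-step bonding maps $p^{k(n)}_{k(n)-1}$ and $q^{l(n)}_{l(n)-1}$; the paper leaves these as straightforward.
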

\begin{proof}
     By induction, for all integers $m \ge c \ge 0$, we have the intertwining identities:
    \begin{equation}
        \mu_c \circ q^m_c = p^m_c \circ \mu_m \quad \text{and} \quad \lambda_c \circ p^{m+1}_{c+1} = q^m_c \circ \lambda_m. \label{eq:intertwine}
    \end{equation}
     We now construct the forward homomorphism $\mathfrak{F} \colon \wp(\underline{G}) \to \wp(\underline{H})$. Let $[\{x_{k(n)}\}] \in \wp(\underline{G})$. Discarding finitely many terms if necessary, we may assume $k(n) \ge 1$ for all $n$. We define:
    \[
    \mathfrak{F}([\{x_{k(n)}\}]) \coloneqq [\{\lambda_{k(n)-1}(x_{k(n)})\}].
    \]Applying the second identity of \eqref{eq:intertwine}, $\mathfrak F$ can be shown to be a well-defined group homomorphism.
    
    Next, we construct the reverse homomorphism $\mathfrak{G} \colon \wp(\underline{H}) \to \wp(\underline{G})$. Let $[\{z_{s(n)}\}] \in \wp(\underline{H})$. We define:
    \[
    \mathfrak{G}([\{z_{s(n)}\}]) \coloneqq [\{\mu_{s(n)}(z_{s(n)})\}].
    \]Applying the first identity of \eqref{eq:intertwine}, $\mathfrak G$ can be shown to be a well-defined group homomorphism.

    Finally, it is straightforward to show that $\mathfrak{F}$ and $\mathfrak{G}$ are mutual inverses. Since $\mathfrak{F}$ and $\mathfrak{G}$ are mutually inverse homomorphisms, we conclude that $\wp(\underline{G}) \cong \wp(\underline{H})$.  
\end{proof}
\subsection{The Trivial-or-Uncountable Dichotomy for \texorpdfstring{$\wp(\underline G)$}{℘(G)}}\label{dichotomy}
In this section, we prove that the asymptotic limit of any non-pro-trivial system is uncountable. We utilize this result later, in Section~\ref{section:exotic}, to show that the proper fundamental group of an exotic, open, contractible $3$-manifold is uncountable---—a stark contrast to the classical fact that the ordinary fundamental group of a connected manifold is countable \cite[Theorem 7.21]{MR2766102}.

\begin{theorem}\label{uncountable}Let $\underline{G} = \{G_n, p^m_n\}$ be an inverse system of groups. Then the group $\wp(\underline{G})$ is either the trivial group or uncountably infinite.
\end{theorem}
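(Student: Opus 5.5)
Suppose $\wp(\underline{G})$ is nontrivial. By Corollary~\ref{protrivdet}, $\underline{G}$ is not pro-trivial, and the first step of the proof of Theorem~\ref{thm:trivial_wp} makes this concrete: there is a fixed index $n_0$ such that for every $m \ge n_0$ the bonding map $p^m_{n_0}$ is nontrivial. Choose $x_m \in G_m$ with $p^m_{n_0}(x_m) \neq 1$ for each $m \ge n_0$. The plan is to exhibit $2^{\aleph_0}$ pairwise distinct elements of $\wp(\underline{G})$ directly, by an element-wise diagonal construction. This sidesteps any Baire-category argument and needs no countability hypothesis on the $G_n$.

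For each subset $S \subseteq \mathbb{N}$, define a diagonal sequence $\xi^S = \{\xi^S_m\}_{m\ge 0}$ with index function $k(m)=m$ by
\[
\xi^S_m = \begin{cases} x_m & \text{if } m \ge n_0 \text{ and } m \in S,\\ 1 & \text{otherwise.}\end{cases}
\]
Each $\xi^S$ represents a class $[\xi^S]\in\wp(\underline{G})$.

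The key claim is that if $S \triangle T$ is infinite, then $[\xi^S] \neq [\xi^T]$. Suppose instead they were equal. Then there would be $c(m)\to\infty$ with $c(m)\le m$ and $p^m_{c(m)}(\xi^S_m) = p^m_{c(m)}(\xi^T_m)$ for all large $m$. Once $c(m)\ge n_0$, apply $p^{c(m)}_{n_0}$ to get $p^m_{n_0}(\xi^S_m) = p^m_{n_0}(\xi^T_m)$. Now take $m \in S\setminus T$ large enough (or symmetrically $m\in T\setminus S$). One side is $p^m_{n_0}(x_m)\ne 1$ and the other is $1$, a contradiction. This is exactly the argument in Theorem~\ref{thm:trivial_wp}, now applied to differences.

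It remains to choose a family of subsets with pairwise infinite symmetric differences and cardinality $\mathfrak{c}$. A standard choice: fix a bijection $\mathbb{N}\to 2^{<\omega}$ (finite binary strings), and for each infinite binary sequence $\sigma\in 2^{\omega}$ let $S_\sigma$ be the set of codes of the finite initial segments of $\sigma$. Two distinct branches share only finitely many initial segments, so $S_\sigma \cap S_\tau$ is finite while both sets are infinite. Hence $S_\sigma \triangle S_\tau$ is infinite. This gives $2^{\aleph_0}$ distinct elements, so $\wp(\underline{G})$ is uncountable.

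I expect the main obstacle to be the distinctness step. A naive attempt would compare $[\xi^S]$ and $[\xi^T]$ through the product $[\xi^S][\xi^T]^{-1}$, but in a non-abelian group this does not simply vanish termwise. The remedy is to avoid group operations altogether and compare the projections to level $n_0$ directly, as in the key claim above. The paper's remark that Baire category fails suggests their route goes through a subgroup, but the explicit almost-disjoint family should suffice here.
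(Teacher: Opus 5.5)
Your proof is correct, and it takes a genuinely different route from the paper's. The paper argues by contrapositive using topology. It gives $P=\prod_n G_n$ the product of discrete topologies, which is completely metrizable via the Baire metric, and uses the injection $P/\mathscr{Z}\hookrightarrow\wp(\underline G)$. It covers $\mathscr{Z}$ by the closed subgroups $C_{k,N}=\{x\in P : p^n_k(x_n)=1 \text{ for all } n\ge N\}$. If $P/\mathscr{Z}$ were countable, the Baire category theorem would force some $C_{k,N}$ to have nonempty interior, hence to contain a basic neighbourhood of the identity. That makes $p^n_k$ trivial for large $n$, and pro-triviality then gives $\wp(\underline G)=1$ via Theorem~\ref{proinv}.

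You instead fix the non-pro-triviality witness $n_0$ and build an explicit injection of $2^{\aleph_0}$ into $\wp(\underline G)$ from an almost disjoint family. You separate the classes by projecting to level $n_0$, the same trick as in the first step of Theorem~\ref{thm:trivial_wp}. What this buys:
\begin{itemize}
\item It is more elementary.
\item It gives the sharper bound $|\wp(\underline G)|\ge 2^{\aleph_0}$, and already $|P/\mathscr{Z}|\ge 2^{\aleph_0}$ since your $\xi^S$ are diagonal sequences. The Baire argument as written only yields ``not countable'', which without CH is weaker.
\item The separation step uses no topology and nothing about the groups beyond the fact that bonding maps preserve the identity.
\end{itemize}
The paper's route follows the Gray--Geoghegan Baire template for $\varprojlimone$ and fits that literature, but here it is heavier machinery for a weaker cardinality conclusion.

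Two minor remarks. First, the implication you need at the start is ``pro-trivial $\Rightarrow$ $\wp(\underline G)$ trivial''. That comes from the second step of Theorem~\ref{thm:trivial_wp} together with Theorem~\ref{proinv}, not from the first step, though the negated condition you write down is correct. Second, your worry about non-abelian groups is unfounded. Since $p^m_{n_0}$ is a homomorphism, the representative $\{\xi^S_m(\xi^T_m)^{-1}\}$ of $[\xi^S][\xi^T]^{-1}$ has terms $x_m^{\pm1}$ on $S\triangle T$, and each of these projects nontrivially to level $n_0$. Your direct comparison is equally valid, so this changes nothing.
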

Let $P = \prod_{n=0}^\infty G_n$ be the infinite direct product of the groups $G_n$. Define a subset $ \mathscr Z$ of $P$ as follows:
    \[
    \mathscr Z \coloneqq \left\{ \{x_n\}_{n \ge 0} \in P \;\middle|\; \exists\, m(n) \to \infty \text{ with } m(n) \le n \text{ s.t. } p^n_{m(n)}(x_n) = 1\,  \text{ for all sufficiently large } n \right\}.
    \] 
\begin{lemma}The set $ \mathscr Z$ is a normal subgroup of $P$, and the canonical map $\iota \colon P/ \mathscr Z \hookrightarrow \wp(\underline{G})$, induced by the natural inclusion of a diagonal sequence $\{x_n\} \in P$ into the space of all asymptotic sequences, is a well-defined injective group homomorphism.
\end{lemma}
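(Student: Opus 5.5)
The cleanest route is to realise $\mathscr Z$ as the kernel of a homomorphism. Define $\tilde\iota \colon P \to \wp(\underline{G})$ by sending $\{x_n\}_{n\ge 0}$ to the class $[\{x_{k(n)}\}]$ with diagonal index function $k(n)=n$. This is a legitimate asymptotic sequence, since $k(n)=n\to\infty$.

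First I will check that $\tilde\iota$ is a homomorphism. Take two diagonal sequences, both with index $n$. We may use the bounding sequence $m(n)=n=\min\{n,n\}$ in the product formula from Brown's theorem on the group structure of $\wp(\underline{G})$. This gives
\[
[\{x_n\}]\cdot[\{y_n\}]=[\{p^n_n(x_n)\,p^n_n(y_n)\}]=[\{x_ny_n\}],
\]
which is exactly $\tilde\iota$ applied to the coordinatewise product in $P$. This is the same computation as in the proof of Theorem~\ref{constant}.

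Next I will identify $\ker\tilde\iota$ with $\mathscr Z$. The identity of $\wp(\underline{G})$ is $[\{1_n\}]$. By definition, $\{x_n\}\sim\{1_n\}$ means there is a sequence $m(n)\to\infty$ with $m(n)\le\min\{n,n\}=n$ such that $p^n_{m(n)}(x_n)=p^n_{m(n)}(1_n)=1$ for all sufficiently large $n$. That is word for word the defining condition of $\mathscr Z$. So $\mathscr Z=\ker\tilde\iota$, and it is therefore a normal subgroup of $P$. The first isomorphism theorem then produces an injective homomorphism $\iota\colon P/\mathscr Z\hookrightarrow\wp(\underline{G})$ with $\iota(\{x_n\}\mathscr Z)=[\{x_n\}]$. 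This is the canonical map in the statement, and its well-definedness comes for free.

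There is no real obstacle here. The only point needing care is matching the quantifiers: the bounding sequence in $\mathscr Z$ must satisfy $m(n)\le n$, and this is exactly the constraint $m(n)\le\min\{k(n),l(n)\}$ when both index functions are the identity. One could also verify directly that $\mathscr Z$ is closed under products, inverses and conjugation by using $\min$ of bounding sequences, but the kernel argument makes that unnecessary. Note that $\iota$ need not be surjective, since a general class has an arbitrary index function $k(n)$; the lemma claims only injectivity, and surjectivity is not required.
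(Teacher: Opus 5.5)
Your proposal is correct and follows the same route as the paper: view each element of $P$ as a diagonal asymptotic sequence, use the bounding sequence $m(n)=n$ to see that this gives a homomorphism $P\to\wp(\underline{G})$, identify its kernel with $\mathscr Z$ directly from the definition of $\sim$, and apply the First Isomorphism Theorem. You are also more explicit than the paper in noting that normality of $\mathscr Z$ follows from its being a kernel.
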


\begin{proof}
    Every element $\{x_n\}_{n \ge 0} \in P$ can be viewed as an asymptotic sequence where the index sequence is simply $k(n) = n$. Because $n \to \infty$ as $n \to \infty$, every sequence in $P$ defines a valid equivalence class in $\wp(\underline{G})$. Furthermore, multiplication in $P$ is defined coordinate-wise, which matches the definition of the group product in $\wp(\underline{G})$ when taking the bounding index $m(n) = n$. Thus, the natural projection map $\pi \colon P \to \wp(\underline{G})$ defined by $\pi(\{x_n\}) = [\{x_n\}]$ is a valid group homomorphism.

    To prove that the induced map $\iota$ is injective, we characterize the kernel of $\pi$. Suppose a diagonal sequence $\{x_n\} \in P$ maps to the identity class in $\wp(\underline{G})$. The identity class is represented by the constant sequence of identities $\{1_n\}$, where $1_n \in G_n$. By the definition of the asymptotic equivalence relation $\sim$, we have $\{x_n\} \sim \{1_n\}$ if and only if there exists a common bounding sequence $m(n) \to \infty$ with $m(n) \le n$ such that for all sufficiently large $n$,
    \[
    p^n_{m(n)}(x_n) = p^n_{m(n)}(1_n) = 1.
    \]
    This condition is exactly the defining property of the subgroup $ \mathscr Z$. Therefore, an element $\{x_n\}$ is in the kernel of $\pi$ if and only if $\{x_n\} \in \mathscr Z$, establishing that $\ker(\pi) =  \mathscr Z$. 
    
    By the First Isomorphism Theorem, the induced map $\iota \colon P/N \to \wp(\underline{G})$ is injective.
\end{proof}

\begin{remark}
    The map $\iota$ (and consequently $\pi$) is generally not surjective. To see this explicitly, consider a tower where $G_n = \mathbb{Z}$ for all $n \ge 0$, and the bonding maps $p^m_n \colon G_m \to G_n$ are defined by multiplication by $2^{m-n}$. Let $[\{y_{k(n)}\}] \in \wp(\underline{G})$ be an equivalence class represented by a sequence given by $y_{k(n)} = 1 \in G_{\lfloor n/2 \rfloor}$, where $k(n) = \lfloor n/2 \rfloor$. 
    
    If $\pi$ were surjective, there would exist a diagonal sequence $\{x_n\}_{n \ge 0} \in P$ (meaning $x_n \in G_n$) equivalent to $\{y_{k(n)}\}$. By definition, there must exist an index sequence $m(n) \to \infty$ with $m(n) \le \lfloor n/2 \rfloor$ where their projections eventually coincide: $p^n_{m(n)}(x_n) = p^{\lfloor n/2 \rfloor}_{m(n)}(1).$ Applying the definition of the bonding maps yields $2^{n - m(n)} x_n = 2^{\lfloor n/2 \rfloor - m(n)}.$ Dividing out the common factor of $2^{-m(n)}$ gives $2^{n - \lfloor n/2 \rfloor} x_n = 1.$
    Since $n - \lfloor n/2 \rfloor \ge 1$ for all $n \ge 2$, this requires solving equations like $2x_2 = 1$, $2x_3 = 1$, $4x_4 = 1$, etc., within the integers $\mathbb{Z}$. Because there are no such integer solutions, the element $[\{y_{k(n)}\}]$ cannot be lifted up to $P$. Therefore, $\pi$ is not surjective.
\end{remark}

\begin{proposition} If $P/ \mathscr Z$ is trivial, then $\wp(\underline{G})$ is also trivial.
\end{proposition}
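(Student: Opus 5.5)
The plan is to show that triviality of $P/\mathscr Z$ forces $\underline{G}$ to be pro-trivial, and then appeal to the earlier results: by Theorem~\ref{proinv} a pro-trivial system has $\wp(\underline{G})\cong\wp(\underline{\bm{1}})$, and the latter is trivial. Indeed every sequence in $\mathscr S_{\underline{\bm 1}}$ is the constant trivial sequence, so $\wp(\underline{\bm{1}})=1$. The second step of the proof of Theorem~\ref{thm:trivial_wp} already shows that pro-triviality yields a pro-isomorphism with $\underline{\bm 1}$, so the whole problem reduces to establishing pro-triviality from the hypothesis $P=\mathscr Z$.

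For that reduction, repeat the first step of the proof of Theorem~\ref{thm:trivial_wp}; only diagonal sequences were used there, so it applies verbatim. Suppose $\underline G$ is not pro-trivial. Then there is an index $n_0$ such that $p^m_{n_0}$ is nontrivial for every $m\ge n_0$. For each such $m$ choose $x_m\in G_m$ with $p^m_{n_0}(x_m)\neq 1$, and set $x_m=1$ for $m<n_0$. The resulting $\{x_m\}$ lies in $P$.

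Since $P/\mathscr Z$ is trivial, $\{x_m\}\in\mathscr Z$. Hence there is $c(m)\to\infty$ with $c(m)\le m$ and $p^m_{c(m)}(x_m)=1$ for all large $m$. Take $m$ large enough that also $c(m)\ge n_0$. Projecting further by $p^{c(m)}_{n_0}$ gives $p^m_{n_0}(x_m)=1$, which contradicts the choice of $x_m$. So $\underline G$ is pro-trivial.

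Alternatively, one can avoid the pro-isomorphism machinery and argue directly from pro-triviality. Given any class $[\{g_{k(n)}\}]$, define $m(n)$ to be the largest $j\le k(n)$ with $p^{k(n)}_j$ trivial, or $0$ if there is none. By pro-triviality $m(n)\to\infty$: for each $N$, choose $M$ with $p^M_N$ trivial; then whenever $k(n)\ge M$ we get $m(n)\ge N$. Thus the sequence is equivalent to $\{1\}$ via the bounding sequence $m(n)$.

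The only delicate point is the observation that the contradiction argument in Theorem~\ref{thm:trivial_wp} uses nothing beyond diagonal sequences, i.e.\ membership in $\mathscr Z$. I expect no real obstacle beyond checking this carefully.
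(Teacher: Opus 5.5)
Your proposal is correct and follows essentially the paper's proof. The paper also derives pro-triviality from the same diagonal-sequence contradiction, then shows every class is trivial using a divergent bounding sequence $c(n)\le k(n)$ along which $p^{k(n)}_{c(n)}$ is trivial; this is your direct alternative. Your first route through Theorem~\ref{proinv} is exactly the step the paper itself uses at the end of its proof of Theorem~\ref{uncountable}.
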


\begin{proof}
    Suppose, for the sake of contradiction, that there exists some fixed index $m_0$ such that for all $n \ge m_0$, the bonding map $p^n_{m_0} \colon G_n \to G_{m_0}$ is not the trivial homomorphism. Therefore, for every $n \ge m_0$, we can choose an element $x_n \in G_n$ such that $p^n_{m_0}(x_n) \neq 1.$ For $n < m_0$, we simply set $x_n = 1$. This defines a diagonal sequence $\{x_n\}_{n \ge 0} \in P$. 

    By our hypothesis, $P = \mathscr Z$, so there exists a sequence $m(n) \to \infty$ with $m(n) \le n$ such that $p^n_{m(n)}(x_n) = 1$ for all sufficiently large $n$. Because $m(n) \to \infty$, there exists an $N$ such that for all $n \ge N$, we have $m(n) \ge m_0$. 
    
    For any $n \ge \max\{m_0, N\}$, we can project the relation $p^n_{m(n)}(x_n) = 1$ down to the level $m_0$:
    \[
    p^n_{m_0}(x_n) = p^{m(n)}_{m_0} \left( p^n_{m(n)}(x_n) \right) = p^{m(n)}_{m_0}(1) = 1.
    \]
    But this strictly contradicts our explicit choice of $x_n$, which was chosen precisely so that $p^n_{m_0}(x_n) \neq 1$. Thus, for every index $m$, there exists an integer $N(m) \ge m$ such that the map $p^{N(m)}_m \colon G_{N(m)} \to G_m$ is trivial. This means that for every index $m$, there exists an integer $N(m) \ge m$ such that for all $n \ge N(m)$, the map $p^n_m=p^{N(m)}_m\circ p^{n}_{N(m)}\colon G_n \to G_m$ is trivial.

    Now, let $[\{y_{k(n)}\}] \in \wp(\underline{G})$ be an arbitrary asymptotic class. We want to show that $\{y_{k(n)}\} \sim \{1_{k(n)}\}$. Without loss of generality, we may choose the sequence $N(m)$ to be strictly increasing with respect to $m$. We define a bounding index sequence $c(n)$ by:
    \[
    c(n) \coloneqq \max \{ m \in \mathbb{N} \mid N(m) \le k(n) \}.
    \]
    Because $k(n) \to \infty$ as $n \to \infty$, and $N(m)$ is strictly increasing, it immediately follows that $c(n) \to \infty$. Furthermore, since $N(m) \ge m$, we have $c(n) \le N(c(n)) \le k(n)$, making $c(n)$ a valid bounding sequence.

    Projecting $y_{k(n)}$ down to level $c(n)$ applies the map $p^{k(n)}_{c(n)}$. By our definition of $c(n)$, we have $k(n) \ge N(c(n))$. Because $k(n)$ exceeds the triviality threshold for level $c(n)$, the map $p^{k(n)}_{c(n)}$ is the trivial homomorphism. Therefore, $p^{k(n)}_{c(n)}(y_{k(n)}) = 1$ for all $n$. This precisely satisfies the definition of equivalence to the identity sequence. Hence, $[\{y_{k(n)}\}] = 1$, and $\wp(\underline{G})$ is trivial.
\end{proof}

We are now ready to prove Theorem~\ref{uncountable}.

\begin{proof}[Proof of Theorem~\ref{uncountable}]
    Let $P = \prod_{n=0}^\infty G_n$ be the infinite direct product. Endowing each discrete group $G_n$ with the discrete topology equips the product space $P$ with the standard product topology. Induced by the Baire metric, defined by $d(\{a_n\},\{b_n\}) = 2^{-\min\{n \mid a_n \neq b_n\}}$ if $\{a_n\}\neq \{b_n\}$ and $d(\{a_n\},\{b_n\}) = 0$ if $\{a_n\}= \{b_n\}$, the space $P$ is completely metrizable. 

    Recall the canonical injection $\iota \colon P/\mathscr{Z} \hookrightarrow \wp(\underline{G})$. If $\wp(\underline{G})$ is countable, then $P/\mathscr{Z}$ must also be countable. Consequently, $\mathscr{Z}$ has a countable index in $P$, allowing $P$ to be partitioned into a countable union of left cosets defined by a set of sequence representatives $\{y^{(i)}\}_{i=1}^\infty$:
\[
    P = \bigcup_{i=1}^\infty y^{(i)} \mathscr{Z}.
\]Define $C_{k,N} = \{ \{x_n\}\in P \mid p_k^n(x_n) = 1 \text{ for all } n \ge N \}$. We establish three properties of $C_{k,N}$.\medskip

 \emph{$C_{k,N}$ is a subgroup:} Let $x=\{x_n\}, y=\{y_n\} \in C_{k,N}$. For any $n \ge N$, since the bonding map $p_k^n$ is a homomorphism, it preserves group operations. Thus, $p_k^n(x_n y_n^{-1}) = p_k^n(x_n) (p_k^n(y_n))^{-1} = 1 \cdot 1^{-1} = 1$. The sequence $x y^{-1}$ satisfies the defining condition, proving $C_{k,N}$ is a subgroup.\medskip

 \emph{$C_{k,N}$ is a closed subset:} The projection map $\pi_n \colon P \to G_n$ is continuous by the definition of the product topology. Because $G_k$ is equipped with the discrete topology, the composite map $f_n(x) = p_k^n(\pi_n(x)) = p_k^n(x_n)$ is continuous. The subset can therefore be expressed as the infinite intersection $C_{k,N} = \bigcap_{n=N}^\infty f_n^{-1}(\{1\})$. Being an intersection of continuous preimages of closed singleton sets, $C_{k,N}$ is closed in $P$.\medskip

 \emph{$\mathscr{Z}$ is a subset of $\bigcup_{N=0}^\infty C_{k,N}$:} Let $x=\{x_n\} \in \mathscr{Z}$. By definition, there exists an index sequence $m(n) \to \infty$ and a threshold $N_0$ such that $p_{m(n)}^n(x_n) = 1$ for all $n \ge N_0$. For any arbitrary target level $k$, the divergence $m(n) \to \infty$ guarantees the existence of an index $N_1$ where $m(n) \ge k$ for all $n \ge N_1$. Let $N = \max(N_0, N_1)$. Thus, for all $n \ge N$, we have $p_k^n(x_n) = p_k^{m(n)}(p_{m(n)}^n(x_n)) = p_k^{m(n)}(1) = 1$.
Since $p_k^n(x_n) = 1$ for all $n \ge N$, the sequence $x\in C_{k,N}$. Therefore, $\mathscr{Z}$ is a subset of $\bigcup_{N=0}^\infty C_{k,N}$.\medskip

Applying this covering property specifically for $k=0$ yields $\mathscr{Z} \subseteq \bigcup_{N=0}^\infty C_{0,N}$. We can thus expand our partition of the complete metric space $P$ into a countable union of closed sets:
\[
    P = \bigcup_{i=1}^\infty y^{(i)} \mathscr{Z} = \bigcup_{i=1}^\infty y^{(i)} \left( \bigcup_{N=0}^\infty C_{0,N} \right) = \bigcup_{i=1}^\infty \bigcup_{N=0}^\infty y^{(i)} C_{0,N}
\]
By the Baire Category Theorem, a completely metrizable space cannot be a countable union of nowhere dense sets. Thus, at least one closed coset $y^{(i)} C_{0,N}$ must possess a non-empty interior. Because left-translation by the sequence $y^{(i)}$ is a homeomorphism of the topological group, the closed subgroup $C_{0,N}$ itself must have a non-empty interior.

A subgroup with a non-empty interior must contain a basic open neighborhood of the identity element. There must exist an integer $M$ such that $C_{0,N}$ contains the basic open set:
\[
    U = \prod_{n=0}^M \{1\} \times \prod_{n=M+1}^\infty G_n \subseteq C_{0,N}
\]
For this containment to hold, placing any arbitrary element $g \in G_n$ at an index $n > \max(M, N)$ must yield the identity under the projection $p_0^n$. This forces the bonding map $p_0^n \colon G_n \to G_0$ to be identically the trivial homomorphism.

Applying this exact logic to an arbitrary level $k$, there must exist a higher level $n > k$ such that the bonding map $p_k^n$ is trivial. This condition dictates that $\underline G$ is pro-isomorphic to the trivial system $\underline{\bm 1}$.  By Theorem~\ref{proinv}, $\wp(\underline{G}) \cong 1$.

Therefore, if $\wp(\underline G)$ is countable, then $\wp(\underline{G}) \cong 1$. Thus, if $\wp(\underline{G})$ is non-trivial, then $\wp(\underline{G})$ must be uncountably infinite.
\end{proof}

\section{Connection Between Asymptotic Limits and Derived Limits} \label{connection}

To further illustrate how the classical limit functors fall short of the asymptotic limit, we present a four-term exact sequence associated with every inverse system $\underline{G} = \{G_n, p^m_n\}$ of abelian groups.

\begin{theorem}\label{4ses}
If each $G_n$ is abelian, then there is an exact sequence of groups:
\[ 0 \longrightarrow \varprojlim \underline{G} \longrightarrow \wp(\underline{G}) \longrightarrow \wp(\underline{G}) \longrightarrow {\varprojlim}^1 \underline{G} \longrightarrow 0. \]
\end{theorem}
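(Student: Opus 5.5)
The plan is to define $\Phi^\infty$ explicitly on representatives, build the two outer maps by hand, and check exactness at each spot. I will work with the coordinate description $\varprojlim\underline G=\ker\Delta$ and $\varprojlimone\underline G=\operatorname{coker}\Delta$, where $\Delta\colon\prod_j G_j\to\prod_j G_j$ is the shift operator $\Delta(\{x_j\})=\{x_j-p^{j+1}_j(x_{j+1})\}$, as in Section~\ref{protriv}.

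\emph{Reduction and definition of $\Phi^\infty$.} First I reduce to representatives $\{g_{k(n)}\}$ whose index sequence $k(n)$ is non-decreasing. Replacing $k(n)$ by $\tilde k(n)=\inf_{N\ge n}k(N)$ and projecting $g_{k(n)}$ to level $\tilde k(n)$ gives an equivalent sequence; this is the direct-limit description later recorded as Theorem~\ref{colimrep}. For such a representative I set
\[
\Phi^\infty\bigl([\{g_{k(n)}\}]\bigr)\coloneqq\bigl[\{\,g_{k(n)}-p^{k(n+1)}_{k(n)}(g_{k(n+1)})\,\}_{n\ge 0}\bigr].
\]
This mirrors $\Delta$ along the index sequence. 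To see it is well defined, I compare two non-decreasing representatives that agree at levels $m(n)$. I may take $m(n)$ non-decreasing by the same infimum trick, so the differences agree at level $\min\{m(n),m(n+1)\}=m(n)$. Additivity is then immediate. The map $\varprojlim\underline G\to\wp(\underline G)$ is $\{x_j\}\mapsto[\{x_n\}]$, using the diagonal index $k(n)=n$. It is injective because $p^n_{m(n)}(x_n)=x_{m(n)}=0$ for large $n$ forces every $x_j=0$. Its composite with $\Phi^\infty$ is zero because threads satisfy $x_n=p^{n+1}_n(x_{n+1})$.

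\emph{Exactness at the first $\wp(\underline G)$.} Suppose $\Phi^\infty[\{g_{k(n)}\}]=0$. Then there is $m(n)\to\infty$, which I again make non-decreasing, with $p^{k(n)}_{m(n)}(g_{k(n)})=p^{k(n+1)}_{m(n)}(g_{k(n+1)})$ for $n\ge N_0$. Chaining these equalities shows $p^{k(N)}_{m(n)}(g_{k(N)})$ is independent of $N\ge n$. Hence for each fixed $j$ the values $p^{k(N)}_j(g_{k(N)})$ stabilize, and the stable values form a thread $x=\{x_j\}\in\varprojlim\underline G$. By construction $p^{k(n)}_{m(n)}(g_{k(n)})=x_{m(n)}$, so $[\{g_{k(n)}\}]$ is the image of $x$.

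\emph{The cokernel and the obstacle.} The step I expect to be hardest is constructing $\Psi\colon\wp(\underline G)\to\varprojlimone\underline G$ and proving that $\ker\Psi=\operatorname{im}\Phi^\infty$. My first attempt is a ``block-sum'' formula. For a non-decreasing representative, let $\Psi[\{g_{k(n)}\}]$ be the class modulo $\operatorname{im}\Delta$ of the sequence $y\in\prod_j G_j$ with
\[
y_j=\sum_{n\,:\,k(n)=j}g_{k(n)} .
\]
This is a finite sum because $k(n)\to\infty$. The check $\Psi\circ\Phi^\infty=0$ then comes from a telescoping identity: the image of $\Phi^\infty$ maps to an explicit $\Delta(z)$, where $z_j$ collects the projections of the $g_{k(n+1)}$ lying above level $j$.

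Next I must show $\Psi$ is well defined, i.e.\ that equivalent representatives differ by an element of $\operatorname{im}\Delta$. I would solve $\Delta(z)=w$ by backward recursion along the blocks between the levels $m(n)$ and $k(n)$, which is possible since $m(n)\to\infty$.

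Surjectivity is immediate: any $y\in\prod_j G_j$ is $\Psi$ of its diagonal class.

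The inclusion $\ker\Psi\subseteq\operatorname{im}\Phi^\infty$ is the delicate part. If $y=\Delta(z)$, I need to produce an asymptotic sequence $h$ with $\Phi^\infty(h)$ equal to the given class. I would try $h_{k(n)}=$ the partial tail sum of projections of the $z_j$ with $j\ge k(n)$. That sum is infinite, so the real work is arranging that only finitely many terms survive modulo the relations defining $\sim$, using the freedom to shrink $m(n)$. If the block-sum formula fails to be well defined, my fallback is to define $\Psi$ only on diagonal representatives. I would then show that $\wp(\underline G)=\operatorname{im}(\text{diagonal})+\operatorname{im}\Phi^\infty$, and verify exactness on that subgroup.
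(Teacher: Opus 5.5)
Your proposal has a genuine gap at exactly the step you flag: the inclusion $\ker\Psi\subseteq\operatorname{im}\Phi^\infty$. This step is the real content of identifying the cokernel, and it is the counterpart of the paper's Theorem~\ref{psi1sur}. Your candidate $h_{k(n)}$ is a tail sum of projections of the $z_j$ over all $j\ge k(n)$. That is an infinite sum in $G_{k(n)}$, so it is not an element of $\mathscr{S}_{\underline G}$ at all. Shrinking $m(n)$ cannot repair this. The relation $\sim$ only compares sequences of honest group elements after projecting them, and it gives no meaning to an infinite sum. Your fallback, $\wp(\underline G)=\operatorname{im}(\text{diagonal})+\operatorname{im}\Phi^\infty$, is stated without argument. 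It is exactly the paper's block-accumulation argument: write $x=x'\cdot\Delta_{\bm k}(y)$ with $y$ the within-block partial products, so $x'$ lives at the block ends $b_m$. Then use the Claim to slide each entry from coordinate $b_m$ to coordinate $m$ modulo $\operatorname{im}\Delta_{\bm l}$. As written, exactness at the second $\wp(\underline G)$ is unproved.

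The missing idea is that no tail sum is needed, because $z_{k(n)}$ already is the telescoped tail. Write $B(g)$ for your block-sum sequence. Take a non-decreasing representative $g=\{g_n\}$ with $g_n\in G_{k(n)}$ and $\Delta(z)=B(g)$, and set
\[
h_n\coloneqq z_{k(n)}-\sum_{n'<n,\ k(n')=k(n)} g_{n'}\in G_{k(n)}.
\]
Inside a block, $h_n-h_{n+1}=g_n$. At a block end $n$ with $k(n)=j<j'=k(n+1)$, the intermediate blocks are empty. Hence $z_j-p^{j'}_j(z_{j'})=\sum_{i=j}^{j'-1}p^i_j\bigl(B(g)_i\bigr)=\sum_{k(n')=j}g_{n'}$, which gives $h_n-p^{j'}_j(h_{n+1})=g_n$. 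So $\Delta_{\bm k}(h)=g$ exactly, and $\Phi^\infty[h]=[g]$.

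With this in place, the rest of your route goes through. The block-sum map is well defined: moving $g_n$ from level $k(n)$ down to level $m(n)$ costs $\Delta$ of the sequence that equals $p^{k(n)}_j(g_n)$ for $m(n)<j\le k(n)$ and is zero elsewhere. These sequences are locally finite because $m(n)\to\infty$, and finitely supported corrections lie in $\operatorname{im}\Delta$. Once completed, your route is a genuine alternative to the paper's. Your $\Psi$ induces a left inverse of the paper's $\Psi^1$ on $\operatorname{coker}\Phi^\infty$. So mapping out of $\wp(\underline G)$ makes the paper's injectivity argument, Lemmas~\ref{rearrangement} and~\ref{cofinallim1}, unnecessary.
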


Specifically, we show that $\varprojlim \underline{G}$ and $\varprojlimone \underline{G}$ are isomorphic to the kernel and cokernel, respectively, of a homomorphism $\Phi^\infty\colon \wp(\underline{G})\to \wp(\underline{G})$. This mirrors the classical setting, where $\varprojlim$ and $\varprojlimone$ are defined as the kernel and cokernel, respectively, of the classical shift operator $\Delta\colon \prod_{n=0}^\infty G_n\to \prod_{n=0}^\infty G_n$. The construction of $\Phi^\infty$ relies on a direct limit presentation of $\wp(\underline{G})$. More precisely, let $\mathscr{I}_{\operatorname{nd}}$ be the set of all non-decreasing divergent sequences of non-negative integers, directed under the reverse pointwise order. We will first show that $\wp(\underline{G})$ is isomorphic to the direct limit $\varinjlim_{\bm{k}\in \mathscr{I}_{\operatorname{nd}}}Q_{\bm{k}}$, where 
$Q_{\bm{k}} \coloneqq \left. \prod_{n=0}^\infty G_{k(n)} \right/ \bigoplus_{n=0}^\infty G_{k(n)}$.

For every $\bm{k} \in \mathscr{I}_{\operatorname{nd}}$, the product $\prod_{n=0}^\infty G_{k(n)}$ admits a natural shift operator $\Delta_{\bm{k}}$, defined coordinate-wise by
\[ (\Delta_{\bm{k}}(\{x_{k(n)}\}))_{k(n)} = x_{k(n)} \left( p_{k(n)}^{k(n+1)}(x_{k(n+1)}) \right)^{-1}. \]
This operator preserves the direct sum, thereby inducing a quotient group homomorphism $\Delta_{\bm{k}}^\infty\colon Q_{\bm{k}} \to Q_{\bm{k}}$. Passing to the direct limit over $\bm{k}$ yields the desired homomorphism $\Phi^\infty\colon \wp(\underline{G}) \to \wp(\underline{G})$. The remainder of this section is dedicated to making the above idea precise and showing that the kernel and cokernel of $\Phi^\infty$ are indeed isomorphic to $\varprojlim \underline{G}$ and $\varprojlimone \underline{G}$, respectively.

\subsection{Colimit Representation and Asymptotic Shift Operator}
First, we present $\wp(\underline{G})$ as a colimit. For this, we need the following directed set.

Let $\mathscr I_{\operatorname{div}}$ be the set of all index sequences $\bm{k} = \{k(n)\}_{n=0}^\infty$ such that $k(n) \to \infty$ as $n \to \infty$.  We equip the collection $\mathscr I_{\operatorname{div}}$ with the structure of a directed set. Define a preorder $\preceq$ on $\mathscr I_{\operatorname{div}}$ as follows: $\bm{k} \preceq \bm{l} \iff k(n) \ge l(n)$ for all  $n \ge 0.$ Notice that for any two elements $\bm{k}, \bm{l} \in \mathscr I_{\operatorname{div}}$, the sequence $\bm{m}$ defined by $m(n) \coloneqq \min\{k(n), l(n)\}$ is an element of $\mathscr{I}_{\operatorname{div}}$ satisfying $\bm{k} \preceq \bm{m}$ and $\bm{l} \preceq \bm{m}$. Therefore, $\mathscr I_{\operatorname{div}}$ is a directed set.

We now construct a direct system. For each $\bm{k} \in \mathscr I_{\operatorname{div}}$, our objects are the quotient groups $Q_{\bm{k}} \coloneqq \left. \prod_{n=0}^\infty G_{k(n)} \right/ \bigoplus_{n=0}^\infty G_{k(n)}$ (cf. Theorem~\ref{constant}). Whenever $\bm{k} \preceq \bm{l}$, we define the transition homomorphism $\varphi_{\bm{k},\bm{l}} \colon Q_{\bm{k}} \to Q_{\bm{l}}$ coordinate-wise via the inverse system's bonding maps: $$\varphi_{\bm{k},\bm{l}}\left([\{x_{k(n)}\}]\right) \coloneqq \left[\left\{p^{k(n)}_{l(n)}(x_{k(n)})\right\}\right]$$ 
This map is well-defined. Moreover, $\varphi_{\bm{k},\bm{k}} = \mathrm{id}_{Q_{\bm{k}}}$, and if $\bm{k} \preceq \bm{l} \preceq \bm{m}$, then $p^{l(n)}_{m(n)} \circ p^{k(n)}_{l(n)} = p^{k(n)}_{m(n)}$ ensures that $\varphi_{\bm{l},\bm{m}} \circ \varphi_{\bm{k},\bm{l}} = \varphi_{\bm{k},\bm{m}}$. Thus, $\{Q_{\bm{k}}, \varphi_{\bm{k},\bm{l}}\}$ forms a direct system.

\begin{theorem}[Colimit Representation]\label{colimrep}
    The asymptotic group $\wp(\underline G)$ is canonically isomorphic to the direct limit $\varinjlim_{\bm{k} \in \mathscr I_{\operatorname{div}}} Q_{\bm{k}}$.
\end{theorem}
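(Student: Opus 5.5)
We plan to construct compatible maps $\Theta_{\bm{k}} \colon Q_{\bm{k}} \to \wp(\underline{G})$ for each $\bm{k} \in \mathscr I_{\operatorname{div}}$, and then use the universal property of the direct limit. The induced map will be shown to be a bijective homomorphism. Set
\[
\Theta_{\bm{k}}\left([\{x_{k(n)}\}]\right) \coloneqq [\{x_{k(n)}\}] \in \wp(\underline G).
\]
This is well-defined: if two sequences in $\prod_n G_{k(n)}$ agree for all large $n$, they are asymptotically equivalent via the bounding sequence $m(n)=k(n)$. It is a homomorphism because the product in $\wp(\underline G)$ of two sequences with the same index sequence $\bm{k}$ can be computed with $m(n)=k(n)$, which is coordinatewise multiplication.

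\textbf{Compatibility.} For $\bm{k}\preceq\bm{l}$ we need $\Theta_{\bm{l}}\circ\varphi_{\bm{k},\bm{l}}=\Theta_{\bm{k}}$. This amounts to $\{p^{k(n)}_{l(n)}(x_{k(n)})\}\sim\{x_{k(n)}\}$, which holds with bounding sequence $m(n)=l(n)$. This is the same projection argument already used in the proof of Middle Exactness in Theorem~\ref{exactness}. The universal property then yields a homomorphism
\[
\Theta\colon \varinjlim_{\bm{k}} Q_{\bm{k}}\to\wp(\underline G),
\]
which is canonical since no choices were made.

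\textbf{Surjectivity.} This is immediate: every class $[\{g_{k(n)}\}]$ has $\bm{k}\in\mathscr I_{\operatorname{div}}$, so it equals $\Theta_{\bm{k}}$ of the corresponding element of $Q_{\bm{k}}$.

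\textbf{Injectivity.} This is the only step with content. Suppose $\xi\in Q_{\bm{k}}$ is represented by $\{x_{k(n)}\}$ and $\Theta_{\bm{k}}(\xi)=1$. Then there is $m(n)\to\infty$ with $m(n)\le k(n)$ and $p^{k(n)}_{m(n)}(x_{k(n)})=1$ for all large $n$. Let $\bm{m}=\{m(n)\}$. Then $\bm{m}\in\mathscr I_{\operatorname{div}}$ and $\bm{k}\preceq\bm{m}$. Moreover, $\varphi_{\bm{k},\bm{m}}(\xi)$ is represented by a sequence that is eventually trivial, so it is trivial in $Q_{\bm{m}}$. Hence $\xi$ maps to the identity in the direct limit.

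Since a general element of a directed colimit comes from some single $Q_{\bm{k}}$, this shows $\ker\Theta$ is trivial. In the nonabelian case we should note that the direct limit of groups over a directed set is the usual quotient of the disjoint union, where an element is trivial iff it becomes trivial at some later stage. So the kernel argument suffices.

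I expect no real obstacle. The only care needed is bookkeeping: ``sufficiently large $n$'' is exactly absorbed by quotienting by $\bigoplus$, and the bounding sequence witnessing equivalence itself serves as the later index in the directed set.
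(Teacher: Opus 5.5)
Your proposal is correct and follows essentially the same route as the paper, whose proof simply observes that equivalence in the colimit coincides with asymptotic equivalence: the bounding sequence that witnesses an asymptotic equivalence is itself the later index in $\mathscr I_{\operatorname{div}}$. Your version spells out the steps the paper leaves implicit (well-definedness and compatibility of the $\Theta_{\bm{k}}$, the homomorphism property, and the kernel check via the directed colimit), and each of those steps is sound.
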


Recall that as a set, $\varinjlim_{\bm{k} \in \mathscr{I}_{\operatorname{div}}} Q_{\bm{k}}$ is the disjoint union of the quotient spaces modulo an equivalence relation:
    \[ \varinjlim_{\bm{k} \in \mathscr{I}_{\operatorname{div}}} Q_{\bm{k}} = \bigsqcup_{\bm{k} \in \mathscr{I}} Q_{\bm{k}} \bigg/ \sim \]
    where for $[x] \in Q_{\bm{k}}$ and $[y] \in Q_{\bm{l}}$, we define $[x] \sim [y]$ if and only if there exists some index sequence $\bm{m} \in \mathscr{I}_{\operatorname{div}}$ with $\bm{k} \preceq \bm{m}$ and $\bm{l} \preceq \bm{m}$ such that $\varphi_{\bm{k},\bm{m}}([x]) = \varphi_{\bm{l},\bm{m}}([y])$ in $Q_{\bm{m}}$. The canonical homomorphisms $i_{\bm{k}} \colon Q_{\bm{k}} \to \varinjlim_{\bm{k} \in \mathscr{I}_{\operatorname{div}}} Q_{\bm{k}}$ map an element $[x]$ to its equivalence class $i_{\bm{k}}([x])$. These satisfy $i_{\bm{k}} = i_{\bm{l}} \circ \varphi_{\bm{k},\bm{l}}$ for all $\bm{k} \preceq \bm{l}$. The set forms a group under the operation:
    \[ i_{\bm{k}}([x]) \cdot i_{\bm{l}}([y]) = i_{\bm{m}}\left(\varphi_{\bm{k},\bm{m}}([x]) \cdot \varphi_{\bm{l},\bm{m}}([y])\right) \]
    where $\bm{m} \succeq \bm{k}, \bm{l}$. The identity element is $1=i_{\bm{k}}([1])$ for any $\bm{k} \in \mathscr{I}_{\operatorname{div}}$ (where $[1]$ is the identity of $Q_{\bm{k}}$). Thus, if $[x] \in Q_{\bm{k}}$, then $i_{\bm{k}}([x]) = 1$ if and only if there exists some sequence $\bm{l} \in \mathscr{I}_{\operatorname{div}}$ with $\bm{k} \preceq \bm{l}$ such that $\varphi_{\bm{k},\bm{l}}([x]) = [1]$ in $Q_{\bm{l}}$.

\begin{proof}[Proof of Theorem~\ref{colimrep}]
    Two classes $[\{x_{k(n)}\}] \in Q_{\bm{k}}$ and $[\{y_{l(n)}\}] \in Q_{\bm{l}}$ are equivalent in the colimit if and only if there exists a common upper bound $\bm{m} \in \mathscr I_{\operatorname{div}}$ such that their images are strictly equal in $Q_{\bm{m}}$:
    \[
    \varphi_{\bm{k},\bm{m}}\left([\{x_{k(n)}\}]\right) = \varphi_{\bm{l},\bm{m}}\left([\{y_{l(n)}]\right).
    \]

    The relation is identically the definition of asymptotic equivalence in $\wp(\underline{G})$. In $\wp(\underline{G})$, two sequences $\{x_{k(n)}\}$ at index $\bm{k}$ and $\{y_{l(n)}\}$ at index $\bm{l}$ are equivalent if and only if there exists a common diverging bounding sequence $\bm{m}$ such that their projections agree for all sufficiently large $n$. Because the underlying sets and the bounding mechanisms are identical, and the equivalence relations strictly coincide ($\sim_{\mathrm{colim}} \iff \sim_\wp$), mapping a representative class in the direct limit to its corresponding asymptotic class in $\wp(\underline{G})$ yields a canonical, well-defined, bijective group homomorphism. Therefore, the asymptotic group is isomorphic to the colimit.
\end{proof}

Let $\mathscr{I}_{\operatorname{nd}} \subseteq \mathscr{I}_{\operatorname{div}}$ be the subset of sequences $\bm{m} = \{m(n)\}_{n=0}^\infty$ that are monotonically non-decreasing, meaning $m(n) \le m(n+1)$ for all $n \ge 0$. Then $\mathscr{I}_{\operatorname{nd}}$ is a \emph{cofinal subset} of $\mathscr{I}_{\operatorname{div}}$ (that is, for any $\bm{k} \in \mathscr{I}_{\operatorname{div}}$, there exists $\bm{m} \in \mathscr{I}_{\operatorname{nd}}$ such that $\bm{k} \preceq \bm{m}$). Indeed if $\bm{k}\in\mathscr{I}_{\operatorname{div}}$, then $\bm{m} \in \mathscr{I}_{\operatorname{nd}}$ defined by $m(n) \coloneqq \min_{i \ge n} k(i)$ satisfy $\bm{k} \preceq \bm{m}$.

\begin{theorem}[Another Colimit Representation]\label{colimrepnd}
    The asymptotic group $\wp(\underline G)$ is canonically isomorphic to the direct limit $\varinjlim_{\bm{k} \in \mathscr I_{\operatorname{nd}}} Q_{\bm{k}}$.
\end{theorem}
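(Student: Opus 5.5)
The plan is to deduce Theorem~\ref{colimrepnd} from Theorem~\ref{colimrep} by the standard fact that restricting a direct system to a cofinal subset of the index set does not change the direct limit. We have already observed that $\mathscr{I}_{\operatorname{nd}}$ is cofinal in $\mathscr{I}_{\operatorname{div}}$ via $m(n) \coloneqq \min_{i \ge n} k(i)$. The first thing to check is that $\mathscr{I}_{\operatorname{nd}}$ is itself directed under $\preceq$. Given $\bm{k},\bm{l}\in\mathscr{I}_{\operatorname{nd}}$, the pointwise minimum $\min\{k(n),l(n)\}$ is again non-decreasing and divergent, so it lies in $\mathscr{I}_{\operatorname{nd}}$. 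Hence $\{Q_{\bm{k}},\varphi_{\bm{k},\bm{l}}\}_{\bm{k},\bm{l}\in\mathscr{I}_{\operatorname{nd}}}$ is a genuine direct subsystem.

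Next I would write down the canonical comparison homomorphism
\[ \Theta\colon \varinjlim_{\bm{k}\in\mathscr{I}_{\operatorname{nd}}} Q_{\bm{k}} \longrightarrow \varinjlim_{\bm{k}\in\mathscr{I}_{\operatorname{div}}} Q_{\bm{k}}. \]
It is induced by the canonical maps $i_{\bm{k}}$ for $\bm{k}\in\mathscr{I}_{\operatorname{nd}}$, and these are compatible because $i_{\bm{k}}=i_{\bm{l}}\circ\varphi_{\bm{k},\bm{l}}$. Composing $\Theta$ with the isomorphism of Theorem~\ref{colimrep} gives the desired map to $\wp(\underline{G})$. Concretely, this map sends the class of $\{x_{k(n)}\}$ with $\bm{k}$ non-decreasing to $[\{x_{k(n)}\}]\in\wp(\underline{G})$, which makes it canonical.

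Surjectivity of $\Theta$ comes from cofinality. Take any $i_{\bm{k}}([x])$ with $\bm{k}\in\mathscr{I}_{\operatorname{div}}$ and choose $\bm{m}\in\mathscr{I}_{\operatorname{nd}}$ with $\bm{k}\preceq\bm{m}$. Then $i_{\bm{k}}([x])=i_{\bm{m}}(\varphi_{\bm{k},\bm{m}}([x]))$, which lies in the image of $\Theta$. In terms of $\wp$, this says every class is represented by its projection $\{p^{k(n)}_{m(n)}(x_{k(n)})\}$ onto a non-decreasing index sequence.

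The only step needing care is injectivity. Suppose $[x]\in Q_{\bm{k}}$ with $\bm{k}\in\mathscr{I}_{\operatorname{nd}}$ becomes trivial in the $\mathscr{I}_{\operatorname{div}}$-colimit. By the kernel description given after Theorem~\ref{colimrep}, there is some $\bm{l}\in\mathscr{I}_{\operatorname{div}}$ with $\bm{k}\preceq\bm{l}$ and $\varphi_{\bm{k},\bm{l}}([x])=[1]$, but $\bm{l}$ need not be monotone. I would replace $\bm{l}$ by a non-decreasing $\bm{m}\succeq\bm{l}$ using cofinality. Transitivity gives $\varphi_{\bm{k},\bm{m}}([x])=\varphi_{\bm{l},\bm{m}}(\varphi_{\bm{k},\bm{l}}([x]))=[1]$, so $[x]$ already dies in the $\mathscr{I}_{\operatorname{nd}}$-colimit. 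Since $\Theta$ is a homomorphism, trivial kernel gives injectivity. Hence $\Theta$ is an isomorphism, and composing with Theorem~\ref{colimrep} proves the statement.
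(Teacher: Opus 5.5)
Your proposal is correct and follows essentially the same route as the paper, which cites the standard fact that restricting a direct system to a cofinal subset of a directed set does not change the direct limit, and then applies Theorem~\ref{colimrep}. What you add is a self-contained proof of that cofinality fact. You check that $\mathscr{I}_{\operatorname{nd}}$ is itself directed because it is closed under pointwise minima, and you prove injectivity by pushing a non-monotone witness $\bm{l}$ up to a monotone $\bm{m}$, details the paper leaves to the citation.
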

\begin{proof}
    Since $\mathscr{I}_{\operatorname{nd}}$ is a cofinal subset of $\mathscr{I}_{\operatorname{div}}$, by \cite[Proposition 8.2]{MR1757274}, $\varinjlim_{\bm{k} \in \mathscr I_{\operatorname{nd}}} Q_{\bm{k}}$ is isomorphic to $\varinjlim_{\bm{k} \in \mathscr I_{\operatorname{div}}} Q_{\bm{k}}$. Now, the proof follows from Theorem~\ref{colimrep}. 
\end{proof}

\begin{definition}
For a fixed index sequence $\bm{k} \in \mathscr{I}_{\operatorname{nd}}$, the classical subsequence shift operator $\Delta_{\bm{k}} \colon \prod_{n=0}^\infty G_{k(n)} \to \prod_{n=0}^\infty G_{k(n)}$ is defined coordinate-wise for a sequence $x = \{x_{k(n)}\}$ by:
\[ (\Delta_{\bm{k}}(x))_{k(n)} = x_{k(n)} \left( p_{k(n)}^{k(n+1)}(x_{k(n+1)}) \right)^{-1}. \]
Because $\Delta_{\bm{k}}$ strictly maps sequences of finite support (i.e., elements of $\bigoplus_{n=0}^\infty G_{k(n)}$) to sequences of finite support, it restricts to an endomorphism on the direct sum. Consequently, it induces a well-defined reduced shift operator on the quotient space $Q_{\bm{k}}$:
\[ \Delta_{\bm{k}}^\infty \colon Q_{\bm{k}} \to Q_{\bm{k}}, \quad \Delta_{\bm{k}}^\infty([x]) = [\Delta_{\bm{k}}(x)]. \]
\end{definition}

\begin{theorem}
The global asymptotic shift operator 
\[ \Phi^\infty \coloneqq \varinjlim_{\bm{k}\in \mathscr{I}_{\operatorname{nd}}} \Delta_{\bm{k}}^\infty \colon \varinjlim_{\bm{k}\in \mathscr{I}_{\operatorname{nd}}} Q_{\bm{k}} \longrightarrow \varinjlim_{\bm{k}\in \mathscr{I}_{\operatorname{nd}}} Q_{\bm{k}} \]
is a well-defined endomorphism. Thus, $\Phi^\infty\circ i_{\bm{k}}=i_{\bm{k}}\circ \Delta^\infty_{\bm{k}}$ for all $\bm{k}\in \mathscr I_{\operatorname{nd}}$.
\end{theorem}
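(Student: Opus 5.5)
To show that $\Phi^\infty$ is well defined, I will check that the family $\{\Delta^\infty_{\bm{k}}\}_{\bm{k}\in\mathscr I_{\operatorname{nd}}}$ is a morphism of direct systems. Concretely, for $\bm{k}\preceq\bm{l}$ in $\mathscr I_{\operatorname{nd}}$ (so $k(n)\ge l(n)$ for all $n$) I need
\[
\varphi_{\bm{k},\bm{l}}\circ\Delta^\infty_{\bm{k}}=\Delta^\infty_{\bm{l}}\circ\varphi_{\bm{k},\bm{l}} .
\]
Once this holds, the universal property of the colimit gives a unique homomorphism $\Phi^\infty$ on $\varinjlim_{\mathscr I_{\operatorname{nd}}}Q_{\bm{k}}$ with $\Phi^\infty\circ i_{\bm{k}}=i_{\bm{k}}\circ\Delta^\infty_{\bm{k}}$. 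Explicitly, $\Phi^\infty(i_{\bm{k}}[x])=i_{\bm{k}}[\Delta_{\bm{k}}x]$. This is independent of the representative: two representatives become equal in some $Q_{\bm{m}}$, and the commuting square carries that equality through $\Delta^\infty_{\bm{m}}$. Compatibility with the product also follows from the square. Throughout, $G_n$ are abelian, as in Theorem~\ref{4ses}; otherwise $\Delta_{\bm{k}}$ need not be a homomorphism.

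\textbf{Step 1: $\Delta^\infty_{\bm{k}}$ is an endomorphism of $Q_{\bm{k}}$.} In the abelian case $\Delta_{\bm{k}}$ is a homomorphism of $\prod_n G_{k(n)}$, because each coordinate $x\mapsto x_{k(n)}-p^{k(n+1)}_{k(n)}(x_{k(n+1)})$ is a difference of homomorphisms. The bonding map makes sense since $k$ is non-decreasing, so $k(n+1)\ge k(n)$. If $x$ is supported in coordinates $<N$, then $\Delta_{\bm{k}}x$ is supported in coordinates $<N$. Hence $\Delta_{\bm{k}}$ preserves $\bigoplus_n G_{k(n)}$ and descends to $Q_{\bm{k}}$.

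\textbf{Step 2: the naturality square.} This is the only real computation. Take $x\in\prod G_{k(n)}$. The $n$-th coordinate of $\varphi_{\bm{k},\bm{l}}(\Delta_{\bm{k}}x)$ is
\[
p^{k(n)}_{l(n)}(x_{k(n)})-p^{k(n+1)}_{l(n)}(x_{k(n+1)}).
\]
The $n$-th coordinate of $\Delta_{\bm{l}}(\varphi_{\bm{k},\bm{l}}x)$ is
\[
p^{k(n)}_{l(n)}(x_{k(n)})-p^{l(n+1)}_{l(n)}\bigl(p^{k(n+1)}_{l(n+1)}(x_{k(n+1)})\bigr)=p^{k(n)}_{l(n)}(x_{k(n)})-p^{k(n+1)}_{l(n)}(x_{k(n+1)}),
\]
using $l(n)\le l(n+1)\le k(n+1)$ and the cocycle identity $p^b_a\circ p^c_b=p^c_a$. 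The two coordinates agree exactly, even before passing to the quotient.

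I expect the only delicate point to be index bookkeeping. The hypotheses $\bm{k},\bm{l}\in\mathscr I_{\operatorname{nd}}$ and $\bm{k}\preceq\bm{l}$ are needed precisely to make every bonding map in the computation defined. This is why $\mathscr I_{\operatorname{nd}}$ is used rather than $\mathscr I_{\operatorname{div}}$, and why one first replaces $\wp(\underline G)$ by $\varinjlim_{\mathscr I_{\operatorname{nd}}}Q_{\bm{k}}$ via Theorem~\ref{colimrepnd}.
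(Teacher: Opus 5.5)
Your proposal is correct and follows the paper's proof. Both arguments verify the coordinate-wise identity $\varphi_{\bm{k},\bm{l}}\circ\Delta_{\bm{k}}=\Delta_{\bm{l}}\circ\varphi_{\bm{k},\bm{l}}$ on the products, pass it to the quotients $Q_{\bm{k}}$, and obtain $\Phi^\infty$ from the colimit. You also note that the $G_n$ must be abelian for $\Delta_{\bm{k}}$, and hence $\Phi^\infty$, to be a homomorphism; the paper leaves this implicit at this point.
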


\begin{proof}
To show that $\Phi^\infty$ is well-defined on the direct limit, we must demonstrate that the local shift operators $\Delta_{\bm{k}}^\infty$ commute with the transition homomorphisms of the directed system $(Q_{\bm{k}}, \varphi_{\bm{k},\bm{l}})$. That is, for any $\bm{k} \le \bm{l}$ in $\mathscr{I}_{\operatorname{nd}}$, the following diagram must commute:
\[
\begin{tikzcd}
Q_{\bm{k}} \arrow[r, "\Delta_{\bm{k}}^\infty"] \arrow[d, "\varphi_{\bm{k},\bm{l}}"'] & Q_{\bm{k}} \arrow[d, "\varphi_{\bm{k},\bm{l}}"] \\
Q_{\bm{l}} \arrow[r, "\Delta_{\bm{l}}^\infty"'] & Q_{\bm{l}}
\end{tikzcd}
\]
It suffices to verify this commutativity coordinate-wise on the classical direct products before passing to the quotients. Let $x = \{x_{k(n)}\} \in \prod_{n=0}^\infty G_{k(n)}$. We compute the $n$-th coordinate for both paths of the diagram.

First, applying $\Delta_{\bm{k}}$ followed by the transition map $\varphi_{\bm{k},\bm{l}}$:
The shift operator gives $(\Delta_{\bm{k}}(x))_{k(n)} = x_{k(n)} \left( p_{k(n)}^{k(n+1)}(x_{k(n+1)}) \right)^{-1}$. 
Applying the transition map $p_{l(n)}^{k(n)}$ yields:
\begin{align*}
\left( \varphi_{\bm{k},\bm{l}}(\Delta_{\bm{k}}(x)) \right)_{l(n)} &= p_{l(n)}^{k(n)} \left( x_{k(n)} \left( p_{k(n)}^{k(n+1)}(x_{k(n+1)}) \right)^{-1} \right) \\
&= p_{l(n)}^{k(n)}(x_{k(n)}) \cdot \left( p_{l(n)}^{k(n)} \circ p_{k(n)}^{k(n+1)}(x_{k(n+1)}) \right)^{-1} \\
&= p_{l(n)}^{k(n)}(x_{k(n)}) \cdot \left( p_{l(n)}^{k(n+1)}(x_{k(n+1)}) \right)^{-1},
\end{align*}
where the last equality follows from the functoriality of the inverse system bonding maps.

Next, applying the transition map $\varphi_{\bm{k},\bm{l}}$ followed by $\Delta_{\bm{l}}$:
The transition map gives $(\varphi_{\bm{k},\bm{l}}(x))_n = p_{l(n)}^{k(n)}(x_{k(n)})$.
Applying the shift operator $\Delta_{\bm{l}}$ to this sequence yields:
\begin{align*}
\left( \Delta_{\bm{l}}(\varphi_{\bm{k},\bm{l}}(x)) \right)_{l(n)} &= (\varphi_{\bm{k},\bm{l}}(x))_{l(n)} \cdot \left( p_{l(n)}^{l(n+1)} \left( (\varphi_{\bm{k},\bm{l}}(x))_{n+1} \right) \right)^{-1} \\
&= p_{l(n)}^{k(n)}(x_{k(n)}) \cdot \left( p_{l(n)}^{l(n+1)} \left( p_{l(n+1)}^{k(n+1)}(x_{k(n+1)}) \right) \right)^{-1} \\
&= p_{l(n)}^{k(n)}(x_{k(n)}) \cdot \left( p_{l(n)}^{k(n+1)}(x_{k(n+1)}) \right)^{-1},
\end{align*}
again utilizing the condition $p_{l(n)}^{l(n+1)} \circ p_{l(n+1)}^{k(n+1)} = p_{l(n)}^{k(n+1)}$.

Since both paths produce the exact same sequence coordinate-wise, we have $\varphi_{\bm{k},\bm{l}} \circ \Delta_{\bm{k}} = \Delta_{\bm{l}} \circ \varphi_{\bm{k},\bm{l}}$. This descends to a commuting diagram on the quotient spaces $Q_{\bm{k}}$. Consequently, the local endomorphisms $\{\Delta_{\bm{k}}^\infty\}_{\bm{k} \in \mathscr{I}_{\operatorname{nd}}}$ induce a well-defined global endomorphism $\Phi^\infty$ on the colimit.
\end{proof}

\subsection{Inverse Limit as Kernel}
\begin{theorem}\label{computekernel}
The kernel of the global asymptotic shift operator $\Phi^\infty$ is canonically isomorphic to the inverse limit of the system, $\varprojlim \underline{G}$.
\end{theorem}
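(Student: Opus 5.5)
We define a map $\Theta\colon \varprojlim \underline{G} \to \wp(\underline{G})$ and show it is an injective homomorphism whose image is exactly $\ker \Phi^\infty$. Here the groups are abelian, as in Theorem~\ref{4ses}, and we write them additively.

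\textbf{Step 1: the map.} A thread $x=\{x_n\}\in\varprojlim\underline{G}$ satisfies $p^{m}_{n}(x_m)=x_n$. Send it to $\Theta(x)\coloneqq[\{x_n\}]=i_{\bm{\iota}}([x])$, where $\bm{\iota}(n)=n$. This is a homomorphism because the product in $\wp(\underline G)$ with $m(n)=n$ is coordinatewise.

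\textbf{Step 2: the image lies in the kernel.} For the thread $x$ we have $\Delta_{\bm\iota}(x)=0$ exactly. Hence $\Phi^\infty(\Theta(x))=i_{\bm\iota}(\Delta^\infty_{\bm\iota}[x])=0$.

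\textbf{Step 3: injectivity.} Suppose $[\{x_n\}]=0$ in $\wp(\underline G)$. Then there is $m(n)\to\infty$ with $m(n)\le n$ and $p^n_{m(n)}(x_n)=0$ for all large $n$. Since $x$ is a thread, $p^n_{m(n)}(x_n)=x_{m(n)}$, so $x_{m(n)}=0$ for all large $n$. Fix a level $j$ and choose $n$ large with $m(n)\ge j$. Then $x_j=p^{m(n)}_j(x_{m(n)})=0$. So $x=0$.

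\textbf{Step 4: surjectivity onto the kernel.} This is the main obstacle. Take $\xi\in\ker\Phi^\infty$ and represent it as $i_{\bm k}([y])$ with $\bm k\in\mathscr I_{\operatorname{nd}}$ and $y=\{y_{k(n)}\}$. Then $i_{\bm k}(\Delta^\infty_{\bm k}[y])=0$. So there is $\bm l\succeq\bm k$ in $\mathscr I_{\operatorname{nd}}$ such that $\varphi_{\bm k,\bm l}\Delta_{\bm k}(y)$ has finite support. By the commutation $\varphi_{\bm k,\bm l}\Delta_{\bm k}=\Delta_{\bm l}\varphi_{\bm k,\bm l}$, we may replace $\bm k$ by $\bm l$ and $y$ by $z\coloneqq\varphi_{\bm k,\bm l}(y)$. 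We then have $\Delta_{\bm l}(z)\in\bigoplus G_{l(n)}$.

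Discard finitely many initial terms (allowed, since we work modulo the direct sum and the index can be reset). Then $z_{l(n)}=p^{l(n+1)}_{l(n)}(z_{l(n+1)})$ for all $n\ge N$. So $\{z_{l(n)}\}_{n\ge N}$ is a compatible family along the non-decreasing divergent index sequence $l(n)$.

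Define a thread $x\in\varprojlim\underline G$ as follows. For each $j$, pick $n\ge N$ with $l(n)\ge j$ and set $x_j\coloneqq p^{l(n)}_j(z_{l(n)})$. Compatibility makes this independent of $n$. When $l(n)=l(n+1)$, the relation reads $z_{l(n)}=z_{l(n+1)}$, so repeated indices cause no conflict. This is precisely where monotonicity of $\bm l$ is used.

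Finally we show $\Theta(x)=\xi$. The class $\xi$ equals $[\{z_{l(n)}\}]$. Also $\{x_n\}\sim\{z_{l(n)}\}$ via the bounding sequence $m(n)=\min\{n,l(n)\}$: for $n\ge N$,
\[
p^{n}_{m(n)}(x_n)=x_{m(n)}=p^{l(n)}_{m(n)}(z_{l(n)}).
\]

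\textbf{Step 5: canonicity.} The construction is natural in $\underline G$, so the isomorphism $\varprojlim\underline G\cong\ker\Phi^\infty$ is canonical.
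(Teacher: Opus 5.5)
Your proof is correct and follows essentially the same route as the paper: you send a thread to its class in the colimit, observe that the shift annihilates it, and for the reverse inclusion push a representative forward to an index $\bm{l}$ where $\Delta_{\bm{l}}$ of it has finite support, then assemble a thread from the eventually coherent tail. The differences are cosmetic: you fix the identity index sequence and check injectivity directly via asymptotic equivalence rather than in the colimit, and your abelian assumption is harmless since none of your steps uses commutativity.
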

\begin{proof}
    Let $L = \varprojlim_{m} G_m$ denote the inverse limit, consisting of coherent threads $g = \{g_m\}_{m=0}^\infty$ satisfying $p_m^n(g_n) = g_m$ for all $n \ge m$. For any thread $g \in L$ and any index sequence $\bm{k} \in \mathscr{I}_{\operatorname{nd}}$, let $g_{\bm{k}} = \{g_{k(n)}\}_{n=0}^\infty$ be the projection of $g$ onto the indices of $\bm{k}$. This sequence defines an equivalence class $[g_{\bm{k}}] \in Q_{\bm{k}} = \left. \prod_{n=0}^\infty G_{k(n)} \right/ \bigoplus_{n=0}^\infty G_{k(n)}$. Notice that if $\bm{k} \preceq \bm{l}$ (meaning $k(n) \ge l(n)$), the transition map $\varphi_{\bm{k},\bm{l}}$ applies the inverse system projections coordinate-wise:
\[ p_{l(n)}^{k(n)}(g_{k(n)}) = g_{l(n)}. \]
Thus, $\varphi_{\bm{k},\bm{l}}([g_{\bm{k}}]) = [g_{\bm{l}}]$. Because of this strict compatibility with the directed system, the element $i_{\bm{k}}([g_{\bm{k}}])$ defines a unique element in $\varinjlim_{\bm{k} \in \mathscr{I}_{\operatorname{nd}}} Q_{\bm{k}}$, independent of the choice of $\bm{k}$. We define $\Psi \colon L \to \varinjlim_{\bm{k}\in \mathscr{I}_{\operatorname{nd}}} Q_{\bm{k}}$ by $$\Psi(g) = i_{\bm{k}}([g_{\bm{k}}]).$$ 

We claim that $\operatorname{im}(\Psi)=\ker \Phi^\infty$. First, observe that $\operatorname{im}(\Psi)\subseteq\ker \Phi^\infty$. Indeed if $g\in L$, then $$\Delta_{\bm{k}}(g_{\bm{k}}))_{k(n)} = g_{k(n)} \left( p_{k(n)}^{k(n+1)}(g_{k(n+1)}) \right)^{-1}=g_{k(n)}g_{k(n)}^{-1}=1\text{ for all }n.$$Thus, $\Delta_{\bm{k}}^\infty([g_{\bm{k}}]) = [e]$ in $Q_{\bm{k}}$, which implies $\Phi^\infty(\Psi(g)) = 0$. Thus, $\operatorname{im}(\Psi)\subseteq\ker \Phi^\infty$.

Next, we show that $\operatorname{im}(\Psi)\supseteq\ker \Phi^\infty$. Let $c \in \ker(\Phi^\infty)$. Then $c = i_{\bm{k}}([x])$ for some $\bm{k} \in \mathscr{I}_{\operatorname{nd}}$ and $[x] \in Q_{\bm{k}}$. The hypothesis $\Phi^\infty(c) = 1$ implies $i_{\bm{k}}(\Delta_{\bm{k}}^\infty([x])) = 1$. Thus, there exists an index sequence $\bm{l} \in \mathscr{I}$ with $\bm{k} \preceq \bm{l}$ such that $\varphi_{\bm{k},\bm{l}}(\Delta_{\bm{k}}^\infty([x]))=[1]$. Since the local shift operators $\Delta_{\bm{k}}^\infty$ commute with the transition homomorphisms of the directed system $(Q_{\bm{k}}, \varphi_{\bm{k},\bm{l}})$, we have  $\Delta^\infty_{\bm{l}}(\varphi_{\bm{k},\bm{l}}([x]))=[1]$. Define $y_{l(n)}= p_{l(n)}^{k(n)}(x_{k(n)})$, and let $x=\{x_{k(n)}\}$ and $y=\{y_{l(n)}\}$. Then, $\Delta_{\bm{l}}^\infty([y])=\Delta^\infty_{\bm{l}}(\varphi_{\bm{k},\bm{l}}([x]))=[1]$. So there exists an integer $N \ge 0$ such that for all $n \ge N$:
\[ y_{l(n)} \left( p_{l(n)}^{l(n+1)}(y_{l(n+1)}) \right)^{-1} = 1 \implies y_{l(n)} = p_{l(n)}^{l(n+1)}(y_{l(n+1)}). \]Because $l(n) \to \infty$, we can construct a thread $g \in L$. For any fixed index $r \ge 0$, choose $M \ge N$ such that $l(i) \ge r$ for all $i \ge M$. Consider the element $p_r^{l(i)}(y_{l(i)}) \in G_r$. By the local coherence of the tail of $y$, this element is constant for all $i \ge M$:
\[ p_r^{l(i)}(y_{l(i)}) = p_r^{l(i)} \left( p_{l(i)}^{l(i+1)}(y_{l(i+1)}) \right) = p_r^{l(i+1)}(y_{l(i+1)}). \] Define $g_r$ to be this constant value. By construction, $p_r^s(g_s) = g_r$ for all $s \ge r$, so the sequence $g = \{g_r\}_{r=0}^\infty$ is a coherent thread in $L$. Finally, projecting $g$ onto $\bm{l}$ for indices $n \ge N$ yields $g_{l(n)} = p_{l(n)}^{l(i)}(y_{l(i)}) = y_{l(n)}$ (by choosing  $i=n$).
Since $g_{\bm{l}}$ and $y$ agree entirely for $n \ge N$, they represent the same class in $Q_{\bm{l}}$, meaning $[g_{\bm{l}}] = [y]$. Therefore, in the direct limit:
\[ \Psi(g) = i_{\bm{l}}([g_{\bm{l}}]) = i_{\bm{l}}([y]) = i_{\bm{k}}([x]) = c.\] Therefore, $\operatorname{im}(\Psi)=\ker \Phi^\infty$.

To complete the proof, we show that that the homomorphism $\Psi \colon L \to \ker(\Phi^\infty)$ is injective. Assume $\Psi(g) = 1$ for some coherent threads $g=\{g_m\}^\infty_{m=0}$. Then $i_{\bm{k}}([g_{\bm{k}}])=1$ for every $\bm{k}\in \mathscr I_{\operatorname{nd}}$. Thus, there exists some sequence $\bm{l} \in \mathscr{I}_{\operatorname{nd}}$ with $\bm{k} \preceq \bm{l}$ such that $[g_{\bm{l}}]=\varphi_{\bm{k},\bm{l}}([g_{\bm{k}}]) = [1]$ in $Q_{\bm{l}}$. This implies the sequence $g_{\bm{l}} = \{g_{l(n)}\}$ has finite support. Therefore, there exists an integer $N \ge 0$ such that $g_{l(n)} = 1$ for all $n \ge N$.
Let $r \ge 0$ be an arbitrary index. Because $\bm{l} \in \mathscr{I}_{\operatorname{nd}}$, we have $l(n) \to \infty$. Choose $n \ge N$ large enough so that $l(n) \ge r$. By the global coherence of $g$:
\[ g_r = p_r^{l(n)}(g_{l(n)}) = p_r^{l(n)}(1) = 1. \]
Since $r$ was arbitrary, $g$ is the trivial identity thread. Hence, $\Psi$ is injective.
\end{proof}

\subsection{Derived Limit as Cokernel}
\begin{theorem}\label{cokerlim1}
If each $G_n$ is abelian, then the cokernel of the global asymptotic shift operator $\Phi^\infty$ is canonically isomorphic to the first derived limit of the system, $\varprojlimone \underline{G}$.
\end{theorem}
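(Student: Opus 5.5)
Write $\Delta\colon \prod_{n} G_n \to \prod_n G_n$ for the classical shift, $\Delta(\{x_n\})_n = x_n - p^{n+1}_n(x_{n+1})$ (additive notation, since everything is abelian), so that $\varprojlimone \underline{G} = \operatorname{coker}\Delta$. The plan is to build a surjection $\Theta\colon \varinjlim_{\bm{k}\in\mathscr I_{\operatorname{nd}}} Q_{\bm{k}} \to \operatorname{coker}\Delta$ and show its kernel is $\operatorname{im}\Phi^\infty$. By Theorem~\ref{colimrepnd} the colimit is $\wp(\underline G)$, so this gives the required isomorphism.

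\textbf{Construction of $\Theta$.} For $\bm{k}\in\mathscr I_{\operatorname{nd}}$ and $x=\{x_{k(n)}\}\in\prod_n G_{k(n)}$, the natural first guess is to sum over $n$ with $k(n)=j$ and place the result at level $j$. This is a finite sum, because a non-decreasing divergent $\bm{k}$ takes each value only finitely often. Elements of $\bigoplus_n G_{k(n)}$, however, go to finitely supported sequences, and these need not lie in $\operatorname{im}\Delta$. So the guess must be refined.

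The fix is to pass to the telescoped sum. Set
\[ \sigma_{\bm{k}}(x)_j \coloneqq \sum_{n\colon k(n)\le j < k(n+1)} p^{k(n)}_j\text{-free representative}, \]
or, more cleanly, first transport everything to the tail. Every finitely supported sequence is a $\Delta$-image: given $z$ supported below $N$, solve $x_n - p^{n+1}_n x_{n+1} = z_n$ backward, taking $x_n=0$ for $n\ge N$. Hence $\bigoplus_n G_n\subseteq \operatorname{im}\Delta$.

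So define $\Theta_{\bm{k}}([x])$ as the class in $\operatorname{coker}\Delta$ of the sequence $s$ with $s_j=\sum_{k(n)=j} x_{k(n)}$. This is well defined on $Q_{\bm{k}}$ because finite support is absorbed into $\operatorname{im}\Delta$.

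\textbf{Compatibility with transitions.} For $\bm{k}\preceq\bm{l}$ I would check $\Theta_{\bm{l}}\circ\varphi_{\bm{k},\bm{l}}=\Theta_{\bm{k}}$. The difference comes from moving each $x_{k(n)}$ from level $k(n)$ down to level $l(n)$. Moving an element $a$ from level $m$ to $m-1$ changes the sequence by $-\Delta(e_m a)$ up to sign, where $e_m a$ denotes $a$ placed at level $m$. So the total difference is $\Delta(y)$, where $y_j$ is the sum of the partially projected $x_{k(n)}$ over those $n$ with $l(n) < j\le k(n)$. Each such sum is finite since $l(n)\to\infty$, so $y$ is a genuine element of $\prod_n G_n$. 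This is the key bookkeeping step. It yields $\Theta$ on the colimit.

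\textbf{Surjectivity.} Take $\bm{k}=\mathrm{id}$. Then $\Theta_{\mathrm{id}}$ is just the quotient map $\prod_n G_n\to\operatorname{coker}\Delta$, which is onto.

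\textbf{$\operatorname{im}\Phi^\infty\subseteq\ker\Theta$.} We have $\Theta_{\bm{k}}(\Delta_{\bm{k}}x)=[\Delta(y)]$, where $y$ spreads each $x_{k(n+1)}$ through the levels strictly between $k(n)$ and $k(n+1)$. This is again the same telescoping identity.

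\textbf{$\ker\Theta\subseteq\operatorname{im}\Phi^\infty$.} I expect this to be the main obstacle. Suppose $\Theta_{\bm{k}}([x])=0$, so $s=\Delta(y)$ for some $y\in\prod_n G_n$. We must show $i_{\bm{k}}([x])\in\operatorname{im}\Phi^\infty$.

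First I would use the compatibility already established to replace $\bm{k}$ by the identity sequence. Concretely, compare $i_{\bm{k}}([x])$ with $i_{\mathrm{id}}([s])$ in the colimit via a common refinement $\bm{m}$ with $m(n)\le\min\{k(n),n\}$. The claim is that they differ by an element of $\operatorname{im}\Phi^\infty$; the correction is $\Delta_{\bm{m}}$ applied to a spread-out version of $x$. Given that claim, $i_{\mathrm{id}}([s]) = i_{\mathrm{id}}([\Delta y]) = \Phi^\infty(i_{\mathrm{id}}[y])$, which finishes the argument.

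The delicate point is that $\bm{k}$ may repeat values while the identity sequence does not, and $\bm{k}$ may skip levels. So the comparison must go through a sequence $\bm{m}$ that is simultaneously a coarsening of both. Verifying that the correction terms genuinely form an element of some $\prod_n G_{m(n)}$ is where I would spend the most care.
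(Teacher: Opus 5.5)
\textbf{Overall.} Your architecture is correct, and it differs from the paper in how one half is handled. The other half, $\ker\Theta\subseteq\operatorname{im}\Phi^\infty$, is only asserted.

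\textbf{The half that is done, by a different route.} The paper builds $\Psi^1\colon\varprojlimone\underline G\to\operatorname{coker}\Phi^\infty$, $[[h]]\mapsto\langle i_{\bm{id}}([h])\rangle$. It proves injectivity through Lemma~\ref{rearrangement} and Lemma~\ref{cofinallim1}, which rebuild a $\Delta_{\bm{id}}$-preimage of $h$ from eventually constant telescoped products. You instead push everything forward into $\operatorname{coker}\Delta$ by block sums, and your two checks are right. Write $e_a c$ for $c$ placed in coordinate $a$. Then $e_a c-e_b\,p^{a}_{b}c=\Delta\bigl(\sum_{j=b+1}^{a}e_j\,p^{a}_{j}c\bigr)$ for $b<a$, and summing these over $n$ is legitimate because the sums are locally finite.
\begin{itemize}
\item In the compatibility step, local finiteness holds because $l(n)\to\infty$.
\item In the step $\operatorname{im}\Phi^\infty\subseteq\ker\Theta$, it holds because each level $j>k(0)$ lies in exactly one range $k(n)<j\le k(n+1)$. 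This range is half-open, not ``strictly between''. The leftover term $e_{k(0)}x_0$ is finitely supported.
\end{itemize}
Since $\Theta_{\bm{id}}$ is the quotient map, the induced $\overline{\Theta}$ on $\operatorname{coker}\Phi^\infty$ satisfies $\overline{\Theta}\circ\Psi^1=\mathrm{id}$. This yields Theorem~\ref{psi1inj} without those two lemmas, which is a genuine simplification. The ``refinement'' detour and the garbled displayed $\sigma_{\bm k}$ should be deleted. The naive block sum is already well defined on $Q_{\bm k}$, precisely because, as you show, $\bigoplus_n G_n\subseteq\operatorname{im}\Delta$.

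\textbf{The gap.} The inclusion $\ker\Theta\subseteq\operatorname{im}\Phi^\infty$ is equivalent to surjectivity of $\Psi^1$, i.e.\ the whole content of Theorem~\ref{psi1sur}. Your proposal stops at the unproved claim $i_{\bm k}([x])-i_{\bm{id}}([s])\in\operatorname{im}\Phi^\infty$.

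\textbf{How to close it.} The delicate point is not that $\bm k$ repeats or skips values. It is that in $\prod_n G_{m(n)}$ you can only move an entry to an \emph{earlier} coordinate, by projecting, never to a later one.

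Take $m(n)=\min\{k(n),n\}$, so $\bm m\in\mathscr I_{\operatorname{nd}}$ refines both $\bm k$ and $\bm{id}$. The entry $x_n$ contributes $p^{k(n)}_{m(n)}x_n$ at coordinate $n$ of $\varphi_{\bm k,\bm m}(x)$. It contributes $p^{k(n)}_{m(k(n))}x_n$ at coordinate $k(n)$ of $\varphi_{\bm{id},\bm m}(s)$.

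At the larger of these two coordinates the level is exactly $k(n)$:
\begin{itemize}
\item if $n>k(n)$, then $m(n)=k(n)$;
\item if $k(n)>n$, then monotonicity gives $k(k(n))\ge k(n)$, so $m(k(n))=k(n)$.
\end{itemize}
So the entry there is $x_n$ itself, and for $b<a$
\[
e_a c-e_b\,p^{m(a)}_{m(b)}c=\Delta_{\bm m}\Bigl(\sum_{j=b+1}^{a}e_j\,p^{m(a)}_{m(j)}c\Bigr)
\]
carries it to the smaller coordinate with exactly the right projection.

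The correction for $x_n$ is supported in $(m(n),\max\{n,k(n)\}]$. These corrections are locally finite since $m(n)\to\infty$. Summing them gives $\varphi_{\bm k,\bm m}(x)-\varphi_{\bm{id},\bm m}(s)=\Delta_{\bm m}(u)$ on the nose, hence $i_{\bm k}([x])-i_{\bm{id}}([s])=\Phi^\infty(i_{\bm m}([u]))$.

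This is slightly more direct than the paper. The paper first uses $\Delta_{\bm k}$ and commutativity to concentrate each block at its last index, then transports block sums with powers of the shift $S$. With this lemma written out, your proof is complete.
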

From now on, assume that each \(G_n\) is abelian. Accordingly, we may occasionally use additive notation for the group operation and subtraction notation for inverses. 

Let $P = \prod_{m=0}^\infty G_m$, and let $\bm{id} = \{0, 1, 2, \dots\} \in \mathscr{I}_{\operatorname{nd}}$ be the identity sequence. Recall that the  classical shift operator  $\Delta_{\bm{id}} \colon P \to P$ is defined by \( (\Delta_{\bm{id}}(h))_m = h_m- p_m^{m+1}(h_{m+1}). \) The first derived limit is the cokernel of this map: $\varprojlimone \underline G \coloneqq P \big/ \operatorname{im}(\Delta_{\bm{id}}).$ We denote elements of $\varprojlimone \underline G$ as equivalence classes $[[h]]$, where $h \in P$. 

Elements of $\operatorname{coker}(\Phi^\infty)\coloneqq\left. \varinjlim_{\bm{k}\in \mathscr{I}_{\operatorname{nd}}} Q_{\bm{k}} \right/ \operatorname{im}(\Phi^\infty)$ are represented by $\langle i_{\bm{k}}([x]) \rangle$, where $x \in \prod G_{k(n)}$, $[x]$ is its class in $Q_{\bm{k}}$, $i_{\bm{k}}$ is the inclusion into the colimit, and $\langle \cdot \rangle$ denotes the coset modulo $\operatorname{im}(\Phi^\infty)$.

    Define $\Psi^1 \colon {\varprojlim}^1 \underline G \longrightarrow \operatorname{coker}(\Phi^\infty)$ by \[ \Psi^1([[h]]) = \langle i_{\bm{id}}([h]) \rangle. \]To check well-definedness, suppose $[[h]] = [[g]]$ in $\varprojlimone \underline G$. This means $h - g = \Delta_{\bm{id}}(y)$ for some $y \in \prod G_m$. Therefore, in the quotient $Q_{\bm{id}}$, we have $[h] - [g] = [\Delta_{\bm{id}}(y)] = \Delta_{\bm{id}}^\infty([y]).$ Pushing this to the direct limit yields $i_{\bm{id}}([h]) - i_{\bm{id}}([g]) = i_{\bm{id}}(\Delta_{\bm{id}}^\infty([y])) = \Phi^\infty(i_{\bm{id}}([y]))$. 
Since the difference lies exactly in $\operatorname{im}(\Phi^\infty)$, their classes in the cokernel are identical: $\langle i_{\bm{id}}([h]) \rangle = \langle i_{\bm{id}}([g]) \rangle$. Thus, $\Psi^1$ is well-defined.

\begin{theorem}\label{psi1inj}
    The homomorphism $\Psi^1 \colon {\varprojlim}^1 \underline G \longrightarrow \operatorname{coker}(\Phi^\infty)$ is injective.
\end{theorem}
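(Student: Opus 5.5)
To prove the statement we must show: if $\Psi^1([[h]])=0$, i.e. $i_{\bm{id}}([h])\in\operatorname{im}(\Phi^\infty)$, then $h\in\operatorname{im}(\Delta_{\bm{id}})$. The plan is to unwind the colimit equality into a coordinatewise relation on a tail, and then build an explicit preimage $z\in P$ by a telescoping sum.

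\emph{Step 1 (unwinding).} Write $i_{\bm{id}}([h])=\Phi^\infty(i_{\bm{k}}([x]))=i_{\bm{k}}(\Delta^\infty_{\bm{k}}([x]))$ for some $\bm{k}\in\mathscr{I}_{\operatorname{nd}}$ and $x\in\prod G_{k(n)}$. By the description of equality in the direct limit, together with cofinality of $\mathscr{I}_{\operatorname{nd}}$, there is $\bm{l}\in\mathscr{I}_{\operatorname{nd}}$ with $\bm{id}\preceq\bm{l}$ and $\bm{k}\preceq\bm{l}$. Thus $\bm{l}$ is non-decreasing, divergent, and $l(n)\le\min\{n,k(n)\}$, and we have $\varphi_{\bm{id},\bm{l}}([h])=\varphi_{\bm{k},\bm{l}}(\Delta^\infty_{\bm{k}}[x])$. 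By the commutation $\varphi_{\bm{k},\bm{l}}\circ\Delta_{\bm{k}}=\Delta_{\bm{l}}\circ\varphi_{\bm{k},\bm{l}}$ proved above, the right side equals $\Delta^\infty_{\bm{l}}([y])$ with $y_{l(n)}=p^{k(n)}_{l(n)}(x_{k(n)})$. Hence there is $N$ such that for all $n\ge N$,
\[ p^n_{l(n)}(h_n)=y_{l(n)}-p^{l(n+1)}_{l(n)}(y_{l(n+1)}). \]

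\emph{Step 2 (two reductions).} First, any finitely supported sequence lies in $\operatorname{im}(\Delta_{\bm{id}})$: solve $u_m=z_m-p^{m+1}_m(z_{m+1})$ backwards, starting from $z_m=0$ beyond the support. So it suffices to find $z\in P$ with $h_m=z_m-p^{m+1}_m(z_{m+1})$ for all large $m$. Second, define the index function $a(m):=\min\{n\ge m : l(n)\ge m\}$, which is finite because $l\to\infty$. It is non-decreasing in $m$, tends to $\infty$, and satisfies $l(j)\ge m$ for every $j\ge a(m)$, since $\bm{l}$ is non-decreasing.

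\emph{Step 3 (the construction).} Set
\[ z_m:=\sum_{j=m}^{a(m)-1}p^j_m(h_j)+p^{l(a(m))}_m\bigl(y_{l(a(m))}\bigr). \]
Computing $z_m-p^{m+1}_m(z_{m+1})$, the common $h$-terms cancel, leaving
\[ h_m-\sum_{j=a(m)}^{a(m+1)-1}p^j_m(h_j)+p^{l(a(m))}_m\bigl(y_{l(a(m))}\bigr)-p^{l(a(m+1))}_m\bigl(y_{l(a(m+1))}\bigr). \]
For $m$ large enough that $a(m)\ge N$, every $j$ in the sum satisfies $j\ge N$ and $l(j)\ge m$. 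So we may write $p^j_m(h_j)=p^{l(j)}_m\bigl(p^j_{l(j)}(h_j)\bigr)$ and substitute the relation from Step 1. The sum then telescopes to exactly $p^{l(a(m))}_m(y_{l(a(m))})-p^{l(a(m+1))}_m(y_{l(a(m+1))})$, so the expression reduces to $h_m$. Commutativity of the $G_n$ is used to reorder these sums. This gives $h-\Delta_{\bm{id}}(z)\in\bigoplus G_m\subseteq\operatorname{im}\Delta_{\bm{id}}$, hence $[[h]]=0$.

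The main obstacle is Step 3: choosing the right cutoff $a(m)$. It must guarantee simultaneously that every projection $p^{l(j)}_m$ is defined (so $l(j)\ge m$) and that all terms lie in the tail $j\ge N$ where the relation holds. Monotonicity of $\bm{l}$ is what makes the bookkeeping of this telescoping work.
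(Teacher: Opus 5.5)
Your proof is correct and follows essentially the same route as the paper. The unwinding to a common bound $\bm{l}$ with $\bm{id},\bm{k}\preceq\bm{l}$ is identical, and your $z_m$ is the analogue of the eventually constant value $H_m^k\,p_m^{l(k)}(v_{l(k)})$ from Lemma~\ref{cofinallim1}, taken at the explicit cutoff $k=a(m)$.

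The only difference is bookkeeping. The paper first removes the finitely supported defect at level $\bm{l}$ via Lemma~\ref{rearrangement}, so that the relation holds for every $n$. You instead work with the tail relation directly and absorb the resulting finitely supported error at level $\bm{id}$ at the end.
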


\begin{proof}
    Assume $\Psi^1([[h]]) = 1$. This means $i_{\bm{id}}([h]) \in \operatorname{im}(\Phi^\infty)$. 
Therefore, there exists some index sequence $\bm{k} \in \mathscr{I}_{\operatorname{nd}}$ and some sequence $x \in \prod G_{k(n)}$ such that 
\[ i_{\bm{id}}([h]) = \Phi^\infty(i_{\bm{k}}([x])) = i_{\bm{k}}(\Delta_{\bm{k}}^\infty([x])). \]By the definition of the direct limit, there exists an index sequence $\bm{l} \in \mathscr{I}_{\operatorname{nd}}$ that bounds both $\bm{id}$ and $\bm{k}$ from above (meaning $\bm{id} \preceq \bm{l}$ and $\bm{k} \preceq \bm{l}$, so $l(n) \le n$ and $l(n) \le k(n)$) where these elements become equal in $Q_{\bm{l}}$:
\[ \varphi_{\bm{id},\bm{l}}([h]) = \varphi_{\bm{k},\bm{l}}(\Delta_{\bm{k}}^\infty([x])). \]Let $y \in \prod G_{l(n)}$ be the representative sequence for $\varphi_{\bm{id},\bm{l}}([h])$, given by $y_{l(n)} = p_{l(n)}^n(h_n)$. 
Let $z \in \prod G_{l(n)}$ be the representative sequence for $\varphi_{\bm{k},\bm{l}}([x])$, given by $z_{l(n)} = p_{l(n)}^{k(n)}(x_{k(n)})$. 
By the functoriality of the shift operator, the transition maps commute with the shifts, so $\varphi_{\bm{k},\bm{l}}(\Delta_{\bm{k}}^\infty([x])) = \Delta_{\bm{l}}^\infty([z])$. Thus, in the quotient $Q_{\bm{l}}$, we have $[y] = \Delta_{\bm{l}}^\infty([z])$. 
This implies $y - \Delta_{\bm{l}}(z) \in \bigoplus_{n=0}^\infty G_{l(n)}$. By  Lemma~\ref{rearrangement}, there exists $v \in \bigoplus G_{l(n)}$ such that $\Delta_{\bm{l}}(v) = y - \Delta_{\bm{l}}(z)$. Rearranging gives $y = \Delta_{\bm{l}}(z + v)$.  This proves that $y$ is in the image of the local shift operator $\Delta_{\bm{l}}$. By Lemma~\ref{cofinallim1}, $[[h]] = 1$.
\end{proof}
\begin{lemma}[Bijectivity on Finite Support Sequences]\label{rearrangement}For any fixed index sequence $\bm{k} \in \mathscr{I}_{\operatorname{nd}}$, let $S_{\bm{k}} = \bigoplus_{n=0}^\infty G_{k(n)}$ be the group of sequences with finite support. The restriction of the classical subsequence shift operator to this subgroup, \( \Delta_{\bm{k}}|_{S_{\bm{k}}} \colon S_{\bm{k}} \longrightarrow S_{\bm{k}}, \)is a bijection.
\end{lemma}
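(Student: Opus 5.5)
We prove injectivity and surjectivity of $\Delta_{\bm{k}}|_{S_{\bm{k}}}$ separately, working additively since each $G_n$ is abelian. First we confirm that $\Delta_{\bm{k}}$ maps $S_{\bm{k}}$ into itself. If $x_{k(n)}=0$ for all $n\ge N$, then for $n\ge N$ we have
\[
(\Delta_{\bm{k}}(x))_{k(n)} = x_{k(n)} - p^{k(n+1)}_{k(n)}(x_{k(n+1)}) = 0 .
\]
So $\Delta_{\bm{k}}(x)$ is supported in $\{0,\dots,N-1\}$.

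\emph{Injectivity.} Suppose $x\in S_{\bm{k}}$ and $\Delta_{\bm{k}}(x)=0$. Then $x_{k(n)}=p^{k(n+1)}_{k(n)}(x_{k(n+1)})$ for every $n$. Since $x$ has finite support, $x_{k(n)}=0$ for all $n\ge N$. Downward induction on $n$ from $N$ to $0$ then gives $x_{k(n)}=p^{k(n+1)}_{k(n)}(0)=0$ for each $n$. Hence $x=0$. This argument does not even need monotonicity of $\bm{k}$.

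\emph{Surjectivity.} Let $y\in S_{\bm{k}}$ with $y_{k(n)}=0$ for $n\ge N$. We want $x$ with $x_{k(n)}-p^{k(n+1)}_{k(n)}(x_{k(n+1)})=y_{k(n)}$ for all $n$. Set $x_{k(n)}=0$ for $n\ge N$; the equations then hold for $n\ge N$, since both sides vanish. For $n=N-1,N-2,\dots,0$, define recursively
\[
x_{k(n)} \coloneqq y_{k(n)} + p^{k(n+1)}_{k(n)}(x_{k(n+1)}).
\]
This is a finite backward recursion. Equivalently, there is the closed form $x_{k(n)}=\sum_{j=n}^{N-1} p^{k(j)}_{k(n)}(y_{k(j)})$, using $p^{k(j+1)}_{k(n)}=p^{k(n+1)}_{k(n)}\circ p^{k(j+1)}_{k(n+1)}$. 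The resulting $x$ lies in $S_{\bm{k}}$ and satisfies $\Delta_{\bm{k}}(x)=y$.

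The only point needing care is that the bonding maps $p^{k(n+1)}_{k(n)}$ must make sense, i.e.\ we need $k(n+1)\ge k(n)$. This is exactly why the lemma assumes $\bm{k}\in\mathscr{I}_{\operatorname{nd}}$, and it is also what makes the composition identity in the closed form valid. There is no real obstacle beyond this: the argument is a finite triangular solve, made possible by the vanishing tail of the sequence.
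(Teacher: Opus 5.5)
Your proof is correct and follows the paper's argument essentially verbatim: injectivity by finite backward induction from the vanishing tail, and surjectivity by setting the tail of $x$ to the identity and solving the finite triangular system backward. Your closed form $x_{k(n)}=\sum_{j=n}^{N-1} p^{k(j)}_{k(n)}(y_{k(j)})$ is a nice explicit addition, but your remark that injectivity ``does not need monotonicity'' is moot, since $\Delta_{\bm{k}}$ is only defined when $k(n+1)\ge k(n)$, as you note at the end.
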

\begin{proof}
    Let $1_{k(n)}$ denote the identity element in $G_{k(n)}$, and let $1 \in S_{\bm{k}}$ denote the trivial sequence where every coordinate is the identity.
    \medskip

    \emph{Injectivity:} Suppose $x \in S_{\bm{k}}$ is an element in the kernel of the restricted shift operator, meaning $\Delta_{\bm{k}}(x) = 1$. By the definition of the shift operator, this implies that $x_{k(n)} = p_{k(n)}^{k(n+1)}(x_{k(n+1)})$ for all $n \ge 0$. Because $x$ has finite support ($x \in S_{\bm{k}}$), there exists an integer $N \ge 0$ such that $x_{k(n)} = 1_{k(n)}$ for all $n \ge N$. We proceed by finite backward induction. For $n = N-1$, our coordinate equation yields:
\[ x_{k(N-1)} = p_{k(N-1)}^{k(N)}(x_{k(N)}) = p_{k(N-1)}^{k(N)}(1_{k(N)}) = 1_{k(N-1)}. \]
Repeating this backward substitution for $N-2, N-3, \dots, 0$, it follows immediately that $x_{k(n)} = 1_{k(n)}$ for all $0 \le n < N$. Since $x$ is the identity at every coordinate, $x = 1$. The kernel is trivial, proving that $\Delta_{\bm{k}}|_{S_{\bm{k}}}$ is injective.
\medskip

\emph{Surjectivity:} Let $y \in S_{\bm{k}}$ be an arbitrary sequence of finite support. We must construct a sequence $x \in S_{\bm{k}}$ such that $\Delta_{\bm{k}}(x) = y$. The condition $\Delta_{\bm{k}}(x) = y$ requires that $x_{k(n)} = y_{k(n)} \cdot p_{k(n)}^{k(n+1)}(x_{k(n+1)})$ for all $n \ge 0$. Because $y \in S_{\bm{k}}$, there exists an integer $N \ge 0$ such that $y_{k(n)} = 1_{k(n)}$ for all $n \ge N$. 

We define the candidate pre-image $x$ by setting its tail to the identity. Define $x_{k(n)} = 1_{k(n)}$ for all $n \ge N$. Notice that for any $n \ge N$, our required equation is trivially satisfied: $x_{k(n)} (p_{k(n)}^{k(n+1)}(x_{k(n+1)}))^{-1} =  1_{k(n)} = y_{k(n)}$.
For the coordinates where $0 \le n < N$, we define $x_{k(n)}$ recursively via backward finite induction:
\[ x_{k(n)} := y_{k(n)} \cdot p_{k(n)}^{k(n+1)}(x_{k(n+1)}). \]
Since $x_{k(N)}$ is explicitly defined, $x_{k(N-1)}$ is uniquely determined by $y_{k(N-1)}$ and $x_{k(N)}$. Subsequently, $x_{k(N-2)}$ is determined by $y_{k(N-2)}$ and $x_{k(N-1)}$, and so on down to $x_{k(0)}$. This construction yields a well-defined sequence $x$. Furthermore, because $x_{k(n)} = 1_{k(n)}$ for all $n \ge N$, the sequence $x$ inherently possesses finite support, meaning $x \in S_{\bm{k}}$. By construction, $\Delta_{\bm{k}}(x) = y$, proving that the restricted shift operator is surjective.

Since $\Delta_{\bm{k}}|_{S_{\bm{k}}}$ is both injective and surjective, it is a bijection.
\end{proof}

\begin{lemma}\label{cofinallim1}
Let $\bm{l} \in \mathscr{I}_{\operatorname{nd}}$ be an index sequence such that $\bm{id} \preceq \bm{l}$. Let $h \in \prod_{n=0}^\infty G_n$. If the projected sequence $y \in \prod_{n=0}^\infty G_{l(n)}$, defined by $y_{l(n)} = p_{l(n)}^n(h_n)$, lies in the image of the local shift operator $\Delta_{\bm{l}}$, then $h$ lies in the image of the global shift operator $\Delta_{\bm{id}}$. Consequently, $[[h]] = 1$ in $\varprojlimone\underline G$.
\end{lemma}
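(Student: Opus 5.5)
Suppose $y = \Delta_{\bm{l}}(z)$ for some $z \in \prod_{n} G_{l(n)}$. The goal is to build $w \in P = \prod_m G_m$ with $\Delta_{\bm{id}}(w) = h$, i.e.
\[
w_m - p^{m+1}_m(w_{m+1}) = h_m \quad\text{for all } m .
\]
Equivalently, $w$ must satisfy, for all $m \le M$,
\[
w_m = \sum_{j=m}^{M-1} p^j_m(h_j) + p^M_m(w_M).
\]
The idea is to define $w$ by ``summing $h$ from $m$ up to the level where the sequence $\bm{l}$ reaches $m$'' and then correcting with $z$. Concretely, since $l(n)\le n$ and $\bm{l}$ is non-decreasing and divergent, each index $m$ has only finitely many $n$ with $l(n)=m$, and the values $l(n)$ are attained in blocks.

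\textbf{Step 1 (the case $m\in\operatorname{im}\bm{l}$).} For each $m$ in the image of $\bm{l}$, let $n_m \coloneqq \min\{n : l(n)=m\}$ (one could also use the max). I set
\[
w_m \coloneqq z_{l(n_m)} - \sum_{j=m}^{n_m-1} p^j_m(h_j).
\]
The sign conventions have to be matched against $\Delta$, so this formula may need a sign or index adjustment. The relation $y_{l(n)} = z_{l(n)} - p^{l(n+1)}_{l(n)}(z_{l(n+1)})$ with $y_{l(n)} = p^n_{l(n)}(h_n)$ then lets me telescope. If I instead put $w_m = -z_{l(n)} + \sum_{j<n}\dots$, then between consecutive $n$ with the same value $l(n)=m$ the equation $y = \Delta_{\bm{l}} z$ gives $z_{l(n)} - z_{l(n+1)} = p^n_m(h_n)$, so the expression $z_{l(n)} + \sum_{j=m}^{n-1} p^j_m(h_j)$ is independent of $n$ within the block. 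The natural definition is therefore
\[
w_m \coloneqq z_{l(n)} + \sum_{j=m}^{n-1} p^j_m(h_j) \quad\text{for any } n \text{ with } l(n)=m,
\]
up to an overall sign chosen so that $\Delta_{\bm{id}}(w)=h$. This is only heuristic: it holds when $h$ is replaced by $-\Delta_{\bm{id}}$-type corrections, and I will fix the sign by checking the relation directly.

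\textbf{Step 2 (indices skipped by $\bm{l}$).} For $m$ not in the image of $\bm{l}$, let $m' > m$ be the next value attained by $\bm{l}$. I define $w_m$ by the forced backward recursion
\[
w_m = h_m + p^{m+1}_m(w_{m+1}), \qquad w_{m+1}=\dots,
\]
starting from $w_{m'}$. This makes the equation $\Delta_{\bm{id}}(w)_m = h_m$ hold automatically at every non-image index. Note also that if $l(0)>0$, the finitely many indices below $l(0)$ are handled the same way.

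\textbf{Step 3 (verification at image indices).} It remains to check $\Delta_{\bm{id}}(w)_m = h_m$ when $m$ is an image value. Let $n$ be the last index in the block $l^{-1}(m)$, so that $l(n+1)=m'>m$. Expanding $p^{m+1}_m(w_{m+1})$ through the recursion from Step 2 down from $m'$ gives
\[
p^{m+1}_m(w_{m+1}) = p^{m'}_m(w_{m'}) + \sum_{j=m+1}^{m'-1} p^j_m(h_j),
\]
with $w_{m'} = z_{m'} + \sum_{j=m'}^{n'-1} p^j_{m'}(h_j)$ for any $n'$ with $l(n')=m'$; I take $n'=n+1$. The one-step relation $y_{l(n)} = z_{l(n)} - p^{m'}_m(z_{l(n+1)})$, i.e. $p^n_m(h_n) = z_m - p^{m'}_m(z_{m'})$, is what must close the identity. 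The worry is the $h_j$ terms for $j$ between $m$ and $n$ versus those between $m'$ and $n+1$, which overlap as index ranges. I expect this index bookkeeping between the ranges $[m,n]$ and $[m',n+1]$ to be the main obstacle. If the telescoping does not close cleanly, I would instead switch to the nicer formula
\[
w_m \coloneqq -\Bigl(\sum_{j=m}^{N-1} p^j_m(h_j)\Bigr) + p^{N}_m(\cdot),
\]
and, as a fallback, first reduce via Lemma~\ref{SI}-style reindexing to the case where $\bm{l}$ is strictly increasing on blocks.

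Once $\Delta_{\bm{id}}(w)=h$ is established, $h\in\operatorname{im}\Delta_{\bm{id}}$ and hence $[[h]]=1$ in $\varprojlimone\underline G$.
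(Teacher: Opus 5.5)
Your construction is correct and gives exactly the same sequence as the paper. However, the argument as written stops at the one step that needs proof. In Step 1 you leave the sign open ("up to an overall sign", "only heuristic"). In Step 3 you never verify $\Delta_{\bm{id}}(w)_m=h_m$ at image indices; instead you flag the bookkeeping as a possible obstacle and propose fallbacks. That identity is the whole content of the lemma, so this is a gap.

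The gap is easily filled: the check closes with the $+$ sign exactly as you wrote it. Write $z_n\in G_{l(n)}$ for the $n$-th coordinate of your preimage, since the notation $z_{l(n)}$ is ambiguous inside a block. For an image value $m$, let $n$ be the last index with $l(n)=m$ and put $m'=l(n+1)$. Then
\[
w_m=z_n+\sum_{j=m}^{n-1}p^j_m(h_j), \qquad w_{m'}=z_{n+1}+\sum_{j=m'}^{n}p^j_{m'}(h_j).
\]
Because $m'=l(n+1)\le n+1$, the range $[m+1,m'-1]$ produced by your Step 2 recursion and the range $[m',n]$ are disjoint with union $[m+1,n]$. So there is no overlap problem, and
\[
p^{m+1}_m(w_{m+1})=p^{m'}_m(z_{n+1})+\sum_{j=m+1}^{n}p^j_m(h_j).
\]
Hence
\[
w_m-p^{m+1}_m(w_{m+1})=\bigl(z_n-p^{m'}_m(z_{n+1})\bigr)+h_m-p^n_m(h_n)=h_m,
\]
since the $n$-th coordinate of $\Delta_{\bm{l}}(z)=y$ reads $z_n-p^{m'}_m(z_{n+1})=p^n_m(h_n)$. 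If $n=m$, both sums are empty and the identity still holds. Together with Step 2 this gives $\Delta_{\bm{id}}(w)=h$. No reindexing via Lemma~\ref{SI} is needed, and the case $l(0)>0$ never occurs, because $l(0)\le 0$.

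The paper organizes the same construction without case distinctions. For each $m$ it shows that $\sum_{j=m}^{k-1}p^j_m(h_j)+p^{l(k)}_m(z_k)$ is independent of $k$ once $l(k)\ge m$. This uses the $k$-th coordinate of $\Delta_{\bm{l}}(z)=y$ projected to $G_m$; your within-block computation is the special case $l(k)=l(k+1)=m$. The paper then defines $w_m$ as this stable value. Taking $k$ to be the first index with $l(k)\ge m$ recovers your Step 1 value when $m$ is attained by $\bm{l}$, and your Step 2 value when it is not. The uniform definition makes the final check a one-line telescoping with one common large $k$ for $m$ and $m+1$, avoiding the block bookkeeping you were worried about.
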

\begin{definition}[Telescoping Partial Products]
For a sequence $h = \{h_n\} \in \prod_{n=0}^\infty G_n$ and any indices $k > n \ge 0$, define the partial product $H_n^k \in G_n$ by: $H_n^k \coloneqq h_n p_n^{n+1}(h_{n+1}) \cdots p_n^{k-1}(h_{k-1}).$ For convenience, define the empty product $H_n^n = 1$. 
\end{definition}
This definition yields the recursive properties: $H_n^{k+1} = H_n^k p_n^k(h_k)$ and $h_n p_n^{n+1}(H_{n+1}^k) = H_n^k$.

\begin{proof}[Proof of Lemma~\ref{cofinallim1}]
    By hypothesis, $y = \Delta_{\bm{l}}(v)$ for some $v \in \prod_{n=0}^\infty G_{l(n)}$. Evaluated at the index $l(k)$, the local shift operator gives the condition: \( p_{l(k)}^k(h_k) = y_{k(n)}= v_{l(k)} ( p_{l(k)}^{l(k+1)}(v_{l(k+1)}) )^{-1}. \)

We will explicitly construct a sequence $z \in \prod_{n=0}^\infty G_n$ such that $\Delta_{\bm{id}}(z) = h$. Fix an arbitrary index $n \ge 0$. Because $l(k) \to \infty$ as $k \to \infty$, there exists an integer $N_n \ge n$ such that for all $k \ge N_n$, we have $l(k) \ge n$.

For any such $k \ge N_n$, we can apply the inverse system projection $p_n^{l(k)}$ to the local condition. By functoriality, specifically $p_n^{l(k)} \circ p_{l(k)}^k = p_n^k$ and $p_n^{l(k)} \circ p_{l(k)}^{l(k+1)} = p_n^{l(k+1)}$, we obtain a relation purely in $G_n$:
\begin{equation}\label{tele}
    p_n^k(h_k) = p_n^{l(k)}(v_{l(k)}) \left( p_n^{l(k+1)}(v_{l(k+1)}) \right)^{-1}.
\end{equation}
Now, consider the sequence of elements in $G_n$ defined by $c_k = H_n^k p_n^{l(k)}(v_{l(k)})$. We use \eqref{tele} to evaluate $c_{k+1}$ for any $k \ge N_n$:
\begin{multline*}
    c_{k+1} = H_n^{k+1} p_n^{l(k+1)}(v_{l(k+1)}) = H_n^k p_n^k(h_k) p_n^{l(k+1)}(v_{l(k+1)})=H_n^k \left[ p_n^{l(k)}(v_{l(k)}) \left( p_n^{l(k+1)}(v_{l(k+1)}) \right)^{-1} \right]\cdot\\[1ex] p_n^{l(k+1)}(v_{l(k+1)})= H_n^k p_n^{l(k)}(v_{l(k)})=c_k.
\end{multline*}
This proves that the sequence $c_k$ is exactly constant for all $k \ge N_n$. We define $z_n \in G_n$ to be this stable eventual value. Thus, for any sufficiently large $k$, we have:
\[ z_n = H_n^k p_n^{l(k)}(v_{l(k)}). \]Finally, we verify that this constructed sequence $z = \{z_n\}_{n=0}^\infty$ satisfies $\Delta_{\bm{id}}(z) = h$. We evaluate $z_n$ and $z_{n+1}$ by choosing a common index $k$ large enough such that $k \ge N_n$ and $k \ge N_{n+1}$:
\[
    h_n p_n^{n+1}(z_{n+1})=h_np_n^{n+1} \left( H_{n+1}^k p_{n+1}^{l(k)}(v_{l(k)}) \right)=h_n p_n^{n+1}(H_{n+1}^k) p_n^{l(k)}(v_{l(k)})=H_n^k p_n^{l(k)}(v_{l(k)})=z_n.
\]Rearranging this equation yields $h_n = z_n \left( p_n^{n+1}(z_{n+1}) \right)^{-1}$, which is precisely the definition of $(\Delta_{\bm{id}}(z))_n = h_n$. Therefore, $h \in \operatorname{im}(\Delta_{\bm{id}})$, which concludes the proof that $[[h]] = 1$ in $\varprojlimone \underline G$.
\end{proof}
\begin{theorem}\label{psi1sur}
    The homomorphism $\Psi^1 \colon {\varprojlim}^1 \underline G \longrightarrow \operatorname{coker}(\Phi^\infty)$ is surjective.
\end{theorem}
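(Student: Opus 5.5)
The plan is to show that every class $\langle i_{\bm{k}}([x])\rangle \in \operatorname{coker}(\Phi^\infty)$ has a representative of the form $i_{\bm{id}}([h])$ with $h \in P=\prod_m G_m$. Then it equals $\Psi^1([[h]])$. I will change the representative in two ways: by moving to larger indices in the directed set, which does not change the element of the colimit, and by subtracting elements of the form $\Delta_{\bm{k}}(z)$, which does not change the class modulo $\operatorname{im}(\Phi^\infty)$ since $\Phi^\infty\circ i_{\bm{k}}=i_{\bm{k}}\circ\Delta^\infty_{\bm{k}}$.

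\emph{Step 1: reduce to $k(n)\le n$.} Put $l(n)=\min\{n,k(n)\}$. Then $\bm{l}\in\mathscr{I}_{\operatorname{nd}}$ and $\bm{k}\preceq\bm{l}$, so $i_{\bm{k}}([x])=i_{\bm{l}}(\varphi_{\bm{k},\bm{l}}([x]))$. Hence we may assume $k(n)\le n$ for all $n$, which gives $\bm{id}\preceq\bm{k}$.

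\emph{Step 2: concentrate $x$ on the ends of blocks.} For each value $j$, let $B_j=\{n : k(n)=j\}$. Since $\bm{k}$ is non-decreasing and divergent, each $B_j$ is a finite, possibly empty, interval. Inside a block the relevant bonding maps are identities. I will choose $z\in\prod G_{k(n)}$ block by block, setting $z=0$ at the first element of each block and $z_{n+1}=z_n-x_n$ while $n+1$ stays in the same block. Then $y:=x-\Delta_{\bm{k}}(z)$ vanishes everywhere except at the last element $n_j$ of each nonempty block, where $y_{n_j}=\sum_{n\in B_j}x_n\in G_j$. This is just a finite telescoping computation in the spirit of Lemma~\ref{rearrangement}. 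After this step, $\langle i_{\bm{k}}([x])\rangle=\langle i_{\bm{k}}([y])\rangle$.

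\emph{Step 3: transport to the identity indexing.} Define $h\in P$ by $h_j:=y_{n_j}$ if $B_j\ne\emptyset$ and $h_j=0$ otherwise. The remaining, and I expect hardest, step is to prove $i_{\bm{id}}([h])-i_{\bm{k}}([y])\in\operatorname{im}(\Phi^\infty)$. The two sequences contain the same nonzero entries $(j,h_j)$, but these sit at different positions ($j$ versus $n_j\ge j$), and the transition map $\varphi_{\bm{id},\bm{k}}$ projects $h_{n}$ down to level $k(n)$ rather than relocating it.

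To handle this, I will first try a second telescoping, now on a common refinement. Take an index sequence $\bm{m}$ that lists each level $j$ once at position $j$ and again at position $n_j$. Concretely, work in $\prod_n G_{k(n)}$ together with the id-indexed copy. Then solve $\Delta_{\bm{m}}(w)=$ (difference of the two placements) with $w$ supported on the intervals $[j,n_j]$, using the same backward recursion as in Lemma~\ref{rearrangement}. The point is that the entry $h_j\in G_j$ is only ever needed at levels $\ge j$ on those intervals, where it can be carried by the identity bonding maps along a constant-level stretch of $\bm{m}$.

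If building such an $\bm{m}$ inside $\mathscr{I}_{\operatorname{nd}}$ turns out to be awkward, the fallback is to argue directly with the $\wp$-description from Theorem~\ref{colimrep}, where representatives may use arbitrary divergent index sequences. There I would show, as a small lemma, that inserting or deleting zero terms, and moving a single term $g\in G_j$ along a run of equal indices, changes a class only by an element of $\operatorname{im}(\Phi^\infty)$. Steps 2 and 3 then combine to give $\langle i_{\bm{k}}([x])\rangle=\Psi^1([[h]])$, which proves surjectivity.
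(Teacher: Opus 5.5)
Your Steps 1 and 2 are correct and match the paper. Step 2 is exactly the paper's block accumulation producing $x'$ supported at the block ends with value $h_m$. Step 1 (passing first to $\min\{n,k(n)\}$) is a nice simplification: afterwards $n_j\ge j$ for every nonempty block, so only the first case of the paper's Claim can arise.

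The gap is Step 3, which carries the real content. You leave it as a plan, and the mechanism you describe does not work.
\begin{itemize}
\item An index sequence $\bm m$ with $m(j)=m(n_j)=j$ for all $j$ need not exist. Monotonicity forces $m\equiv j$ on $[j,n_j]$, and these intervals overlap: for $k(n)=\lfloor n/2\rfloor$ you would need both $m(1)=0$ and $m(1)=1$.
\item Any common upper bound of $\bm{id}$ and $\bm k$ must satisfy $m(n)\le k(n)$, so $m(j)=j$ fails whenever $k(j)<j$.
\item For the same reason, on $[j,n_j]$ the entry $h_j$ is needed at levels $\le j$, not $\ge j$. Carrying it at levels above $j$ would require lifting through the bonding maps, which is impossible in general (think of $\times 2$ on $\mathbb Z$). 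The levels along $[j,n_j]$ are also not constant, so identity bonding maps are not available.
\item Your fallback lemma, in the form you need it (infinitely many relocations at once, each crossing levels from $k(j)$ to $j$), is a restatement of what remains to be proved. It does not follow from single moves along runs of equal indices.
\end{itemize}

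The missing idea is that no new index sequence is needed. Relocating an entry to an earlier position while projecting it down along the way is a $\Delta_{\bm k}$-boundary. After Step 1 (so $k(n)\le n$), $\bm k$ is itself a common upper bound of $\bm{id}$ and $\bm k$, and $\varphi_{\bm{id},\bm k}(h)$ has $p^j_{k(j)}(h_j)$ at position $j$.

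Set $S(w)_i\coloneqq p^{k(i+1)}_{k(i)}(w_{i+1})$, so that $\Delta_{\bm k}=\mathrm{id}-S$. Let $E^{(j)}$ be $h_j$ placed at position $n_j$, and let $d_j=n_j-j$. Then $S^{d_j}(E^{(j)})$ is $p^j_{k(j)}(h_j)$ at position $j$, and
\[ E^{(j)}-S^{d_j}\bigl(E^{(j)}\bigr)=\Delta_{\bm k}\bigl(w^{(j)}\bigr), \qquad w^{(j)}\coloneqq\sum_{i=0}^{d_j-1}S^{i}\bigl(E^{(j)}\bigr). \]
Thus $w^{(j)}_t=p^j_{k(t)}(h_j)$ for $j<t\le n_j$, and $w^{(j)}_t=0$ otherwise.

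Because $w^{(j)}$ is supported in $(j,n_j]$, each coordinate meets only finitely many $w^{(j)}$. So $w\coloneqq\sum_j w^{(j)}$ is defined and $\Delta_{\bm k}(w)=y-\varphi_{\bm{id},\bm k}(h)$. Applying $i_{\bm k}$ gives $i_{\bm k}([y])-i_{\bm{id}}([h])=\Phi^\infty(i_{\bm k}([w]))$.

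This is precisely the paper's Claim, together with its coordinate-wise finite product $u=\prod_m u^{(m)}$. Your backward-recursion instinct is right, but it must be run inside $\bm k$ using these downward projections, not identity maps.
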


\begin{proof}
    For notational simplicity, throughout the following proof, for an index sequence \(\bm{k} \in \mathscr{I}_{\operatorname{nd}}\), we write an arbitrary element \(x \in \prod_{n=0}^{\infty} G_{k(n)}\) as \(x = \{x_n\},\) where \(x_n \in G_{k(n)}\) for each \(n \ge 0\).

    Let $c \in \operatorname{coker}(\Phi^\infty)$ be an arbitrary element. Thus, there exists an index sequence $\bm{k} \in \mathscr{I}_{\operatorname{nd}}$ and a sequence $x = \{x_{n}\}  \in \prod_{n=0}^\infty G_{k(n)}$ such that $c = \langle i_{\bm{k}}([x]) \rangle$. We want to find $h \in \prod_{m=0}^\infty G_m$ such that $\Psi^1([[h]]) = c$.

    For any integer $m \ge 0$, we define the preimage $I_m = \{ n \ge 0 \mid k(n) = m \}$. Because $\bm{k}$ is non-decreasing and divergent, $I_m$ is a finite contiguous block of indices. If $I_m \neq \emptyset$, we denote its minimum and maximum indices as $a_m = \min I_m$ and $b_m = \max I_m$. We define the block-accumulated sequence $h=\{h_m\} \in \prod_{m=0}^\infty G_m$ coordinate-wise by:
\[
h_m \coloneqq \begin{cases} 
\prod_{n \in I_m} x_{n} & \text{if } I_m \neq \emptyset, \\
1 & \text{if } I_m = \emptyset 
\end{cases}
\]
where $1$ is the identity element in $G_m$.

We define a sequence $y =\{y_{n}\}\in \prod_{n=0}^\infty G_{k(n)}$ block-by-block. For each $m$ such that $I_m \neq \emptyset$, define the coordinates of $y$ for $j \in I_m$ as follows: $y_{a_m} = e$ and $y_j = (x_{a_m} x_{a_m+1} \dots x_{j-1})^{-1}$ for  $a_m < j \le b_m$. Moreover, define a new sequence $x' \coloneqq x \cdot (\Delta_{\bm{k}}(y))^{-1} \in \prod_{n=0}^\infty G_{k(n)}$. Because $x = x' \cdot \Delta_{\bm{k}}(y)$ holds in the product space, it naturally holds in the quotient space modulo finite support $Q_{\bm{k}}$:
\begin{equation}\label{step3}
    [x] = [x'] \cdot [\Delta_{\bm{k}}(y)] = [x'] \cdot \Delta_{\bm{k}}^\infty([y])
\end{equation}

Now, we want to show that the sequence $x'$ is supported at terminal indices $b_m$. To see this, we evaluate the local shift operator $\Delta_{\bm{k}}(y)_j = y_j ( p_{k(j)}^{k(j+1)}(y_{j+1}) )^{-1}$ across two distinct cases:

\emph{Case 1: Internal block indices ($a_m \le j < b_m$):} Because $j$ and $j+1$ belong to the same block $I_m$, $k(j) = k(j+1) = m$. The bonding map $p_m^m$ is the identity on $G_m$. Thus
\[
\Delta_{\bm{k}}(y)_j = y_j y_{j+1}^{-1} 
= (x_{a_m} \dots x_{j-1})^{-1} \left( (x_{a_m} \dots x_j)^{-1} \right)^{-1} 
= (x_{a_m} \dots x_{j-1})^{-1} (x_{a_m} \dots x_{j-1} x_j) 
= x_j,
\]
and hence $x'_j = x_j x_j^{-1} = 1$.

\emph{Case 2: The terminal block index ($j = b_m$):} The subsequent index $b_m + 1$ is the start of the next non-empty block, say $I_{m'}$, meaning $y_{b_m+1} = y_{a_{m'}} = 1$. So $p_m^{m'}(y_{b_m+1}) = 1$. Thus
\[
\Delta_{\bm{k}}(y)_{b_m} = y_{b_m} 1^{-1} = (x_{a_m} \dots x_{b_m-1})^{-1}, 
\] and hence $x'_{b_m} = x_{b_m} \left( (x_{a_m} \dots x_{b_m-1})^{-1} \right)^{-1} = x_{b_m} x_{a_m} x_{a_m+1} \dots x_{b_m-1}$. Because each $G_m$ is an abelian group, rearranging the terms yields $x'_{b_m} = x_{a_m} x_{a_m+1} \dots x_{b_m-1} x_{b_m} = \prod_{n \in I_m} x_n = h_m$.

Thus, the sequence $x'$ is supported at terminal indices $b_m$ and the sequence $h$ is supported at standard indices $m$. We project them into the common bounding index $\bm{l}$ where $l(n) = \min(k(n), n)$. Let $\widetilde{x} = \varphi_{\bm{k},\bm{l}}(x')$ and $\widetilde{h} = \varphi_{\bm{id},\bm{l}}(h)$. We can decompose these globally as $\widetilde{x} = \prod_{m=0}^\infty E^{(m)}$ and $\widetilde{h} = \prod_{m=0}^\infty H^{(m)}$, where $E^{(m)}$ contains the element $p_{l(b_m)}^m(h_m)$ at coordinate $b_m$, and $1$ elsewhere, and $H^{(m)}$ contains the element $p_{l(m)}^m(h_m)$ at coordinate $m$, and $1$ elsewhere.

\begin{claim}
    For every $m\geq 0$, there exists $u^{(m)}\in \prod_{n=0}^\infty G_{l(n)}$ such that $E^{(m)} \cdot (H^{(m)})^{-1} = \Delta_{\bm{l}}(u^{(m)})$.
\end{claim}
\begin{proof}[Proof of Claim]
    Define the left-shift endomorphism $S \colon \prod_{n=0}^\infty G_{l(n)} \to \prod_{n=0}^\infty G_{l(n)}$ by:
$$S(z)_j = p_{l(j)}^{l(j+1)}(z_{j+1}).$$
The global shift operator $\Delta_{\bm{l}}$ can now be written compactly as: $\Delta_{\bm{l}}(z) = z \cdot S(z)^{-1}$. Moreover, an induction on $k$ shows that $S^k(z)_j = p_{l(j)}^{l(j+k)}(z_{j+k})$ for all indices $j$. Now, fix $m\geq 0$ and consider the following two cases.\medskip

\emph{Case 1: If $b_m \ge m$:}  Let $d = b_m - m$. Then $H^{(m)} = S^d(E^{(m)})$. Moreover, $E^{(m)} \cdot (H^{(m)})^{-1} = E^{(m)} \cdot S^d(E^{(m)})^{-1} = \Delta_{\bm{l}}(u^{(m)})$,
where we define $u^{(m)} \coloneqq \prod_{i=0}^{d-1} S^i(E^{(m)})$.\medskip

\emph{Case 2: If $m > b_m$:} Let $d = m - b_m$. Then, $E^{(m)} = S^d(H^{(m)})$. Moreover, $E^{(m)} \cdot (H^{(m)})^{-1} =S^d(H^{(m)})\cdot (H^{(m)})^{-1}= \Delta_{\bm{l}}((v^{(m)})^{-1}),$ where $v^{(m)} \coloneqq \prod_{i=0}^{d-1} S^i(H^{(m)})$. So we simply set $u^{(m)} \coloneqq (v^{(m)})^{-1}$.\medskip

Therefore, $E^{(m)} \cdot (H^{(m)})^{-1} = \Delta_{\bm{l}}(u^{(m)})$.
\end{proof}

Define $u \coloneqq \prod_{m=0}^\infty u^{(m)}$. Because each $u^{(m)}$ only has non-identity elements strictly between $m$ and $b_m$, the product is coordinate-wise finite and thus well-defined in $\prod_{n=0}^\infty G_{l(n)}$. Therefore, 
$$\Delta_{\bm{l}}(u) = \prod_{m=0}^\infty \Delta_{\bm{l}}(u^{(m)}) = \prod_{m=0}^\infty E^{(m)} \cdot (H^{(m)})^{-1} = \widetilde{x} \cdot \widetilde{h}^{-1}.$$This gives $[\widetilde{x}] = [\widetilde{h}] \cdot \Delta_{\bm{l}}^\infty([u])$ in the quotient space $Q_{\bm{l}}$. Applying the canonical colimit injection $i_{\bm{l}} \colon Q_{\bm{l}} \to \varinjlim_{\bm{k}\in \mathscr{I}_{\operatorname{nd}}} Q_{\bm{k}}$, we get \(i_{\bm{l}}([\widetilde{x}]) = i_{\bm{l}}([\widetilde{h}]) \cdot i_{\bm{l}}(\Delta_{\bm{l}}^\infty([u]))\).
By the definition of the direct limit, the transition maps commute with the injections, giving $i_{\bm{l}} \circ \varphi_{\bm{k},\bm{l}} = i_{\bm{k}}$ and $i_{\bm{l}} \circ \varphi_{\bm{id},\bm{l}} = i_{\bm{id}}$. Furthermore, the injection commutes with the asymptotic shift operator such that $i_{\bm{l}} \circ \Delta_{\bm{l}}^\infty = \Phi^\infty \circ i_{\bm{l}}$. Substituting these relations yields:
\[
i_{\bm{k}}([x']) = i_{\bm{id}}([h]) \cdot \Phi^\infty(i_{\bm{l}}([u]))
\]
Because $\Phi^\infty(i_{\bm{l}}([u])) \in \operatorname{im}(\Phi^\infty)$, we conclude that \(i_{\bm{k}}([x']) \equiv i_{\bm{id}}([h]) \pmod{\operatorname{im}(\Phi^\infty)}\). Combined with the equivalence $[x] \equiv [x'] \pmod{\operatorname{im}(\Delta_{\bm{k}}^\infty)}$ from \eqref{step3}, it follows that $i_{\bm{id}}([h]) \equiv i_{\bm{k}}([x]) \pmod{\operatorname{im}(\Phi^\infty)}$. This completes the proof.
\end{proof}

\begin{proof}[Proof of Theorem~\ref{cokerlim1}]
    By Theorems~\ref{psi1inj} and \ref{psi1sur},  the homomorphism $\Psi^1$ is an isomorphism from  ${\varprojlim}^1 \underline G$ onto $\operatorname{coker}(\Phi^\infty)$.
\end{proof}

\begin{proof}[Proof of Theorem~\ref{4ses}]
    Since the kernel (resp. cokernel) of $\Phi^\infty\colon \varinjlim_{\bm{k} \in \mathscr I_{\operatorname{nd}}} Q_{\bm{k}}\to \varinjlim_{\bm{k} \in \mathscr I_{\operatorname{nd}}} Q_{\bm{k}}$ is isomorphic to $\varprojlim \underline G$ (resp. $\varprojlimone \underline G$), we have an exact sequence of groups
    \[ 0 \longrightarrow \varprojlim \underline{G} \longrightarrow \varinjlim_{\bm{k} \in \mathscr I_{\operatorname{nd}}} Q_{\bm{k}} \longrightarrow \varinjlim_{\bm{k} \in \mathscr I_{\operatorname{nd}}} Q_{\bm{k}}\longrightarrow {\varprojlim}^1 \underline{G} \longrightarrow 0. \] To complete the proof, we note that $\wp(\underline G)$ is isomorphic to $\varinjlim_{\bm{k} \in \mathscr I_{\operatorname{nd}}} Q_{\bm{k}}$ by Theorem~\ref{colimrepnd}. 
\end{proof}

\section{Proper Homology}\label{homology}
In this section, we develop two isomorphic formulations of proper homology. Both formulations will be utilized in Section~\ref{section:Hur} to connect proper homology with Brown's notion of proper homotopy theory via a Hurewicz-type theorem.
\subsection{Ends of spaces}
This section provides a brief overview of the notion of an \emph{end}---intuitively, a topological direction in which a space extends to infinity. Roughly speaking, a space $X$ has at least $n$ ends if there exists a compact subset $K \subseteq X$ such that $X\setminus K$ contains at least $n$ distinct connected components with non-compact closures. For example, $\mathbb{R}$ has exactly two ends, whereas $\mathbb{R}^2$ has exactly one end. To rigorously define the set of ends and its topology, we require the notion of an efficient exhaustion.

Let $X$ be a non-compact topological space. An \emph{efficient exhaustion} of $X$ is a sequence $K_0 \subseteq K_1 \subseteq K_2 \subseteq \cdots$ of compact, connected subsets of $X$ such that $\bigcup_{i=0}^{\infty} K_i = X$ (equivalently, $\bigcap_{i=0}^{\infty} (X \setminus K_i) = \varnothing$), $K_i \subseteq \operatorname{int}(K_{i+1})$ for all $i$, and every connected component of $X \setminus K_i$ has noncompact closure. We will be primarily interested in spaces that admit an efficient exhaustion.

\begin{definition}\label{def:nice}
    A topological space is said to be \emph{nice} if it is non-compact, connected, locally path connected, locally compact, Hausdorff, and second countable.
\end{definition}

There are several important classes of nice spaces. These include any non-compact, connected topological manifold (assuming the standard convention that manifolds are Hausdorff and second-countable), as well as any non-compact, connected, locally finite simplicial or CW complex.

The following theorem is a well-known result. The closest reference in the literature---though stated specifically for ANR spaces---is \cite[Exercise 3.3.4]{MR3598162}. We provide a sketch of the proof for the reader's convenience.
\begin{theorem}\label{eff}
Let $X$ be a nice space. Then $X$ admits an efficient exhaustion.
\end{theorem}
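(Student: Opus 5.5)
The plan is the standard three-stage construction: build a compact exhaustion, make its pieces connected, and then absorb the relatively compact components of the complements.

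\emph{Step 1 (compact exhaustion).} Since $X$ is locally compact, Hausdorff and second countable, it has a countable basis of open sets with compact closures. Enumerate these as $B_1, B_2, \dots$. Define $L_0 = \overline{B_1}$. Given a compact $L_i$, cover it by finitely many $B_j$ and set $L_{i+1}$ to be the union of their closures together with $\overline{B_{i+1}}$. Then each $L_i$ is compact, $L_i \subseteq \operatorname{int}(L_{i+1})$, and $\bigcup_i L_i = X$.

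\emph{Step 2 (connectedness).} Because $X$ is connected and locally path connected, it is path connected, and every compact set meets only finitely many components of any open cover by path-connected basic sets. To make $L_i$ connected, first cover it by finitely many path-connected open sets with compact closure; local path connectedness and local compactness allow us to shrink the $B_j$ to such sets. Then join these finitely many pieces to a fixed base point by finitely many paths, whose images are compact. Let $K_i'$ be the union of the closures and the path images. Repeating the Step 1 interleaving with these enlarged sets keeps $K_i' \subseteq \operatorname{int}(K_{i+1}')$ and the union equal to $X$.

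\emph{Step 3 (removing relatively compact components).} This is the main obstacle. Define $K_i$ to be $K_i'$ together with all components of $X \setminus K_i'$ that have compact closure. The key claim is that only finitely many such components exist which are not already contained in $K_{i+1}'$, and that $K_i$ is compact. To prove this, use the fact that $X \setminus K_i'$ is open and locally connected, so its components are open. The compact set $\partial K_{i+1}'$, or more simply $K_{i+1}' \setminus \operatorname{int} K_i'$ used as a frontier, meets each component that escapes $K_{i+1}'$, by connectedness of the component. It is covered by these disjoint open components together with $\operatorname{int}K_{i+1}'$-type sets, so only finitely many components can reach outside $K_{i+1}'$. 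Each remaining relatively compact component lies inside the compact set $K_{i+1}'$. Hence $K_i$ is a closed subset of $K_{i+1}'$ union finitely many compact closures, and is therefore compact.

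The set $K_i$ is connected, since each added component has closure meeting $K_i'$. It also satisfies $K_i \subseteq \operatorname{int} K_{i+1}$, because components absorbed at stage $i$ are contained in components of $X \setminus K_i'$, which either lie in $K_{i+1}'$ or are handled at the next stage. Every component of $X \setminus K_i$ is a component of $X \setminus K_i'$ with noncompact closure. The delicate point is verifying the interior containment after the modification. If it fails, I would reindex by passing to a subsequence $K_{i_j}$ with $K_{i_j} \subseteq \operatorname{int}K'_{i_{j+1}}$; this is possible since $K_i$ is compact and the interiors $\operatorname{int}K'_n$ exhaust $X$.
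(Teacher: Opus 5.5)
Your argument is correct and uses the same three ingredients as the paper: a compact exhaustion, connected compact enlargements, and the fill $\mathsf{Fill}(D)$ obtained by adjoining the relatively compact complementary components. The paper orders them differently. It fills \emph{inside} the induction, then picks a later stage $C_m$ with $K_i\subseteq\operatorname{int}(C_m)$ and sets $K_{i+1}=\mathsf{Fill}(D_{i+1})$ for a compact connected $D_{i+1}\supseteq C_m$, so $K_i\subseteq\operatorname{int}(K_{i+1})$ holds by construction. That is exactly your fallback reindexing, so your ``delicate point'' never arises there.

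Your direct route also works, and the missing justification is one line. Let $V$ be a component of $X\setminus K'_i$ with compact closure, and let $W$ be a component of $X\setminus K'_{i+1}$ meeting $V$. Then $W\subseteq V$, since $W$ is connected and lies in $X\setminus K'_i$. So $\overline W$ is compact and $W\subseteq K_{i+1}$. Hence $V\subseteq K_{i+1}$, and since $V$ is open, $V\subseteq\operatorname{int}(K_{i+1})$.

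In exchange, your Step 3 proves something the paper's sketch only asserts: that the fill is compact, via the finiteness of components escaping a compact neighborhood. In that argument, use only the frontier $\partial K'_{i+1}$. Your alternative $K'_{i+1}\setminus\operatorname{int}K'_i$ contains points of $K'_i$, so it is not covered by the components of $X\setminus K'_i$. By contrast, $\partial K'_{i+1}\subseteq X\setminus\operatorname{int}K'_{i+1}\subseteq X\setminus K'_i$ is covered by them. Moreover, a component not contained in $K'_{i+1}$ must meet $\partial K'_{i+1}$, because its closure meets $K'_i\subseteq\operatorname{int}K'_{i+1}$.
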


\begin{proof}[Sketch of Proof]
First, note that since $X$ is a connected, locally connected, locally compact, Hausdorff space, for every compact and connected $D\subseteq X$, the \emph{fill} of $D$, defined by
$$\mathsf{Fill}(D) \coloneqq D \,\cup\, \bigcup \left\{ V \;:\; V \text{ is a connected component of } X\setminus D \text{ and } \overline{V}\text{ is compact} \right\},$$ 
is compact and connected. Furthermore, every connected component of $X\setminus \mathsf{Fill}(D)$ has noncompact closure.

Now, since $X$ is locally compact, Hausdorff, and second-countable, $X$ is $\sigma$-compact; i.e., $X$ can be written as a countable union of its compact subsets. Moreover, because $X$ is locally compact and $\sigma$-compact, there exists a sequence of compact subsets $C_0\subseteq C_1\subseteq C_2\subseteq \cdots$ such that $C_i\subseteq \operatorname{int}(C_{i+1})$ for all $i$, and $X=\bigcup_{i=0}^{\infty} C_i$. We inductively construct an exhausting sequence $K_0 \subseteq K_1 \subseteq K_2 \subseteq \cdots$ of $X$.

Since $X$ is connected, locally path connected, and locally compact, there exists a compact connected set $D_0$ such that $C_0\subseteq D_0$. Define $K_0\coloneqq\mathsf{Fill}(D_0)$. By the properties of the fill, $K_0$ is compact and connected, and every connected component of $X\setminus K_0$ has noncompact closure.

Assume that $K_i$ has been constructed. Since $X=\bigcup_{n=0}^{\infty} C_n$, and since $K_i$ is compact, there exists an integer $m$ such that $K_i\subseteq \operatorname{int}(C_m)$. Again, by the same reasoning, there exists a compact connected set $D_{i+1}$ such that $C_m\subseteq D_{i+1}$. Define $K_{i+1}\coloneqq\mathsf{Fill}(D_{i+1})$. Then we have $K_i \subseteq \operatorname{int}(C_m) \subseteq C_m \subseteq D_{i+1} \subseteq K_{i+1}.$  Thus, we obtain $K_i\subseteq \operatorname{int}(K_{i+1})$. By the properties of the fill, $K_{i+1}$ is compact and connected, and every connected component of $X\setminus K_{i+1}$ has noncompact closure. 

Finally, since $X=\bigcup_{i=0}^{\infty} C_i$, and $C_0 \subseteq K_0$ with $C_m \subseteq K_{i+1}$ for arbitrarily large indices, it follows by our construction that $X=\bigcup_{i=0}^{\infty} K_i$.
\end{proof}

Let $X$ be a nice space with an efficient exhaustion $K_0\subseteq K_1\subseteq K_2\subset\cdots$. Let $\mathsf{Ends}(X)$ be the set of all sequences $(U^{(0)}, U^{(1)}, U^{(2)},\ldots)$, where $U^{(i)}$ is a component of $X\setminus K_i$ and $U^{(0)}\supseteq U^{(1)}\supseteq U^{(2)}\supseteq \cdots$. Give $\underline{X}\coloneqq X\cup \mathsf{Ends}(X)$ with the topology generated by the basis consisting of all open subsets of $X$, and all sets $\underline{U^{(i)}}$, where
$$\underline{U^{(i)}}\coloneqq U^{(i)}\cup \left.\left\{\, (V^{(0)}, V^{(1)}, V^{(2)},\ldots)\in \mathsf{Ends}(X)\; \right|\;  V^{(i)}= U^{(i)}\, \right\}.$$Then $\underline{X}$ is separable, compact, and metrizable such that $X$ is an open dense subset of $\underline{X}$ \cite[p. 57]{MR3598162}. We call $\underline{X}$ as the \emph{Freudenthal compactification} of $X$. Recall that we say a space $X_\mathsf{c}$ is a \emph{compactification} of $X$ if $X_\mathsf{c}$ is compact Hausdorff space, and $X$ is a dense subset of $X_\mathsf{c}$.  The subspace $\mathsf{Ends}(X)$ of $\underline{X}$ is a compact, metrizable, and totally disconnected space. Hence $\mathsf{Ends}(X)$ is homeomorphic to a closed subset of the Cantor set. 

A sequence $\{Z_n\}$ of subsets of $X$ is said to \emph{converge to the end} $e=(U^{(0)}, U^{(1)}, U^{(2)},\ldots)$ of $X$ provided that for every $i \geq 0$, there exists an $n_i$ such that $Z_n \subseteq U^{(i)}$ for all $n \geq n_i$. If $Z_n \neq \varnothing$ for all sufficiently large $n$, then the end $e$ is unique.

Every proper map $f \colon X \to Y$ between nice spaces induces a  map $\mathsf{Ends}(f) \colon \mathsf{Ends}(X)\to \mathsf{Ends}(Y)$ that can be used to uniquely extend $f$ to a map $\underline{f} \colon \underline{X} \to \underline{Y}$ between their Freudenthal compactifications. Moreover, $\mathsf{Ends}$ is a functor: the induced map of the identity map $\operatorname{id}_X$ is the identity map on $\mathsf{Ends}(X)$, and for any two proper maps $f \colon X \to Y$ and $g \colon Y \to Z$ between nice spaces, the induced map of their composition is the composition of their induced maps, i.e., $\mathsf{Ends}(g \circ f) = \mathsf{Ends}(g) \circ \mathsf{Ends}(f)$ \cite[Proposition 3.3.12]{MR3598162}.

Recall that a proper homotopy is a homotopy with the additional property that it is a proper map. If two proper maps $f_0, f_1 \colon X \to Y$ between nice spaces are properly homotopic, then $\mathsf{Ends}(f_0) = \mathsf{Ends}(f_1)$ \cite[Proposition 3.3.12]{MR3598162}.

\subsection{Algebraic and Geometric Formulations of Proper Homology}
In this section, we introduce two isomorphic formulations of proper homology. The first, which we call \emph{algebraic proper homology}, is defined by applying the $\wp$ functor to the singular homology of a neighborhood system of a given end. This formulation will be our primary tool for computations. The second formulation, \emph{geometric proper homology}, is constructed from chain complexes of sequences of singular chains whose supports converge to a given end. This geometric perspective is better suited for establishing various properties such as functoriality, proper homotopy invariance, and the existence of long exact sequences.

Let $X$ be a nice space with an efficient exhaustion $K_0\subseteq K_1\subseteq K_2\subset\cdots$, and let $e=(U^{(0)}, U^{(1)}, U^{(2)},\ldots)$ be an end of $X$. For a non-negative integer $k$, we define the proper homology as follows.

\begin{definition}
    Let $\underline{G}$ denote the inverse system $\{H_k(U^{(n)}), (p_n^m)_k\}_{m \ge n \ge 0}$, where the bonding maps $(p_n^m)_k \colon H_k(U^{(m)}) \to H_k(U^{(n)})$ are induced by the inclusions $U^{(m)} \hookrightarrow U^{(n)}$. The \emph{algebraic proper homology group} of $(X,e)$ in degree $k$, denoted $\underline{H}_k^\mathsf{alg}(X, e)$, is defined as $\underline{H}_k^\mathsf{alg}(X, e) \coloneqq \wp(\underline{G})$.
\end{definition}

To show that the algebraic proper homology group is independent of the choice of efficient exhaustion, we introduce the geometric approach and show that the two definitions yield isomorphic groups.

Denote the $k$-th singular chain group of $X$ by $C_k(X)$, and let $\partial_k \colon C_k(X) \to C_{k-1}(X)$ be the boundary operator. Recall that any non-trivial element $c \in C_k(X)$ can be uniquely expressed as a finite formal sum $c = \sum_i m_i \sigma_i$, where the singular $k$-simplices $\sigma_i \colon \Delta^k \to X$ are pairwise distinct and each coefficient $m_i$ is a non-zero integer. The \emph{support} of $c$ is defined as $\operatorname{supp}(c) \coloneqq \bigcup_i \operatorname{im}(\sigma_i)$ \cite[p. 71]{MR957919}. The support of the trivial element $0 \in C_k(X)$ is defined to be $\varnothing$.
  
Consider the direct product $\prod_{n \geq 0} C_k(X),$ equipped with the component-wise boundary map $\prod_{n \geq 0} \partial_k \colon \prod_{n \geq 0} C_k(X) \to \prod_{n \geq 0} C_{k-1}(X).$ The \emph{proper singular chain group} of the pair $(X,e)$ in degree $k$ is defined by
\[
  \underline{C}_k(X,e) \coloneqq \left\{\, \{c_n\}_{n \geq 0} \in \prod_{n \geq 0} C_k(X) \mid \{\operatorname{supp}(c_n)\} \text{ converges to } e \,\right\}.
\]
Then $\underline{C}_k(X,e)$ is a subgroup of $\prod_{n \geq 0} C_k(X)$, and the boundary operator $\prod_{n \geq 0} \partial_k$ restricts to a homomorphism $\underline{\partial}_k \coloneqq \left. \prod_{n \geq 0} \partial_k \, \right| \underline{C}_k(X,e) \to \underline{C}_{k-1}(X,e).$
Define
\begin{align*}
    \underline{Z}_k(X,e) & \coloneqq \left\{\, \{c_n\} \in \underline{C}_k(X,e) \mid \partial_k(c_n)=0 \text{ for all } n\gg 0 \,\right\},\text{ and}\\[1ex]
    \underline{B}_k(X,e) & \coloneqq \left\{\, \{c_n\} \in \underline{C}_k(X,e) \mid \exists\, \{b_n\}\in \underline{C}_{k+1}(X,e) \text{ such that } c_n=\partial_{k+1}(b_n) \text{ for all } n\gg 0 \,\right\}
\end{align*}
where $n \gg 0$ means ``for all sufficiently large $n$''. Then $\underline{B}_k(X,e) \subseteq \underline{Z}_k(X,e)$, and both are subgroups of $\underline{C}_k(X,e)$.

\begin{definition}
    The \emph{geometric proper homology group} of $(X,e)$ in degree $k$, denoted $\underline{H}_k^\mathsf{geo}(X,e)$, is defined as the quotient $\underline{H}_k^\mathsf{geo}(X,e) \coloneqq \underline{Z}_k(X,e) / \underline{B}_k(X,e)$.
\end{definition}

\begin{theorem}\label{computation}
Let $X$ be a nice space and $e$ an end of $X$. The geometric proper homology group $\underline{H}_k^\mathsf{geo}(X,e)$ is isomorphic to the algebraic proper homology group $\underline{H}_k^\mathsf{alg}(X,e)$.
\end{theorem}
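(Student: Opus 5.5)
I will construct an explicit homomorphism $\Theta\colon \underline{H}_k^\mathsf{geo}(X,e)\to \wp(\underline{G})$, where $\underline{G}=\{H_k(U^{(n)}),(p_n^m)_k\}$, and then check that it is bijective.

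\emph{Definition of $\Theta$.} Let $\{c_n\}\in\underline{Z}_k(X,e)$ and choose $N$ with $\partial_k c_n=0$ for $n\ge N$. Because $\{\operatorname{supp}(c_n)\}$ converges to $e$, the index
\[
k(n)\coloneqq \max\{\,i\le n : \operatorname{supp}(c_n)\subseteq U^{(i)}\,\}
\]
satisfies $k(n)\to\infty$. Here we use $U^{(0)}\supseteq U^{(1)}\supseteq\cdots$; for the finitely many $n$ where no such $i$ exists, we set $c_n$ to $0$, which is harmless. Each $c_n$ with $n\ge N$ is then a cycle in $U^{(k(n))}$, so it has a class $[c_n]\in H_k(U^{(k(n))})$. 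Define
\[
\Theta(\{c_n\})\coloneqq\bigl[\{[c_n]\}\bigr]\in\wp(\underline G).
\]
Any other admissible choice $k'(n)\to\infty$ with $\operatorname{supp}(c_n)\subseteq U^{(k'(n))}$ gives an asymptotically equivalent sequence; this is witnessed by $m(n)=\min\{k(n),k'(n)\}$ together with the fact that the bonding maps are induced by inclusion. So $\Theta$ is well defined on cycles, and it is additive.

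\emph{Boundaries map to zero.} Let $c_n=\partial b_n$ for $n\gg0$, with $\{b_n\}\in\underline C_{k+1}(X,e)$. Choose $m(n)\to\infty$ with $m(n)\le k(n)$ and $\operatorname{supp}(b_n)\subseteq U^{(m(n))}$. Then $p^{k(n)}_{m(n)}[c_n]=0$ in $H_k(U^{(m(n))})$, so $\Theta$ descends to $\underline H_k^\mathsf{geo}(X,e)$.

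\emph{Surjectivity.} Take a class $[\{\alpha_{k(n)}\}]$ with $\alpha_{k(n)}\in H_k(U^{(k(n))})$, and pick cycles $c_n$ in $U^{(k(n))}$ representing $\alpha_{k(n)}$. Since the $U^{(i)}$ are the components of $X\setminus K_i$ and $K_i$ exhausts $X$, the supports converge to $e$. The resulting $\{c_n\}$ is a proper cycle with $\Theta(\{c_n\})$ equal to the given class. There is a minor bookkeeping point: the index attached to $c_n$ by $\Theta$ may exceed $k(n)$. This is handled by asymptotic equivalence, projecting down to $k(n)$.

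\emph{Injectivity.} Suppose $\Theta(\{c_n\})=0$. Then there is $m(n)\to\infty$, $m(n)\le k(n)$, such that $c_n$ is a boundary in $U^{(m(n))}$ for $n\gg0$. Choose $b_n\in C_{k+1}(U^{(m(n))})\subseteq C_{k+1}(X)$ with $\partial b_n=c_n$, and set $b_n=0$ for small $n$. Since $\operatorname{supp}(b_n)\subseteq U^{(m(n))}$ and $m(n)\to\infty$, the sequence $\{b_n\}$ lies in $\underline C_{k+1}(X,e)$, so $\{c_n\}\in\underline B_k(X,e)$.

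\emph{The main technical point.} One must check that ``$\{\operatorname{supp}(c_n)\}$ converges to $e$'' is equivalent to ``there is $k(n)\to\infty$ with $\operatorname{supp}(c_n)\subseteq U^{(k(n))}$'', and that this fits the definition of $\wp$, which allows eventual agreement only. The forward direction follows from the definition of convergence: for each $i$ there is $n_i$ with $\operatorname{supp}(c_n)\subseteq U^{(i)}$ for $n\ge n_i$, so $k(n)$ can be taken as the largest such $i$ bounded by $n$. The converse is immediate. This translation is the crux of all four steps; the homological parts are routine because singular chains are compactly supported and homology of an open subset is computed by chains supported in it.
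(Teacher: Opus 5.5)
Your proposal is correct and follows essentially the same route as the paper. Both define the map on proper cycles by sending $\{c_n\}$ to the asymptotic class of $[c_n]\in H_k(U^{(k(n))})$ for a divergent index sequence with $\operatorname{supp}(c_n)\subseteq U^{(k(n))}$, and both obtain well-definedness and injectivity by viewing bounding chains as chains in $U^{(m(n))}$ with $m(n)\to\infty$. The only differences are cosmetic: the paper normalizes representatives via $c_n^\sharp$, zeroing out finitely many bad terms, instead of your canonical choice $k(n)=\max\{i\le n : \operatorname{supp}(c_n)\subseteq U^{(i)}\}$, and you state the independence from the choice of index sequence explicitly where the paper leaves it implicit in its well-definedness check.
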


\begin{proof}
Let $[\{c_n\}] \in \underline{H}_k^\mathsf{geo}(X,e)$. By definition, there exists an integer $N$ such that $\partial_k(c_n) = 0$ for all $n \ge N$. Since $\{\operatorname{supp}(c_n)\}$ converges to $e$, we can standardize this representative by defining a new sequence $\{c_n^\sharp\}$ as follows:
$$c_n^\sharp \coloneqq\begin{cases}c_n & \text{if } \partial_k(c_n) = 0 \text{ and } \bigcup_{j \ge n} \operatorname{supp}(c_j) \subseteq U^{(0)}, \\0 & \text{otherwise}.\end{cases}$$
Then $c_n = c_n^\sharp$ for all $n \gg 0$, meaning $[\{c_n\}] = [\{c_n^\sharp\}]$ in $\underline{H}_k^\mathsf{geo}(X,e)$. Importantly, $\partial_k(c_n^\sharp) = 0$ for all $n \ge 0$. Because $\{\operatorname{supp}(c_n^\sharp)\}$ converges to the end $e$, there exists a sequence of non-negative integers $\{l(n)\}$ diverging to $\infty$ such that $\operatorname{supp}(c_n^\sharp) \subseteq U^{(l(n))}$ for all $n \ge 0$. Thus, each $c_n^\sharp$ can be viewed as an element of $Z_k(U^{(l(n))})$.

Recall that $\underline{H}_k^\mathsf{alg}(X,e) \coloneqq \wp(\underline{G})$, where $\underline{G} = \{H_k(U^{(n)}), (p_n^m)_k\}_{m \ge n \ge 0}$ is the inverse system of groups and $(p_n^m)_k$ are the inclusion-induced homomorphisms. We define a map $\Phi \colon \underline{H}_k^\mathsf{geo}(X,e) \to \underline{H}_k^\mathsf{alg}(X,e)$ by mapping the geometric class to the asymptotic equivalence class:
$$\Phi([\{c_n\}]) \coloneqq \left[\left\{ [c_n^\sharp] \in H_k(U^{(l(n))}) \right\}\right].$$
We now verify that $\Phi$ is an isomorphism.

\emph{Well-definedness:} Suppose $[\{c_n\}] = [\{d_n\}]$ in $\underline{H}_k^\mathsf{geo}(X,e)$. By definition, there exists a sequence $\{b_n\} \in \underline{C}_{k+1}(X,e)$ and an integer $N$ such that $d_n^\sharp - c_n^\sharp = \partial_{k+1}(b_n)$ for all $n \ge N$. Because $\{\operatorname{supp}(b_n)\}$ converges to $e$, there exists a sequence of non-negative integers $l'(n) \to \infty$ such that $b_n \in C_{k+1}(U^{(l'(n))})$ for all $n \ge 0$. Let $l(n)$ and $m(n)$ be the diverging index sequences associated with $c_n^\sharp$ and $d_n^\sharp$ respectively, i.e., $\operatorname{supp}(c_n^\sharp) \subseteq U^{(l(n))}$ and $\operatorname{supp}(d_n^\sharp) \subseteq U^{(m(n))}$ for all $n \ge 0$. Define $o(n) \coloneqq \min\{l(n), m(n), l'(n)\}$. Then $o(n) \to \infty$, and in the chain complex $C_*(U^{(o(n))})$, we have $d_n^\sharp - c_n^\sharp = \partial_{k+1}(b_n)$ for all $n \ge N$. Therefore,
$$(p_{o(n)}^{m(n)})_k([d_n^\sharp]) = (p_{o(n)}^{l(n)})_k([c_n^\sharp])$$
in $H_k(U^{(o(n))})$ for all $n \ge N$. This shows the sequence $\{[c_n^\sharp]\}$ is asymptotically equivalent to $\{[d_n^\sharp]\}$, making $\Phi$ well-defined.

\emph{Homomorphism:} Let $[\{c_n\}], [\{d_n\}] \in \underline{H}_k^\mathsf{geo}(X,e)$. Consider sequences $l(n), m(n) \to \infty$ such that $c_n^\sharp \in C_k(U^{(l(n))})$ and $d_n^\sharp \in C_k(U^{(m(n))})$. Define $o(n) \coloneqq \min\{l(n), m(n)\}$. By the group operation in $\wp(\underline{G})$,\begin{align*}
\Phi([\{c_n\}]) + \Phi([\{d_n\}]) &= \left[\left\{ (p_{o(n)}^{l(n)})_k([c_n^\sharp]) + (p_{o(n)}^{m(n)})_k([d_n^\sharp]) \right\}\right] \\
&= \left[\left\{ [c_n^\sharp + d_n^\sharp] \in H_k(U^{(o(n))}) \right\}\right] \\
&= \Phi([\{c_n + d_n\}]).
\end{align*}

\emph{Injectivity:} Suppose $\Phi([\{c_n\}])$ is the trivial element of $\underline{H}_k^\mathsf{alg}(X,e)$. Consider the sequence $l(n) \to \infty$ where $c_n^\sharp \in C_k(U^{(l(n))})$. The triviality in $\wp(\underline{G})$ implies there exists a sequence $o(n) \to \infty$ with $o(n) \le l(n)$ such that $(p_{o(n)}^{l(n)})_k([c_n^\sharp]) = 0$ in $H_k(U^{(o(n))})$ for all $n \gg 0$. This means $c_n^\sharp \in B_k(U^{(o(n))})$ for all $n \gg 0$. Thus, for all $n \gg 0$, there exists $b_n \in C_{k+1}(U^{(o(n))})$ such that $\partial_{k+1}(b_n) = c_n^\sharp$. By setting $b_n = 0$ for smaller $n$, we obtain a sequence $\{b_n\} \in \prod C_{k+1}(X)$. Because $\operatorname{supp}(b_n) \subseteq U^{(o(n))}$ and $o(n) \to \infty$, the supports strictly converge to $e$, meaning $\{b_n\} \in \underline{C}_{k+1}(X,e)$. Hence, its boundary sequence $\{\partial_{k+1}(b_n)\}$ belongs to $\operatorname{im}(\underline{\partial}_{k+1})$, and since it equals $c_n^\sharp$ for $n \gg 0$, we conclude $\{c_n^\sharp\} \in \underline{B}_k(X,e)$. Therefore, $[\{c_n\}] = 0$.

\emph{Surjectivity:} Let $x \in \underline{H}_k^\mathsf{alg}(X,e)$. Then $x$ is represented by a class $\left[\left\{ g_{l(n)} \in H_k(U^{(l(n))}) \right\}\right]$, where $l(n) \to \infty$. For each $n$, choose a representative cycle $c_{l(n)} \in Z_k(U^{(l(n))})$ such that $[c_{l(n)}] = g_{l(n)}$. Define $c'_n \in C_k(X)$ by $c'_n \coloneqq c_{l(n)}$ for all $n \ge 0$. Because $l(n) \to \infty$, the sequence of supports $\{\operatorname{supp}(c'_n)\}$ converges to $e$, meaning $\{c'_n\} \in \underline{C}_k(X,e)$. Furthermore, $\partial_k(c'_n) = 0$ for all $n \ge 0$, so $\{c'_n\} \in \underline{Z}_k(X,e)$. By construction, $\Phi([\{c'_n\}]) = \left[\left\{ [c'_n] \in H_k(U^{(l(n))}) \right\}\right] = x$. Thus, $\Phi$ is surjective, completing the proof.
\end{proof}

\subsection{Basic Properties of Geometric Proper Homology} 
The aim of this section is to establish the basic properties of proper homology groups using the geometric formulation developed earlier. We detail the proofs for functoriality, proper homotopy invariance, and the long exact sequence. However, we omit explicit statements and proofs of other standard homological properties, such as the proper counterparts to the good cover proposition \cite[Proposition 2.21]{MR1867354} and the excision theorem \cite[Theorem 6.17]{MR957919}. \emph{These omitted results follow by adapting standard arguments from the ordinary singular setting, provided careful attention is paid to the convergence of the supports of the chains.}

Let $X$ and $Y$ be nice spaces. Suppose $f \colon X \to Y$ is a proper map, let $e$ be an end of $X$, and let $e' = \mathsf{Ends}(f)(e)$ be the corresponding end of $Y$. Since $f$ is continuous, it induces a chain map $C_k(f) \colon C_k(X) \to C_k(Y)$ defined by sending an element $\sum_i m_i \sigma_i \in C_k(X)$ to the element $\sum_i m_i (f \circ \sigma_i) \in C_k(Y)$. 

Notice that the support of a chain pushes forward naturally: $\operatorname{supp}\big(C_k(f)(c)\big) = f(\operatorname{supp}(c))$. Moreover, since $f$ is proper, the preimage of any compact set in $Y$ is compact in $X$. Therefore, if $\{c_n\}_{n \geq 0} \in \prod_{n \geq 0} C_k(X)$ is a sequence of chains such that $\{\operatorname{supp}(c_n)\}$ converges to the end $e$, the sequence of image supports $\{f(\operatorname{supp}(c_n))\}$ must converge to the end $e'$. 

Therefore, $f$ induces a well-defined homomorphism $\underline{C}_k(f) \colon \underline{C}_k(X,e) \to \underline{C}_k(Y,e')$ given by $\{c_n\}\longmapsto \{C_k(f)(c_n)\}$. Because $C_k(f)$ commutes with the standard boundary operator $\partial_k$, the map $\underline{C}_k(f)$ sends cycles $\underline{Z}_k(X,e)$ into $\underline{Z}_k(Y,e')$ and boundaries $\underline{B}_k(X,e)$ into $\underline{B}_k(Y,e')$. Therefore, $f$ induces a well-defined homomorphism on the geometric proper homology groups:
\(
\underline{H}_k^\mathsf{geo}(f) \colon \underline{H}_k^\mathsf{geo}(X,e) \to \underline{H}_k^\mathsf{geo}(Y,e').
\)

This assignment is functorial: the identity map $\operatorname{id}_X$ induces the identity homomorphism $\underline{H}_k^\mathsf{geo}(\operatorname{id}_X) = \operatorname{id}$, and if $g \colon Y \to Z$ is another proper map to a nice space $Z$, then $\underline{H}_k^\mathsf{geo}(g \circ f) = \underline{H}_k^\mathsf{geo}(g) \circ \underline{H}_k^\mathsf{geo}(f)$. We now show that proper homology is a proper homotopy invariant.

\begin{theorem}
    Let $X$ and $Y$ be nice spaces, and let $f,g \colon X \to Y$ be proper maps. If $f$ and $g$ are properly homotopic, then $\underline{H}_k^\mathsf{geo}(f) = \underline{H}_k^\mathsf{geo}(g) \colon \underline{H}_k^\mathsf{geo}(X, e) \to \underline{H}_k^\mathsf{geo}(Y, e'),$ 
    where $e' = \mathsf{Ends}(f)(e) = \mathsf{Ends}(g)(e)$.
\end{theorem}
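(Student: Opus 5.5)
The plan is to build a chain homotopy on the level of proper chains, using the standard prism operator from ordinary singular homology. Let $F\colon X\times[0,1]\to Y$ be a proper homotopy with $F_0=f$ and $F_1=g$. Recall the prism operator $P_k\colon C_k(X)\to C_{k+1}(Y)$, defined by $P_k(\sigma)=\sum_i (-1)^i F\circ(\sigma\times\mathrm{id})\circ\nu_i$ for suitable affine simplices $\nu_i$ in $\Delta^k\times[0,1]$. It satisfies
\[
\partial_{k+1}P_k+P_{k-1}\partial_k=C_k(g)-C_k(f).
\]
Its key support property is $\operatorname{supp}(P_k(c))\subseteq F(\operatorname{supp}(c)\times[0,1])$. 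I will define $\underline{P}_k\colon \underline{C}_k(X,e)\to \underline{C}_{k+1}(Y,e')$ coordinatewise by $\{c_n\}\mapsto\{P_k(c_n)\}$. Since the homotopy identity holds in each coordinate, it holds for sequences.

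The main point, and the expected obstacle, is to show that $\underline{P}_k$ really lands in $\underline{C}_{k+1}(Y,e')$. That is, if $\{\operatorname{supp}(c_n)\}$ converges to $e$, then $\{F(\operatorname{supp}(c_n)\times[0,1])\}$ converges to $e'$. I plan the following argument. The map $F$ is proper, and $X\times[0,1]$ is a nice space whose ends correspond to those of $X$, since $[0,1]$ is compact. So I will apply the fact quoted in the paper, that proper maps push sequences converging to an end to sequences converging to the induced end, to the proper map $F$.

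I will check this directly. Fix a component $V^{(i)}$ of $Y\setminus L_i$ in the end $e'$, where $\{L_i\}$ is an efficient exhaustion of $Y$. The set $F^{-1}(L_i)$ is compact, so its projection $K$ to $X$ is compact and lies in some $K_j$. For large $n$, $\operatorname{supp}(c_n)\subseteq U^{(j)}$, which is connected and misses $K$. Hence $F(U^{(j)}\times[0,1])$ is a connected set missing $L_i$, so it lies in a single component of $Y\setminus L_i$. This component contains $f(U^{(j)})$. Because $f$ sends $e$ to $e'$, this forces the component to be $V^{(i)}$, after enlarging $j$ if needed so that $f(U^{(j)})\subseteq V^{(i)}$; this uses the definition of $\mathsf{Ends}(f)$. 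Hence the supports converge to $e'$.

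With this in hand, the conclusion is formal. Take $\{c_n\}\in\underline{Z}_k(X,e)$, so $\partial_k c_n=0$ for $n\gg0$. Then
\[
C_k(g)(c_n)-C_k(f)(c_n)=\partial_{k+1}P_k(c_n)\quad\text{for } n\gg0,
\]
with $\{P_k(c_n)\}\in\underline{C}_{k+1}(Y,e')$. Therefore $\underline{C}_k(g)\{c_n\}-\underline{C}_k(f)\{c_n\}\in\underline{B}_k(Y,e')$, and the two induced maps agree on $\underline{H}_k^{\mathsf{geo}}(X,e)$.

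Equality of the target ends $\mathsf{Ends}(f)(e)=\mathsf{Ends}(g)(e)$ is given by the cited \cite[Proposition 3.3.12]{MR3598162}. The argument above also recovers it, since $F(U^{(j)}\times[0,1])$ contains both $f(U^{(j)})$ and $g(U^{(j)})$.
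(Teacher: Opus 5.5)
Your proposal is correct and matches the paper's proof: the prism operator with the support bound $\operatorname{supp}(P_k(c))\subseteq F(\operatorname{supp}(c)\times[0,1])$, properness of the homotopy to keep $\{P_k(c_n)\}$ in $\underline{C}_{k+1}(Y,e')$, and the chain homotopy identity for $n\gg 0$ to exhibit the difference as a proper boundary. Your explicit check that $F(U^{(j)}\times[0,1])$ lies in the correct component $V^{(i)}$ (via the projection of $F^{-1}(L_i)$ lying in some $K_j$, together with the definition of $\mathsf{Ends}(f)$) fills in a step that the paper only asserts from properness.
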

\begin{proof}Let $\mathcal H \colon X \times [0,1] \to Y$ be a proper homotopy from $f$ to $g$. Recall the standard construction of the prism operator $P_k \colon C_k(X) \to C_{k+1}(Y)$ satisfying $\partial_{k+1} P_k + P_{k-1} \partial_k = C_k(g) - C_k(f)$. This operator is defined by subdividing $\Delta^k \times [0,1]$ into $(k+1)$-simplices, where for any singular $k$-simplex $\sigma \colon \Delta^k \to X$, the chain $P_k(\sigma)$ is given by an alternating sum of the compositions $\mathcal H \circ (\sigma \times \operatorname{id}_{[0,1]})$ restricted to these simplices \cite[Proof of Theorem 2.10]{MR1867354}. Thus, $\operatorname{supp}(P_k(\sigma)) \subseteq \mathcal H(\operatorname{im}(\sigma) \times [0,1])$. By linearity, for any arbitrary $k$-chain $c \in C_k(X)$, its support satisfies $\operatorname{supp}(P_k(c)) \subseteq \mathcal H(\operatorname{supp}(c) \times [0,1])$.

    Let $\{c_n\}_{n \ge 0} \in \underline{C}_k(X, e)$. By definition, the sequence of supports $\{\operatorname{supp}(c_n)\}$ converges to the end $e$. Because $\mathcal H$ is a proper map, the preimage of any compact set in $Y$ is compact in $X \times [0,1]$. Thus, the sequence of pushed-forward supports $\{\mathcal H(\operatorname{supp}(c_n) \times [0,1])\}$ must converge to the end $e'$. Since $\operatorname{supp}(P_k(c_n)) \subseteq \mathcal H(\operatorname{supp}(c_n) \times [0,1])$, the sequence $\{\operatorname{supp}(P_k(c_n))\}$ also converges to $e'$. 

    Therefore, the sequence of prism operators restricts to a well-defined family of homomorphisms $\underline{P}_k \colon \underline{C}_k(X, e) \to \underline{C}_{k+1}(Y, e')$ given by $\{c_n\} \mapsto \{P_k(c_n)\}$. This yields the proper chain homotopy relation:
    $$\underline{\partial}_{k+1} \underline{P}_k + \underline{P}_{k-1} \underline{\partial}_k = \underline{C}_k(g) - \underline{C}_k(f).$$

    To conclude, let $[\{c_n\}] \in \underline{H}_k^\mathsf{geo}(X, e)$ be represented by a proper cycle $\{c_n\} \in \underline{Z}_k(X, e)$. By definition, $\partial_k(c_n) = 0$ for all $n \gg 0$. Applying the proper chain homotopy to this sequence gives:
    $$C_k(g)(c_n) - C_k(f)(c_n) = \partial_{k+1}(P_k(c_n)) + P_{k-1}(\partial_k(c_n)).$$
    For all $n \gg 0$, the term $P_{k-1}(\partial_k(c_n))$ evaluates to zero. Thus, $C_k(g)(c_n) - C_k(f)(c_n) = \partial_{k+1}(P_k(c_n))$ for all $n \gg 0$. Because $\{P_k(c_n)\} \in \underline{C}_{k+1}(Y, e')$, this shows that the difference sequence belongs precisely to the geometric proper boundary group, i.e., $\{C_k(g)(c_n) - C_k(f)(c_n)\} \in \underline{B}_k(Y, e')$. Hence, they induce the exact same map on the quotient space, proving $\underline{H}_k^\mathsf{geo}(f) = \underline{H}_k^\mathsf{geo}(g)$.
\end{proof}

We are now ready to define the relative version of geometric proper homology. To motivate our definition, we first recall some terminology from singular homology. Let $A\subseteq X$ be a pair of topological spaces. The relative cycle group in degree $k$, denoted $Z_k(X,A)$, is the subgroup of $C_k(X)$ defined as $Z_k(X,A)\coloneqq\{\, c \in C_k(X) \mid \partial_k(c) \in C_{k-1}(A) \,\}$. The relative boundary group in degree $k$, denoted $B_k(X,A)$, is the subgroup of $C_k(X)$ defined as $B_k(X,A)\coloneqq\{\, \partial_{k+1}(d) + b \mid d \in C_{k+1}(X),\, b \in C_k(A) \,\}$. The $k$-th relative singular homology $H_k(X,A)$ is isomorphic to the quotient group $Z_k(X,A)/B_k(X,A)$ \cite[Theorem 5.11]{MR957919}.

Let $X$ be a nice space and $e$ an end of $X$. Suppose $A$ is a nice subspace of $X$ such that the inclusion $i \colon A \hookrightarrow X$ is a proper map. Let $e'$ be an end of $A$ such that $\mathsf{Ends}(i)(e') = e$. For brevity, we denote the geometric proper chain groups by $\underline{C}_k \coloneqq \underline{C}_k(X,e)$ and $\underline{C}'_k \coloneqq \underline{C}_k(A,e')$. Define the \emph{relative proper cycle group} and the \emph{relative proper boundary group} in degree $k$, respectively, as follows:
\begin{align*}
    \underline{Z}_k((X,e),(A,e')) & \coloneqq \left\{\, \{c_n\} \in \underline{C}_k \;\middle|\; 
    \begin{aligned}
        &\exists\, \{a_n'\} \in \underline{C}'_{k-1} \text{ such that } \\
        &\partial_k(c_n) = a_n' \text{ for all } n \gg 0 
    \end{aligned}
    \,\right\}, \text{ and}\\[1ex]
    \underline{B}_k((X,e),(A,e')) & \coloneqq \left\{\, \{c_n\} \in \underline{C}_k \;\middle|\; 
    \begin{aligned}
        &\exists\, \{d_n\} \in \underline{C}_{k+1} \text{ and } \{b_n'\} \in \underline{C}'_k \text{ such that } \\
        &c_n = \partial_{k+1}(d_n) + b_n' \text{ for all } n \gg 0 
    \end{aligned}
    \,\right\}.
\end{align*}

\begin{definition}
    The \emph{geometric relative proper homology group} in degree $k$, denoted $\underline{H}_k^\mathsf{geo}((X,e), \allowbreak (A,e'))$, is defined as the quotient group 
    $\underline{Z}_k((X,e),(A,e'))/\underline{B}_k((X,e),(A,e')).$
\end{definition}

\begin{theorem}
    The homomorphism $\delta_k \colon \underline{H}_k^\mathsf{geo}((X,e),(A,e')) \to \underline{H}_{k-1}^\mathsf{geo}(A,e')$ defined by
    $\delta_k([\{c_n\}])$ $\coloneqq [\{\partial_k(c_n)\}]$
    is a well-defined group homomorphism and fits into the following long exact sequence:
    $$ \cdots \to \underline{H}_k^\mathsf{geo}(A,e') \xrightarrow{i_*} \underline{H}_k^\mathsf{geo}(X,e) \xrightarrow{j_*} \underline{H}_k^\mathsf{geo}((X,e),(A,e')) \xrightarrow{\delta_k} \underline{H}_{k-1}^\mathsf{geo}(A,e') \to \cdots $$
    where $j_*([\{c_n\}]) \coloneqq [\{c_n\}]$ is induced by the natural projection of cycles.
\end{theorem}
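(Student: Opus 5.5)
The plan is to mimic the classical zig-zag construction for the long exact sequence of a pair, working throughout with sequences of chains and keeping track of two things: that supports converge to the relevant end, and that every identity is only required for $n \gg 0$. Because the inclusion $i$ is proper with $\mathsf{Ends}(i)(e')=e$, a sequence in $\underline{C}'_k$ is also a sequence in $\underline{C}_k$. So $i_*$ is simply the functorially induced map $\underline{H}_k^\mathsf{geo}(i)$, and $j_*$ is well defined: a proper cycle is a relative proper cycle, take $a_n'=0$; and a proper boundary is a relative proper boundary, take $b_n'=0$.

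First I will check that $\delta_k$ is well defined. Let $\{c_n\}\in\underline{Z}_k((X,e),(A,e'))$. Then $\partial_k c_n = a_n'$ for $n\gg0$ with $\{a_n'\}\in\underline{C}'_{k-1}$. Replace the sequence $\{\partial_k c_n\}$ by $\{a_n'\}$; this changes only finitely many terms and is harmless, since all conditions are tail conditions. Then $\{a_n'\}$ lies in $\underline{Z}_{k-1}(A,e')$, because $\partial a_n' = \partial\partial c_n = 0$ for $n\gg0$. So the formula $\delta_k[\{c_n\}]=[\{a_n'\}]$ is meaningful, and I will read the paper's formula in this sense. If $c_n=\partial d_n + b_n'$ for $n\gg0$, then $\partial c_n = \partial b_n'$ with $\{b_n'\}\in\underline{C}'_k$, so the image lies in $\underline{B}_{k-1}(A,e')$. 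Additivity is clear.

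Next come the six inclusions, exactness at each of the three spots. The compositions $j_*i_*$, $\delta j_*$ and $i_*\delta$ vanish exactly as in the classical case. Each reverse inclusion is obtained by lifting witnesses termwise.
\begin{itemize}
\item At $\underline{H}_k^\mathsf{geo}((X,e),(A,e'))$: if $\partial_k c_n = \partial b_n'$ for $n\gg0$, with $\{b_n'\}\in\underline{C}'_k$ and $\{d_n'\}\in\underline{C}'_{k+1}$, then $\{c_n-b_n'\}$ is a proper cycle of $X$ whose image under $j_*$ is $[\{c_n\}]$.
\item At $\underline{H}_k^\mathsf{geo}(X,e)$: if $c_n=\partial d_n+b_n'$, then $\{b_n'\}$ is a proper cycle of $A$ for $n\gg0$, and it is homologous to $\{c_n\}$ in $X$.
\item At $\underline{H}_{k-1}^\mathsf{geo}(A,e')$: if $a_n'=\partial d_n$ with $\{d_n\}\in\underline{C}_k$, then $\{d_n\}$ is a relative proper cycle with $\delta[\{d_n\}]=[\{a_n'\}]$.
\end{itemize}

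In each case the new sequence is built from differences and sums of sequences in $\underline{C}_\ast$ or $\underline{C}'_\ast$. Such sequences still have supports converging to the right end, since a finite union of sequences converging to $e$ converges to $e$. Where an equation is missing for small $n$, I set the term to $0$.

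The main obstacle is exactness at $\underline{H}_k^\mathsf{geo}(A,e')$, that is, $\ker i_*\subseteq \operatorname{im}\delta$. Here the witness $\{d_n\}$ lives in $X$, and we must know that it converges to $e$ rather than merely escaping to infinity. This holds by the definition of $\underline{B}_{k-1}(X,e)$. The real subtlety is the convergence issue for the images $\{a_n'\}$ in $X$ versus in $A$. We need that a sequence of subsets of $A$ converging to $e'$ in $A$ converges to $e$ in $X$. This is exactly the compatibility $\mathsf{Ends}(i)(e')=e$ for the proper inclusion, via the argument used for functoriality. I will state it once as a lemma and invoke it wherever sequences are pushed from $A$ to $X$.
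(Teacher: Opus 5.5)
Your proposal is correct and follows the paper's proof essentially step for step. It uses the same termwise witnesses for the three reverse inclusions: $\{b_n'\}$ as a proper cycle in $A$, $\{c_n - b_n'\}$ as a proper cycle in $X$, and $\{d_n\}$ as a relative proper cycle. Your one refinement, reading $\delta_k[\{c_n\}]$ as $[\{a_n'\}]$ because $\partial_k c_n$ is only guaranteed to be a chain in $A$ for $n \gg 0$, tightens a point the paper's formula glosses over.
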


\begin{proof}
    First, we verify that $\delta_k$ is well-defined. Suppose $[\{c_n\}] = [\{x_n\}]$ in the relative group. Then $\{c_n - x_n\} \in \underline{B}_k((X,e),(A,e'))$. By definition, there exist sequences $\{d_n\} \in \underline{C}_{k+1}$ and $\{b_n'\} \in \underline{C}'_k$ such that for $n \gg 0$, we have $c_n - x_n = \partial_{k+1}(d_n) + b_n'$. Applying the boundary operator yields $\partial_k(c_n) - \partial_k(x_n) = \partial_k(\partial_{k+1}(d_n)) + \partial_k(b_n') = \partial_k(b_n')$ for all $n \gg 0$. Since $\{b_n'\} \in \underline{C}'_k$, the sequence $\{\partial_k(c_n) - \partial_k(x_n)\}$ belongs to $\underline{B}_{k-1}(A,e')$. Thus, $[\{\partial_k(c_n)\}] = [\{\partial_k(x_n)\}]$, so $\delta_k$ is well-defined.

    We now prove exactness at each of the three nodes.

    \emph{Exactness at $\underline{H}_k^\mathsf{geo}(X,e)$:} We need to show that $\operatorname{im}(i_*) = \ker(j_*)$. If $\alpha \in \operatorname{im}(i_*)$, then $\alpha$ is represented by a proper cycle $\{c_n\} \in \underline{Z}_k(A,e')$ in $A$. Thus, $\{c_n\}$ satisfies the relative boundary condition $c_n = \partial_{k+1}(0) + c_n$, meaning $\{c_n\} \in \underline{B}_k((X,e),(A,e'))$. Therefore, $j_*(\alpha) = j_*([\{c_n\}]) = 0$. 

    Conversely, let $[\{c_n\}] \in \ker(j_*)$. Then $\{c_n\} \in \underline{B}_k((X,e),(A,e'))$, so there exist $\{d_n\} \in \underline{C}_{k+1}$ and $\{b_n'\} \in \underline{C}'_k$ such that $c_n = \partial_{k+1}(d_n) + b_n'$ for $n \gg 0$. Because $\{c_n\}$ is a proper cycle in $X$, we have $0 = \partial_k(c_n) = \partial_k(b_n')$ for all $n \gg 0$, making $\{b_n'\}$ a proper cycle in $A$. Furthermore, $c_n - b_n' = \partial_{k+1}(d_n)$ for all $n \gg 0$, which means $[\{c_n\}] = i_*([\{b_n'\}])$.

    \emph{Exactness at $\underline{H}_k^\mathsf{geo}((X,e),(A,e'))$:} We show $\operatorname{im}(j_*) = \ker(\delta_k)$. If $\alpha \in \operatorname{im}(j_*)$, then $\alpha$ is represented by a proper cycle $\{c_n\} \in \underline{Z}_k(X,e)$ in $X$, meaning $\partial_k(c_n) = 0$ for all $n \gg 0$. Thus, $\delta_k(\alpha) = \delta_k([\{c_n\}]) = [\{0\}] = 0$.

    Conversely, let $[\{c_n\}] \in \ker(\delta_k)$. Then the sequence $\{\partial_k(c_n)\}$ is a proper boundary in $A$. This implies there exists $\{a_n'\} \in \underline{C}'_k$ such that $\partial_k(c_n) = \partial_k(a_n')$ for all $n \gg 0$. Define a new sequence $z_n \coloneqq c_n - a_n'$. Notice that $\partial_k(z_n) = 0$ for all $n \gg 0$, meaning $\{z_n\} \in \underline{Z}_k(X,e)$. Furthermore, in the relative chain group, $c_n - z_n = a_n' = \partial_{k+1}(0) + a_n'$, meaning $[\{c_n\}] = [\{z_n\}] = j_*([\{z_n\}])$.

    \emph{Exactness at $\underline{H}_{k-1}^\mathsf{geo}(A,e')$:} We need to show that $\operatorname{im}(\delta_k) = \ker(i_*)$. For any $[\{c_n\}] \in \underline{H}_k^\mathsf{geo}((X,e),\allowbreak (A,e'))$, we have $i_*(\delta_k([\{c_n\}])) = i_*([\{\partial_k(c_n)\}])$. Because this class is exactly the boundary of $\{c_n\} \in \underline{C}_k(X,e)$, it is trivial in $\underline{H}_{k-1}^\mathsf{geo}(X,e)$.

    Conversely, let $[\{a_n'\}] \in \ker(i_*)$. This means $\{a_n'\}$ is a proper cycle in $A$ that bounds in $X$. Thus, there exists a sequence $\{c_n\} \in \underline{C}_k(X,e)$ such that $\partial_k(c_n) = a_n'$ for $n \gg 0$. Because its boundary lies entirely in $A$, $\{c_n\} \in \underline{Z}_k((X,e),(A,e'))$. By definition, $\delta_k([\{c_n\}]) = [\{\partial_k(c_n)\}] = [\{a_n'\}]$.
\end{proof}

\section{A Proper Hurewicz Theorem}\label{section:Hur}
This section aims to establish suitable conditions under which Brown's proper homotopy groups are isomorphic to the corresponding proper homology groups defined in the previous section. For simplicity, we work with connected, non-compact smooth manifolds. Recall that by the Cairns--Whitehead triangulation theorem, every such manifold admits a CW-complex structure.

 We now recall the definition of Brown's proper homotopy group \cite{MR356041}. Let $M$ be a connected, non-compact smooth manifold, and let $e$ be an end of $M$ determined by a sequence of connected open subsets $U_0 \supseteq U_1 \supseteq U_2 \supseteq \cdots$ of $M$ such that $\bigcap_n U_n = \varnothing$. Denote $[0,\infty)$ by $\underline{*}$, and define $\underline{\mathbb{S}^k}$ to be the space $\underline{*}$ together with a distinct $k$-sphere attached at each integer point. Adopting the terminology of \cite{MR1361888}, we call $\underline{\mathbb{S}^k}$ the string of $k$-spheres.  Fix a proper ray $r \colon [0,\infty) \to M$ such that $r([n,\infty)) \subseteq U_n$ for every integer $n \geq 0$. We consider $x_n = r(n)$ as the basepoint of $U_n$. Two proper maps between manifolds are said to \emph{have the same germ} if they agree on the complement of some compact set. The equivalence class of a proper map $f$ under this relation is denoted $\underline{f}$ and is called the \emph{germ} of $f$.

A \emph{proper map of pairs} $\alpha\colon \left(\underline{\mathbb{S}^k}, \underline{*}\right)\to (M, \underline{r})$ is a proper map $\alpha\colon \underline{\mathbb{S}^k}\to M$ such that the germ of the restriction $\alpha|_{\underline{*}}$ is $\underline{r}$. If $\beta\colon \left(\underline{\mathbb{S}^k}, \underline{*}\right)\to (M, \underline{r})$ is another proper map of pairs, we say that $\alpha$ and $\beta$ are \emph{germ homotopic rel $\underline{*}$} if there is a proper homotopy $H\colon\underline{\mathbb{S}^k} \times [0,1] \to M$ such that $\underline{H}_0 = \underline{\alpha}$, $\underline{H}_1 = \underline{\beta}$, and the germ of $H|_{\underline{*} \times [0,1]}$ is the germ of the map $\underline{*} \times [0,1] \xrightarrow{p} \underline{*} \xrightarrow{r} M$, where $p$ is the natural projection. The equivalence class of $\alpha$ is denoted $[\alpha]$. The set of all such equivalence classes, denoted $\underline{\pi}_k(M, \underline{r})$, is called the \emph{$k$-th proper homotopy group of $M$ based at $\underline{r}$}.

We now define the group structure. Let $[\alpha],[\beta] \in \underline{\pi}_k(M, \underline{r})$ and choose representatives $\alpha, \beta$ that agree on $\underline{*}$. Define $(\alpha\cdot \beta)|_{\underline{*}} = \alpha|_{\underline{*}}$. If $\mathbb{S}^k_m$ is the $k$-sphere attached at the integer $m$, we define $(\alpha\cdot \beta)|_{\mathbb{S}^k_m}$ using the standard map for the homotopy addition of $\alpha|_{\mathbb{S}^k_m}$ and $\beta|_{\mathbb{S}^k_m}$. The attaching point is treated as the basepoint. The class $[\alpha\cdot \beta]$ depends only on $[\alpha]$ and $[\beta]$, defining the group operation on $\underline{\pi}_k(M, \underline{r})$.

Let $\gamma_n$ be the path in $U_n$ from $x_{n+1}$ to $x_n$ obtained by traversing the ray $r$ backward from $r(n+1)$ to $r(n)$. Let $\underline{\Pi}_k = \{\pi_k(U_n, x_n)\}$ be the inverse system with bonding maps $f_n \coloneqq (\gamma_n)_\sharp \circ (\iota_n)_\sharp \colon \pi_k(U_{n+1}, x_{n+1}) \to \pi_k(U_n, x_n)$, where $(\gamma_n)_\sharp$ is the basepoint change isomorphism and $\iota_n\colon U_{n+1}\hookrightarrow U_n$ is the inclusion. Brown proved that $\wp(\underline{\Pi}_k)$ is isomorphic to $\underline{\pi}_k(M,\underline{r})$ \cite{MR356041}.

Similarly, let $\underline{H}_k = \{H_k(U_n)\}$ be the inverse system whose bonding maps are the homomorphisms $(\iota_n)_* \colon H_k(U_{n+1}) \to H_k(U_n)$ induced by the inclusions $\iota_n\colon U_{n+1}\hookrightarrow U_n$. By Theorem~\ref{computation}, $\underline{H}_k^\mathsf{geo}(M,e)$ is isomorphic to $\underline{H}_k^\mathsf{alg}(M,e)$. For simplicity, we denote this common group by $\underline{H}_k(M,e)$. The main theorem of this section is as follows.

\begin{theorem}\label{thm:hur}
    The abelianization $\operatorname{Ab}(\underline{\pi}_1(M,\underline{r}))$ is isomorphic to $\underline{H}_1(M,e)$. Moreover, for any $k\geq 1$, if $\underline{\pi}_1(M,\underline{r}),\dots, \underline{\pi}_k(M,\underline{r})$ are trivial, then $\underline{\pi}_{k+1}(M,\underline{r})$ is isomorphic to $\underline{H}_{k+1}(M,e)$.
\end{theorem}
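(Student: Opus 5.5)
We will work with the algebraic models $\underline{\pi}_k(M,\underline{r})\cong\wp(\underline{\Pi}_k)$ (Brown) and $\underline{H}_k(M,e)\cong\wp(\underline{H}_k)$ (Theorem~\ref{computation}). The classical Hurewicz maps $h_n\colon\pi_k(U_n,x_n)\to H_k(U_n)$ commute with the bonding maps: the basepoint change $(\gamma_n)_\sharp$ does not affect the Hurewicz image, and inclusions are natural. Hence $\{h_n\}$ is a morphism of inverse systems $\underline{\Pi}_k\to\underline{H}_k$. By Theorem~\ref{fun} it induces a homomorphism $\wp(h)\colon\wp(\underline{\Pi}_k)\to\wp(\underline{H}_k)$, and the plan is to prove that this map has the stated properties.

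\emph{Degree one.} Each $h_n\colon\pi_1(U_n,x_n)\to H_1(U_n)$ is surjective with kernel $[\pi_1,\pi_1]$, since each $U_n$ is path connected. Levelwise we then have short exact sequences $1\to\underline{[\Pi_1,\Pi_1]}\to\underline{\Pi}_1\to\underline{H}_1\to 1$, and Theorem~\ref{exactness} gives $\wp(\underline{\Pi}_1)/\wp([\underline{\Pi}_1,\underline{\Pi}_1])\cong\wp(\underline{H}_1)$. It remains to identify $\wp([\underline{\Pi}_1,\underline{\Pi}_1])$ with the commutator subgroup of $\wp(\underline{\Pi}_1)$, and this is the main obstacle. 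The inclusion $\supseteq$ is clear. For $\subseteq$, an element of $\wp$ of the commutator system is represented by a sequence $\{g_{k(n)}\}$ in which each $g_{k(n)}$ is a product of commutators, but the number of commutators $c(n)$ may be unbounded. A finite product of commutators in $\wp$ only produces sequences with uniformly bounded commutator length, so the algebraic argument breaks down.

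I would first try to bound the commutator length using the geometry, which is the reason the paper stresses the geometric formulation. A loop that is null-homologous in $U_n$ bounds a map of an orientable surface of some genus $g(n)$. The ray $r$ lets us organize these surfaces into proper maps of a ``string of genus-$g(n)$ surfaces'', and we then want to express such a family as a single commutator $[\alpha,\beta]$ of two proper maps of the string of circles. The trick to attempt is to spread the $g(n)$ handles along the ray. Concretely, we reindex by replacing $k(n)$ with a slower sequence, which is allowed by the asymptotic relation, so that at each index only a bounded number of handles occur. Alternatively, one can use the freedom in $\wp$ to interleave several sequences, letting a product of boundedly many commutators of asymptotic classes realize the whole family. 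If this fails, a fallback is to define the inverse map $\underline{H}_1^{\mathsf{geo}}\to\operatorname{Ab}(\underline{\pi}_1)$ directly, by sending a proper 1-cycle to the sum of its loops connected to the ray by tails, and to check that proper boundaries map to zero by decomposing proper 2-chains into triangles.

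\emph{Higher degrees.} Suppose $\underline{\pi}_1,\dots,\underline{\pi}_k$ are trivial. By Corollary~\ref{protrivdet}, the systems $\underline{\Pi}_1,\dots,\underline{\Pi}_k$ are pro-trivial: for each $n$ there is $m$ with $\pi_j(U_m)\to\pi_j(U_n)$ trivial for $j\le k$. Passing to a subsequence, which is allowed by Lemma~\ref{SI}, we may assume every bonding map kills $\pi_{\le k}$. Next we apply a relative Hurewicz-type argument on the pairs $(U_n,U_{n+1})$, using CW approximations. If $\pi_{\le k}(U_{n+1})\to\pi_{\le k}(U_n)$ is trivial, then after stepping down two levels the Hurewicz map $\pi_{k+1}\to H_{k+1}$ becomes a pro-isomorphism, and $H_j$ for $1\le j\le k$ becomes pro-trivial. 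This is the pro-Hurewicz theorem of Artin--Mazur type: the kernel and cokernel systems of $\{h_n\}$ are pro-trivial, with each composite through two or three levels vanishing. Applying exactness of $\wp$ (Theorem~\ref{exactness}) to $1\to\ker\to\underline{\Pi}_{k+1}\to\operatorname{im}\to1$ and $1\to\operatorname{im}\to\underline{H}_{k+1}\to\operatorname{coker}\to1$, together with $\wp(\text{pro-trivial})=1$, shows that $\wp(h)$ is an isomorphism. The key verification here is the pro-triviality of the kernel and cokernel. It follows from the Hurewicz theorem applied to a cellular model in which $U_{n+2}\to U_n$ factors through a $(k)$-connected complex, obtained by attaching cells in $U_{n+1}$ to kill $\pi_{\le k}(U_{n+2})$.
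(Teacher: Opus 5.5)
\textbf{Degree one: there is a genuine gap, and it cannot be closed along the lines you suggest.}

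Your reduction to the question of whether $[\wp(\underline{\Pi}_1),\wp(\underline{\Pi}_1)]=\wp([\underline{\Pi}_1,\underline{\Pi}_1])$ is right, and the obstruction you name is real. A product of $c$ commutators in $\wp(\underline{\Pi}_1)$ is, after projecting to a common level, a product of $c$ commutators at every large index. Reindexing to a slower sequence or interleaving sequences does not change this, and for a constant system projection does nothing at all.

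Here is a concrete failure. Take $M=\Sigma_2\times\mathbb{R}$ with $U_n=\Sigma_2\times(n,\infty)$. Then $\underline{\Pi}_1$ is the constant system on $G=\pi_1(\Sigma_2)$, so $\underline{\pi}_1(M,\underline r)\cong\prod G/\bigoplus G$. The class of $\{[a_1,b_1]^n\}$ is killed by the Hurewicz map. It is nevertheless not a finite product of commutators. The retraction $G\to F(a_1,b_1)$ given by $a_2\mapsto b_1$, $b_2\mapsto a_1$, combined with Culler's formula $\mathrm{cl}_{F_2}([a,b]^n)=\lfloor n/2\rfloor+1$, gives $\mathrm{cl}_G([a_1,b_1]^n)\to\infty$. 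So the natural map $\operatorname{Ab}(\underline{\pi}_1)\to\underline{H}_1$ is surjective (by your exactness argument) but not injective in general.

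Your fallback is exactly the paper's route: $\underline{\mathrm{Hu}}$ together with the inverse $\underline{K}$ built from tails along the ray. It breaks at the same point. Lemma~\ref{lem:properH3} asserts a level-wise null-homotopy of $\ell_{\partial_2 b_k}$. Lemma~\ref{lem:H6} only places that loop in the commutator subgroup of $\pi_1(U_{m(k)},r(k))$, with a number of commutators that grows with $b_k$. Concretely, the $1$-simplices of the loops $[a_1,b_1]^k$ in $\Sigma_2\times\{k+1\}$ form a proper boundary on which $\underline{K}$ is nonzero. No argument through the Hurewicz map, yours or the paper's, can give the degree-one isomorphism. An abstract isomorphism would need an entirely separate argument.

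\textbf{Higher degrees: your argument is correct and takes a different route from the paper.}

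The paper builds explicit cross-homomorphisms between $\underline{\Pi}_{k+1}$ and $\underline{H}_{k+1}$ to exhibit a pro-isomorphism and then applies Theorem~\ref{proinv}. It uses universal covers in degree $2$, the path-fibration lifting criterion of Lemma~\ref{lem:lifttool} in degree $3$, and says ``similarly'' for higher degrees.

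You instead show that the kernel and cokernel systems of the levelwise Hurewicz morphism are pro-trivial, and then use exactness of $\wp$ (Theorem~\ref{exactness}) together with $\wp(\text{pro-trivial})=1$. The factorization $U_m\subseteq Y\to U_n$, with $Y$ $k$-connected and the map extending the inclusion, gives both facts at once:
\begin{itemize}
\item a class in $\ker h_m$ dies in $\pi_{k+1}(Y)\cong H_{k+1}(Y)$;
\item the image of $H_{k+1}(U_m)$ in $H_{k+1}(U_n)$ factors through $H_{k+1}(Y)=h_Y(\pi_{k+1}(Y))$.
\end{itemize}
This is uniform in $k$ and avoids the fibration machinery.

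One correction is needed. The cells must be attached in stages, and each stage costs one level. The $j$-spheres created by the cells attached at the previous stage map to arbitrary classes in $\pi_j$ of the current target, and these die only one level further down. So the factorization runs from $U_{n+k}$ to $U_n$, not through ``two or three levels.'' Given Lemma~\ref{SI}, this is harmless.
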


\subsection{Isomorphism in Higher Degrees}
We first prove the second statement of Theorem~\ref{thm:hur} using the algebraic version $\underline{H}_k^\mathsf{alg}(M,e)$ of $\underline{H}_k(M,e)$. Suppose that $\underline{\pi}_1(M,\underline{r}),\dots, \underline{\pi}_k(M,\underline{r})$ are trivial. We will explicitly establish the resulting isomorphism $\underline{\pi}_{k+1}(M,\underline{r}) \cong \underline{H}_{k+1}(M,e)$ for the cases $k=2$ and $k=3$; the proof for $k \geq 4$ proceeds similarly. The case $k=2$ is more direct than $k=3$, as it allows us to apply the properties of covering spaces directly. For $k=3$, however, we must first develop a lifting criterion for maps into $2$-connected pointed fibrations.
\begin{theorem}[Degree 2]
    If the proper fundamental group $\underline{\pi}_1(M, \underline{r})$ is trivial, then  $\underline{\pi}_2(M, \underline{r})$ is isomorphic to $\underline{H}_2(M, e)$.
\end{theorem}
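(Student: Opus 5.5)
The plan is to show that the inverse systems $\underline{\Pi}_2=\{\pi_2(U_n,x_n)\}$ and $\underline{H}_2=\{H_2(U_n)\}$ become pro-isomorphic after passing to a subsequence. The three ingredients are the pro-triviality criterion, covering spaces, and the classical Hurewicz theorem. Once the pro-isomorphism is in place, Lemma~\ref{SI} and Lemma~\ref{AI} (equivalently Theorem~\ref{proinv}) give $\wp(\underline{\Pi}_2)\cong\wp(\underline{H}_2)$. Brown's identification $\underline{\pi}_2(M,\underline{r})\cong\wp(\underline{\Pi}_2)$ and Theorem~\ref{computation} then finish the proof.

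\emph{Step 1: reduce to trivial bonding maps on $\pi_1$.} Since $\underline{\pi}_1(M,\underline{r})\cong\wp(\underline{\Pi}_1)$ is trivial, the first step of the proof of Theorem~\ref{thm:trivial_wp} shows that $\underline{\Pi}_1$ is pro-trivial. So we can choose $i_0<i_1<\cdots$ such that each bonding map $\pi_1(U_{i_{n+1}},x_{i_{n+1}})\to\pi_1(U_{i_n},x_{i_n})$ is trivial. By Lemma~\ref{SI}, applied to both $\underline{\Pi}_2$ and $\underline{H}_2$, we may replace $U_n$ by $U_{i_n}$ and the ray segments by their concatenations. Hence we may assume that every map $\pi_1(U_{n+1})\to\pi_1(U_n)$ is trivial.

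\emph{Step 2: construct the ladder.} Each $U_n$ is a connected open subset of $M$, hence a connected manifold, and so has a universal cover $p_n\colon\widetilde U_n\to U_n$. Fix a lift $\tilde x_n$ of $x_n$. Because the composite
\[
\pi_1(U_{n+1},x_{n+1})\to\pi_1(U_n,x_{n+1})\xrightarrow{(\gamma_n)_\sharp}\pi_1(U_n,x_n)
\]
is trivial, the lifting criterion applied to $\iota_n$ gives a lift $\ell_n\colon U_{n+1}\to\widetilde U_n$. We normalize $\ell_n$ by sending $x_{n+1}$ to the endpoint of the lift of $\gamma_n^{-1}$ starting at $\tilde x_n$. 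The space $\widetilde U_n$ is simply connected, so the classical Hurewicz map $h\colon\pi_2(\widetilde U_n)\to H_2(\widetilde U_n)$ is an isomorphism. Moreover, $(p_n)_\sharp$ is an isomorphism on $\pi_2$. Using these, define
\[
\lambda_n\colon H_2(U_{n+1})\xrightarrow{(\ell_n)_*}H_2(\widetilde U_n)\xrightarrow{h^{-1}}\pi_2(\widetilde U_n,\tilde x_n)\xrightarrow{(p_n)_\sharp}\pi_2(U_n,x_n),
\]
and let $\mu_n\colon\pi_2(U_n,x_n)\to H_2(U_n)$ be the Hurewicz homomorphism.

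\emph{Step 3: check the triangle identities required by Lemma~\ref{AI}.} Take $G=\underline{H}_2$ and $H=\underline{\Pi}_2$ there. By naturality of Hurewicz, $\mu_n\circ(p_n)_\sharp=(p_n)_*\circ h$. Hence
\[
\mu_n\lambda_n=(p_n)_*(\ell_n)_*=(\iota_n)_*,
\]
which is the bonding map of $\underline{H}_2$. For the other triangle, naturality of Hurewicz gives $(\ell_n)_*\circ h_{U_{n+1}}=h\circ(\ell_n)_\sharp$, where $(\ell_n)_\sharp$ lands in $\pi_2$ based at $\ell_n(x_{n+1})$. Composing with the basepoint change along the lifted path, and using that Hurewicz is invariant under basepoint change, we get
\[
\lambda_n\mu_{n+1}=(p_n)_\sharp\circ(\text{lifted path change})\circ(\ell_n)_\sharp=(\gamma_n)_\sharp(\iota_n)_\sharp=f_n.
\]

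The main obstacle is this basepoint bookkeeping. The second triangle has to equal Brown's bonding map $f_n$ exactly, not merely up to the action of $\pi_1$. This is where the normalization of $\ell_n$ is used: the path $\gamma_n$ lifts to a path from $\ell_n(x_{n+1})$ to $\tilde x_n$, and $p_n$ carries the change of basepoint along this lift to $(\gamma_n)_\sharp$. Once both triangles commute, Lemma~\ref{AI} gives $\wp(\underline{H}_2)\cong\wp(\underline{\Pi}_2)$, and the theorem follows.
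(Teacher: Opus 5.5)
Your proposal is correct and follows essentially the same route as the paper. Both arguments pass to a subsequence on which the $\pi_1$ bonding maps vanish, lift the inclusion to the universal cover normalized via the lift of $\gamma_n$, and define $\lambda_n$ through the inverse Hurewicz isomorphism upstairs with $\mu_n$ the Hurewicz map. Both then check the two ladder triangles by naturality and basepoint-change invariance of Hurewicz, and conclude by the pro-isomorphism invariance of $\wp$.
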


\begin{proof}
By hypothesis, the asymptotic limit $\wp(\underline{\Pi}_1) = \underline{\pi}_1(M, \underline{r})$ is trivial. By Proposition~\ref{thm:trivial_wp}, the inverse system $\underline{\Pi}_1$ is pro-trivial.

To establish an isomorphism $\wp(\underline{\Pi}_2) \cong \wp(\underline{H}_2)$, by Theorem~\ref{proinv}, it suffices to show that the systems $\underline{\Pi}_2$ and $\underline{H}_2$ are pro-isomorphic. Because $\underline{\Pi}_1$ is pro-trivial, we can extract a strictly increasing sequence of indices to form a subsequence of end neighborhoods, denoted $V_n = U_{i_n}$, such that for every $n \ge 0$, the inclusion map $\iota_n \colon V_{n+1} \hookrightarrow V_n$ induces the zero map on $\pi_1$.

Recall that the bonding map for $\underline{\Pi}_2$ is defined as $f_n \coloneqq (\gamma_n)_\sharp \circ (\iota_n)_\sharp \colon \pi_2(V_{n+1}, x_{n+1}) \to \pi_2(V_n, x_n)$, where $(\gamma_n)_\sharp$ is the basepoint change isomorphism, and the bonding map for $\underline{H}_2$ is simply $g_n \coloneqq (\iota_n)_* \colon H_2(V_{n+1}) \to H_2(V_n)$.

We need to construct cross-homomorphisms $\mu_n \colon \pi_2(V_n, x_n) \to H_2(V_n)$ and $\lambda_n \colon H_2(V_{n+1}) \to \pi_2(V_n, x_n)$ to form the following pro-isomorphism diagram:
\[
\begin{tikzcd}[column sep=1.5em]
    H_2(V_0) & & H_2(V_1) \arrow[ll, "g_0"'] \arrow[ld, "\lambda_0"'] & & H_2(V_2) \arrow[ll, "g_1"'] \arrow[ld, "\lambda_1"'] & & \cdots \arrow[ll] \arrow[ld] & \\
             & \pi_2(V_0, x_0) \arrow[lu, "\mu_0"] & & \pi_2(V_1, x_1) \arrow[lu, "\mu_1"] \arrow[ll, "f_0"'] & & \pi_2(V_2, x_2) \arrow[lu, "\mu_2"] \arrow[ll, "f_1"'] & & \cdots \arrow[ll]
\end{tikzcd}
\]

First, define $\mu_n$ as the Hurewicz homomorphism evaluated on $V_n$ at basepoint $x_n$:
\[
\mu_n \coloneqq \eta^{(n)} \colon \pi_2(V_n, x_n) \to H_2(V_n)
\]

Next, let $p^{(n)} \colon W_n \to V_n$ be the universal covering map. Choose a basepoint $\widetilde{x}_n \in (p^{(n)})^{-1}(x_n)$ in $W_n$. Let $\widetilde{\gamma}_n$ be the unique lift of the path $\gamma_n$ that ends at $\widetilde{x}_n$, and let its starting point be $\widetilde{x}_{n+1} \in (p^{(n)})^{-1}(x_{n+1})$. Because the inclusion $\iota_n$ trivially maps $\pi_1$, it lifts to a unique map $\widehat{\iota}_n \colon V_{n+1} \to W_n$ such that $p^{(n)} \circ \widehat{\iota}_n = \iota_n$ and $\widehat{\iota}_n(x_{n+1}) = \widetilde{x}_{n+1}$.

Since $W_n$ is $1$-connected, the Hurewicz theorem provides an isomorphism $\widetilde{\eta}^{(n)}_{\widetilde{x}_n} \colon \pi_2(W_n, \widetilde{x}_n) \to H_2(W_n)$. We define $\lambda_n\colon H_2(V_{n+1})\to \pi_2(V_n, x_n)$ by the following composition:
\[
H_2(V_{n+1})\xrightarrow{(\widehat{\iota}_n)_*} H_2(W_n)\xrightarrow{(\widetilde{\eta}^{(n)}_{\widetilde{x}_n})^{-1}} \pi_2(W_n, \widetilde{x}_n) \xrightarrow{(p^{(n)})_\sharp} \pi_2(V_n, x_n)
\]

We now verify the mutually commuting ladder diagram. To show $\mu_n \circ \lambda_n = g_n$, consider the following commutative diagram:
\[
\begin{tikzcd}[column sep=4.5em, row sep=2em]
    \pi_2(W_n, \widetilde{x}_n) \arrow[d, "(p^{(n)})_\sharp"']       & H_2(W_n) \arrow[l, "(\widetilde{\eta}^{(n)}_{\widetilde{x}_n})^{-1}"'] \arrow[d, "(p^{(n)})_*"'] & H_2(V_{n+1}) \arrow[l, "(\widehat{\iota}_n)_*"'] \arrow[ld, "g_n = (\iota_n)_*"] \\
    \pi_2(V_n, x_n) \arrow[r, "\mu_n = \eta^{(n)}"'] & H_2(V_n)                                                                     &                                                                                 
\end{tikzcd}
\]
Evaluating the composition yields $\mu_n \circ \lambda_n = \eta^{(n)} \circ \left( (p^{(n)})_\sharp \circ (\widetilde{\eta}^{(n)}_{\widetilde{x}_n})^{-1} \circ (\widehat{\iota}_n)_* \right)= (p^{(n)})_* \circ \widetilde{\eta}^{(n)}_{\widetilde{x}_n} \circ (\widetilde{\eta}^{(n)}_{\widetilde{x}_n})^{-1} \circ (\widehat{\iota}_n)_* = (p^{(n)})_* \circ (\widehat{\iota}_n)_*=(p^{(n)} \circ \widehat{\iota}_n)_* = (\iota_n)_* = g_n$.

Next, to show $\lambda_n \circ \mu_{n+1} = f_n$, consider the following commutative diagram mapping the basepoint change through the universal cover:
\[\begin{tikzcd}[column sep=4.5em, row sep=2.5em]
\pi_2(V_{n+1}, x_{n+1}) \arrow[r, "(\widehat{\iota}_n)_\sharp"] \arrow[d, "\mu_{n+1} = \eta^{(n+1)}"'] \arrow[rrr, "f_n", bend left=20] & \pi_2(W_n, \widetilde{x}_{n+1}) \arrow[r, "(\widetilde{\gamma}_n)_\sharp"] \arrow[d, "\widetilde{\eta}^{(n)}_{\widetilde{x}_{n+1}}"'] & \pi_2(W_n, \widetilde{x}_n) \arrow[r, "(p^{(n)})_\sharp"] & \pi_2(V_n, x_n) \\
H_2(V_{n+1}) \arrow[r, "(\widehat{\iota}_n)_*"']                                                                                        & H_2(W_n) \arrow[ru, "(\widetilde{\eta}^{(n)}_{\widetilde{x}_n})^{-1}"']                                                           &                                                       &
\end{tikzcd}\]
Evaluating the outer path, we have $\lambda_n \circ \mu_{n+1} = (p^{(n)})_\sharp \circ (\widetilde{\eta}^{(n)}_{\widetilde{x}_n})^{-1} \circ (\widehat{\iota}_n)_* \circ \eta^{(n+1)}$. By the naturality of the Hurewicz homomorphism based at $x_{n+1}$ and its lift $\widetilde{x}_{n+1}$, we have $(\widehat{\iota}_n)_* \circ \eta^{(n+1)} = \widetilde{\eta}^{(n)}_{\widetilde{x}_{n+1}} \circ (\widehat{\iota}_n)_\sharp$. Substituting this gives $\lambda_n \circ \mu_{n+1} = (p^{(n)})_\sharp \circ \left( (\widetilde{\eta}^{(n)}_{\widetilde{x}_n})^{-1} \circ \widetilde{\eta}^{(n)}_{\widetilde{x}_{n+1}} \right) \circ (\widehat{\iota}_n)_\sharp$. Since the Hurewicz homomorphism is invariant under basepoint changes $(\widetilde{\eta}^{(n)}_{\widetilde{x}_n})^{-1} \circ \widetilde{\eta}^{(n)}_{\widetilde{x}_{n+1}} = (\widetilde{\gamma}_n)_\sharp$. Thus $\lambda_n \circ \mu_{n+1} = (p^{(n)})_\sharp \circ (\widetilde{\gamma}_n)_\sharp \circ (\widehat{\iota}_n)_\sharp$. Since $p^{(n)}$ projects $\widetilde{\gamma}_n$ onto $\gamma_n$, we have $(p^{(n)})_\sharp \circ (\widetilde{\gamma}_n)_\sharp = (\gamma_n)_\sharp \circ (p^{(n)})_\sharp$. Therefore $\lambda_n \circ \mu_{n+1} = (\gamma_n)_\sharp \circ (p^{(n)} \circ \widehat{\iota}_n)_\sharp = (\gamma_n)_\sharp \circ (\iota_n)_\sharp = f_n$.

Since both identities hold for all indices $n$, $\underline{\Pi}_2$ is pro-isomorphic to $\underline{H}_2$. By Theorem~\ref{proinv}, $\wp(\underline{\Pi}_2) \cong \wp(\underline{H}_2)$, which concludes the proof that $\underline{\pi}_2(M, \underline{r}) \cong \underline{H}_2(M, e)$.
\end{proof}

We are now ready to address the case $k=3$. 

\begin{theorem}[Degree 3]\label{thm:deg3}
    If the proper homotopy groups $\underline{\pi}_1(M, \underline{r})$ and $\underline{\pi}_2(M, \underline{r})$ are trivial, then $\underline{\pi}_3(M, \underline{r})$ is isomorphic to $\underline{H}_3(M, e)$.
\end{theorem}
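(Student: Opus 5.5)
Following the degree~2 argument, I will show that $\underline{\Pi}_3$ and $\underline{H}_3$ are pro-isomorphic after passing to a subsequence. Theorem~\ref{proinv} then gives $\wp(\underline{\Pi}_3)\cong\wp(\underline{H}_3)$, that is, $\underline{\pi}_3(M,\underline{r})\cong \underline{H}_3(M,e)$. Since $\wp(\underline{\Pi}_1)$ and $\wp(\underline{\Pi}_2)$ are trivial, Theorem~\ref{thm:trivial_wp} shows that $\underline{\Pi}_1$ and $\underline{\Pi}_2$ are pro-trivial. Passing to a subsequence twice (Lemma~\ref{SI}), I choose $V_n=U_{i_n}$ so that, for every $n$, the inclusion $\iota_n\colon V_{n+1}\hookrightarrow V_n$ is zero on $\pi_1$ and also zero on $\pi_2$. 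Concretely, I want $V_{n+1}\to V_n$ to kill $\pi_1$, and I want the composite through an intermediate level to kill $\pi_2$. To arrange both, I will first pick indices so that each single step is zero on $\pi_1$ and on $\pi_2$; this is possible because pro-triviality lets me choose a common larger index. The map $\mu_n$ will be the Hurewicz map $\eta^{(n)}\colon\pi_3(V_n,x_n)\to H_3(V_n)$.

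For $\lambda_n$ I replace the universal cover by a $2$-connected cover. Let $q^{(n)}\colon W_n\to V_n$ be the $2$-connected cover, that is, the homotopy fiber of the Postnikov section $V_n\to P_2(V_n)$, turned into a fibration. Then $W_n$ is $2$-connected, and $q^{(n)}$ induces isomorphisms on $\pi_k$ for $k\ge3$. This is the lifting criterion for $2$-connected pointed fibrations announced before the statement, and proving it is the main step. The criterion should say the following: a map $f\colon(Y,y)\to(V_n,x_n)$ from a CW complex (here $V_{n+1}$, which is an open manifold and hence has CW type) lifts to $W_n$ if $f$ induces zero on $\pi_1$ and on $\pi_2$. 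Note that this is stronger than "$f$ is null-homotopic on the $2$-skeleton."

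To prove the criterion, I would lift over the $1$-connected cover first; this step is covering space theory, exactly as in the degree-$2$ case. The remaining obstruction is then a class in $H^3(Y;\pi_2(V_n))$, namely the pullback of the fundamental class of $K(\pi_2,2)$. This class is the composite $Y\to \widetilde V_n\to K(\pi_2(V_n),2)$, and vanishing on $\pi_2$ alone does not make it zero; it only makes it vanish on the image of Hurewicz. This is the main obstacle. I would handle it by composing two steps: use $V_{n+2}\to V_{n+1}\to V_n$ and reindex the subsequence. Lift $V_{n+2}\to V_{n+1}$ to the universal cover $\widetilde V_{n+1}$, then map $\widetilde V_{n+1}\to \widetilde V_n$. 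The resulting obstruction class in $H^2(\widetilde V_{n+1};\pi_2(V_n))$ equals $\operatorname{Hom}(H_2(\widetilde V_{n+1}),\pi_2(V_n))$, because $\widetilde V_{n+1}$ is simply connected and Hurewicz gives $H_2(\widetilde V_{n+1})\cong\pi_2(\widetilde V_{n+1})$. This class is therefore determined by the map on $\pi_2$, which is zero. So the composite lifts to $W_n$, giving $\widehat{\iota}_n\colon V_{n+2}\to W_n$ with $\widehat\iota_n(x_{n+2})=\widetilde x_{n+2}$ and with a lifted path $\widetilde\gamma$ to $\widetilde x_n$. After reindexing the subsequence, I define
\[
\lambda_n\coloneqq (q^{(n)})_\sharp\circ(\widetilde\gamma)_\sharp\circ(\widetilde\eta^{(n)})^{-1}\circ(\widehat\iota_n)_*,
\]
where $\widetilde\eta^{(n)}\colon\pi_3(W_n)\to H_3(W_n)$ is an isomorphism by the Hurewicz theorem, since $W_n$ is $2$-connected.

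The two triangle identities are then verified exactly as in degree~$2$. First, $\mu_n\circ\lambda_n=\eta^{(n)}\circ q_\sharp\circ\widetilde\eta^{-1}\circ\widehat\iota_*=q_*\circ\widehat\iota_*=(\iota)_*$; this uses naturality of Hurewicz and the fact that Hurewicz commutes with basepoint change up to the identity on homology. Second, $\lambda_n\circ\mu_{n+1}=(\gamma)_\sharp\circ(q\circ\widehat\iota)_\sharp$, which is the bonding map of $\underline{\Pi}_3$; this uses naturality $\widehat\iota_*\circ\eta=\widetilde\eta\circ\widehat\iota_\sharp$ and $q_\sharp\circ\widetilde\gamma_\sharp=\gamma_\sharp\circ q_\sharp$ for the lifted path. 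Both identities require the lift to be pointed and the path lifting property of the fibration $q^{(n)}$. I expect only the lifting step to need real care.
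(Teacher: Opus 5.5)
Your proposal is correct and is essentially the paper's proof. Both arguments pass to a subsequence on which the inclusions kill $\pi_1$ and $\pi_2$, then take every other level so that each bonding inclusion factors through the universal cover of the intermediate level. Both then lift into a $2$-connected fibration over the universal cover; the paper's pullback of the path fibration of $K(\pi_2,2)$ plays the role of your $2$-connected cover. Finally, both verify the two ladder identities by Hurewicz naturality and path lifting.

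Your remark that vanishing on $\pi_2$ does not by itself kill the obstruction class unless the source is simply connected is exactly right, and it is why you factor through $\widetilde V_{n+1}$. Note that this class lives in $H^2$, not $H^3$ as you first wrote. On this point you are more careful than the paper, which applies its lifting lemma directly to $\widetilde J_n$ even though the source $Y_{n+1}$ need not be simply connected.
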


The primary tool for proving Theorem~\ref{thm:deg3} is the following lifting criterion for maps into $2$-connected fibrations.

\begin{lemma}\label{lem:lifttool}
    Let $(Y, y_0)$ be a $1$-connected pointed CW-complex. Then there exists a $2$-connected pointed space $(W, w_0)$ and a pointed fibration $p \colon (W, w_0) \to (Y, y_0)$ such that $\pi_n(W, w_0) \cong \pi_n(Y, y_0)$ for all $n \ge 3$. Moreover, if $(X, x_0)$ is another $1$-connected pointed CW-complex, and $f \colon (X, x_0) \to (Y, y_0)$ is a pointed map such that the induced homomorphism $f_\sharp \colon \pi_2(X, x_0) \to \pi_2(Y, y_0)$ is the zero map, then there exists a pointed continuous lift $\widehat{f} \colon (X, x_0) \to (W, w_0)$ such that $p \circ \widehat{f} = f$.
\end{lemma}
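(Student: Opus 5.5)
The plan is to take $W$ to be the $2$-connected cover of $Y$, built as the homotopy fiber of a map killing $\pi_2$, and then get the lift from obstruction theory (equivalently, from the long exact sequence of the fibration together with Whitehead-tower lifting).

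\emph{Construction of $W$.} Since $Y$ is a $1$-connected CW-complex, the Hurewicz theorem gives $\pi_2(Y,y_0)\cong H_2(Y)$. I would build an Eilenberg--MacLane space $K=K(\pi_2(Y,y_0),2)$ by attaching cells of dimension $\ge 4$ to $Y$ to kill $\pi_n$ for $n\ge 3$. This gives a pointed inclusion $\phi\colon Y\to K$ that is an isomorphism on $\pi_2$. Replacing $\phi$ by a fibration in the standard way (the path-space construction $Y\times_K K^I$) yields a space homotopy equivalent to $Y$ over $K$. I would then let $W$ be the homotopy fiber of $\phi$, namely $W=\{(y,\omega)\in Y\times K^{I}\mid \omega(0)=*,\ \omega(1)=\phi(y)\}$, with $p(y,\omega)=y$. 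The map $p$ is a fibration, being the pullback of the path fibration $PK\to K$ along $\phi$, and its fiber is $\Omega K\simeq K(\pi_2 Y,1)$. The long exact sequence
\[
\pi_{n}(\Omega K)\to \pi_n(W)\to\pi_n(Y)\to\pi_{n-1}(\Omega K)
\]
gives $\pi_n(W)\cong\pi_n(Y)$ for $n\ge3$, because $\pi_{n}(\Omega K)=\pi_{n-1}(\Omega K)=0$ there. In low degrees it gives $\pi_2(W)\to\pi_2(Y)\xrightarrow{\partial}\pi_1(\Omega K)=\pi_2(Y)\to\pi_1(W)\to\pi_1(Y)=0$. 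The connecting map $\partial$ is identified with $\phi_\sharp$, which is an isomorphism. Together with $\pi_2(\Omega K)=0$, this shows $\pi_2(W)=\pi_1(W)=0$. Finally, $\pi_0(W)=0$ because $\pi_0(\Omega K)=0$ and $Y$ is connected. So $W$ is $2$-connected.

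\emph{Lifting.} A lift of $f$ through $p$ is the same as a null-homotopy of $\phi\circ f\colon X\to K$ relative to the basepoint. Indeed, a lift is a pair consisting of $f$ and a path-family $\omega_x$ from $*$ to $\phi f(x)$, with the basepoint sent to the constant path, and this is exactly a pointed null-homotopy. Pointed homotopy classes $[X,K(\pi,2)]_*$ correspond to $H^2(X;\pi)$, so it suffices to show that the class of $\phi f$ vanishes. Since $X$ is $1$-connected, the universal coefficient theorem gives $H^2(X;\pi)\cong\operatorname{Hom}(H_2(X),\pi)$, because $\operatorname{Ext}(H_1X,\pi)=0$. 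Under this isomorphism, and using Hurewicz $H_2(X)\cong\pi_2(X)$, the class of $\phi f$ is the homomorphism $\phi_\sharp\circ f_\sharp\colon\pi_2(X)\to\pi_2(K)$. By hypothesis this homomorphism is zero. Hence $\phi f$ is pointed null-homotopic, and the null-homotopy gives $\widehat f$ with $p\widehat f=f$ and $\widehat f(x_0)=w_0=(y_0,\text{const})$.

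\emph{Main obstacle.} I expect the delicate step to be the identification of $[X,K]_*$ with $\operatorname{Hom}(\pi_2X,\pi_2K)$ via $g\mapsto g_\sharp$. If I want to avoid citing representability of cohomology, I would instead argue cellularly by obstruction theory. The map $\phi f$ sends the $1$-skeleton to $*$, after cellular approximation and the fact that $X$ can be taken with a single $0$-cell and no $1$-cells up to homotopy. On $2$-cells it gives a cocycle whose class is determined by $\phi_\sharp f_\sharp$, which is zero, so the map is null-homotopic on the $3$-skeleton. The obstructions to extending the null-homotopy over higher cells lie in $H^{n}(X;\pi_{n-1}K)$ for $n\ge 4$, and these groups vanish since $\pi_{n-1}K=0$. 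The pointed (rel $x_0$) version holds because $x_0$ is a $0$-cell, so the homotopy extension property applies.
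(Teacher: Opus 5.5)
Your proposal is correct and takes essentially the same route as the paper. You pull back the path fibration $PK\to K$ along a map $Y\to K(\pi_2(Y,y_0),2)$ that is an isomorphism on $\pi_2$, read off the connectivity of $W$ from the long exact sequence, and get the lift from a pointed null-homotopy of the composite $X\to K$; that null-homotopy exists by $[X,K]_*\cong H^2(X;\pi)\cong\operatorname{Hom}(H_2(X),\pi)$, naturality of the Hurewicz map, and $f_\sharp=0$. The only difference is cosmetic: you obtain the classifying map by attaching cells of dimension $\ge 4$ to $Y$, so the $\pi_2$-isomorphism is automatic by cellular approximation, whereas the paper chooses $g$ as the map whose class corresponds to $\operatorname{id}_G$ under $H^2(Y;G)\cong\operatorname{Hom}(G,G)$.
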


\begin{proof}
    Denote $\pi_2(Y, y_0)$ by $G$ and let $(K, k_0)$ be an Eilenberg-MacLane space $K(G,2)$ with basepoint $k_0$. By the Hurewicz theorem, $G$ is isomorphic to $H_2(Y;\mathbb{Z})$. Moreover, by \cite[Theorem 4.57]{MR1867354}, there exists a bijection $T \colon \langle Y, K\rangle \to H^2(Y;G)$ from basepoint-preserving homotopy classes of basepoint-preserving maps onto $H^2(Y;G)$ given by $T([\phi]) = \phi^*(\alpha)$, where $\alpha \in H^2(K;G)$ is the fundamental class. By the Universal Coefficient Theorem, since $Y$ is $1$-connected, we have $H^2(Y;G) \cong \operatorname{Hom}(H_2(Y;\mathbb{Z}), G) \cong \operatorname{Hom}(G, G)$. Let $g \colon (Y, y_0) \to (K, k_0)$ be a pointed map such that $T([g]_*) = g^*(\alpha)$ corresponds to $\operatorname{id}_G$ under this sequence of isomorphisms. 
    
    Let $PK = \{ \gamma \colon I \to K \mid \gamma(0) = k_0 \}$ be the path space with basepoint $c_{k_0}$ (the constant path at $k_0$). Let $p \colon (W,w_0) \to (Y,y_0)$ be the pointed fibration obtained from the following pullback square, where $w_0 = (y_0, c_{k_0})$: 
    $$\begin{tikzcd}
        (W, w_0) \arrow[d, "p"'] \arrow[r, "\theta"] & (PK, c_{k_0}) \arrow[d, "\operatorname{ev}_1"] \\
        (Y, y_0) \arrow[r, "g"']                     & (K, k_0)
    \end{tikzcd}$$

    Because the path space $PK$ is contractible, the evaluation map $\operatorname{ev}_1$ is a Serre fibration whose fiber over $k_0$ is the based loop space $\Omega(K, k_0) \simeq K(G, 1)$. Since the restriction of the projection $\theta \colon W \to PK$ to the fiber $F = \{y_0\} \times \Omega(K, k_0)$ is a homeomorphism onto $\Omega(K, k_0)$, the long exact sequence of homotopy groups shows that $\pi_n(W, w_0) \cong \pi_n(Y, y_0)$ for all $n \ge 3$. Since $g_\sharp\colon \pi_2(Y, y_0)\to \pi_2(K, k_0)$ is an isomorphism, by the naturality of the long exact sequence of a fibration,    
    $$\begin{tikzcd}             
        \pi_2(W, w_0) \arrow[r] \arrow[d, "\theta_\sharp"'] & \pi_2(Y, y_0) \arrow[r, "\partial'"] \arrow[d, "g_\sharp"', "\cong"] & \pi_1(F, w_0) \arrow[d, "(\theta\vert_F)_\sharp"', "\cong"] &             \\             
        0=\pi_2(PK, c_{k_0}) \arrow[r]        & \pi_2(K, k_0) \arrow[r, "\partial"']                    & \pi_1(\Omega(K, k_0), c_{k_0}) \arrow[r]   & \pi_1(PK, c_{k_0})=0         
    \end{tikzcd}$$
    the boundary map $\partial'\colon \pi_2(Y, y_0)\to \pi_1(F, w_0)$ is an isomorphism. Therefore, the tail end of the long exact sequence:
    $$0 = \pi_2(F, w_0) \to \pi_2(W, w_0) \xrightarrow{p_\sharp} \pi_2(Y, y_0) \xrightarrow{\partial'} \pi_1(F, w_0) \to \pi_1(W, w_0) \to \pi_1(Y, y_0) = 0$$
    shows that $\pi_2(W, w_0) \cong \ker(\partial') = 0$ and $\pi_1(W, w_0) \cong \operatorname{coker}(\partial') = 0$. Thus, $W$ is strictly $2$-connected.

    Let $(X, x_0)$ be a $1$-connected pointed CW-complex and let $f\colon (X, x_0)\to (Y, y_0)$ be a pointed map such that $f_\sharp\colon \pi_2(X, x_0)\to \pi_2(Y, y_0)$ is the zero homomorphism. By the defining universal property of the pullback square in $\mathbf{Top}_*$, $f \colon (X, x_0) \to (Y, y_0)$ lifts to a pointed map $\widehat{f} \colon (X, x_0) \to (W, w_0)$ such that $p \circ \widehat{f} = f$ if and only if the composition $g \circ f \colon (X, x_0) \to (K, k_0)$ lifts to a pointed map $f_{PK} \colon (X, x_0) \to (PK, c_{k_0})$ such that $\operatorname{ev}_1 \circ f_{PK} = g \circ f$. Because $PK$ is contractible, a lift $f_{PK} \colon (X, x_0) \to (PK, c_{k_0})$ of $g\circ f$ exists if and only if the composition $g \circ f$ is pointed nullhomotopic. In cohomology, the pointed homotopy class $[g \circ f]\in \langle X,K\rangle$ corresponds to $f^*(g^*(\alpha)) \in H^2(X; G)$. Therefore, the lift $\widehat{f} \colon (X, x_0) \to (W, w_0)$ exists if and only if $f^*(g^*(\alpha))=0$.  

    The naturality of the Hurewicz homomorphism gives the following commuting diagram:
    $$\begin{tikzcd} 
        \pi_2(X, x_0) \arrow[r, "f_\sharp"] \arrow[d, "\eta_X"'] & \pi_2(Y, y_0) \arrow[d, "\eta_Y"] \\ 
        H_2(X) \arrow[r, "f_*"']                                 & H_2(Y)                     
    \end{tikzcd}$$
    By our hypothesis, $f_\sharp=0$. Since $X$ and $Y$ are $1$-connected, $\eta_X$ and $\eta_Y$ are isomorphisms. This forces $f_*$ to be the zero homomorphism. Moreover, because $X$ is $1$-connected, the Universal Coefficient Theorem gives $H^2(X; G) \cong \operatorname{Hom}(H_2(X), G)$. Under this identification, the class $f^*(g^*(\alpha))$ corresponds to the composition of the zero map $f_*\colon H_2(X)\to H_2(Y)$ and $\eta_Y^{-1}\colon H_2(Y)\to\pi_2(Y, y_0)$. Because this composite homomorphism is identically zero, the corresponding cohomology class (i.e., $f^*(g^*(\alpha))$) vanishes. Consequently, the map $g \circ f$ is pointed nullhomotopic, guaranteeing the existence of the pointed lift $\widehat{f} \colon (X, x_0) \to (W, w_0)$ such that $p \circ \widehat{f} = f$.
\end{proof} 

\begin{proof}[Proof of Theorem~\ref{thm:deg3}] 
By Proposition~\ref{thm:trivial_wp}, both $\underline{\Pi}_1$ and $\underline{\Pi}_2$ are pro-trivial. Hence, for any index $k$, there exist integers $a(k), b(k) > k$ such that the inclusion-induced maps $\pi_1(U_{a(k)}) \to \pi_1(U_k)$ and $\pi_2(U_{b(k)}) \to \pi_2(U_k)$ are trivial. Let $M(k) \coloneqq \max(a(k), b(k))$ and define an increasing sequence of indices $i_0 < i_1 < i_2 < \dots$ inductively as follows: let $i_0 \coloneqq 0$ and define $i_{n+1} = M(i_n)$. Let $V_n \coloneqq U_{i_n}$. Then the standard inclusion map $V_{n+1} \hookrightarrow V_n$ induces the trivial zero map on both $\pi_1$ and $\pi_2$.

We extract a further sub-subsequence defined by $Y_n = V_{2n}$. The inclusion map between these new spaces, $J_n \colon Y_{n+1} \hookrightarrow Y_n$, factors as the composition of two consecutive inclusions: $Y_{n+1} = V_{2n+2} \xrightarrow{a_n} V_{2n+1} \xrightarrow{b_n} V_{2n} = Y_n$ where $a_n$ and $b_n$ are the respective inclusion maps.

Let $y_n = r(i_{2n})$ be the basepoint of $Y_n$. Let $\gamma_n$ be the path in $Y_n$ tracing the ray $r\colon [0,\infty)\to M$ backward from $y_{n+1}$ to $y_n$. The bonding maps for $\underline{\Pi}_3$ and $\underline H_3$ are defined as $f_n \coloneqq (\gamma_n)_\sharp \circ (J_n)_\sharp \colon \pi_3(Y_{n+1}, y_{n+1}) \to \pi_3(Y_n, y_n)$ and $g_n \coloneqq (J_n)_* \colon H_3(Y_{n+1}) \to H_3(Y_n)$.

Let $p_n \colon Y_n^{(1)} \to Y_n$ be the universal covering. Choose a basepoint $\widetilde{y}_n \in p_n^{-1}(y_n)$. Lift the path $\gamma_n$ uniquely to a path $\widetilde{\gamma}_n$ in $Y_n^{(1)}$ ending at $\widetilde{y}_n$, and let its starting point be $\widetilde{y}_{n+1} \in p_n^{-1}(y_{n+1})$. Let $q_n\colon (V_{2n+1}^{(1)},*)\to (V_{2n+1},y_{n+1})$ be the universal cover. Consider the liftings $\widetilde a_n\colon (Y_{n+1},y_{n+1})\to (V_{2n+1}^{(1)},*)$ and $\widetilde b_n\colon (V_{2n+1}^{(1)},*)\to (Y_n^{(1)},\widetilde y_{n+1})$ satisfying $q_n\circ \widetilde a_n=a_n$ and $p_n\circ \widetilde b_n=b_n\circ q_n$.

$$\begin{tikzcd} & {(V_{2n+1}^{(1)},*)} \arrow[d, "q_n"] \arrow[r, "\widetilde{b}_n"] & {(Y_n^{(1)},\widetilde y_{n+1})} \arrow[d, "p_n"] \\
{(Y_{n+1},y_{n+1})} \arrow[r, "a_n"'] \arrow[ru, "\widetilde{a}_n"] & {(V_{2n+1},y_{n+1})} \arrow[r, "b_n"']                             & {(Y_n,y_{n+1})}                                  
\end{tikzcd}$$

Define $\widetilde{J}_n \colon Y_{n+1} \to Y_n^{(1)}$ by $\widetilde J_n=\widetilde b_n\circ \widetilde a_n$. Then $p_n \circ \widetilde{J}_n = J_n$ and $\widetilde{J}_n(y_{n+1}) = \widetilde{y}_{n+1}$. Because $(\widetilde{b}_n)_\sharp = 0$ on $\pi_2$, the composition $\widetilde{J}_n$ induces the zero map on $\pi_2$. By applying Lemma~\ref{lem:lifttool} to the $1$-connected pointed space $(Y_n^{(1)}, \widetilde{y}_{n+1})$, we obtain a $2$-connected pointed space $(W_n, \widehat{y}_{n+1})$ and a pointed fibration $r_n \colon (W_n, \widehat{y}_{n+1}) \to (Y_n^{(1)}, \widetilde{y}_{n+1})$. Because $(\widetilde{J}_n)_\sharp = 0$ on $\pi_2$, Lemma~\ref{lem:lifttool} provides a pointed lift $L_n \colon (Y_{n+1}, y_{n+1}) \to (W_n, \widehat{y}_{n+1})$ such that $r_n \circ L_n = \widetilde{J}_n$. 

$$\begin{tikzcd} & {(W_n,\widehat y_{n+1})} \arrow[d, "r_n"] \\
{(Y_{n+1},y_{n+1})} \arrow[r, "\widetilde J_n"'] \arrow[ru, "L_n"] & {(Y_n^{(1)},\widetilde y_{n+1})}         
\end{tikzcd}$$

Next, by the Pointed Homotopy Lifting Property of $r_n$, we lift the path $\widetilde{\gamma}_n$ (which originates at $\widetilde{y}_{n+1}$) starting from the basepoint $\widehat{y}_{n+1} \in W_n$. This yields a path $\widehat{\gamma}_n$ in $W_n$ terminating at some point $\widehat{y}_n \in r_n^{-1}(\widetilde{y}_n)$. Let $c_n \coloneqq p_n \circ r_n$. Then $c_n \circ L_n = J_n$. Moreover, because $W_n$ is $2$-connected, the Hurewicz theorem provides an isomorphism $\widetilde{\eta}^{(n)}_{\widehat{y}_n} \colon \pi_3(W_n, \widehat{y}_n) \to H_3(W_n)$. 

To conclude the proof, we need to construct the cross-homomorphisms as follows:
\[
\begin{tikzcd}[column sep=1.5em]
    H_3(Y_0) & & H_3(Y_1) \arrow[ll, "g_0"'] \arrow[ld, "\lambda_0"'] & & H_3(Y_2) \arrow[ll, "g_1"'] \arrow[ld, "\lambda_1"'] & & \cdots \arrow[ll] \arrow[ld] & \\
             & \pi_3(Y_0, y_0) \arrow[lu, "\mu_0"] & & \pi_3(Y_1, y_1) \arrow[lu, "\mu_1"] \arrow[ll, "f_0"'] & & \pi_3(Y_2, y_2) \arrow[lu, "\mu_2"] \arrow[ll, "f_1"'] & & \cdots \arrow[ll]
\end{tikzcd}
\]

First, define $\mu_n$ as the classical Hurewicz homomorphism evaluated on the pointed space $(Y_n,y_n)$:
$$\mu_n \coloneqq \eta^{(n)}_{y_n} \colon \pi_3(Y_n, y_n) \to H_3(Y_n)$$
Next, define $\lambda_n\colon H_3(Y_{n+1})\to \pi_3(Y_n, y_n)$ by the following composition:
\[
H_3(Y_{n+1})\xrightarrow{(L_n)_*} H_3(W_n)\xrightarrow{(\widetilde{\eta}^{(n)}_{\widehat{y}_n})^{-1}} \pi_3(W_n, \widehat{y}_n) \xrightarrow{(c_n)_\sharp} \pi_3(Y_n, y_n)
\]

To show that $\mu_n \circ \lambda_n = g_n$, we apply the naturality of the Hurewicz homomorphism:
$$\begin{tikzcd}
\pi_3(W_n, \widehat{y}_n) \arrow[r, "\widetilde\eta^{(n)}_{\widehat y_n}"] \arrow[d, "(c_n)_\sharp"'] & H_3(W_n) \arrow[d, "(c_n)_*"] \\
\pi_3(Y_n, y_n) \arrow[r, "\eta^{(n)}_{y_n}"']                                     & H_3(Y_n)                     
\end{tikzcd}$$

Then $\mu_n \circ \lambda_n = \eta^{(n)}_{y_n} \circ (c_n)_\sharp \circ (\widetilde{\eta}^{(n)}_{\widehat{y}_n})^{-1} \circ (L_n)_* = (c_n)_* \circ \widetilde{\eta}^{(n)}_{\widehat{y}_n} \circ (\widetilde{\eta}^{(n)}_{\widehat{y}_n})^{-1} \circ (L_n)_* = (c_n)_* \circ (L_n)_* = (c_n \circ L_n)_* = (J_n)_* = g_n $

To show that $\lambda_n \circ \mu_{n+1} = f_n$, we expand the composition:
$\lambda_n \circ \mu_{n+1} = (c_n)_\sharp \circ (\widetilde{\eta}^{(n)}_{\widehat{y}_n})^{-1} \circ (L_n)_* \circ \eta^{(n+1)}$. By the naturality of the Hurewicz homomorphism evaluated at basepoint $y_{n+1}$ and its lift $\widehat{y}_{n+1}$, we have $(L_n)_* \circ \eta^{(n+1)} = \widetilde{\eta}^{(n)}_{\widehat{y}_{n+1}} \circ (L_n)_\sharp$. 

\[\begin{tikzcd}[column sep=4.5em, row sep=2.5em]
\pi_3(Y_{n+1}, y_{n+1}) \arrow[r, "(L_n)_\sharp"] \arrow[d, "\eta^{(n+1)}_{y_{n+1}}"'] \arrow[rrr, "f_n", bend left=20] & \pi_3(W_n, \widehat y_{n+1}) \arrow[r, "(\widehat \gamma_n)_\sharp"] \arrow[d, "\widetilde{\eta}^{(n)}_{\widehat y_{n+1}}"'] & \pi_3(W_n, \widehat y_n) \arrow[r, "(c_n)_\sharp"] & \pi_3(Y_n, y_n) \\
H_3(Y_{n+1}) \arrow[r, "(L_n)_*"']                                                                                        & H_3(W_n) \arrow[ru, "(\widetilde{\eta}^{(n)}_{\widehat y_n})^{-1}"']                                                           &                                                       &
\end{tikzcd}\]

Substituting this gives: $\lambda_n \circ \mu_{n+1} = (c_n)_\sharp \circ \left( (\widetilde{\eta}^{(n)}_{\widehat{y}_n})^{-1} \circ \widetilde{\eta}^{(n)}_{\widehat{y}_{n+1}} \right) \circ (L_n)_\sharp$. 
By the Hurewicz homomorphism, $(\widetilde{\eta}^{(n)}_{\widehat{y}_n})^{-1} \circ \widetilde{\eta}^{(n)}_{\widehat{y}_{n+1}} = (\widehat{\gamma}_n)_\sharp$. Thus $\lambda_n \circ \mu_{n+1} = (c_n)_\sharp \circ (\widehat{\gamma}_n)_\sharp \circ (L_n)_\sharp $. 

Because $c_n$ projects the lifted path $\widehat{\gamma}_n$ onto $\gamma_n$, we have $(c_n)_\sharp \circ (\widehat{\gamma}_n)_\sharp = (\gamma_n)_\sharp \circ (c_n)_\sharp$. Therefore $\lambda_n \circ \mu_{n+1} = (\gamma_n)_\sharp \circ (c_n \circ L_n)_\sharp = (\gamma_n)_\sharp \circ (J_n)_\sharp = f_n$.

Since both identities hold for all indices $n$, $\underline{\Pi}_3$ is pro-isomorphic to $\underline{H}_3$. By Theorem~\ref{proinv}, $\wp(\underline{\Pi}_3) \cong \wp(\underline{H}_3)$, concluding the proof that $\underline{\pi}_3(M, \underline{r}) \cong \underline{H}_3(M, e)$.
\end{proof}

\subsection{Abelianization Isomorphism in Degree One}
Finally, we prove the $k=1$ case using the geometric version $\underline{H}_1^\mathsf{geo}(M,e)$ of $\underline H_1(M,e)$. 

\begin{theorem}\label{thm:Hur1}
    The abelianization $\operatorname{Ab}(\underline{\pi}_1(M,\underline{r}))$ is isomorphic to $\underline{H}_1(M,e)$.
\end{theorem}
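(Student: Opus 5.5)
The plan is to compare $\underline{\pi}_1(M,\underline{r}) \cong \wp(\underline{\Pi}_1)$ with $\underline{H}_1(M,e) \cong \wp(\underline{H}_1)$ through the level-wise Hurewicz maps $\eta_n\colon \pi_1(U_n,x_n)\to H_1(U_n)$. These commute with the bonding maps: the basepoint change $(\gamma_n)_\sharp$ becomes invisible after abelianization. So they form a morphism of inverse systems $\underline{\eta}\colon \underline{\Pi}_1\to \underline{H}_1$. Each $U_n$ is path connected, so each $\eta_n$ is surjective with kernel the commutator subgroup $[\pi_1(U_n),\pi_1(U_n)]$. This gives a level-wise short exact sequence of towers
\[1\to \underline{K}\to \underline{\Pi}_1\to \underline{H}_1\to 1, \qquad K_n = [\pi_1(U_n,x_n),\pi_1(U_n,x_n)].\]
By Theorem~\ref{exactness}, $\wp(\underline{\eta})$ is surjective with kernel $\wp(\underline{K})$, viewed as a normal subgroup of $\wp(\underline{\Pi}_1)$. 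The target $\wp(\underline{H}_1)$ is abelian, so $[\wp(\underline{\Pi}_1),\wp(\underline{\Pi}_1)]\subseteq \wp(\underline{K})$. Hence everything reduces to the reverse inclusion: every class in $\wp(\underline{K})$ is a finite product of commutators in $\wp(\underline{\Pi}_1)$.

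That reverse inclusion is the main obstacle, and it is where the geometric formulation (Theorem~\ref{computation}) should enter. A class in $\wp(\underline{K})$ is represented by loops $\ell_n$ in $U_{k(n)}$, each bounding a singular $2$-chain in $U_{k(n)}$. Each such $\ell_n$ is a product of $c_n$ commutators, and the difficulty is that $c_n$ may be unbounded. Since products in $\wp$ are computed term by term, no fixed number of commutators in $\wp(\underline{\Pi}_1)$ obviously suffices.

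I would first realize each boundary relation by a map of an orientable surface $\Sigma_n$ of genus $c_n$ with one boundary circle, $\Sigma_n\to U_{k(n)}$, which sends the boundary to $\ell_n$ and is tethered to the ray $r$. I would then try to redistribute handles along the ray. The freedom in $\wp$ is that the $n$-th term only needs to be correct after projecting to some level $m(n)\to\infty$, and the subsequence invariance of Lemma~\ref{SI} allows reindexing. The idea is to cut $\Sigma_n$ into boundedly many pieces, say two pieces, each of which is a single commutator of loops. Each piece is built by an Eilenberg-swindle-type telescoping, using loops that run out along the ray through the neighborhoods $U_{m}$ with $m(n)\le m\le k(n)$, and whose supports still escape to $e$. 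If this works, $\{\ell_n\}$ is equivalent to $[\{\alpha_n\},\{\beta_n\}]\cdot[\{\alpha'_n\},\{\beta'_n\}]$ for suitable proper sequences of loops.

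If the swindle fails, my fallback is to bypass the commutator-length issue entirely. I would define a direct map $\underline{H}_1^{\mathsf{geo}}(M,e)\to \operatorname{Ab}(\underline{\pi}_1(M,\underline{r}))$ on proper $1$-cycles. Each cycle would be written as a sum of loops connected to the ray by tethers whose supports escape to $e$, and sent to the product of the corresponding proper loop classes. Then I would check two things: that proper $1$-boundaries, i.e. boundaries of escaping $2$-simplices, map to products of commutators in $\operatorname{Ab}$, and that this map is inverse to the one induced by $\wp(\underline{\eta})$. The well-definedness check on boundaries is where the real work lies, and there the convergence of supports must be tracked carefully, exactly as in the proof of Theorem~\ref{computation}.
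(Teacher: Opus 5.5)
Your reduction is correct as far as it goes. The level-wise Hurewicz maps form a morphism $\underline{\Pi}_1\to\underline{H}_1$, and by Theorem~\ref{exactness} the induced map $\operatorname{Ab}(\wp(\underline{\Pi}_1))\to\wp(\underline{H}_1)$ is onto with kernel $\wp(\underline{K})/[\wp(\underline{\Pi}_1),\wp(\underline{\Pi}_1)]$.

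\textbf{The gap.} Nothing in the proposal shows that this kernel is trivial. The swindle is only sketched, and the fallback just moves the question to whether proper $1$-boundaries are killed. Worse, the kernel is not trivial in general, so neither idea can work. Products in $\wp$ are computed termwise, so an element of $[\wp(\underline{\Pi}_1),\wp(\underline{\Pi}_1)]$ is a product of a fixed number $N$ of commutators. Its terms, after projection, therefore have commutator length at most $N$. In a constant tower, asymptotic equivalence is just eventual equality (Theorem~\ref{constant}), so there is no room to move handles along the ray.

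\textbf{A counterexample.} Take $M=\Sigma_2\times\mathbb{R}$, one of its ends, $U_n=\Sigma_2\times(n,\infty)$, and $r(t)=(p,t+1)$.
\begin{itemize}
\item Then $\underline{\Pi}_1$ is constant at $G=\pi_1(\Sigma_2)$, so $\underline{\pi}_1(M,\underline{r})\cong\prod G/\bigoplus G$.
\item The commutator subgroup of this group consists exactly of the classes of sequences that eventually lie in $[G,G]$ with bounded commutator length.
\item The sequence $g_n=[a_1,a_2]^n$ lies in $[G,G]$. The homomorphism $G\to F(a_1,a_2)$ killing $b_1,b_2$ sends $g_n$ to $[a,b]^n$, whose commutator length is at least $n\operatorname{scl}([a,b])=n/2$. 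Commutator length does not increase under homomorphisms, so $g_n$ has unbounded commutator length in $G$.
\item Hence $[\{g_n\}]\neq 0$ in $\operatorname{Ab}(\underline{\pi}_1(M,\underline{r}))$.
\item Its Hurewicz image is represented by cycles bounding $2$-chains in the slices $\Sigma_2\times\{n+1\}$, so it is $0$ in $\underline{H}_1(M,e)$.
\end{itemize}
So the inclusion $\wp(\underline{K})\subseteq[\wp(\underline{\Pi}_1),\wp(\underline{\Pi}_1)]$ that you need is false, and the Hurewicz map is not injective. Your fallback map is also not well defined on $\underline{B}_1(M,e)$: the proper boundary $\{c_{g_n}\}$ would be sent to this nonzero class.

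\textbf{Relation to the paper.} Your fallback is essentially the paper's proof. The paper defines $\underline{K}$ on proper $1$-chains by tethering simplices to $r(k)$ inside $U_{m(k)}$. In Lemma~\ref{lem:properH3} it claims that each level loop $\ell_{\partial_2 b_k}$ is contracted by the null-homotopy of Lemma~\ref{lem:H6}. But Lemma~\ref{lem:H6} only contracts some representative of the class of $\ell_{\partial_2 b_k}$ in $\operatorname{Ab}(\pi_1(U_{m(k)}))$, namely the product regrouped simplex by simplex. Passing from $\ell_{\partial_2 b_k}$ to that representative costs a number of commutators that grows with the number of simplices in $b_k$. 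This is exactly the unbounded-commutator-length issue you isolated. The paper passes over it rather than resolving it, and the example above shows it cannot be resolved for the canonical map without further hypotheses on the tower $\underline{\Pi}_1$.
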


To prove Theorem~\ref{thm:Hur1}, we adapt several lemmas from \cite[Chapter IV, \S3]{MR1224675}, which are used to prove the classical one-dimensional Hurewicz theorem, by incorporating necessary additional information.

Let $V$ be a topological space. For any path $f \colon [0, 1] \to V$, let $c_f \in C_1(V)$ denote the corresponding singular $1$-chain. Let $f * g$ denote the concatenation of paths $f$ and $g$ (where $f$ is traversed first).  Let $x_0 \in V$ be a basepoint, and let $e_{x_0}$ denote the constant path at $x_0$. 
\begin{lemma}\textup{\cite[p. 173]{MR1224675}}\label{lem:H1}
    Let $f$ and $g$ be paths in $V$ such that $f(1) = g(0)$. Then there exists a singular $2$-chain $\sigma \in C_2(V)$ such that $\partial_2\sigma=c_{f * g} - c_f - c_g$. Furthermore, $\operatorname{supp}(\sigma) = \operatorname{im}(f) \cup \operatorname{im}(g)$.
\end{lemma}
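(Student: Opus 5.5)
The plan is to construct an explicit singular $2$-simplex on the standard $2$-simplex $\Delta^2=[v_0,v_1,v_2]$ whose three faces are $f$, $g$ and $f*g$. I will define $\sigma\colon\Delta^2\to V$ by collapsing the triangle onto the interval and then following $f*g$. Concretely, choose the affine-in-each-region map $\rho\colon\Delta^2\to[0,1]$ that sends $v_0\mapsto 0$, $v_1\mapsto \tfrac12$, $v_2\mapsto 1$, and is linear on $\Delta^2$. For example, in barycentric coordinates $(t_0,t_1,t_2)$ I take $\rho=\tfrac12 t_1+t_2$. Then I set $\sigma\coloneqq (f*g)\circ\rho$, which is continuous.

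Next I compute the faces, recalling $\partial_2\sigma=\sigma|_{[v_1,v_2]}-\sigma|_{[v_0,v_2]}+\sigma|_{[v_0,v_1]}$.

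\begin{itemize}
\item On $[v_0,v_2]$, the map $\rho$ is the identity parametrization of $[0,1]$, so this face is exactly $f*g$.
\item On $[v_0,v_1]$, $\rho$ runs over $[0,\tfrac12]$ linearly, so $(f*g)\circ\rho$ is $s\mapsto f(s)$, i.e.\ exactly $f$ under the standard concatenation convention $(f*g)(s)=f(2s)$ for $s\le\tfrac12$.
\item On $[v_1,v_2]$, it runs over $[\tfrac12,1]$, giving exactly $g$.
\end{itemize}

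Hence $\partial_2\sigma=c_g-c_{f*g}+c_f$. Taking the chain $-\sigma$ then yields $\partial_2(-\sigma)=c_{f*g}-c_f-c_g$ as required. Because these faces are equalities of singular simplices rather than reparametrizations, no auxiliary degenerate correction chains are needed. This is the point to check carefully against the paper's concatenation convention, and it is the only real subtlety.

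For the support, $\operatorname{supp}(-\sigma)=\operatorname{im}(\sigma)=(f*g)(\rho(\Delta^2))=(f*g)([0,1])=\operatorname{im}(f)\cup\operatorname{im}(g)$, using that $\rho$ is surjective onto $[0,1]$. The one degenerate case is when $\sigma$ is zero as a chain, which cannot happen since a single simplex with coefficient $-1$ is nonzero. So the support is exactly as claimed, which is the extra information beyond the classical lemma in \cite{MR1224675}.
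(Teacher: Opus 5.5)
Your proof is correct and is essentially the standard argument from the cited source \cite[p.~173]{MR1224675}, which the paper relies on without reproducing. That argument uses a single singular $2$-simplex that is constant on the fibres of the affine projection $\rho=\tfrac12 t_1+t_2$ onto the edge carrying $f*g$, so its three faces are exactly $f$, $f*g$ and $g$, followed by a sign change to obtain $c_{f*g}-c_f-c_g$; your check that $\operatorname{supp}(-\sigma)=\operatorname{im}(f)\cup\operatorname{im}(g)$, using that $\rho$ maps $\Delta^2$ onto $[0,1]$, supplies exactly the support statement the paper adds to Bredon's lemma.
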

\begin{lemma}\textup{\cite[p. 173]{MR1224675}}\label{lem:H2}
     If $f$ is a path in $V$, then there exists a singular $2$-chain $\sigma \in C_2(V)$ with $\operatorname{supp}(\sigma)=\operatorname{im}(f)$ such that $\partial_2\sigma=c_f + c_{f^{-1}}$. Moreover, any constant path is a boundary via a $2$-chain supported at that single point. 
\end{lemma}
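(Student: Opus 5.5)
The statement to prove is Lemma~\ref{lem:H2}: for a path $f$ there is a $2$-chain $\sigma$ with $\operatorname{supp}(\sigma)=\operatorname{im}(f)$ and $\partial_2\sigma=c_f+c_{f^{-1}}$, and a constant path is a boundary of a $2$-chain supported at its point. The plan is to follow the classical argument for the one-dimensional Hurewicz theorem, checking supports at each step.

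\emph{Constant path.} Let $\kappa\colon\Delta^2\to V$ be the constant singular $2$-simplex at $x_0$. Each of its three faces is the constant $1$-simplex $c_{e_{x_0}}$, so
\[
\partial_2\kappa=c_{e_{x_0}}-c_{e_{x_0}}+c_{e_{x_0}}=c_{e_{x_0}}.
\]
Moreover $\operatorname{supp}(\kappa)=\{x_0\}$. This settles the second sentence of the lemma.

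\emph{The path $f$ and its reverse.} I would build one singular $2$-simplex $\tau\colon\Delta^2\to V$ whose faces are $f$, $f^{-1}$ and a constant path. Write $\Delta^2=[v_0,v_1,v_2]$ and set $\tau=f\circ\rho$, where $\rho\colon\Delta^2\to[0,1]$ is the affine map with $\rho(v_0)=0$, $\rho(v_1)=1$, $\rho(v_2)=0$. Then:
\begin{itemize}
\item the face $[v_0,v_1]$ restricts to $f$;
\item the face $[v_1,v_2]$ restricts to $f^{-1}$;
\item the face $[v_0,v_2]$ is the constant path at $f(0)$.
\end{itemize}
Hence
\[
\partial_2\tau=c_{f^{-1}}-c_{e_{f(0)}}+c_f.
\]
Now take $\sigma=\tau+\kappa$, with $\kappa$ the constant $2$-simplex at $f(0)$. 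This gives $\partial_2\sigma=c_f+c_{f^{-1}}$, as required.

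\emph{Support.} Since $\rho$ is surjective onto $[0,1]$, we have $\operatorname{im}(\tau)=\operatorname{im}(f)$, and $\operatorname{im}(\kappa)=\{f(0)\}\subseteq\operatorname{im}(f)$. So $\operatorname{supp}(\sigma)\subseteq\operatorname{im}(f)$.

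The main obstacle is the claimed equality of supports, because support is computed after cancellation in the free abelian group. If $f$ is itself constant, then $\tau=\kappa$ and $\sigma=2\kappa$. This is nonzero, so its support is still $\{f(0)\}=\operatorname{im}(f)$. If $f$ is not constant, then $\tau\neq\kappa$ as singular simplices, so no cancellation occurs and $\operatorname{supp}(\sigma)=\operatorname{im}(f)\cup\{f(0)\}=\operatorname{im}(f)$.

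For the later applications only the inclusion $\operatorname{supp}(\sigma)\subseteq\operatorname{im}(f)$ matters, since it lets sequences of such chains converge to the end. I would therefore record the equality but stress the inclusion.
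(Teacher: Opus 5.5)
Your proof is correct. The paper gives no argument of its own here, only a citation to Bredon's one-dimensional Hurewicz lemmas, and your direct construction (the degenerate simplex $\tau=f\circ\rho$ with faces $f$, $f^{-1}$ and a constant path, corrected by the constant $2$-simplex $\kappa$) is the standard argument for this fact. Your cancellation check ($\tau\neq\kappa$ unless $f$ is constant, and $2\kappa$ still has support $\{f(0)\}$ in that case) is what justifies the equality $\operatorname{supp}(\sigma)=\operatorname{im}(f)$, which the paper adds to the classical statement.
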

\begin{lemma}\textup{\cite[p. 173]{MR1224675}}\label{lem:H3}
      Let $H\colon I\times I\to V$ be a homotopy rel $\partial I$ between paths $f$ and $g$. Then there exists a $2$-chain $\sigma\in C_2(V)$ with $\operatorname{supp}(\sigma)\subseteq\operatorname{im}(H)$ such that $\partial_2\sigma=c_f - c_g$.
\end{lemma}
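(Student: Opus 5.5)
The statement to prove is Lemma~\ref{lem:H3}: given a homotopy $H$ rel $\partial I$ from $f$ to $g$, produce a $2$-chain $\sigma$ with $\partial_2\sigma=c_f-c_g$ and $\operatorname{supp}(\sigma)\subseteq\operatorname{im}(H)$. I will follow the classical argument and track supports at each step.

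\emph{Step 1: split the square.} Cut $I\times I$ along its diagonal from $(0,0)$ to $(1,1)$ into two affine $2$-simplices. Let $\tau_1$ be the simplex with vertices $(0,0),(1,0),(1,1)$ and $\tau_2$ the simplex with vertices $(0,0),(0,1),(1,1)$, each taken with the corresponding affine parametrization of $\Delta^2$. Put $\sigma_i\coloneqq H\circ\tau_i$. Then $\sigma_1$ and $\sigma_2$ are singular $2$-simplices with image in $\operatorname{im}(H)$.

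\emph{Step 2: compute boundaries.} The faces of $\sigma_1$ are:
\begin{itemize}
\item the bottom edge, which is $f$;
\item the right edge, the constant path at $f(1)$ (since $H$ is rel $\partial I$);
\item the diagonal path $d=H\circ(\text{diagonal})$.
\end{itemize}
With the usual face conventions, $\partial\sigma_1=c_{e}-c_d+c_f$ for a constant path $e$. Similarly, $\partial\sigma_2=c_{g}-c_d+c_{e'}$, where $e'$ is the constant path at $g(0)=f(0)$ on the left edge. The exact signs and orientations depend on the vertex ordering, so I will fix the ordering so that $d$ appears with the same sign in both. Then
\[
\partial(\sigma_1-\sigma_2)=c_f-c_g+c_e-c_{e'}.
\]

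\emph{Step 3: absorb the constant terms.} By Lemma~\ref{lem:H2}, each constant path is a boundary, $c_e=\partial\kappa$ and $c_{e'}=\partial\kappa'$, where $\kappa,\kappa'$ are $2$-chains supported at the single points $f(1)$ and $f(0)$. Both points lie in $\operatorname{im}(H)$. Set
\[
\sigma\coloneqq\sigma_1-\sigma_2-\kappa+\kappa'.
\]
Then $\partial_2\sigma=c_f-c_g$ and $\operatorname{supp}(\sigma)\subseteq\operatorname{im}(H)$.

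One caveat on supports: the support of a chain is the union of images of the simplices that survive with nonzero coefficient. Cancellation can therefore only shrink the support, which is harmless here because we only need containment.

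The only delicate point is the orientation bookkeeping in Step 2. If a face appears reversed, say as $d^{-1}$, I will use Lemma~\ref{lem:H2} again to replace $c_{d^{-1}}$ by $-c_d$ modulo a boundary supported on $\operatorname{im}(d)\subseteq\operatorname{im}(H)$. So every correction term stays inside $\operatorname{im}(H)$.
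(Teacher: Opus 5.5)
Your proof is correct and is essentially the classical argument that the paper takes from Bredon (it cites the lemma without proof), now with the support condition tracked explicitly. With the standard vertex ordering, $\partial\sigma_1=c_e-c_d+c_f$ and $\partial\sigma_2=c_g-c_d+c_{e'}$; the two diagonal faces are literally the same singular simplex and cancel, and the constant corrections from Lemma~\ref{lem:H2} sit at $f(0),f(1)\in\operatorname{im}(H)$, so the support bound holds and your closing contingency about reversed faces is never needed.
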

\begin{lemma}\textup{\cite[p. 174]{MR1224675}}\label{lem:H4}
    Let $\sigma \colon \Delta_2 \to V$ be a $2$-simplex. Let $y_i = \sigma(e_i)$ for $i=0,1,2$, and let $f = \sigma^{(2)}$, $g = \sigma^{(0)}$, and $h = (\sigma^{(1)})^{-1}$ be its face paths. Choose connecting paths $\lambda_{y_i}$ from $x_0$ to $y_i$. For any path $p$ between vertices, denote the conjugated loop at $x_0$ by $\widehat{p} \coloneqq \lambda_{p(0)} * p * \lambda_{p(1)}^{-1}$. Then the concatenated loop $\widehat{f} * \widehat{g} * \widehat{h}$ is null-homotopic in $V$ relative to $x_0$, and there exists a null-homotopy $H$ rel $\{x_0\}$ from $\widehat{f} * \widehat{g} * \widehat{h}$ to $e_{x_0}$ such that $\operatorname{im}(H)\subseteq \operatorname{im}(\sigma) \cup \bigcup_{i=0}^2 \operatorname{im}(\lambda_{y_i})$.
\end{lemma}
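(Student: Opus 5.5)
The plan is the classical argument behind the one-dimensional Hurewicz theorem, carried out with supports tracked at every step. Write the face maps so that $\partial\sigma = c_{\sigma^{(0)}} - c_{\sigma^{(1)}} + c_{\sigma^{(2)}}$. Set $f=\sigma^{(2)}$ (from $y_0$ to $y_1$), $g=\sigma^{(0)}$ (from $y_1$ to $y_2$), and $h=(\sigma^{(1)})^{-1}$ (from $y_2$ back to $y_0$). Then $f*g*h$ is a loop based at $y_0$.

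First I build the null-homotopy of $f*g$ versus $\sigma^{(1)}$ inside the image of $\sigma$. Since $\Delta_2$ is convex, the edge path from $e_0$ through $e_1$ to $e_2$ is homotopic rel endpoints to the direct edge $[e_0,e_2]$ by a straight-line homotopy $G$ in $\Delta_2$. Composing, $\sigma\circ G$ is a homotopy rel endpoints from $f*g$ to $\sigma^{(1)}$ whose image lies in $\operatorname{im}(\sigma)$. Concatenating with $h=(\sigma^{(1)})^{-1}$ and applying the standard homotopy from $\sigma^{(1)}*(\sigma^{(1)})^{-1}$ to the constant path, whose image is $\operatorname{im}(\sigma^{(1)})$, gives a null-homotopy of $f*g*h$ rel $y_0$ with image in $\operatorname{im}(\sigma)$.

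Next I pass to the conjugated loops. The loop $\widehat f*\widehat g*\widehat h$ is $\lambda_{y_0}*f*\lambda_{y_1}^{-1}*\lambda_{y_1}*g*\lambda_{y_2}^{-1}*\lambda_{y_2}*h*\lambda_{y_0}^{-1}$. The cancellations $\lambda_{y_i}^{-1}*\lambda_{y_i}\simeq e_{y_i}$ are achieved by the standard homotopies, which have image in $\operatorname{im}(\lambda_{y_i})$. Reparametrization homotopies, used to reassociate concatenations and delete constant paths, have image equal to the image of the path being reparametrized. Together these reduce the loop to $\lambda_{y_0}*(f*g*h)*\lambda_{y_0}^{-1}$, with every homotopy staying in $\operatorname{im}(\sigma)\cup\bigcup_i\operatorname{im}(\lambda_{y_i})$.

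Then I apply the null-homotopy from the first step in the middle slot, which keeps the outer $\lambda_{y_0}$ pieces fixed and stays in $\operatorname{im}(\sigma)\cup\operatorname{im}(\lambda_{y_0})$. This leaves $\lambda_{y_0}*\lambda_{y_0}^{-1}$, which contracts to $e_{x_0}$ inside $\operatorname{im}(\lambda_{y_0})$. All homotopies here are rel $x_0$, since each is either rel endpoints on an inner block or fixes the outer ends.

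Finally I concatenate the finitely many homotopies; the image of a concatenation is the union of the images, which gives the required bound. The only point needing care is the convention: the loop must close up exactly as $f$, $g$, $h$ with $h$ the reverse of the middle edge. Beyond that, the main thing to get right is that each elementary homotopy (reassociation, cancellation of inverses, the straight-line homotopy in $\Delta_2$) has image contained in the images of the paths involved. For straight-line homotopies inside the domain simplex or interval this holds automatically.
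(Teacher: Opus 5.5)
Your proposal is correct. It is essentially the classical argument from \cite[p.~174]{MR1224675}, which the paper cites rather than proves: you cancel the $\lambda_{y_i}^{-1}*\lambda_{y_i}$ pairs to reduce $\widehat{f}*\widehat{g}*\widehat{h}$ to $\lambda_{y_0}*(f*g*h)*\lambda_{y_0}^{-1}$, and then contract $f*g*h$ through the convex simplex $\Delta_2$. You also track the image of each elementary homotopy (the reparametrizations, the cancellations, and $\sigma\circ G$), which supplies exactly the bound $\operatorname{im}(H)\subseteq\operatorname{im}(\sigma)\cup\bigcup_i\operatorname{im}(\lambda_{y_i})$ that the paper asserts without proof.
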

\begin{lemma}\textup{\cite[p. 175]{MR1224675}}\label{lem:H5}
    Let $\sigma$ be a $1$-simplex in $V$ with vertices $y_0 = \sigma(e_0)$ and $y_1 = \sigma(e_1)$. Let $\lambda_{y_0}, \lambda_{y_1}$ be paths from a basepoint $x_0$ to these vertices, and define the loop $\widehat{\sigma} = \lambda_{y_0} * \sigma * \lambda_{y_1}^{-1}$. There exists a singular $2$-chain $\tau_\sigma \in C_2(V)$ such that $\partial_2 \tau_\sigma = c_{\widehat{\sigma}} - c_\sigma - c_{\lambda_{y_0}} + c_{\lambda_{y_1}}$ and $\operatorname{supp}(\tau_\sigma) \subseteq \operatorname{im}(\sigma) \cup \operatorname{im}(\lambda_{y_0}) \cup \operatorname{im}(\lambda_{y_1})$. Consequently, if $c = \sum m_i \sigma_i$ is a $1$-cycle and we linearly extend this construction to define $c_{\widehat{c}} \coloneqq \sum m_i c_{\widehat{\sigma}_i}$ and $\tau_c \coloneqq \sum m_i \tau_{\sigma_i}$, then $\partial_2 \tau_c = c_{\widehat{c}} - c$.
\end{lemma}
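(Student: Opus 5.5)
We will prove Lemma~\ref{lem:H5} by the same device as in the classical proof of the one-dimensional Hurewicz theorem: the loop $\widehat{\sigma}$ is a triple concatenation, and we apply Lemma~\ref{lem:H1} twice, then Lemma~\ref{lem:H2} once to turn the reversed path $\lambda_{y_1}^{-1}$ into $-\lambda_{y_1}$. Throughout we only need to track supports, and every auxiliary $2$-chain produced by Lemmas~\ref{lem:H1} and~\ref{lem:H2} has support equal to the union of the images of the paths involved, so the support bound will be automatic.

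First we fix a bracketing of $\widehat{\sigma} = (\lambda_{y_0} * \sigma) * \lambda_{y_1}^{-1}$; any other bracketing differs by a reparametrization homotopy rel endpoints, whose image is $\operatorname{im}(\widehat{\sigma})$, so Lemma~\ref{lem:H3} absorbs it without enlarging supports. Applying Lemma~\ref{lem:H1} to the pair $(\lambda_{y_0}*\sigma,\ \lambda_{y_1}^{-1})$ gives $\rho_1$ with $\partial_2\rho_1 = c_{\widehat{\sigma}} - c_{\lambda_{y_0}*\sigma} - c_{\lambda_{y_1}^{-1}}$. Applying it again to $(\lambda_{y_0},\sigma)$ gives $\rho_2$ with $\partial_2\rho_2 = c_{\lambda_{y_0}*\sigma} - c_{\lambda_{y_0}} - c_\sigma$. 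Lemma~\ref{lem:H2} for $\lambda_{y_1}$ gives $\rho_3$ with $\partial_2\rho_3 = c_{\lambda_{y_1}} + c_{\lambda_{y_1}^{-1}}$. Then
\[
\tau_\sigma \coloneqq \rho_1 + \rho_2 - \rho_3
\]
satisfies $\partial_2\tau_\sigma = c_{\widehat{\sigma}} - c_\sigma - c_{\lambda_{y_0}} - c_{\lambda_{y_1}}$ — and here one must be careful with signs: the desired formula has $+c_{\lambda_{y_1}}$, so we instead take $\tau_\sigma \coloneqq \rho_1+\rho_2+\rho_3$, giving $c_{\widehat\sigma}-c_\sigma-c_{\lambda_{y_0}}+c_{\lambda_{y_1}}$ after the $c_{\lambda_{y_1}^{-1}}$ terms cancel. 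The support of each $\rho_i$ lies in $\operatorname{im}(\sigma)\cup\operatorname{im}(\lambda_{y_0})\cup\operatorname{im}(\lambda_{y_1})$, and support of a sum is contained in the union of supports, giving the stated bound.

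For the consequence, let $c=\sum m_i\sigma_i$ be a $1$-cycle and sum: $\partial_2\tau_c = c_{\widehat c} - c - \sum m_i\bigl(c_{\lambda_{\sigma_i(e_0)}} - c_{\lambda_{\sigma_i(e_1)}}\bigr)$. The last sum is the image of $\partial_1 c = \sum m_i(\sigma_i(e_1)-\sigma_i(e_0))$ under the homomorphism $C_0(V)\to C_1(V)$ sending a point $y$ to $c_{\lambda_y}$ (up to sign), hence vanishes because $\partial_1 c=0$. This yields $\partial_2\tau_c = c_{\widehat c}-c$.

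The only step requiring genuine care is the bookkeeping of signs and of the convention that the connecting paths $\lambda_y$ depend only on the vertex $y$ (so that the cancellation in the cycle case is exact); the support estimates, which are the new information relative to \cite{MR1224675}, follow formally from those already recorded in Lemmas~\ref{lem:H1}--\ref{lem:H3}.
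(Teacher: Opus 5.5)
Your proof is correct and is essentially the standard argument of \cite[Ch.~IV, \S3]{MR1224675}, which the paper cites in place of a proof. You apply Lemma~\ref{lem:H1} twice and Lemma~\ref{lem:H2} once to get $\tau_\sigma=\rho_1+\rho_2+\rho_3$ with the stated boundary and support, and you obtain the cycle case by applying $y\mapsto c_{\lambda_y}$ to $\partial_1 c=0$. You should delete the discarded intermediate claim about $\rho_1+\rho_2-\rho_3$, which is false as written (it leaves an extra $-2c_{\lambda_{y_1}^{-1}}$); only your corrected choice is needed.
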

The proofs of the next two lemmas are straightforward and are therefore omitted. For instance, Lemma~\ref{lem:H6} follows directly from Lemma~\ref{lem:H4}.

\begin{lemma}\label{lem:H6}
    Let $b = \sum_{j} n_j \omega_j \in C_2(V)$ be a $2$-chain, and let $E(b)$ be the set of all vertices of the simplices in $b$. For each $y \in E(b)$, choose a connecting path $\lambda_y$ from $x_0$ to $y$. Let the boundary be given by the indexed sum $\partial_2 b = \sum_{i=1}^k m_i \sigma_i$. Extending the loop assignment from Lemma \ref{lem:H4}, we form the concatenated loop $\ell_{\partial_2 b} \coloneqq \prod_{i=1}^k \left( \lambda_{\sigma_i(e_0)} * \sigma_i * \lambda_{\sigma_i(e_1)}^{-1} \right)^{m_i}$. 
    
    Then $\ell_{\partial_2 b}$ represents the trivial element in $\operatorname{Ab}(\pi_1(V, x_0))$, and there exists a null-homotopy $H$ rel $\{x_0\}$ from a representative of this class to $e_{x_0}$ such that $\operatorname{im}(H) \subseteq \operatorname{supp}(b) \cup \bigcup_{y \in E(b)} \operatorname{im}(\lambda_y)$.
\end{lemma}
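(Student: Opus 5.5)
We argue by reducing to the classical fact that a $2$-boundary, read as a product of conjugated face loops, is a product of commutators and conjugates of relator loops coming from individual $2$-simplices. Write $b=\sum_j n_j\omega_j$ and expand $\partial_2 b=\sum_j n_j(\omega_j^{(0)}-\omega_j^{(1)}+\omega_j^{(2)})$. Collecting terms gives the indexed sum $\sum_i m_i\sigma_i$. The first step is to compare, in $\operatorname{Ab}(\pi_1(V,x_0))$, the loop $\ell_{\partial_2 b}$ with the loop
\[
L \coloneqq \prod_j \bigl(\widehat{\omega_j^{(2)}}*\widehat{\omega_j^{(0)}}*(\widehat{\omega_j^{(1)}})^{-1}\bigr)^{n_j}.
\]
Both are products of the same conjugated edge loops $\widehat{\sigma}=\lambda_{\sigma(e_0)}*\sigma*\lambda_{\sigma(e_1)}^{-1}$ with the same total exponents. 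The only difference is that cancellations have been performed and the factors rearranged, and each $\widehat{\sigma}$ inserted against its inverse reduces to the constant loop $e_{x_0}$. Consequently $\ell_{\partial_2 b}$ and $L$ agree modulo the commutator subgroup.

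This gives the triviality statement. By Lemma~\ref{lem:H4}, each factor $\widehat{\omega_j^{(2)}}*\widehat{\omega_j^{(0)}}*(\widehat{\omega_j^{(1)}})^{-1}$ is null-homotopic rel $x_0$, via a homotopy $H_j$ with image in $\operatorname{im}(\omega_j)\cup\bigcup_i\operatorname{im}(\lambda_{\omega_j(e_i)})$. Hence $L$ is trivial in $\pi_1$, and so $\ell_{\partial_2 b}$ is trivial in the abelianization.

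For the support statement we take, as the representative of the class in the lemma, the loop $L$ itself, which represents the same abelianized class as $\ell_{\partial_2 b}$. Concatenating the homotopies $H_j^{\pm}$ factor by factor yields a null-homotopy rel $x_0$ whose image lies in
\[
\bigcup_j\Bigl(\operatorname{im}(\omega_j)\cup\bigcup_{i=0}^2\operatorname{im}(\lambda_{\omega_j(e_i)})\Bigr)\subseteq \operatorname{supp}(b)\cup\bigcup_{y\in E(b)}\operatorname{im}(\lambda_y).
\]

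The main obstacle is making the phrase ``a representative of this class'' precise. The passage from $\ell_{\partial_2 b}$ to $L$ is a relation in the abelianization only, so the statement should be read with $L$ as the chosen representative. If one instead wants a homotopy starting from a fixed reordering of $\ell_{\partial_2 b}$, the rearrangements and cancellations must also be realized by homotopies. These can be done rel $x_0$ using only the loops $\widehat{\sigma}$ involved, whose images lie in $\operatorname{supp}(b)$ and the connecting paths $\lambda_y$. Together with the constant-path and inverse-path cancellations of Lemmas~\ref{lem:H2} and \ref{lem:H3}, this keeps the support bound intact.
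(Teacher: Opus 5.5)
Your proof is correct and is essentially the paper's. The paper omits the argument, saying only that it follows directly from Lemma~\ref{lem:H4}; that is exactly your simplex-by-simplex use of Lemma~\ref{lem:H4} to contract $L$, combined with the observation that $\ell_{\partial_2 b}$ and $L$ have the same exponent sum in each loop $\widehat{\sigma}$ and so agree in $\operatorname{Ab}(\pi_1(V,x_0))$.

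You are right that the loop actually contracted is $L$ rather than $\ell_{\partial_2 b}$, but drop your closing claim that the rearrangements can also be realized by homotopies rel $x_0$. Reordering factors is not a homotopy when $\pi_1(V,x_0)$ is non-abelian. For example, take a triangulated once-punctured torus $V$ with $b$ the sum of its triangles: $\ell_{\partial_2 b}$, taken in boundary order, is a basepoint-conjugate of the boundary circle, which is a nontrivial commutator in the free group $\pi_1(V,x_0)$. So the link between $\ell_{\partial_2 b}$ and $L$ holds only in the abelianization.
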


\begin{lemma}\label{lem:H7}
    Let $\alpha \colon [0, 1] \to V$ be a loop based at $x_0$. Let $c_\alpha \in Z_1(V)$ be the corresponding singular $1$-cycle. If we choose the constant path $e_{x_0}$ as the connecting path for the basepoint vertex $x_0$, the assigned loop from Lemma \ref{lem:H5} is $\widehat{\alpha} = e_{x_0} * \alpha * e_{x_0}^{-1}$. Then $\widehat{\alpha}$ is homotopic to $\alpha$ rel $\{x_0\}$ via a homotopy $H$ such that $\operatorname{im}(H) = \operatorname{im}(\alpha)$.
\end{lemma}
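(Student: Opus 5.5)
The plan is to realize $\widehat{\alpha}$ as a reparametrization of $\alpha$ and then use the straight-line homotopy between the two parametrizations of $[0,1]$. This keeps every intermediate path inside $\operatorname{im}(\alpha)$.

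First I will note that $e_{x_0}^{-1} = e_{x_0}$. Next I fix the bracketing convention for the triple concatenation; I take $(e_{x_0} * \alpha) * e_{x_0}$, and the other bracketing is handled identically. With this convention, $\widehat{\alpha}(t) = \alpha(\varphi(t))$ for an explicit piecewise-linear map $\varphi \colon [0,1] \to [0,1]$. The map $\varphi$ vanishes on the subinterval where $e_{x_0}$ is traversed first and equals $1$ on the subinterval where $e_{x_0}$ is traversed last. On the middle subinterval it is the affine increasing map onto $[0,1]$. This works because a constant path at $x_0 = \alpha(0) = \alpha(1)$ is exactly $\alpha$ precomposed with a constant map to $0$ or to $1$. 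In particular, $\varphi$ is continuous, $\varphi(0) = 0$ and $\varphi(1) = 1$.

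I then define $H \colon I \times I \to V$ by
\[
H(t,s) \coloneqq \alpha\bigl((1-s)\varphi(t) + s\,t\bigr).
\]
This is continuous, $H(\cdot,0) = \widehat{\alpha}$, and $H(\cdot,1) = \alpha$. Since $(1-s)\varphi(t) + st$ equals $0$ at $t = 0$ and $1$ at $t = 1$ for every $s$, we get $H(0,s) = H(1,s) = x_0$. Hence $H$ is a homotopy rel $\{x_0\}$ (indeed rel $\partial I$).

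For the image, the argument of $\alpha$ is a convex combination of points of $[0,1]$, so $\operatorname{im}(H) \subseteq \operatorname{im}(\alpha)$. Conversely, $H(\cdot,1) = \alpha$ gives $\operatorname{im}(\alpha) \subseteq \operatorname{im}(H)$, so equality holds. There is no real obstacle here. The only point needing care is writing $\varphi$ correctly for the paper's concatenation convention, which I will state explicitly.
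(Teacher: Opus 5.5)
Your proposal is correct: writing $\widehat{\alpha}=\alpha\circ\varphi$ with $\varphi(0)=0$, $\varphi(1)=1$ and taking the homotopy $\alpha\bigl((1-s)\varphi(t)+st\bigr)$ fixes $\partial I$ and keeps every stage of the homotopy inside $\operatorname{im}(\alpha)$, with equality coming from the stage $s=1$. The paper omits this proof as straightforward, and your reparametrization argument is the standard one it has in mind.
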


We are now ready to define the proper Hurewicz map and establish its bijectivity. Recall that from this point onward, we will use the geometric version of the proper homology groups.

Let $[\alpha] \in \underline{\pi}_1(M, \underline{r})$ be represented by a proper map $\alpha \colon \underline{\mathbb{S}^1} \to M$. Without loss of generality, we may assume that $\alpha|_{\underline{*}} = r$. Let $\pi \colon I \to \mathbb{S}^1$ be defined by $\pi(t)=e^{2\pi i t}$ for $t\in I$.

For each $k \ge 0$, let $\alpha_k$ be the restriction of $\alpha$ to $\mathbb{S}^1_k$. We can view the composition $\alpha_k \circ \pi \colon I \to M$ as a path in $M$ based at $r(k)$. We define a singular $1$-chain $c_k^\alpha \in C_1(M)$ by $c_k^\alpha \coloneqq c_{\alpha_k \circ \pi}$. Since $\alpha_k \circ \pi$ maps the endpoints of $I$ to the same basepoint $r(k)$, $\partial_1(c_k^\alpha) = 0$, meaning $c_k^\alpha \in Z_1(M)$.

Because $\alpha$ is a proper map, $\{\operatorname{supp}(c_k^\alpha)\} = \{\alpha(\mathbb{S}^1_k)\}$ converges to the end $e$. Thus, $\{c_k^\alpha\}_{k \geq 0} \in \underline{Z}_1(M, e)$. We define the proper Hurewicz map as:
\[
  \underline{\mathrm{Hu}} \colon \underline{\pi}_1(M, \underline{r}) \to \underline{H}_1(M, e), \quad [\alpha] \mapsto [\{c_k^\alpha\}]_{\underline{H}_1}
\]
\begin{lemma}\label{lem:properH1}
    If $\alpha$ and $\beta$ are germ homotopic relative to $\underline{*}$, then $[\{c_k^\alpha\}]_{\underline{H}_1} = [\{c_k^\beta\}]_{\underline{H}_1}$. Thus, $\underline{\mathrm{Hu}}$ is well-defined.
\end{lemma}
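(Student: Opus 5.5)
Fix a germ homotopy $H\colon \underline{\mathbb{S}^1}\times[0,1]\to M$ rel $\underline{*}$ from $\alpha$ to $\beta$. We may assume $\alpha|_{\underline{*}}=\beta|_{\underline{*}}=r$; the germ conditions then provide a compact set, hence an integer $N$, such that for every $k\ge N$ we have $H_0|_{\mathbb{S}^1_k}=\alpha_k$, $H_1|_{\mathbb{S}^1_k}=\beta_k$, and $H(k,t)=r(k)$ for all $t$. The plan is to build, for each $k\ge N$, a $2$-chain $\sigma_k$ with $\partial_2\sigma_k=c_k^\alpha-c_k^\beta$ and $\operatorname{supp}(\sigma_k)\subseteq H(\mathbb{S}^1_k\times[0,1])$. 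We then set $\sigma_k=0$ for $k<N$. Since $\underline{B}_1(M,e)$ only requires the identity for $n\gg 0$, this exhibits $\{c_k^\alpha-c_k^\beta\}\in\underline{B}_1(M,e)$.

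For the construction, consider $F_k\colon I\times I\to M$, $F_k(s,t)=H(\pi(s),t)$ restricted to $\mathbb{S}^1_k$. For $k\ge N$ this is a homotopy between the paths $\alpha_k\circ\pi$ and $\beta_k\circ\pi$. The sides $s=0$ and $s=1$ both map to $H(k,t)=r(k)$, since the attaching point of $\mathbb{S}^1_k$ is the basepoint $\pi(0)=\pi(1)$. So $F_k$ is a homotopy rel $\partial I$. Lemma~\ref{lem:H3} then gives $\sigma_k\in C_2(M)$ with $\partial_2\sigma_k=c_k^\alpha-c_k^\beta$ and $\operatorname{supp}(\sigma_k)\subseteq\operatorname{im}(F_k)=H(\mathbb{S}^1_k\times[0,1])$.

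The main point to check is that $\{\sigma_k\}\in\underline{C}_2(M,e)$, i.e.\ that the supports converge to $e$. Since $H$ is proper, $H^{-1}(C)$ is compact for each compact $C$ in the exhaustion. Such a compact set meets only finitely many circles $\mathbb{S}^1_k\times[0,1]$, because these are pairwise disjoint and escape to infinity in $\underline{\mathbb{S}^1}\times[0,1]$. Hence for large $k$ the set $H(\mathbb{S}^1_k\times[0,1])$ lies in the complement of $C$. It is also connected and contains $r(k)$, and $r(k)\in U_k$ eventually lies in the component determining $e$. Therefore $H(\mathbb{S}^1_k\times[0,1])$ lies in the correct component of $M\setminus C$, and the supports converge to $e$.

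Finally, $\{c_k^\alpha\}$ does not depend on the representative beyond such modifications. Changing $\alpha$ on a compact set alters only finitely many $c_k^\alpha$, which leaves the class in $\underline{H}_1$ unchanged. Together with the argument above, this shows that $\underline{\mathrm{Hu}}$ is well-defined.
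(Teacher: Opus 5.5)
Your proposal is correct and follows the paper's proof essentially verbatim. You restrict the germ homotopy to $\mathbb{S}^1_k\times[0,1]$ for $k\gg 0$, precompose with $\pi\times\mathrm{id}$ to get a homotopy rel $\partial I$, apply Lemma~\ref{lem:H3}, and use properness of $H$ to conclude $\{\sigma_k\}\in\underline{C}_2(M,e)$. Your extra observation that $H(\mathbb{S}^1_k\times[0,1])$ is connected and contains $r(k)\in U_k$ is what places the supports in the components defining $e$, rather than merely outside compacta; this fills in a step the paper compresses into one sentence.
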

\begin{proof}
    Let $H \colon \underline{\mathbb{S}^1} \times [0, 1] \to M$ be a germ homotopy between $\alpha$ and $\beta$. Thus, for sufficiently large $k$, the homotopy on the basepoints $H(k, -)$ is identically the constant path at $r(k)$. For each $k \gg 0$, the restriction of $H$ to $\mathbb{S}^1_k \times [0, 1]$ precomposed with $\pi \times \mathrm{id}_{[0,1]}$ yields a homotopy $H_k \colon I \times I \to M$ relative to $\partial I$ between the paths $\alpha_k \circ \pi$ and $\beta_k \circ \pi$. By Lemma \ref{lem:H3}, there exists a singular $2$-chain $b_k \in C_2(M)$ with $\operatorname{supp}(b_k) \subseteq \operatorname{im}(H_k)$ such that $\partial_2(b_k) = c_k^\alpha - c_k^\beta$ for all $k\gg 0$.
    
    Because $H$ is a proper map, $\{\operatorname{im}(H_k)\} = \{H(\mathbb{S}^1_k \times [0, 1])\}$ converges to the end $e$. Since $\operatorname{supp}(b_k) \subseteq \operatorname{im}(H_k)$, the sequence $\{\operatorname{supp}(b_k)\}$ also converges to $e$. Therefore, $\{b_k\} \in \underline{C}_2(M, e)$. Since $\partial_2(b_k) = c_k^\alpha - c_k^\beta$ holds for all $k \gg 0$, we have $\{c_k^\alpha\} - \{c_k^\beta\} \in \underline{B}_1(M, e)$. This shows the homology class is independent of the chosen representative, proving that $\underline{\mathrm{Hu}}$ is well-defined.
\end{proof}

\begin{lemma}\label{lem:properH2}
    $\underline{\mathrm{Hu}}([\alpha \cdot \beta]) = \underline{\mathrm{Hu}}([\alpha]) + \underline{\mathrm{Hu}}([\beta])$.
\end{lemma}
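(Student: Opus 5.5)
The plan is to work sphere by sphere, using Lemma~\ref{lem:H1} to produce, for each $k$, a $2$-chain whose boundary is exactly the difference between the $k$-th coordinate of $\{c_k^{\alpha\cdot\beta}\}$ and the $k$-th coordinate of $\{c_k^\alpha+c_k^\beta\}$. We then control supports so that the sequence of these $2$-chains lies in $\underline{C}_2(M,e)$.

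First choose representatives $\alpha,\beta$ with $\alpha|_{\underline{*}}=\beta|_{\underline{*}}=r$; this is allowed by the definition of the group operation together with Lemma~\ref{lem:properH1}. With these choices, $(\alpha\cdot\beta)_k$ on $\mathbb{S}^1_k$ is the standard homotopy sum based at $r(k)$. Hence $(\alpha\cdot\beta)_k\circ\pi$ is a reparametrization of the concatenation $(\alpha_k\circ\pi)*(\beta_k\circ\pi)$, and if the paper's standard map is used it is literally equal to it.

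If some reparametrization is involved, first apply Lemma~\ref{lem:H3} to the obvious homotopy rel $\partial I$ between the two parametrizations. Its image lies in $\alpha(\mathbb{S}^1_k)\cup\beta(\mathbb{S}^1_k)$, and it gives a $2$-chain $\tau_k$ with $\partial_2\tau_k=c_k^{\alpha\cdot\beta}-c_{(\alpha_k\circ\pi)*(\beta_k\circ\pi)}$. Next apply Lemma~\ref{lem:H1} with $f=\alpha_k\circ\pi$ and $g=\beta_k\circ\pi$. This gives $\sigma_k$ with $\partial_2\sigma_k=c_{f*g}-c_k^\alpha-c_k^\beta$ and $\operatorname{supp}(\sigma_k)=\alpha(\mathbb{S}^1_k)\cup\beta(\mathbb{S}^1_k)$. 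Set $b_k=\tau_k+\sigma_k$. Then $\partial_2 b_k=c_k^{\alpha\cdot\beta}-c_k^\alpha-c_k^\beta$ for every $k$.

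It remains to check that $\{b_k\}\in\underline{C}_2(M,e)$. The supports satisfy $\operatorname{supp}(b_k)\subseteq\alpha(\mathbb{S}^1_k)\cup\beta(\mathbb{S}^1_k)$. Since $\alpha$ and $\beta$ are proper and send the $k$-th sphere through $r(k)\in U_k$, both $\{\alpha(\mathbb{S}^1_k)\}$ and $\{\beta(\mathbb{S}^1_k)\}$ converge to $e$, exactly as in the definition of $\underline{\mathrm{Hu}}$. A union of two sequences converging to the same end converges to it: given $i$, take the larger of the two thresholds. So $\{b_k\}\in\underline{C}_2(M,e)$, and therefore $\{c_k^{\alpha\cdot\beta}\}-\{c_k^\alpha\}-\{c_k^\beta\}\in\underline{B}_1(M,e)$, which is the claimed additivity.

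The main obstacle is bookkeeping rather than topology. We need to confirm that the product in $\underline{\pi}_1$ can be computed on representatives agreeing on all of $\underline{*}$ rather than only on a germ, and that the induced homotopy of parametrizations fixes endpoints. Lemma~\ref{lem:properH1} absorbs any finitely many exceptional spheres, since membership in $\underline{B}_1$ only requires the boundary relation for $k\gg 0$.
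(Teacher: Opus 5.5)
Your proposal is correct and follows essentially the same route as the paper. Both arguments apply Lemma~\ref{lem:H1} on each circle $\mathbb{S}^1_k$ to get a $2$-chain $\sigma_k$ with $\partial_2\sigma_k=c_k^{\alpha\cdot\beta}-c_k^\alpha-c_k^\beta$ and support $\alpha(\mathbb{S}^1_k)\cup\beta(\mathbb{S}^1_k)$, then use that these supports converge to $e$. Your extra Lemma~\ref{lem:H3} step for a possible reparametrization is harmless but unnecessary, because the paper takes the product on each circle to be literally the concatenation.
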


\begin{proof}
    Let $[\alpha], [\beta] \in \underline{\pi}_1(M, \underline{r})$ be represented by proper maps of pairs $\alpha, \beta$ that agree exactly on $\underline{*}$. The group operation $\alpha \cdot \beta$ is defined by standard path concatenation on each attached sphere $\mathbb{S}^1_k$. Let $c_k^{\alpha \cdot \beta}$ be the corresponding singular $1$-cycle for the concatenated map on $\mathbb{S}^1_k$. Because $(\alpha_k \circ \pi)(1) = r(k) = (\beta_k \circ \pi)(0)$, by Lemma \ref{lem:H1}, there exists a singular $2$-chain $\sigma_k \in C_2(M)$ such that:
    $\partial_2 \sigma_k = c_k^{\alpha \cdot \beta} - c_k^\alpha - c_k^\beta$ for all  $k \ge 0,$ and $\operatorname{supp}(\sigma_k) = \operatorname{im}(\alpha_k \circ \pi) \cup \operatorname{im}(\beta_k \circ \pi) = \alpha(\mathbb{S}^1_k) \cup \beta(\mathbb{S}^1_k).$
    
    Since the sequences of sets $\{\alpha(\mathbb{S}^1_k)\}$ and $\{\beta(\mathbb{S}^1_k)\}$ both converge to the end $e$, the sequence $\{\operatorname{supp}(\sigma_k)\}$ converges to $e$, meaning $\{\sigma_k\} \in \underline{C}_2(M, e)$. Moreover, because $\partial_2(-\sigma_k) = c_k^\alpha + c_k^\beta - c_k^{\alpha \cdot \beta}$ holds for all $k \ge 0$, we conclude that $\{c_k^\alpha\} + \{c_k^\beta\} - \{c_k^{\alpha \cdot \beta}\} \in \underline{B}_1(M, e).$
    
    Passing to quotients in the proper homology group, this gives $[\{c_k^{\alpha \cdot \beta}\}]_{\underline{H}_1} = [\{c_k^\alpha\}]_{\underline{H}_1} + [\{c_k^\beta\}]_{\underline{H}_1}$, proving $\underline{\mathrm{Hu}}$ is a homomorphism.
\end{proof}

Since $\underline{H}_1(M, e)$ is an abelian group, $\underline{\mathrm{Hu}}$ naturally induces a group homomorphism from the abelianization of $\underline{\pi}_1(M, \underline{r})$. We will denote this induced map by the same symbol:
\begin{equation}\label{eq1}
\underline{\mathrm{Hu}} \colon \operatorname{Ab}(\underline{\pi}_1(M, \underline{r})) \to \underline{H}_1(M, e), \quad [\alpha]_{\operatorname{Ab}(\underline \pi_1)} \mapsto [\{c_k^\alpha\}]_{\underline{H}_1}.
\end{equation}
The remainder of this section is devoted to constructing the inverse of \eqref{eq1}. We first define a group homomorphism $\underline{K} \colon \underline{C}_1(M, e) \to \operatorname{Ab}(\underline{\pi}_1(M, \underline{r}))$. Let $\{c_k\} \in \underline{C}_1(M, e)$. Because $\{\operatorname{supp}(c_k)\}$ converges to the end $e$, there exists an integer $N \ge 0$ and a sequence of integers $l(k) \to \infty$ such that $\operatorname{supp}(c_k) \subseteq U_{l(k)}$ for all $k \ge N$. Then for each $k \ge N$, both the vertices of $c_k$ and the basepoint $r(k)$ lie within the path-connected set $U_{m(k)}$, where $m(k) = \min(k, l(k))$. 

For each $k \ge N$, and for every vertex $y \in \operatorname{supp}(c_k)$, choose a connecting path $\lambda_k^y \colon [0, 1] \to U_{m(k)}$ from the basepoint $r(k)$ to $y$. Write the $1$-chain $c_k$ as a finite formal sum of ordered singular simplices: $c_k = \sum_{i=1}^{t_k} m_{i,k} \sigma_{i,k}$. Let $\alpha \colon \underline{\mathbb{S}^1} \to M$ be the proper map given by $\alpha\vert_{\underline{*}} = r$. Identifying $I / \partial I$ with $\mathbb{S}^1_k$ via the quotient map $\pi$, we define the loop $\alpha\vert_{\mathbb{S}^1_k}$ in $U_{m(k)}$ based at $r(k)$ for $k \ge N$ by:
\begin{equation}\label{eq:K}
    \alpha\vert_{\mathbb{S}^1_k} \coloneqq \prod_{i=1}^{t_k} \left( \lambda_k^{\sigma_{i,k}(e_0)} * \sigma_{i,k} * (\lambda_k^{\sigma_{i,k}(e_1)})^{-1} \right)^{m_{i,k}} 
\end{equation}
For $k < N$, we simply define $\alpha\vert_{\mathbb{S}^1_k}$ to be the constant loop at $r(k)$. Because the germ of $\alpha$ depends only on the behavior for sufficiently large $k$, this construction uniquely determines a germ class $[\alpha]_{\underline{\pi}_1}$ in $\operatorname{Ab}(\underline{\pi}_1(M, \underline{r}))$, i.e., $[\alpha]_{\underline{\pi}_1}$ is independent of $N$. We define $\underline{K}(\{c_k\}) \coloneqq [\alpha]_{\underline{\pi}_1}$.

\begin{lemma}\label{lem:properH3}
    If $\{c_k\} \in \underline{B}_1(M, e)$, then $\underline{K}(\{c_k\}) = 0$ in $\operatorname{Ab}(\underline{\pi}_1(M, \underline{r}))$. Consequently, $\underline{K}$ induces a well-defined group homomorphism $\underline{K} \colon \underline{H}_1(M, e) \to \operatorname{Ab}(\underline{\pi}_1(M, \underline{r}))$.
\end{lemma}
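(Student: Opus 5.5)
We must show: if $\{c_k\}\in \underline{B}_1(M,e)$, then the loop sequence $\alpha$ built from $\{c_k\}$ via \eqref{eq:K} is trivial in $\operatorname{Ab}(\underline{\pi}_1(M,\underline{r}))$. By hypothesis there is $\{b_k\}\in\underline{C}_2(M,e)$ with $c_k=\partial_2 b_k$ for all $k\ge N'$. The plan has two steps. First, reduce to the case where the connecting paths used for $c_k$ are the ones that also serve the vertices of $b_k$. Second, apply Lemma~\ref{lem:H6} level by level and assemble the null-homotopies into a proper germ homotopy.

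\emph{Step 1: choosing indices and paths.} Choose $l'(k)\to\infty$ with $\operatorname{supp}(b_k)\subseteq U_{l'(k)}$ for $k\ge N'$. Then $\operatorname{supp}(c_k)\subseteq\operatorname{supp}(b_k)$, so we may replace $m(k)$ by $m'(k)=\min(k,l'(k),l(k))\to\infty$. We must check that $\underline K(\{c_k\})$ does not depend on the choice of connecting paths $\lambda_k^y$, up to germ homotopy after abelianization. Suppose $\lambda_k^y$ and $\mu_k^y$ are two such choices in $U_{m'(k)}$. The two loops \eqref{eq:K} then differ by inserting the loops $(\lambda_k^y)^{-1}*\mu_k^y$ at $r(k)$ and their inverses. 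Since $\partial c_k=0$, each vertex appears with total signed multiplicity zero. In $\operatorname{Ab}(\pi_1(U_{m'(k)},r(k)))$ these insertions therefore cancel. The cancelling homotopies live in $U_{m'(k)}$, which escapes to $e$.

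This level-wise argument must be upgraded to the proper group. I would do it by commutator manipulation performed uniformly in $k$. The relation ``$\alpha\beta\alpha^{-1}\beta^{-1}$ is trivial in $\operatorname{Ab}$'' is realized in $\operatorname{Ab}(\underline{\pi}_1)$ by applying commutators levelwise. This works because the group law on $\underline{\pi}_1$ is levelwise concatenation. Hence the levelwise product of commutators is a product of commutators of proper maps, provided the number of commutators is bounded in $k$.

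\textbf{Main obstacle.} The number of commutators need not be bounded in $k$. I would handle this via $\wp$: identify $\underline{\pi}_1(M,\underline r)\cong\wp(\underline\Pi_1)$, so $\operatorname{Ab}$ of it maps to $\wp(\operatorname{Ab}\underline\Pi_1)$, where levelwise triviality in the abelianization suffices. Alternatively, and more honestly, the claim may be that the relevant element lies in the commutator subgroup. I would try to show that $[\underline\pi_1,\underline\pi_1]$ consists of exactly those classes that are levelwise products of commutators with supports converging to $e$. That would require proving exactness of $\wp$ on $1\to[\Pi,\Pi]\to\Pi\to\operatorname{Ab}\Pi\to1$, which is Theorem~\ref{exactness} with the commutator subsystem. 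Using it, one gets $\operatorname{Ab}(\wp(\underline\Pi_1))$ mapping onto $\wp(\operatorname{Ab}\underline\Pi_1)$. Showing that this map is an isomorphism (the kernel being generated by bounded-length commutators) is the genuinely delicate point, and I expect it to be the crux.

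\emph{Step 2: applying Lemma~\ref{lem:H6}.} Take the paths $\lambda_k^y$ for $y\in E(b_k)$, inside $U_{m'(k)}$. For each $k\ge N'$, Lemma~\ref{lem:H6} gives a loop $\ell_{\partial_2 b_k}=\alpha|_{\mathbb S^1_k}$ that is trivial in $\operatorname{Ab}(\pi_1(U_{m'(k)},r(k)))$. The same lemma gives a null-homotopy $H_k$ rel $r(k)$ from a representative to the constant loop, with image in $\operatorname{supp}(b_k)\cup\bigcup_y\operatorname{im}\lambda_k^y\subseteq U_{m'(k)}$.

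Gluing the $H_k$ over the spheres $\mathbb S^1_k$, and taking the constant homotopy on $\underline{*}$, gives a map on $\underline{\mathbb S^1}\times I$. It is proper because the images escape to $e$. It is continuous because the spheres are discrete in $k$. This shows the class of $\alpha$ lies in the commutator part, hence vanishes in $\operatorname{Ab}$, modulo the obstacle above.

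\emph{Conclusion.} Linearity of $\underline K$ on $\underline C_1$ holds levelwise up to the same abelianization. Combined with Step 2, it shows that $\underline K$ vanishes on $\underline B_1$. It therefore descends to a homomorphism $\underline{H}_1(M,e)\to\operatorname{Ab}(\underline{\pi}_1(M,\underline r))$.
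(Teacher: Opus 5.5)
Your Step~2 is the paper's argument: apply Lemma~\ref{lem:H6} to $b_k$ in $U_{m(k)}$, then glue the levelwise null-homotopies into a germ homotopy rel $\underline{*}$. The obstacle you flag is a genuine gap, and your proposal leaves it open.

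Lemma~\ref{lem:H6} does not contract $\alpha|_{\mathbb S^1_k}=\ell_{\partial_2 b_k}$ itself. It only contracts some representative of its trivial class in $\operatorname{Ab}(\pi_1(U_{m(k)},r(k)))$. The two loops differ by a product of commutators whose number depends on the size of $b_k$ and is not bounded in $k$. The same defect affects your path-independence argument in Step~1 and the additivity of $\underline K$.

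Levelwise, this only places $[\alpha]$ in $\wp$ of the subsystem $\{[\pi_1(U_n,x_n),\pi_1(U_n,x_n)]\}$. By contrast, $[\alpha]=0$ in $\operatorname{Ab}(\underline{\pi}_1)$ means $[\alpha]$ is a product of a fixed number $N$ of commutators in $\wp(\underline{\Pi}_1)$. Equivalently, it has a representative that is eventually a product of $N$ commutators at every level.

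The paper's written proof passes over the same point. It reads Lemma~\ref{lem:H6} as contracting $\ell_{\partial_2 b_k}$ and concludes that $\alpha$ is germ homotopic to the constant map. That would even give $[\alpha]=1$ in $\underline{\pi}_1$.

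In fact the gap cannot be closed. The natural map $\operatorname{Ab}(\wp(\underline{\Pi}_1))\to\wp(\operatorname{Ab}\,\underline{\Pi}_1)\cong\underline H_1(M,e)$ is surjective by Theorem~\ref{exactness}, but it need not be injective. Here is a counterexample.
\begin{itemize}
\item Take $M=\Sigma_2\times\mathbb R$ with $e$ the end at $+\infty$, $U_n=\Sigma_2\times(n,\infty)$ and $r(t)=(x_0,t+1)$. Then $\underline{\Pi}_1$ is the constant system on $G=\pi_1(\Sigma_2,x_0)$, so $\underline{\pi}_1\cong\prod G/\bigoplus G$ by Theorem~\ref{constant}.
\item Let $c_k$ be a singular $1$-simplex tracing a loop at $r(k)$ in $\Sigma_2\times\{k+1\}$ that represents $[a_1,b_1]^k$. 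Since $[a_1,b_1]^k\in[G,G]$, we have $c_k=\partial_2 b_k$ with $\operatorname{supp}(b_k)\subseteq\Sigma_2\times\{k+1\}\subseteq U_k$. Hence $\{c_k\}\in\underline B_1(M,e)$.
\item With constant connecting paths, $\underline K(\{c_k\})$ is the class of $\{[a_1,b_1]^k\}_k$.
\item Because the bonding maps are identities, this class is a product of $N$ commutators in $\prod G/\bigoplus G$ only if $\operatorname{cl}_G([a_1,b_1]^k)\le N$ for all large $k$.
\item The assignment $a_1,b_2\mapsto a$, $b_1,a_2\mapsto b$ defines an epimorphism $G\to F(a,b)$ sending $[a_1,b_1]$ to $[a,b]$. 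By Culler's theorem $\operatorname{cl}_{F(a,b)}([a,b]^k)=\lfloor k/2\rfloor+1$. Commutator length cannot increase under a homomorphism, so $\operatorname{cl}_G([a_1,b_1]^k)\to\infty$.
\end{itemize}
Hence $\underline K(\{c_k\})\neq0$ in $\operatorname{Ab}(\underline{\pi}_1)$. The lemma is false as stated, and $\underline{\mathrm{Hu}}$ of Theorem~\ref{thm:Hur1} is not injective in this example.

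What your levelwise argument does establish is the corrected statement $\underline H_1(M,e)\cong\underline{\pi}_1(M,\underline r)/\wp(\{[\pi_1(U_n,x_n),\pi_1(U_n,x_n)]\})$. This also follows directly from Theorem~\ref{exactness} together with the levelwise Hurewicz isomorphisms. Recovering $\operatorname{Ab}(\underline{\pi}_1)$ itself would require a uniform bound on the commutator lengths involved.
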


\begin{proof}
    Let $\{c_k\} \in \underline{B}_1(M, e)$. By definition, there exists a proper $2$-chain $\{b_k\} \in \underline{C}_2(M, e)$ such that $c_k = \partial_2(b_k)$ for all $k \gg 0$.
    
    Since the sequence $\{\operatorname{supp}(b_k)\}$ converges to the end $e$, there exists an integer $N_1 \ge 0$ and a sequence of integers $l'(k)$ diverging to $\infty$ such that $\operatorname{supp}(b_k) \subseteq U_{l'(k)}$ for all $k \ge N_1$. Choose an integer $N \ge N_1$ large enough such that $c_k = \partial_2(b_k)$ for all $k \ge N$. Thus, for all $k \ge N$, we have $\operatorname{supp}(c_k) \subseteq \operatorname{supp}(b_k) \subseteq U_{l'(k)}$. We can therefore use this integer $N$ and the sequence $l'(k)$ as the bounds in the construction of the representative map $\alpha$ for $\underline{K}(\{c_k\})$.
    
    For each $k \ge N$, we evaluate the chain-level assignment of $\alpha\vert_{\mathbb{S}^1_k}$ on the boundary $c_k = \partial_2(b_k)$. Using the connecting paths $\lambda_k^y$ defined in the construction of $\underline{K}$ (which are contained in $U_{m(k)}$ where $m(k) = \min(k, l'(k))$), we apply Lemma~\ref{lem:H6} to the $2$-chain $b_k$ within the space $V = U_{m(k)}$ with basepoint $r(k)$.

    By Lemma~\ref{lem:H6}, the concatenated loop $\ell_{\partial_2 b_k}$ assigned to the boundary $\partial_2(b_k)$ represents the trivial element in $\operatorname{Ab}(\pi_1(U_{m(k)}, r(k)))$. Furthermore, Lemma~\ref{lem:H6} guarantees the existence of a null-homotopy $H_k \colon \mathbb{S}^1_k \times [0, 1] \to M$ relative to $r(k)$ contracting this loop such that $\operatorname{im}(H_k) \subseteq \operatorname{supp}(b_k) \cup \bigcup_{y \in E(b_k)} \operatorname{im}(\lambda_k^y)$. Because $\operatorname{supp}(b_k) \subseteq U_{l'(k)}$ and $\operatorname{im}(\lambda_k^y) \subseteq U_{m(k)}$, the image of $H_k$ is contained within $U_{l'(k)} \cup U_{m(k)} = U_{m(k)}$. 
    
    For $k < N$, the map $\alpha\vert_{\mathbb{S}^1_k}$ is defined as the constant loop at $r(k)$. We simply define $H_k \colon \mathbb{S}^1_k \times [0, 1] \to M$ for $k < N$ to be the constant homotopy at $r(k)$.

    We now assemble these components into a single map $H \colon \underline{\mathbb{S}^1} \times [0, 1] \to M$ as follows: On $\underline{*} \times [0, 1]$, $H$ is stationary, i.e., $H(x,t)=r(x)$ for all $(x,t)\in \underline{*} \times [0,1]$. On each $\mathbb{S}^1_k \times [0, 1]$, $H$ equals the null-homotopy $H_k$ for $k \ge 0$.
    
    Because $k \to \infty$ and $l'(k) \to \infty$, the index $m(k) \to \infty$. Therefore, the sequence $\{\operatorname{im}(H_k)\}$ converges to the end $e$, proving that $H$ is a valid germ homotopy rel $\underline{*}$. This homotopy connects the proper map of pairs $\alpha$ representing $\underline{K}(\{c_k\})$ to the trivial proper map of pairs (which constantly maps $\mathbb{S}^1_k$ to $r(k)$ for every $k \ge 0$). Thus, $\underline{K}(\{c_k\}) = 0 \in \operatorname{Ab}(\underline{\pi}_1(M, \underline{r}))$, and the map passes uniquely to the quotient.
\end{proof}

\begin{lemma} \label{lem:properH4}
    The homomorphism $\underline K$ is a left inverse of \eqref{eq1}.
\end{lemma}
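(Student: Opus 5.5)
The plan is to show directly that $\underline{K}\circ\underline{\mathrm{Hu}}$ is the identity on $\operatorname{Ab}(\underline{\pi}_1(M,\underline{r}))$. Fix $[\alpha]$, represented by a proper map of pairs with $\alpha|_{\underline{*}}=r$. Then $\underline{\mathrm{Hu}}([\alpha])=[\{c_k^\alpha\}]$ with $c_k^\alpha=c_{\alpha_k\circ\pi}$. This is a single singular $1$-simplex whose two vertices both equal $r(k)$. The construction of $\underline{K}$ lets us choose the connecting paths freely, and Lemma~\ref{lem:properH3} shows the resulting class does not depend on these choices. We therefore take the constant path $e_{r(k)}$ as the connecting path $\lambda_k^{r(k)}$ for every $k\ge N$.

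With this choice, formula \eqref{eq:K} makes the loop on $\mathbb{S}^1_k$ equal to $e_{r(k)}*(\alpha_k\circ\pi)*e_{r(k)}^{-1}$. Lemma~\ref{lem:H7} then supplies a homotopy $H_k$ rel $\{r(k)\}$ from this loop to $\alpha_k\circ\pi$ with $\operatorname{im}(H_k)=\alpha(\mathbb{S}^1_k)$. For $k<N$ we can use any homotopy between the constant loop and $\alpha_k$ inside $M$. This case only touches finitely many spheres, so it does not affect the germ. Alternatively, we can first change $\alpha$ on these finitely many spheres, which does not change its germ class.

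Next we assemble the $H_k$ into a single map $H$ on $\underline{\mathbb{S}^1}\times[0,1]$, stationary along $\underline{*}$. Because $\alpha$ is proper, the images $\{\alpha(\mathbb{S}^1_k)\}$ converge to $e$, and hence so do the images $\{\operatorname{im}(H_k)\}$. Continuity holds because the spheres are attached at discrete points and each $H_k$ is rel basepoint. Properness follows because any compact subset of $M$ meets only finitely many of the sets $\operatorname{im}(H_k)$, and its preimage in the ray part is compact because $r$ is proper. So $H$ is a germ homotopy rel $\underline{*}$, and $\underline{K}(\underline{\mathrm{Hu}}([\alpha]))=[\alpha]$ in $\operatorname{Ab}(\underline{\pi}_1)$.

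The main obstacle is justifying that $\underline{K}$ is independent of the connecting paths (and of $N$ and $l(k)$), so that we may pick constant paths. If that independence is not already covered, I would argue it as follows. Changing $\lambda_k^y$ to $\mu_k^y$, both inside $U_{m(k)}$, replaces each factor $\lambda^{a}*\sigma*(\lambda^{b})^{-1}$ by a conjugate up to insertion of the loops $\lambda^{y}*(\mu^{y})^{-1}$. Because $c_k$ is a cycle, each vertex appears with total signed multiplicity zero. So in the abelianization these correction loops cancel. The cancelling homotopies lie in $U_{m(k)}$, which converges to $e$, so they again assemble into a proper germ homotopy.
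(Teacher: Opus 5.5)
Your computation is the same as the paper's proof. You represent $\underline{\mathrm{Hu}}([\alpha])$ by the single simplices $c_k^\alpha$ and take $\lambda_k^{r(k)}=e_{r(k)}$, so that \eqref{eq:K} gives $e_{r(k)}*(\alpha_k\circ\pi)*e_{r(k)}^{-1}$. You then glue the homotopies of Lemma~\ref{lem:H7}, whose images are $\alpha(\mathbb{S}^1_k)$, into a germ homotopy rel $\underline{*}$. One small slip: for $k<N$ there need not be any homotopy from the constant loop to $\alpha_k$. Use your alternative instead: alter $\alpha$ on finitely many spheres, as the paper does, since only the germ matters.

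Your justification for being allowed to choose $e_{r(k)}$ does not work as written.
\begin{itemize}
\item Lemma~\ref{lem:properH3} only says that $\underline{K}$ kills proper boundaries. It says nothing about changing connecting paths.
\item Your fallback argument cancels the correction loops in $\operatorname{Ab}(\pi_1(U_{m(k)},r(k)))$ level by level, but that is not $\operatorname{Ab}(\underline{\pi}_1(M,\underline{r}))$. Loops that agree modulo commutators need not be homotopic, so there are no ``cancelling homotopies'' to assemble.
\item More to the point, a sequence that is levelwise in the commutator subgroup need not lie in $[\underline{\pi}_1,\underline{\pi}_1]$ unless the number of commutators can be kept bounded in $k$.
\end{itemize}
For the chain you actually need, the repair is easy. $c_k^\alpha$ is one simplex with both vertices at $r(k)$, so any admissible loop $\lambda_k=\lambda_k^{r(k)}\subseteq U_{m(k)}$ yields $\lambda_k*(\alpha_k\circ\pi)*\lambda_k^{-1}$. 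This is the class $[\Lambda][\alpha][\Lambda]^{-1}$, where $\Lambda|_{\mathbb{S}^1_k}=\lambda_k$, and it differs from $[\alpha]$ by one commutator in $\underline{\pi}_1$.

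What remains, in your argument and in the paper's, is the presupposition that $\underline{K}$ is well defined on classes in $\underline{H}_1(M,e)$, i.e.\ Lemma~\ref{lem:properH3}. There the same levelwise-versus-global confusion appears, and the statement in fact fails in general.
\begin{itemize}
\item Take $M=N\times\mathbb{R}$ with $N$ closed and $\pi_1(N)$ free on $a,b$, for example $N=(S^1\times S^2)\#(S^1\times S^2)$. Let $e=+\infty$, let $r$ be a vertical ray, and let $\alpha_k$ be a loop in the slice through $r(k)$ representing $[a,b]^k$.
\item A germ homotopy is based on $\mathbb{S}^1_k$ for all large $k$. So $[\alpha]\in[\underline{\pi}_1,\underline{\pi}_1]$ would force $[a,b]^k$ to be a product of a fixed number of commutators in $\pi_1(N)$ for all large $k$.
\item This contradicts Culler's theorem that the commutator length of $[a,b]^k$ tends to infinity. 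Hence $[\alpha]\neq 0$ in $\operatorname{Ab}(\underline{\pi}_1)$.
\item On the other hand, each $c_k^\alpha$ bounds a $2$-chain inside its own slice, so $\underline{\mathrm{Hu}}([\alpha])=0$.
\end{itemize}
Thus \eqref{eq1} is not injective for this $M$ and admits no left inverse at all. Your argument, like the paper's, proves the lemma only conditionally on a well-defined $\underline{K}$, and this example shows that no such $\underline{K}$ exists in general.
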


\begin{proof}
    Let $[\alpha] \in \underline{\pi}_1(M, \underline{r})$ be an equivalence class represented by a proper map of pairs $\alpha \colon (\underline{\mathbb{S}^1}, \underline{*})\allowbreak \to (M, \underline{r})$. By choosing a representative that replaces $\alpha_k$ with the constant loop at $r(k)$ for finitely many $k$, and ensuring $\alpha\vert_{\underline{*}} = r$ exactly, we may assume without loss of generality that the entire image satisfies $\operatorname{im}(\alpha) \subseteq U_0$. Thus, there exists a sequence of integers $m(k) \to \infty$ such that $\operatorname{im}(\alpha_k) \subseteq U_{m(k)}$ for all $k \ge 0$, where $\alpha_k$ is the restriction of $\alpha$ to the attached circle $\mathbb{S}^1_k$.
    
    Applying the proper Hurewicz map gives $\underline{\mathrm{Hu}}([\alpha]) = [\{c_k^\alpha\}]_{\underline{H}_1}$, where for each $k \ge 0$, the $1$-cycle $c_k^\alpha \in Z_1(M)$ is the singular $1$-chain $c_k^\alpha = c_{\alpha_k \circ \pi}$. Because $\alpha_k$ maps the basepoint of $\mathbb{S}^1_k$ to $r(k)$, both endpoints of the $1$-simplex $\alpha_k \circ \pi$ lie exactly at $r(k)$.
    
    Recall that constructing $\underline{K}$ requires choosing connecting paths $\lambda_k^y$ from the basepoint $r(k)$ to the vertices of the $1$-chains. Since the vertices are already at $r(k)$, we choose $\lambda_k^{r(k)}$ to be the constant path at $r(k)$, denoted $e_{r(k)}$. This constant path trivially satisfies the condition of being contained within $U_{m(k)}$.

    Let $\widehat{\alpha} \colon \underline{\mathbb{S}^1} \to M$ be the proper map given by $\widehat{\alpha}\vert_{\underline{*}} = r$ and $\widehat{\alpha}_k \coloneqq \widehat{\alpha}\vert_{\mathbb{S}^1_k} = e_{r(k)} * (\alpha_k \circ \pi) * e_{r(k)}^{-1}$. By the definition of $\underline{K}$, the element $\underline{K} \circ \underline{\mathrm{Hu}}([\alpha]_{\operatorname{Ab}(\underline{\pi}_1)})$ is represented by $\widehat{\alpha}$. Therefore, to conclude the proof it is enough to show that $\widehat{\alpha}$ is germ homotopic to $\alpha$. 
    
    By Lemma~\ref{lem:H7}, there exists a homotopy $H_k \colon \mathbb{S}^1_k \times [0, 1] \to M$ between $\widehat{\alpha}_k$ and $\alpha_k$ relative to the basepoint $r(k)$ such that $\operatorname{im}(H_k) = \operatorname{im}(\alpha_k \circ \pi) = \alpha(\mathbb{S}^1_k)$. Since the sequence $\{\alpha(\mathbb{S}^1_k)\}$ converges to $e$, the proper homotopy $H \colon \underline{\mathbb{S}^1} \times [0,1] \to M$ defined by $H(x,t) \coloneqq r(x)$ for all $(x,t) \in \underline{*} \times [0,1]$ and $H\vert_{\mathbb{S}^1_k \times [0,1]} \coloneqq H_k$, is a germ homotopy between $\widehat{\alpha}$ and $\alpha$.
    
    This concludes the proof that  $\underline{K} \circ \underline{\mathrm{Hu}} = \operatorname{id}_{\operatorname{Ab}(\underline{\pi}_1)}$.
\end{proof}

\begin{lemma}\label{lem:properH5}
    The homomorphism $\underline K$ is a right inverse of \eqref{eq1}.
\end{lemma}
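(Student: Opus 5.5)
We must show $\underline{\mathrm{Hu}}\circ\underline{K}=\operatorname{id}_{\underline{H}_1(M,e)}$. Since $\underline{K}$ was already shown to be well defined on $\underline{H}_1(M,e)$ (Lemma~\ref{lem:properH3}), it suffices to fix a proper cycle $\{c_k\}\in\underline{Z}_1(M,e)$ and exhibit a proper $2$-chain $\{\tau_k\}\in\underline{C}_2(M,e)$ with $\partial_2\tau_k = c_k^{\alpha}-c_k$ for all $k\gg 0$. Here $\alpha$ is the representative of $\underline{K}(\{c_k\})$ built in \eqref{eq:K}.

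First we fix the data used in the construction of $\underline{K}$. There is an $N$ and a sequence $m(k)=\min(k,l(k))\to\infty$ such that, for $k\ge N$, $\operatorname{supp}(c_k)\subseteq U_{m(k)}$, $\partial_1 c_k=0$, and the connecting paths $\lambda_k^y$ lie in $U_{m(k)}$. By definition, $c_k^\alpha$ is the singular $1$-chain of the concatenated loop
\[
\alpha_k\circ\pi=\prod_i\bigl(\widehat{\sigma}_{i,k}\bigr)^{m_{i,k}},\qquad \widehat{\sigma}_{i,k}=\lambda_k^{\sigma_{i,k}(e_0)}*\sigma_{i,k}*(\lambda_k^{\sigma_{i,k}(e_1)})^{-1}.
\]

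The chain $\tau_k$ is assembled from two pieces.

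\emph{Step 1.} We iterate Lemma~\ref{lem:H1} over the concatenation and use Lemma~\ref{lem:H2} to handle negative exponents, i.e.\ inverse paths and the chain $c_{f^{-1}}+c_f$. This gives a $2$-chain $\rho_k$ with
\[
\partial_2\rho_k = c_{\alpha_k\circ\pi}-\sum_i m_{i,k}\,c_{\widehat{\sigma}_{i,k}},
\]
and $\operatorname{supp}(\rho_k)$ is contained in the image of the loop $\alpha_k\circ\pi$, hence in $U_{m(k)}$.

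\emph{Step 2.} Since $c_k$ is a $1$-cycle, Lemma~\ref{lem:H5} provides $\tau_{c_k}$ with
\[
\partial_2\tau_{c_k}=\sum_i m_{i,k}\,c_{\widehat{\sigma}_{i,k}}-c_k,
\]
and $\operatorname{supp}(\tau_{c_k})\subseteq \operatorname{supp}(c_k)\cup\bigcup_y\operatorname{im}(\lambda_k^y)\subseteq U_{m(k)}$.

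We then set $\tau_k=\rho_k+\tau_{c_k}$ for $k\ge N$ and $\tau_k=0$ for $k<N$.

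Because $\operatorname{supp}(\tau_k)\subseteq U_{m(k)}$ and $m(k)\to\infty$, the supports converge to $e$, so $\{\tau_k\}\in\underline{C}_2(M,e)$. Hence $\{c_k^\alpha\}-\{c_k\}\in\underline{B}_1(M,e)$, which gives $\underline{\mathrm{Hu}}(\underline{K}[\{c_k\}])=[\{c_k\}]$.

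Combined with Lemma~\ref{lem:properH4}, this shows that \eqref{eq1} is an isomorphism, proving Theorem~\ref{thm:Hur1}.

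The main obstacle is Step 1: tracking supports when the loop is a concatenation with integer powers, some of them negative. Lemma~\ref{lem:H1} applies only to a single concatenation $f*g$, so we induct on the number of factors. At each stage we check that the $2$-chain produced is supported in the union of the images of the factors, and for inverses we use Lemma~\ref{lem:H2}. All of these images lie in $U_{m(k)}$, which gives the required uniform support control.
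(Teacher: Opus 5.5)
Your proposal is correct and follows the paper's own proof. The paper likewise combines a $2$-chain $\eta_k$ obtained from repeated use of Lemma~\ref{lem:H1} (your $\rho_k$) with the chain $\tau_{c_k}$ from Lemma~\ref{lem:H5}, both supported in $U_{m(k)}$, so that $\partial_2(\eta_k+\tau_{c_k})=c_k^\alpha-c_k$ and $m(k)\to\infty$ makes this a proper boundary; your explicit use of Lemma~\ref{lem:H2} for the negative exponents fills in a detail the paper leaves implicit.
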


\begin{proof}
    Let $[\{c_k\}] \in \underline{H}_1(M, e)$. By definition, we can choose a representative $\{c_k\} \in \underline{Z}_1(M, e)$ such that for all $k \ge 0$, $\partial_1(c_k) = 0$ and $\operatorname{supp}(c_k) \subseteq U_{l(k)}$ for some sequence of integers $l(k) \to \infty$. We write each $1$-chain as a finite formal sum $c_k = \sum_{i=1}^{t_k} m_{i,k} \sigma_{i,k}$. Let $E(c_k)$ be the set of all vertices of the simplices in $c_k$.
    
    Applying $\underline{K}$ to $[\{c_k\}]$ yields an element $[\alpha]_{\operatorname{Ab}(\underline{\pi}_1)}$ where $\alpha\vert_{\mathbb{S}^1_k}$ is a loop in $U_{m(k)}$ based at $r(k)$ given by:
    \[
    \alpha\vert_{\mathbb{S}^1_k} \coloneqq \prod_{i=1}^{t_k}\left( \lambda_k^{\sigma_{i,k}(e_0)} * \sigma_{i,k} * (\lambda_k^{\sigma_{i,k}(e_1)})^{-1} \right)^{m_{i,k}}.
    \]
    Here, $m(k) = \min(k, l(k))$ and $\lambda_k^y \colon [0, 1] \to U_{m(k)}$ is a path from the basepoint $r(k)$ to $y$.

     Applying $\underline{\mathrm{Hu}}$ to $\underline K([\{c_k\}])$ gives the proper homology class of the sequence of singular $1$-chains $\{c_k^\alpha\}$, where $c_k^\alpha$ is the $1$-chain of the concatenated loop $\alpha\vert_{\mathbb{S}^1_k}$.

     On the other hand, let $c_{\widehat{c_k}} \coloneqq \sum_{i=1}^{t_k} m_{i,k} c_{\widehat{\sigma}_{i,k}}$ be the linearly extended $1$-chain defined in Lemma \ref{lem:H5}. By repeated application of Lemma \ref{lem:H1} for path concatenations, the $1$-chain of the concatenated loop $c_k^\alpha$ is homologous to the formal sum $c_{\widehat{c_k}}$ via a $2$-chain $\eta_k \in C_2(U_{m(k)})$, i.e., $\partial_2 \eta_k = c_k^\alpha - c_{\widehat{c_k}}$.
    
    Furthermore, by applying Lemma \ref{lem:H5} within the space $U_{m(k)}$, there exists a singular $2$-chain $\tau_{c_k} \in C_2(U_{m(k)})$ such that $\partial_2 \tau_{c_k} = c_{\widehat{c_k}} - c_k$. Thus, $\operatorname{supp}(\tau_{c_k}) \subseteq U_{m(k)}$. Therefore, the combined $2$-chain $b_k = \eta_k + \tau_{c_k}$ is supported in $U_{m(k)}$ and satisfies $\partial_2(b_k) = c_k^\alpha - c_k$. Since $m(k) \to \infty$ as $k \to \infty$, the sequence $\{\operatorname{supp}(b_k)\}$ converges to the end $e$, meaning $\{b_k\} \in \underline{C}_2(M, e)$. Since $\partial_2(b_k) = c_k^\alpha - c_k$ holds for all $k \ge 0$, this means that $\{c_k^\alpha\} - \{c_k\} \in \underline{B}_1(M, e)$. Therefore, $\underline{\mathrm{Hu}} \circ \underline{K}([\{c_k\}]_{\underline{H}_1}) = [\{c_k^\alpha\}]_{\underline{H}_1} = [\{c_k\}]_{\underline{H}_1}$. 
    
    This concludes the proof that $\underline{\mathrm{Hu}} \circ \underline{K} = \mathrm{id}_{\underline{H}_1}$.  
\end{proof}

\begin{proof}[Proof of Theorem~\ref{thm:Hur1}]
    By Lemma~\ref{lem:properH1} and ~\ref{lem:properH2}, we have a well-defined group homomorphism $\underline{\mathrm{Hu}} \colon \operatorname{Ab}(\underline{\pi}_1(M, \underline{r})) \to \underline{H}_1(M, e)$. By Lemmas~\ref{lem:properH3},~\ref{lem:properH4}, and~\ref{lem:properH5}, $\underline K$ is the inverse of $\underline{\operatorname{Hu}}.$ 
\end{proof}
\section{Computations and Applications}\label{section:computation}
\subsection{Proper Fundamental Groups of Exotic Contractible \texorpdfstring{$3$}{3}-Manifolds}\label{section:exotic}
In dimensions $n=1,2$, it is known that every contractible open $n$-manifold is homeomorphic to $\mathbb R^n$. However, this is not the case for $n \ge 3$. In 1932, J. H. C. Whitehead attempted to prove a non-compact version of the Poincaré Conjecture---namely, that every contractible open $3$-manifold is homeomorphic to $\mathbb R^3$. He later corrected this error in \cite{zbMATH02532945} by constructing the famous Whitehead manifold, a contractible open $3$-manifold not homeomorphic to $\mathbb R^3$. Generalizing Whitehead's approach, McMillan~\cite{MR137105} later constructed uncountably many contractible open subsets of $\mathbb R^3$ that are pairwise non-homeomorphic.

To distinguish $\mathbb R^3$ from this uncountable collection, Brown--Tucker used the proper fundamental group. They proved that an open, irreducible, contractible $3$-manifold with one end has trivial $\underline \pi_1$ if and only if it is homeomorphic to $\mathbb R^3$~\cite[Corollary 3.3]{MR334225}. Their proof relies on the Open Collar Theorem: If $M$ is a connected, orientable, irreducible, one-ended $3$-manifold such that $M$ has no spherical boundary components, $\underline \pi_1(M)$ vanishes, and $M$ is not homeomorphic to $\mathbb R^3$, then $\partial M$ is non-empty and connected, and $M$ is homeomorphic to $\partial M \times [0,\infty)$~\cite[Theorem 3.2]{MR334225} \cite[p. 26]{MR2621405}.

Until now, the only known fact regarding the proper fundamental groups of exotic, one-ended, open, contractible $3$-manifolds was that they are non-trivial. In this section, we provide additional information about these proper fundamental groups; specifically, we prove that they are uncountable, perfect groups. Moreover, we show that the ``one-ended'' condition, which Brown--Tucker assumed throughout their work on contractible open manifolds, is actually redundant.

\begin{theorem}\label{thm:contra}
    Let $\mathcal W$ be an open, contractible $3$-manifold. Then $\mathcal W$ has exactly one end and $\underline H_r(\mathcal W)$ is isomorphic to $\underline H_r(\mathbb R^3)$ for every non-negative integer $r$. In particular, $\underline H_r(\mathcal W)$ is isomorphic to $\left.\prod_{m\geq 0} \mathbb Z\right/\bigoplus_{m\geq 0} \mathbb Z$ for $r \in \{0, 2\}$, and is the trivial group otherwise. Moreover, if $\mathcal W$ is not homeomorphic to $\mathbb R^3$, then $\underline \pi_1(\mathcal W)$ is uncountable and perfect.
\end{theorem}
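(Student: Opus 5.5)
The proof has four parts: one end, the homology of the end neighborhoods, the identification of $\wp$ of the resulting systems, and the fundamental group via Hurewicz.

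\textbf{One end.} Fix an efficient exhaustion $K_0\subseteq K_1\subseteq\cdots$ of $\mathcal W$. By Lefschetz/Poincaré duality for the open orientable (being contractible) $3$-manifold, $H_c^1(\mathcal W)\cong H_2(\mathcal W)=0$. The number of ends is $1+\operatorname{rank}$ of the cokernel of $H^0(\mathcal W)\to \varinjlim_i H^0(\mathcal W\setminus K_i)$. This cokernel injects into $H^1_c(\mathcal W)$ via the long exact sequence of pairs $(\mathcal W,\mathcal W\setminus K_i)$ and excision, so $\mathcal W$ has exactly one end. Concretely, each $\mathcal W\setminus \operatorname{int}K_i$ has exactly one unbounded component. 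Using the efficient exhaustion, I take $U_n$ to be that component.

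\textbf{Homology of the neighborhoods.} I compute the systems $\underline H_r=\{H_r(U_n)\}$ and show they are pro-isomorphic to those of $\mathbb R^3$. Those are the constant $\mathbb Z$ in degrees $0$ and $2$ (since $U_n\simeq \mathbb S^2$ there) and trivial otherwise. For this I would thicken $K_n$ to a compact $3$-submanifold $N_n$ with $U_n=\mathcal W\setminus N_n$ up to a collar; as a smooth manifold, $\mathcal W$ admits such an exhaustion by compact codimension-$0$ submanifolds with connected complement. Then:
\begin{itemize}
\item Degree $0$ is constant $\mathbb Z$ because each $U_n$ is connected.
\item Degree $\ge 3$: $H_r(U_n)=0$, since $U_n$ is an open $3$-manifold.
\item Degree $1$: Mayer--Vietoris for $\mathcal W=N_n\cup U_n$ (overlapping in a collar of $\partial N_n$), together with duality $H_1(U_n)\cong H^2_c$-type arguments, shows $H_1(U_n)\to H_1(\mathcal W)=0$. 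This gives no immediate vanishing. Instead, I argue at the pro level: for $m\gg n$ the map $H_1(U_m)\to H_1(U_n)$ factors through $H_1(\mathcal W\setminus N_n)$ via classes supported away from $N_n$. By duality these are dual to $H^1_c$-type classes of $(N_m\setminus \operatorname{int} N_n)$, and vanishing of $H^1_c(\mathcal W)$ and $H^2_c(\mathcal W)\cong H_1(\mathcal W)=0$ forces pro-triviality.
\item Degree $2$: the same duality identifies the pro-system $\{H_2(U_n)\}$ with $\{H^0_c\}$-dual data, giving pro-isomorphism to constant $\mathbb Z$ via the generator $[\partial N_n]$. Here $\partial N_n$ is connected by $H^1_c=0$, and it is nonzero and stable since it links a point of $N_0$.
\end{itemize}
I expect this duality step to be the main obstacle. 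The cleanest route I would try first is the homology of the end, $\varprojlim$ and $\varprojlimone$-free pro-statements obtained from the long exact sequence of $(\mathcal W,U_n)$, $H_r(\mathcal W,U_n)\cong H^{3-r}(N_n)$, and the contractibility of $\mathcal W$. This gives $H_r(U_n)\cong H_{r+1}(\mathcal W,U_n)\cong H^{2-r}(N_n)$ for $r\ge1$, compatible with the bonding maps. The inverse system $\{H^{2-r}(N_n)\}$ with restriction maps is then the pro-system whose direct-limit dual is $H^{2-r}(\mathcal W)=0$ for $r=1$, and constant $\mathbb Z$ for $r=2$. A pro-argument (restriction maps $H^1(N_m)\to H^1(N_n)$ eventually zero) is needed here; I would deduce it from the fact that any class of $H_1(N_n)$ bounds in some $N_m$ and the universal coefficient theorem.

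\textbf{Computing $\wp$ and the fundamental group.} Given the pro-isomorphisms, Theorem~\ref{proinv}, Theorem~\ref{computation} and Theorem~\ref{constant} give $\underline H_r(\mathcal W)\cong\prod\mathbb Z/\bigoplus\mathbb Z$ for $r=0,2$ and $0$ otherwise, matching $\mathbb R^3$. For the last assertion, Theorem~\ref{thm:hur} gives $\operatorname{Ab}(\underline\pi_1(\mathcal W))\cong\underline H_1(\mathcal W)=0$, so $\underline\pi_1$ is perfect. If $\mathcal W\not\cong\mathbb R^3$, then $\mathcal W$ is irreducible (every open contractible $3$-manifold is, by Perelman/Alexander), so Brown--Tucker [Corollary 3.3] shows $\underline\pi_1$ is nontrivial. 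Equivalently, Stallings' characterization with Corollary~\ref{protrivdet} shows it is nontrivial. Theorem~\ref{uncountable} then makes it uncountable.
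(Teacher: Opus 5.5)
Your proposal is correct and follows the paper's proof essentially step for step. The one-end argument via $H^1_c(\mathcal W)\cong H_2(\mathcal W)=0$ is Lemma~\ref{lem:oneend}. Your ``cleanest route'' (the long exact sequence of $(\mathcal W,\mathcal W\setminus N_n)$, Alexander--Lefschetz duality, the universal coefficient theorem, and eventual vanishing of $H_1(N_n)\to H_1(N_m)$) is exactly Lemmas~\ref{lem:inclusionzero}--\ref{lem:secondcontr}. The $\underline\pi_1$ part uses the same inputs as the paper: irreducibility, Brown--Tucker, the dichotomy, and Hurewicz. Two small points: ``every class of $H_1(N_n)$ bounds in some $N_m$'' gives a single $m$ only because $H_1(N_n)$ is finitely generated ($N_n$ is compact), and you can drop the speculative Mayer--Vietoris and linking bullets entirely.
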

We need a few lemmas to prove Theorem~\ref{thm:contra}. First, we show that open contractible manifolds of dimension $n\geq2$ are one-ended.
\begin{lemma}\label{lem:oneend}
    Let $M$ be a connected, non-compact $n$-manifold such that $H_k(M;\mathbb{Z})$ is finitely generated for all $k \geq 0$. If $\partial M = \varnothing$, then the space of ends of $M$ is a finite set of cardinality at most $1 + \dim_{\mathbb{Z}_2} H_{n-1}(M;\mathbb{Z}_2)$.
\end{lemma}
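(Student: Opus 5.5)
The plan is to bound the number of ends by a Mayer--Vietoris argument with $\mathbb{Z}_2$ coefficients, applied to a compact piece of $M$ and its complement, and to use Poincaré duality mod $2$ on that compact piece. Working mod $2$ avoids orientability issues. Since each $H_k(M;\mathbb{Z})$ is finitely generated, the universal coefficient theorem makes $H_{n-1}(M;\mathbb{Z}_2)$ finite-dimensional. Fix an integer $q$ and suppose $M$ has at least $q$ distinct ends. Using an efficient exhaustion (Theorem~\ref{eff}), I will choose a compact set $K$ such that $M\setminus K$ has at least $q$ components with noncompact closure. Since $M$ is a manifold, I will enlarge $K$ to a compact connected codimension-zero submanifold $N$ with boundary, for instance a regular neighbourhood in a smooth or PL structure. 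In dimension $3$ and in the smooth case this is harmless; in general I would take a compact subpolyhedron or handlebody neighbourhood. I will also arrange, after discarding bounded complementary components by filling, that every component of $M\setminus \operatorname{int} N$ is noncompact. Then there are $q'\ge q$ such components $E_1,\dots,E_{q'}$, each with nonempty boundary $\partial E_j\subseteq \partial N$.

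The key step is to produce $q'-1$ independent classes in $H_{n-1}(M;\mathbb{Z}_2)$ from the boundary pieces $\partial E_j$. Each $\partial E_j$ is a closed $(n-1)$-manifold, possibly disconnected, so it carries a mod-$2$ fundamental class $[\partial E_j]\in H_{n-1}(M;\mathbb{Z}_2)$. The sum of all $[\partial E_j]$ equals $[\partial N]$, which bounds $N$ and is therefore zero, so the span has dimension at most $q'-1$. To show it has dimension exactly $q'-1$, I will use intersection numbers with proper arcs. Because $N$ is connected and each $E_j$ is noncompact and connected, for each $j$ there is a proper ray $\rho_j$ that starts at a fixed point of $\operatorname{int}N$ and exits into $E_j$, crossing $\partial E_j$ once transversally and never meeting $\partial E_i$ for $i\neq j$. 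The arc $\rho_i\cup\rho_j$ is then a properly embedded line, i.e.\ a locally finite $1$-cycle, and it defines a class in locally finite (Borel--Moore) homology $H_1^{lf}(M;\mathbb{Z}_2)$. The mod-$2$ intersection pairing $H_{n-1}(M;\mathbb{Z}_2)\times H_1^{lf}(M;\mathbb{Z}_2)\to\mathbb{Z}_2$, which is Poincaré--Lefschetz duality for noncompact manifolds, is well defined on homology classes. Pairing $\sum_j a_j[\partial E_j]$ with $\rho_1\cup\rho_j$ gives $a_1+a_j$. So any vanishing combination forces all $a_j$ to be equal, and hence the combination is either trivial or the total sum, which is already zero. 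This yields $\dim H_{n-1}(M;\mathbb{Z}_2)\ge q'-1\ge q-1$, so $q\le 1+\dim_{\mathbb{Z}_2}H_{n-1}(M;\mathbb{Z}_2)$. Since $q$ was arbitrary, the space of ends is finite with the stated bound.

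The main obstacle is making the intersection pairing rigorous at the level of homology classes. I would handle this by instead using the relative Mayer--Vietoris / long exact sequence argument. Let $E=\bigsqcup_j E_j$. Consider the exact piece
\[
H_{n-1}(M;\mathbb{Z}_2)\to H_{n-1}(M,E;\mathbb{Z}_2)\cong H_{n-1}(N,\partial N;\mathbb{Z}_2)\cong H^1(N;\mathbb{Z}_2)
\]
which comes from excision and Lefschetz duality. Rather than this target, I would map into the composite $H_{n-1}(\partial N)\to$ and apply duality on $N$. Concretely, I would dualize via $H^{1}$ and use the $\tilde H^0$ of the components of $\partial N$ grouped by $E_j$. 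The rank count then comes from the exact sequence $H_n(N,\partial N)\to H_{n-1}(\partial N)\to H_{n-1}(N)$ together with the fact that the $E_j$ are noncompact, which makes $H_n(E_j,\partial E_j;\mathbb{Z}_2)=0$. If the direct Borel--Moore pairing proves awkward, this cochain-level version of the same count is the fallback.
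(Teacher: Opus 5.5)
Your main argument is correct, at least in the smooth or PL category, and it takes a genuinely different route from the paper.

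\textbf{The paper's route.} The paper never chooses a compact submanifold. It uses the exact sequence $H^0_c(M;\mathbb{Z}_2)\to H^0(M;\mathbb{Z}_2)\xrightarrow{\psi_0} H^0_e(M;\mathbb{Z}_2)\xrightarrow{\delta_0} H^1_c(M;\mathbb{Z}_2)$, which relates compactly supported and end cohomology. It kills the first term by noncompactness and identifies $H^1_c(M;\mathbb{Z}_2)\cong H_{n-1}(M;\mathbb{Z}_2)$ by Poincar\'e duality. It then reads off the number of ends from $H^0_e(M;\mathbb{Z}_2)\cong H^0(\mathsf{Ends}(M);\mathbb{Z}_2)$.

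\textbf{How your argument relates.} Your construction is the geometric, Poincar\'e-dual picture of the same computation:
\begin{itemize}
\item Under duality, $[\partial E_j]$ corresponds to $\delta_0$ of the indicator function $\chi_{E_j}$ of the ends lying in $E_j$.
\item Your relation $\sum_j[\partial E_j]=[\partial N]=0$ is the statement that the constant function lies in $\ker\delta_0=\operatorname{im}\psi_0$.
\item Pairing with the line $\rho_1\cup\rho_j$ amounts to evaluating the compactly supported cocycle $\delta\chi_{E_j}$ on a locally finite chain. That evaluation telescopes to $a_1+a_j$. So the pairing step you worried about is standard, and it can be made completely concrete this way.
\end{itemize}

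\textbf{What each route buys.} Yours gives explicit geometric cycles. It avoids end cohomology and the identification with $H^0(\mathsf{Ends}(M))$, replacing the latter by a direct count of the components of $M\setminus\operatorname{int}N$. The cost is that you need a compact codimension-zero submanifold $N\supseteq K$. This is harmless for smooth manifolds, which is all the paper needs, since the proof of Theorem~\ref{thm:contra} fixes a smooth structure. For an arbitrary topological $n$-manifold, however, ``compact subpolyhedron'' is not available, and producing $N$ requires handle and smoothing theory (Kirby--Siebenmann, Quinn, Freedman--Quinn). The paper's cohomological argument works for topological manifolds as it stands.

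\textbf{Your fallback paragraph does not work as written.} Every $[\partial E_j]$ is supported in $E$, so it maps to zero in $H_{n-1}(M,E;\mathbb{Z}_2)$. The sequence for $(N,\partial N)$ only shows independence in $H_{n-1}(N;\mathbb{Z}_2)$, not in $H_{n-1}(M;\mathbb{Z}_2)$.

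\textbf{A pairing-free version.} Use the pair $(M,\partial N)$ instead, with the exact sequence $H_n(M,\partial N;\mathbb{Z}_2)\to H_{n-1}(\partial N;\mathbb{Z}_2)\to H_{n-1}(M;\mathbb{Z}_2)$.
\begin{itemize}
\item Relative Mayer--Vietoris along collars gives $H_n(M,\partial N;\mathbb{Z}_2)\cong H_n(N,\partial N;\mathbb{Z}_2)\oplus\bigoplus_j H_n(E_j,\partial E_j;\mathbb{Z}_2)$.
\item The first summand is $\mathbb{Z}_2$. Each other summand is $0$, since $H_n(E_j,\partial E_j;\mathbb{Z}_2)\cong H^0_c(E_j;\mathbb{Z}_2)=0$; this is exactly where noncompactness of each $E_j$ enters.
\item Hence the kernel of $H_{n-1}(\partial N;\mathbb{Z}_2)\to H_{n-1}(M;\mathbb{Z}_2)$ is spanned by $[\partial N]$.
\item The $\partial E_j$ are disjoint unions of distinct components of $\partial N$, so the classes $[\partial E_j]$ span a subspace of $H_{n-1}(M;\mathbb{Z}_2)$ of dimension exactly $q'-1$.
\end{itemize}
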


\begin{remark}
    The hypothesis $\partial M = \varnothing$ is necessary for Lemma~\ref{lem:oneend}. To see this, let $\mathcal{E} \subseteq \mathbb{S}^1$ be an infinite closed subset of the Cantor set. The space $\{ z \in \mathbb{C} : |z| \leq 1 \} \setminus \mathcal{E}$ is a contractible $2$-manifold with non-empty boundary, yet it has infinitely many ends.
\end{remark}

\begin{proof}[Proof of Lemma~\ref{lem:oneend}]
    Assume $\partial M = \varnothing$. By \cite[\S1.4]{MR275436} or \cite[\S 2]{MR120637}, there is an exact sequence 
    $$\cdots \to H^0_c(M;\mathbb{Z}_2) \xrightarrow{\varphi_0} H^0(M;\mathbb{Z}_2) \xrightarrow{\psi_0} H^0_e(M;\mathbb{Z}_2) \xrightarrow{\delta_0} H^1_c(M;\mathbb{Z}_2) \xrightarrow{\varphi_1} H^1(M;\mathbb{Z}_2) \to \cdots$$ 
    where $H^*$, $H^*_e$, and $H^*_c$ denote singular, end, and compactly supported cohomology, respectively. By Poincaré duality \cite[Exercise~35, Chapter~3.3]{MR1867354}, we have $H^1_c(M;\mathbb{Z}_2) \cong H_{n-1}(M;\mathbb{Z}_2)$. Since the integral homology of $M$ is finitely generated, the Universal Coefficient Theorem implies that $H_{n-1}(M;\mathbb{Z}_2)$ is finite-dimensional. Hence $H^1_c(M;\mathbb{Z}_2)$ is a finite-dimensional vector space over $\mathbb{Z}_2$.

    Because $M$ is connected, $H^0(M;\mathbb{Z}_2) \cong \mathbb{Z}_2$. Furthermore, since $M$ is non-compact, Poincaré duality gives $H^0_c(M;\mathbb{Z}_2) \cong H_n(M;\mathbb{Z}_2) = 0$, and hence $\operatorname{im}(\varphi_0) = 0$. By exactness, $\ker \psi_0 = 0$, and consequently $\dim_{\mathbb{Z}_2} \operatorname{im}(\psi_0) = \dim_{\mathbb{Z}_2} H^0(M;\mathbb{Z}_2) = 1$. 
    
    Moreover, the quotient vector space $H^0_e(M;\mathbb{Z}_2) / \operatorname{im}(\psi_0) = H^0_e(M;\mathbb{Z}_2) / \ker \delta_0$ is isomorphic via $\delta_0$ to a subspace of the finite-dimensional vector space $H^1_c(M;\mathbb{Z}_2)$. Therefore, 
    \begin{align*}
    \dim_{\mathbb{Z}_2} H^0_e(M;\mathbb{Z}_2)
    &= \dim_{\mathbb{Z}_2} \operatorname{im}(\psi_0)
       + \dim_{\mathbb{Z}_2} \frac{H^0_e(M;\mathbb{Z}_2)}{\operatorname{im}(\psi_0)} \\
    &= 1 + \dim_{\mathbb{Z}_2} \frac{H^0_e(M;\mathbb{Z}_2)}{\operatorname{im}(\psi_0)} \\
    &\leq 1 + \dim_{\mathbb{Z}_2} H^1_c(M;\mathbb{Z}_2) \\
    &= 1 + \dim_{\mathbb{Z}_2} H_{n-1}(M;\mathbb{Z}_2),
    \end{align*}
    which is finite. By \cite[Remark, p.~244]{MR275436} or \cite[Theorem 1.13]{MR120637}, $H^0(\mathsf{Ends}(M);\mathbb{Z}_2)$ is isomorphic to $H^0_e(M;\mathbb{Z}_2)$. If $\alpha$ is the number of path components of $\mathsf{Ends}(M)$, then $H^0(\mathsf{Ends}(M);\mathbb{Z}_2) \cong \mathbb{Z}_2^{\alpha}$. Since the space $\mathsf{Ends}(M)$ is totally disconnected, $\alpha$ equals the number of elements of $\mathsf{Ends}(M)$. 
    
    Thus, the number of ends of $M$ is at most $1 + \dim_{\mathbb{Z}_2} H_{n-1}(M;\mathbb{Z}_2)$.
\end{proof}

The following lemma is useful for computing the proper homology group of an end shaped like an infinite cylinder over a closed, codimension-one submanifold. For instance, the interior of a compact, connected manifold with boundary always has ends of this form.
\begin{lemma}\label{example}
    Let $M$ be a connected non-compact manifold, and let $N$ be a connected, compact, codimension one, separating submanifold of $M$ such that the closure of one of the two sides of $M\setminus N$ is homeomorphic to $N\times [0,\infty)$. Denote the end corresponding to the side $N\times [0,\infty)$ by $e$. Then $\underline H_k^\mathsf{alg}(M,e)$ is isomorphic to $\left.\prod_{n\geq 0}H_k(N)\right/ \bigoplus_{n \ge 0} H_k(N)$.
\end{lemma}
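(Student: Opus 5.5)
The plan is to compute $\wp$ of the inverse system of homologies of a well-chosen neighborhood basis of $e$, and to reduce to Theorem~\ref{constant}. Identify the closed side with $N\times[0,\infty)$, and let $U^{(n)}$ be the open collar pieces $N\times(n,\infty)$ for $n\ge 1$. These are connected and open, they are nested, and $\bigcap_n U^{(n)}=\varnothing$. Each inclusion $U^{(n+1)}\hookrightarrow U^{(n)}$ is a homotopy equivalence, since both deformation retract onto a slice $N\times\{t\}$ compatibly. Hence the inverse system $\{H_k(U^{(n)})\}$ is isomorphic, as an object of $\mathsf{InvGrp}$, to the constant system with value $H_k(N)$ and identity bonding maps. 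The isomorphism is levelwise, induced by the projections $U^{(n)}\to N$, and these projections commute with the inclusions.

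The main point is to justify that the algebraic proper homology may be computed using this neighborhood basis rather than the components $U^{(i)}$ of $X\setminus K_i$ attached to a fixed efficient exhaustion. I would argue in two steps.

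\emph{Cofinality.} The collar neighborhoods and the exhaustion components defining $e$ are mutually cofinal. Every compact $K_i$ lies in $M\setminus N\times(t,\infty)$ for some $t$, and $N\times(t,\infty)$ is connected and lies in a component of $M\setminus K_i$. Conversely, each $U^{(i)}$ contains some collar piece, because the complement of a collar piece in $M$ is compact up to the other side, and $e$ is by definition the end on the collar side.

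\emph{Invariance.} The inclusions therefore assemble into a ladder of homomorphisms on $H_k$. This exhibits the two towers as pro-isomorphic after passing to subsequences, so Lemma~\ref{SI}, Lemma~\ref{AI} and Theorem~\ref{proinv} give isomorphic $\wp$. Alternatively, one can invoke Theorem~\ref{computation}: the geometric group only depends on convergence of supports to $e$, and convergence is the same notion for the two bases.

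Once the reduction is made, functoriality (Theorem~\ref{fun}) applied to the levelwise isomorphism with the constant system gives $\wp(\{H_k(U^{(n)})\})\cong\wp(\underline{H_k(N)})$. Theorem~\ref{constant} then identifies the latter with $\prod_{n\ge0}H_k(N)/\bigoplus_{n\ge0}H_k(N)$.

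The main obstacle is the cofinality step: showing that each exhaustion component defining $e$ contains a collar piece, and hence that $e$ is indeed the collar end. I would handle it using compactness of $N$ together with the fact that $M\setminus\bigl(N\times(t,\infty)\bigr)$ meets the collar side only in the compact set $N\times[0,t]$.
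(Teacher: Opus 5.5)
Your proof is correct, but it is organized differently from the paper's. The paper avoids comparing neighborhood bases: it chooses an efficient exhaustion adapted to the collar, with $K_i\cap C=\Phi(N\times[0,i])$, so the components defining $e$ are literally the collar pieces $\Phi(N\times(i,\infty))$. It then shows that tower is pro-isomorphic to the constant tower $H_k(N)$, using shifted cross-maps $\mu_i\colon x\mapsto\Phi(x,i+1)$, projections $\lambda_i$, and a straight-line homotopy, and concludes with Theorem~\ref{proinv} and Theorem~\ref{constant}.

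You instead keep whatever exhaustion defines $\underline{H}_k^{\mathsf{alg}}(M,e)$, and pay for this with a mutual-cofinality argument plus Theorem~\ref{proinv}. At the collar stage you note that the projections $U^{(n)}\to N$ commute strictly with the inclusions and are homotopy equivalences. So the collar tower is already isomorphic to the constant tower in $\mathsf{InvGrp}$, and functoriality of $\wp$ suffices. Your route makes independence from the exhaustion explicit and does not need an adapted efficient exhaustion, whose existence the paper asserts without proof. The paper's route is shorter once that existence is granted.

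One slip to fix: your ``conversely'' sentence restates the inclusion you had just proved (each $U^{(i)}$ contains a collar piece), and your last paragraph again names that easy direction as the obstacle. The inclusion that genuinely needs an argument is the reverse one: for each $t$ there is $j$ with $U^{(j)}\subseteq\Phi(N\times(t,\infty))$.
\begin{itemize}
\item Take $j$ with $\Phi(N\times[0,t])\subseteq K_j$.
\item Since $M\setminus\Phi(N\times[0,t])$ is the disjoint union of the open sets $M\setminus C$ and $\Phi(N\times(t,\infty))$, the set $M\setminus K_j$ splits into the open pieces $(M\setminus C)\setminus K_j$ and $\Phi(N\times(t,\infty))\setminus K_j$.
\item The connected set $U^{(j)}$ contains far collar points, hence lies in the second piece.
\end{itemize}
This is exactly what your remark that $M\setminus\bigl(N\times(t,\infty)\bigr)$ meets the collar side only in $N\times[0,t]$ provides. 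So the needed idea is present; it is just attached to the wrong direction.
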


\begin{proof}
    Let $C$ be the closure of the side of $M \setminus N$ corresponding to the end $e$, and let $\Phi \colon N \times [0, \infty) \xrightarrow{\cong} C$ be the homeomorphism given by the hypothesis, such that $\Phi(N \times \{0\}) = N$. For each integer $i \ge 0$, define the open set $U^{(i)} \coloneqq \Phi(N \times (i, \infty))$. Since $M$ admits an efficient exhaustion and $N$ is compact, we can choose an efficient exhaustion $K_0 \subseteq K_1 \subseteq K_2 \subseteq \cdots$ of $M$ such that $K_i \cap C = \Phi(N \times [0, i])$. By construction, $U^{(i)}$ is a connected component of $M \setminus K_i$. Thus, the end $e$ is represented by the sequence $(U^{(0)}, U^{(1)}, U^{(2)}, \ldots)$. Therefore, the inverse system $\underline{G} \coloneqq \{H_k(U^{(n)}), (p^m_n)_k\}$, where $(p^m_n)_k \colon H_k(U^{(m)}) \to H_k(U^{(n)})$ is induced by the inclusion $U^{(m)} \hookrightarrow U^{(n)}$, computes the proper homology group $\underline{H}_k(M,e)$.

    We claim that $\underline G$ is pro-isomorphic to the  system $\underline{H_{k}(N)}\coloneqq H_{k}(N) \xleftarrow{\operatorname{id}_{H_k(N)}} H_{k}(N) \xleftarrow{\operatorname{id}_{H_k(N)}} \cdots$, i.e.,  there exist homomorphisms $\lambda_i$ and $\mu_i$ such that the following ladder diagram commutes:
    \[
    \begin{tikzcd}[column sep=.5em]
        H_k(U^{(0)}) & & H_k(U^{(1)}) \arrow[ll, "(p^{1}_{0})_k"'] \arrow[ld, "\lambda_0"'] & & H_k(U^{(2)}) \arrow[ll, "(p^{2}_{1})_k"'] \arrow[ld, "\lambda_1"'] & & H_k(U^{(3)}) \arrow[ll, "(p^{3}_{2})_k"'] \arrow[ld, "\lambda_2"'] & & \cdots \arrow[ll] \arrow[ld] & \\
                & H_{k}(N) \arrow[lu, "\mu_0"] & & H_{k}(N) \arrow[lu, "\mu_1"] \arrow[ll, "\operatorname{id}_{H_k(N)}"] & & H_{k}(N) \arrow[lu, "\mu_2"] \arrow[ll, "\operatorname{id}_{H_k(N)}"] & & H_{k}(N) \arrow[lu, "\mu_3"] \arrow[ll, "\operatorname{id}_{H_k(N)}"] & & \cdots \arrow[ll]
    \end{tikzcd}
    \]
    
    To prove this claim, for each $i \ge 0$, define the inclusion map $\widetilde{\mu}_i \colon N \to U^{(i)}$ by 
    $\widetilde{\mu}_i(x) \coloneqq \Phi(x, i+1),$
    and define the projection map $\widetilde{\lambda}_i \colon U^{(i+1)} \to N$ by 
    $\widetilde{\lambda}_i(\Phi(x, t)) \coloneqq x.$ Let $\mu_i \coloneqq (\widetilde{\mu}_i)_*$ and $\lambda_i \coloneqq (\widetilde{\lambda}_i)_*$ be their respective induced homomorphisms on homology. We now verify two space-level compositions to establish commutativity:
    
    First, for any $i \ge 0$, the composition $\widetilde{\lambda}_i \circ \widetilde{\mu}_{i+1} \colon N \to N$ maps $x \mapsto \Phi(x, i+2) \mapsto x$. Thus, $\widetilde{\lambda}_i \circ \widetilde{\mu}_{i+1} = \operatorname{id}_N$ exactly, which implies 
    $\lambda_i \circ \mu_{i+1} = \operatorname{id}_{H_k(N)}.$
    
    Second, the composition $\widetilde{\mu}_i \circ \widetilde{\lambda}_i \colon U^{(i+1)} \to U^{(i)}$ maps $\Phi(x, t) \mapsto \Phi(x, i+1)$. We claim this is homotopic to the standard inclusion $p^{i+1}_i \colon U^{(i+1)} \hookrightarrow U^{(i)}$. Define the straight-line homotopy $H \colon U^{(i+1)} \times [0, 1] \to U^{(i)}$ by
    $H(\Phi(x, t), s) \coloneqq \Phi(x, (1-s)t + s(i+1)).$
    Because $\Phi(x, t) \in U^{(i+1)}$, we know $t > i+1$. For any $s \in [0, 1]$, the coordinate $(1-s)t + s(i+1)$ is a convex combination of $t$ and $i+1$, both of which are strictly greater than $i$. Thus, the image of $H$ lies entirely within $U^{(i)}$. This shows $\widetilde{\mu}_i \circ \widetilde{\lambda}_i \simeq p^{i+1}_i$, which induces 
    $\mu_i \circ \lambda_i = (p^{i+1}_i)_k.$

    Because the triangles commute, the inverse system $\underline{G}$ is pro-isomorphic to the constant inverse system $\underline{H_{k}(N)}$. The result then follows immediately from Theorem~\ref{constant}.
\end{proof}

\begin{corollary}\label{cor:Rn}
    If $n\geq 2$, the group $\underline H_k^\mathsf{alg}(\mathbb R^n,\infty)$ is isomorphic to $\left.\prod_{m\geq 0} \mathbb Z\right/\bigoplus_{m\geq 0} \mathbb Z$ for $k \in \{0, n-1\}$, and is the trivial group otherwise, where $\infty$ denotes the unique end of $\mathbb R^n$. Furthermore, for either end $e$ of $\mathbb R$, the group $\underline H_k^\mathsf{alg}(\mathbb R,e)$ is isomorphic to $\left.\prod_{m\geq 0} \mathbb Z\right/\bigoplus_{m\geq 0} \mathbb Z$ for $k=0$, and is the trivial group otherwise.
\end{corollary}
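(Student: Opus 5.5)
The plan is to apply Lemma~\ref{example} to $\mathbb{R}^n$ and then read off the homology of a sphere. For $n\geq 2$, I take $N=\mathbb{S}^{n-1}$, embedded as the unit sphere in $\mathbb{R}^n$. It is a connected, compact, codimension-one submanifold, and it separates $\mathbb{R}^n$ into the open unit ball and its exterior. The closure of the exterior is homeomorphic to $\mathbb{S}^{n-1}\times[0,\infty)$ via $(x,t)\mapsto (1+t)x$.

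The end corresponding to this side is $\infty$. Indeed, $\mathbb{R}^n$ has exactly one end, because the complement of a closed ball is connected when $n\geq 2$. So Lemma~\ref{example} gives
\[
\underline H_k^\mathsf{alg}(\mathbb{R}^n,\infty)\cong \left.\prod_{m\geq 0}H_k(\mathbb{S}^{n-1})\right/\bigoplus_{m\geq 0}H_k(\mathbb{S}^{n-1}).
\]
For $n\geq 2$ we have $H_k(\mathbb{S}^{n-1})\cong\mathbb{Z}$ when $k\in\{0,n-1\}$ and $0$ otherwise. When $n=2$ these two degrees are $0$ and $1$, and the formula still holds. Substituting gives the first assertion.

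For $\mathbb{R}$, the sphere $\mathbb{S}^0$ is disconnected, so I would not use the unit sphere. Instead, for the end $+\infty$, I take $N=\{0\}$, which is a connected, compact, codimension-one submanifold separating $\mathbb{R}$. The closure of the positive side is $[0,\infty)\cong\{0\}\times[0,\infty)$. Lemma~\ref{example} then gives $\prod H_k(\mathrm{pt})/\bigoplus H_k(\mathrm{pt})$, which is $\prod\mathbb{Z}/\bigoplus\mathbb{Z}$ for $k=0$ and trivial otherwise. The end $-\infty$ is handled by the same argument, or by applying the symmetry $x\mapsto -x$.

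The only point needing care is checking the hypotheses of Lemma~\ref{example}. In particular, the end produced there must be the one named in the statement. For $\mathbb{R}^n$ with $n\geq2$ this holds because there is only one end. For $\mathbb{R}$, one must choose the side correctly for each of the two ends, and the lemma requires $N$ to be connected, which is why a point is used rather than $\mathbb{S}^0$. Apart from this, everything reduces to Theorem~\ref{constant} via the lemma.
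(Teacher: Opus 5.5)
Your proposal is correct and matches the paper's intended argument. The paper gives no separate proof: the corollary is an immediate consequence of Lemma~\ref{example}, applied with $N=\mathbb{S}^{n-1}$ for $n\geq 2$ and with $N$ a point for each end of $\mathbb{R}$, then substituting $H_k(N)$. Your choice of a point rather than $\mathbb{S}^0$ in the case of $\mathbb{R}$, because the lemma requires $N$ to be connected, is exactly the right care.
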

By Theorem~\ref{eff}, every connected, non-compact manifold admits an efficient exhaustion. However, for the remainder of this section, we require each term of the exhaustion to be a compact, codimension-zero submanifold.

Let $M$ be a connected, non-compact smooth $n$-manifold without boundary. A \emph{regular exhaustion} of $M$ is a sequence of subsets $K_0\subseteq K_1\subseteq K_2\subset\cdots$ such that $M=\bigcup_{i\geq 0}K_i$ and, for every $i\geq 0$, the following conditions hold: $K_i$ is a connected, compact, embedded $n$-dimensional submanifold with boundary; $K_i\subseteq \operatorname{int}(K_{i+1})$; and the closure of each component of $M\setminus K_i$ is non-compact. By \cite[Compact Submanifold Exhaustion Proposition]{friedlbook}, every such manifold $M$ admits a regular exhaustion.

The proof of the next lemma is well-known. We include it for the reader's convenience.
\begin{lemma}\label{lem:inclusionzero}
    Let $M$ be a connected, non-compact smooth $n$-manifold without boundary, and let $\{K_i\}$ be a regular exhaustion of $M$. If $H_1(M)=0$, then for every $i$, there exists $j>i$ such that the inclusion-induced map $H_1(K_i)\to H_1(K_j)$ is the zero map.
\end{lemma}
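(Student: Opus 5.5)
The approach is the standard compactness argument for singular homology: a class in $H_1(K_i)$ that dies in $H_1(M)$ dies already in some compact piece of $M$, and finite generation of $H_1(K_i)$ lets us choose a single such piece for all classes at once. We carry this out in three steps.

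\emph{Step 1 (finite generation).} Since $K_i$ is a compact smooth manifold with boundary, it admits a finite CW structure, for instance via a triangulation or via a handle decomposition from a Morse function. Hence $H_1(K_i)$ is finitely generated. Choose generators $a_1,\dots,a_s\in H_1(K_i)$, represented by singular $1$-cycles $z_1,\dots,z_s\in Z_1(K_i)$.

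\emph{Step 2 (bounding each generator in a compact set).} Because $H_1(M)=0$, the image of each $z_t$ in $C_1(M)$ is a boundary. So there are singular $2$-chains $b_t\in C_2(M)$ with $\partial_2 b_t=z_t$. Each $\operatorname{supp}(b_t)$ is a finite union of images of simplices, hence compact, and so is the finite union $S\coloneqq\bigcup_{t}\operatorname{supp}(b_t)$. The interiors $\operatorname{int}(K_j)$ form an increasing open cover of $M$, since $K_j\subseteq\operatorname{int}(K_{j+1})$ and $\bigcup_j K_j=M$. By compactness there is an index $j$ with $S\subseteq K_j$, and we may take $j>i$ because the sequence is nested.

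\emph{Step 3 (conclusion).} Each $b_t$ is then a chain in $C_2(K_j)$, and $\partial_2 b_t=z_t$ holds in the subcomplex $C_*(K_j)$. So the inclusion-induced map $H_1(K_i)\to H_1(K_j)$ kills every generator $a_t$. Being a homomorphism, it is therefore the zero map.

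The only step needing justification beyond routine singular homology is the finite generation in Step 1. It follows from the classical fact that compact smooth manifolds with boundary have the homotopy type of finite CW complexes, so no real obstacle is expected. Notice that the argument never uses the hypothesis that the components of $M\setminus K_i$ have non-compact closure; only compactness of each $K_i$ and the exhaustion property are needed.
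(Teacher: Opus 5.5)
Your proof is correct and follows essentially the same route as the paper. Both arguments use finite generation of $H_1(K_i)$, bound a representative cycle of each generator by a singular $2$-chain in $M$, and use compactness of the supports together with the nested open cover $\{\operatorname{int}(K_r)\}$ to find a single $K_j$ with $j>i$ containing all of them. The only difference is that you justify finite generation via a finite CW structure, whereas the paper cites a corollary in Bredon.
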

\begin{proof}
    Suppose $H_1(M)=0$. Let $i$ be a non-negative integer. Since $K_i$ is a compact manifold with boundary, $H_1(K_i)$ is finitely generated \cite[p. 538, Corollary E.5]{MR1224675}. Let $\{g_1, g_2, \dots, g_\ell\}$ be a finite set of generators for $H_1(K_i)$. For each generator $g_m \in H_1(K_i)$, choose a representing singular $1$-cycle $z_m$. Because $H_1(M) = 0$, there exists a singular $2$-chain $c_m$ in $M$ such that $\partial c_m = z_m$. Because a singular chain is a finite linear combination of singular simplices, its support $\operatorname{supp}(c_m)$ is compact. Since $\{\operatorname{int}(K_r)\}_{r\geq 0}$ forms a nested open cover of $M$, there exists an integer $i_m$ such that $\operatorname{supp}(c_m)\subseteq K_{i_m}$. Let $j\coloneqq \max\left(\{i+1\} \cup \{i_m \mid 1 \leq m \leq \ell\}\right)$. Then $j > i$, and the inclusion-induced map $H_1(K_i)\to H_1(K_j)$ is the zero map.
\end{proof}

The following lemma shows that the first homology of a neighborhood system of the unique end of an open, contractible $3$-manifold is pro-trivial.
\begin{lemma}\label{lem:firstcontr}
    Let $M$ be a connected, oriented, non-compact smooth $3$-manifold without boundary, and let $\{K_i\}$ be a regular exhaustion of $M$. Suppose that $H_1(M) = 0$ and $H_2(M) = 0$. Then for every $i$, there exists $j>i$ such that the inclusion-induced map $H_1(M\setminus K_j)\to H_1(M\setminus K_i)$ is the zero map.
\end{lemma}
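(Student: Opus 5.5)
The plan is to use duality to convert a statement about $H_1$ of the complements into one about $H_1$ of the compact pieces, and then invoke Lemma~\ref{lem:inclusionzero}. Set $U_i \coloneqq M\setminus K_i$ and $C_i \coloneqq M\setminus \operatorname{int}(K_i)$, a closed $3$-manifold with boundary $\partial K_i$. Since $U_i$ deformation retracts onto a collar-extended version of itself and $\partial K_i$ has a collar, $H_1(U_i)\cong H_1(C_i)$ naturally in $i$. The map we must kill is therefore $H_1(C_j)\to H_1(C_i)$ for suitable $j>i$.

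The first step is to relate $H_1(C_i)$ to the compact piece $K_i$ through the Mayer--Vietoris sequence of $M = K_i\cup C_i$, with $K_i\cap C_i=\partial K_i$ (thickened by collars). Because $H_1(M)=H_2(M)=0$, the sequence gives
\[
0=H_2(M)\to H_1(\partial K_i)\to H_1(K_i)\oplus H_1(C_i)\to H_1(M)=0,
\]
so $H_1(\partial K_i)\cong H_1(K_i)\oplus H_1(C_i)$. This decomposition is natural with respect to the inclusions $K_i\subseteq K_j$ and $C_j\subseteq C_i$, but the two inclusions go in opposite directions, so a cleaner route is needed.

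The cleaner route is duality. Let $B_{ij}\coloneqq K_j\setminus\operatorname{int}(K_i)$, a compact oriented $3$-manifold with boundary $\partial K_i\sqcup\partial K_j$. I would use Lefschetz/Alexander-type duality in $M$, in the form $H_1(M\setminus K)\cong H^2_c$-type groups, or more concretely the intersection pairing. For the compact oriented manifold $K_j$, Lefschetz duality gives $H_1(K_j\setminus \operatorname{int}K_i)$ paired against $H_2(K_j,\partial K_j\cup K_i)$, and hence against $H_2$ of the complement. The cleanest formulation is the pairing
\[
H_1(U_j)\times H_2(K_j,\partial K_j)\to\mathbb Z,
\]
given by intersection numbers. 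Its analogue at level $i$ is compatible with inclusion on the left and restriction on the right.

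Concretely, I would argue as follows. A class $z\in H_1(U_j)$ maps to zero in $H_1(U_i)$ modulo torsion-type issues if it has zero intersection with every relative $2$-cycle of $(K_i,\partial K_i)$. For a relative cycle $S$ in $(K_i,\partial K_i)$, extend it across $C_i$: since $H_1(M)=0$, the surface $\partial S$ bounds in $M$. By choosing $j$ via Lemma~\ref{lem:inclusionzero} applied to finitely many generators (and to $H_2$-type generators, using finite generation of the homology of the compact $K_i$ and $\partial K_i$), we can arrange that these extensions lie inside $K_j$. The intersection of $z$, which lives outside $K_j$, with the closed surface $\widehat S$ equals $z\cdot S$; and $z\cdot\widehat S=0$ because $H_2(M)=0$ makes $\widehat S$ nullhomologous. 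Then $z$ is zero in $H_1(U_i)$, by duality on the compact manifold $C_i\cap K_{j'}$ for large $j'$.

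The main obstacle is torsion. The intersection pairing detects only the free part, so the pairing argument alone does not force the image to vanish. I would handle torsion with the linking form, or more robustly by avoiding pairings altogether. The pairing-free approach runs through the isomorphisms $H_1(U_i)\cong H^2_c(U_i)$-like dualities and the exact sequence of the triple $(M,C_i,C_j)$ in cohomology with compact supports. In that sequence, $H^1_c$ and $H^2_c$ of $M$ vanish, since by Poincaré duality they equal $H_2(M)$ and $H_1(M)$. This yields $H_1(U_i)\cong H^2_c(U_i)\cong H^1$ of the end, and converts the required map into $H_1(K_i)\to H_1(K_j)$. That map is zero by Lemma~\ref{lem:inclusionzero}.
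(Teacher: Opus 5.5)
Your closing, pairing-free argument is correct and is essentially the paper's proof in Poincar\'e-dual form. The paper combines the homology sequence of $(M,M\setminus K_i)$ with $\check H^1(K_i)\cong H_2(M,M\setminus K_i)$; you combine $H_1(U_i)\cong H^2_c(U_i)$ with the compactly supported sequence $H^1_c(M)\to H^1(K_i)\to H^2_c(U_i)\to H^2_c(M)$ of the closed set $K_i\subseteq M$ (that is the sequence you want, not one for a triple $(M,C_i,C_j)$, and it lands in $H^1(K_i)$, not ``$H^1$ of the end''). Both arguments then need one step you left implicit: the universal coefficient theorem, with $\operatorname{Ext}(H_0(K_i),\mathbb Z)=0$, identifies the restriction $H^1(K_j)\to H^1(K_i)$ with the dual of the map $H_1(K_i)\to H_1(K_j)$, and that map is killed by Lemma~\ref{lem:inclusionzero}.

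Drop the Mayer--Vietoris and intersection-pairing paragraphs. They are unused, and the pairing sketch does not work as written: a class supported in $U_j$ cannot meet relative cycles of $(K_i,\partial K_i)$, and $\partial S$ is a curve, not a surface. Your torsion worry is also moot, since the argument shows $H_1(U_i)\cong H^1(K_i)$ is free.
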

\begin{proof}
    For $r \ge 0$, the exactness of the long exact sequence in homology for the pair $(M, M\setminus K_r)$,
    \[
        H_2(M) \to H_2(M, M\setminus K_r) \xrightarrow{\partial} H_1(M\setminus K_r) \to H_1(M),
    \]
    together with the assumptions $H_2(M) = 0$ and $H_1(M) = 0$, implies that the connecting homomorphism $\partial\colon H_2(M, M\setminus K_r) \to H_1(M\setminus K_r)$ is an isomorphism. 

    By Poincaré--Alexander--Lefschetz Duality \cite[p.~351, Theorem 8.3]{MR1224675}, capping with the orientation class $\vartheta$ of $M$ induces an isomorphism $\widecheck{H}^1(K_r) \to H_2(M, M\setminus K_r)$. Since $K_r$ is a manifold, we have an isomorphism $\rho\colon \widecheck H^1(K_r)\to H^1(K_r)$ \cite[p. 539, Corollary E.6]{MR1224675}. Furthermore, because $K_r$ is connected, $H_0(K_r) \cong \mathbb{Z}$ is free abelian, so $\operatorname{Ext}(H_0(K_r), \mathbb{Z}) = 0$. The Universal Coefficient Theorem for cohomology then yields an isomorphism $\operatorname{ev}\colon H^1(K_r) \to \operatorname{Hom}(H_1(K_r), \mathbb{Z})$.

    For any $j > i$, consider the following ladder diagram:
    \[
        \begin{tikzcd}[column sep = 2.5em]
            H_1(M\setminus K_i) 
                & {H_2(M,M\setminus K_i)} \arrow[l, "\partial"', "\cong"] 
                & \widecheck H^1(K_i) \arrow[l, "\frown \vartheta"', "\cong"] \arrow[r, "\rho", "\cong"'] 
                & H^1(K_i) \arrow[r, "\operatorname{ev}", "\cong"'] 
                & {\operatorname{Hom}(H_1(K_i),\mathbb Z)} \\
            H_1(M\setminus K_j) \arrow[u] 
                & {H_2(M,M\setminus K_j)} \arrow[l, "\partial", "\cong"'] \arrow[u] 
                & \widecheck H^1(K_j) \arrow[l, "\frown \vartheta", "\cong"'] \arrow[u] \arrow[r, "\rho"', "\cong"] 
                & H^1(K_j) \arrow[u] \arrow[r, "\operatorname{ev}"', "\cong"] 
                & {\operatorname{Hom}(H_1(K_j),\mathbb Z)} \arrow[u]
        \end{tikzcd}
    \]
    where all horizontal maps are isomorphisms. The first vertical map is induced by the inclusion $M \setminus K_j \hookrightarrow M \setminus K_i$, the second vertical map is induced by the inclusion of pairs $(M, M \setminus K_j) \hookrightarrow (M, M \setminus K_i)$, and the remaining vertical maps are induced by the inclusion $K_i \hookrightarrow K_j$. 

    The first square commutes by naturality of the connecting homomorphism, the second square commutes by naturality of the cap product with the orientation class $\vartheta$ \cite[p.~349, Lemma 8.1]{MR1224675}, the third square commutes because $\rho$ is a natural transformation \cite[p. 285]{MR415602}, and the fourth square commutes by naturality of the evaluation homomorphism.

    By Lemma~\ref{lem:inclusionzero}, there exists $j>i$ such that the inclusion-induced map $H_1(K_i)\to H_1(K_j)$ is the zero map. Applying the functor $\operatorname{Hom}(-,\mathbb Z)$, the induced map $\operatorname{Hom}(H_1(K_j),\mathbb Z) \to \operatorname{Hom}(H_1(K_i),\mathbb Z)$ is the zero map. By the commutativity of the diagram and the fact that all horizontal maps are isomorphisms, it follows that the inclusion-induced map $H_1(M\setminus K_j)\to H_1(M\setminus K_i)$ is also the zero map.
\end{proof}
The following lemma can be proved using an argument similar to that of Lemma~\ref{lem:firstcontr}.
\begin{lemma}\label{lem:secondcontr}
    Let $M$ be a connected, oriented, non-compact smooth $3$-manifold without boundary, and let $\{K_i\}$ be a regular exhaustion of $M$. Suppose that $H_2(M) = 0$. Then $H_2(M\setminus K_i)\cong \mathbb Z$ and for every $j > i$, the inclusion-induced map $H_2(M\setminus K_j)\to H_2(M\setminus K_i)$ is an isomorphism.
\end{lemma}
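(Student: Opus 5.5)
We follow the template of Lemma~\ref{lem:firstcontr}, shifting degrees by one. For each $r\ge 0$, the long exact sequence of the pair $(M,M\setminus K_r)$ contains
\[
H_3(M)\to H_3(M,M\setminus K_r)\xrightarrow{\partial} H_2(M\setminus K_r)\to H_2(M).
\]
Since $M$ is a non-compact connected $3$-manifold, $H_3(M)=0$. By hypothesis $H_2(M)=0$. Hence $\partial$ is an isomorphism $H_3(M,M\setminus K_r)\cong H_2(M\setminus K_r)$, natural in $r$ with respect to inclusions of pairs.

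Next we identify $H_3(M,M\setminus K_r)$ by Poincaré--Alexander--Lefschetz duality \cite[p.~351, Theorem 8.3]{MR1224675}. Capping with the orientation class $\vartheta$ gives an isomorphism $\widecheck H^0(K_r)\to H_3(M,M\setminus K_r)$. Because $K_r$ is a compact connected manifold, \cite[p.~539, Corollary E.6]{MR1224675} gives $\widecheck H^0(K_r)\cong H^0(K_r)\cong \mathbb Z$. So $H_2(M\setminus K_r)\cong\mathbb Z$, which is the first assertion.

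For $j>i$ we form the same ladder diagram as in Lemma~\ref{lem:firstcontr}, one degree up:
\[
H_2(M\setminus K_\bullet)\xleftarrow{\partial} H_3(M,M\setminus K_\bullet)\xleftarrow{\frown\vartheta}\widecheck H^0(K_\bullet)\xrightarrow{\rho} H^0(K_\bullet).
\]
The vertical maps are induced by the inclusions. The squares commute for the same reasons cited there: naturality of $\partial$, naturality of the cap product with $\vartheta$, and naturality of $\rho$. The right-hand vertical map $H^0(K_j)\to H^0(K_i)$ is restriction of locally constant functions from the connected set $K_j$ to the connected set $K_i$, so it is the identity $\mathbb Z\to\mathbb Z$. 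Since all horizontal maps are isomorphisms, the inclusion-induced map $H_2(M\setminus K_j)\to H_2(M\setminus K_i)$ is an isomorphism.

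The main point needing care is checking that the vertical map on $H^0$ really corresponds to restriction, an isomorphism, and not multiplication by some integer. This follows because both $K_i$ and $K_j$ are connected and nonempty, so their $H^0$ groups are canonically the constant functions $\mathbb Z$. A secondary check is $H_3(M)=0$ for the non-compact $M$, which is standard \cite{MR1867354}.
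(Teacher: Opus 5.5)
Your proof is correct and is essentially the argument the paper intends. The paper only remarks that Lemma~\ref{lem:secondcontr} follows by an argument similar to Lemma~\ref{lem:firstcontr}, and your degree-shifted ladder is exactly that adaptation: $\partial\colon H_3(M,M\setminus K_r)\to H_2(M\setminus K_r)$ is an isomorphism since $H_3(M)=H_2(M)=0$, duality gives $\widecheck H^0(K_r)\cong H_3(M,M\setminus K_r)$, and restriction $H^0(K_j)\to H^0(K_i)$ is the identity on $\mathbb Z$ for connected nonempty $K_i\subseteq K_j$.
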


The next lemma shows that the irreducibility hypothesis assumed by Brown and Tucker for all contractible open 3-manifolds is in fact redundant. Its proof is well known, but we include it here for the reader's convenience.

\begin{lemma}\label{lem:irr}
    Let $M$ be a connected, open $3$-manifold such that both $\pi_1(M)$ and $\pi_2(M)$ vanish. Then $M$ is irreducible.
\end{lemma}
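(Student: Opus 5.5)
The plan is to use the definition of irreducibility directly: every embedded $2$-sphere $S \subseteq M$ should bound an embedded $3$-ball. The key inputs are the Alexander/Schoenflies-type consequence of the (now proved) Poincaré conjecture, together with the sphere theorem or, more simply, Hurewicz plus the universal cover. Since $\pi_1(M)=0$, $M$ is orientable and simply connected. Since $\pi_2(M)=0$, the Hurewicz theorem gives $H_2(M)=0$.

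Let $S$ be an embedded $2$-sphere in $M$; after a small isotopy we may assume it is smooth (or locally flat in the topological category, using Moise). I would proceed as follows.

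\begin{enumerate}
\item[(1)] $S$ separates $M$. Because $H_1(M)=0$, a non-separating sphere would produce a loop meeting $S$ transversally in exactly one point, and its mod-$2$ intersection number with $S$ would be nonzero. This contradicts $H_1(M;\mathbb Z_2)=0$.
\item[(2)] Consider the two sides. Write $M\setminus S = A\sqcup B$ with closures $\bar A,\bar B$. By Mayer--Vietoris and van Kampen applied to $M=\bar A\cup_S \bar B$, with $S$ simply connected, both $\bar A$ and $\bar B$ are simply connected. Moreover $H_2(\bar A)\oplus H_2(\bar B)$ receives $H_2(S)=\mathbb Z$, which maps to $H_2(M)=0$ after the splitting.
\item[(3)] At least one side is compact. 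Suppose both $\bar A$ and $\bar B$ were non-compact. By Lefschetz duality for a non-compact $3$-manifold with sphere boundary, the class $[S]$ would be nonzero in $H_2(\bar A)$, since a non-compact side cannot be bounded by a compact $3$-chain. Then $[S]\neq 0$ in $H_2(M)$ by the Mayer--Vietoris injectivity, contradicting $H_2(M)=0$. Alternatively, if $[S]=\partial c$ for a compact singular $3$-chain $c$, then the support of $c$ forces one side to be contained in a compact set.
\item[(4)] The compact side is a ball. Say $\bar A$ is compact. Then $\bar A$ is a compact simply connected $3$-manifold with boundary $S^2$. Capping it with a $3$-ball gives a closed simply connected $3$-manifold, which is $S^3$ by Perelman. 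Hence $\bar A$ is a $3$-ball by Alexander's theorem.
\end{enumerate}

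The main obstacle is step (3): rigorously showing that $S$ bounds a compact region using only $\pi_2(M)=0$, or equivalently $H_2(M)=0$. I would handle it as follows. Take $c$ with $\partial c = [S]$. The set $M\setminus\operatorname{supp}(c)$ meets at most one side, because the fundamental class of $S$ restricts to a generator of the local homology at each point of $S$. That argument pins down the compact side.

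Together, these steps show that every embedded $2$-sphere bounds a ball, so $M$ is irreducible.
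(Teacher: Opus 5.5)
Your proposal is correct and follows essentially the paper's route: show $S$ separates, use Lefschetz duality ($H_3(\bar A,S)\cong H^0_c(\bar A)=0$) together with Mayer--Vietoris and $H_2(M)=0$ to rule out two non-compact sides, then apply van Kampen, cap off with a ball, and invoke Perelman (plus Alexander's theorem, which the paper uses implicitly) to conclude the compact side is a $3$-ball. The differences are minor. You prove separation with a mod-$2$ intersection argument instead of Mayer--Vietoris on a tubular neighborhood, and your bounding-chain/local-homology argument for compactness is a valid alternative. Note that the injectivity of $H_2(\bar A)\to H_2(M)$ you cite as ``Mayer--Vietoris injectivity'' needs $H_2(S)\to H_2(\bar B)$ to be injective, which the same duality argument applied to $\bar B$ supplies.
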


\begin{proof}
    Let $S \subseteq M$ be an arbitrary smoothly embedded $2$-sphere. We first establish that $S$ separates $M$. Since $H^1(\mathbb S^2; \mathbb{Z}/2\mathbb{Z}) = 0$, the normal bundle of $S$ in $M$ is trivial, meaning $S$ admits a tubular neighborhood $V \cong \mathbb S^2 \times (-1, 1)$. Setting $U \coloneqq M \setminus S$, we apply the Mayer--Vietoris sequence to the open cover $\{U, V\}$. Since $M$ is connected and simply connected, $\widetilde{H}_0(M) = \widetilde{H}_1(M) = 0$. Furthermore, the intersection $U \cap V \cong \mathbb S^2 \times (-1, 0) \sqcup \mathbb S^2 \times (0, 1)$ consists of two connected components, giving $\widetilde{H}_0(U \cap V) \cong \mathbb{Z}$. The sequence then reduces to $0 \to \mathbb{Z} \to \widetilde{H}_0(U) \oplus 0 \to 0,$ yielding $\widetilde{H}_0(U) \cong \mathbb{Z}$. Consequently, $U$ has exactly two path-components.

    Let $A$ and $B$ denote the closures of these two components, so that $M = A \cup B$ and $A \cap B = S$. Because $\pi_1(M) = 0$, the manifold $M$ is orientable; because it is also non-compact, $H_3(M) = 0$. Additionally, the Hurewicz theorem implies $H_2(M) = 0$. The Mayer--Vietoris sequence for $M = A \cup B$ thus provides the following: $0 \to H_2(S) \to H_2(A) \oplus H_2(B) \to 0.$ Since $H_2(S) \cong \mathbb{Z}$, this yields $H_2(A) \oplus H_2(B) \cong \mathbb{Z}$. 

    We claim that at least one of these components must be compact. Suppose, for a contradiction, that both $A$ and $B$ are non-compact. By Lefschetz duality for non-compact manifolds \cite[p. 260, Exercise 35]{MR1867354}, we have $H_3(A,S) \cong H_c^0(A) = 0$. Similarly, $H_3(B,S) = 0$. The long exact sequence of the pair $(A, S)$ then forces the inclusion-induced map $H_2(S) \to H_2(A)$ to be injective. The same holds for $H_2(S) \to H_2(B)$. This implies that $H_2(A) \oplus H_2(B)$ contains a subgroup isomorphic to $\mathbb{Z} \oplus \mathbb{Z}$, directly contradicting the established isomorphism $H_2(A) \oplus H_2(B) \cong \mathbb{Z}$. Thus, exactly one component---say, $B$---is compact.

    To conclude, we must show that this compact region $B$ is a standard $3$-ball. Note that $B$ is a compact $3$-manifold with boundary $\partial B = S$. Applying the Seifert-van Kampen theorem to the decomposition $M = A \cup B$, the triviality of both $\pi_1(M)$ and $\pi_1(S)$ forces $\pi_1(B) = 0$. By attaching a standard $3$-ball $\mathbb D^3$ to $B$ along their common spherical boundary, we obtain a closed, simply connected $3$-manifold $\widehat{B} = B \cup_S \mathbb D^3$. By Perelman's resolution of the Poincaré conjecture, $\widehat{B}$ is homeomorphic to $\mathbb S^3$. Thus, $B$ is homeomorphic to $\mathbb D^3$. Therefore, every smoothly embedded $2$-sphere in $M$ bounds a $3$-ball, proving that $M$ is irreducible.
\end{proof}

Before proving Theorem~\ref{thm:contra}, we need one final lemma. The proof is straightforward, since every isomorphism $\mathbb{Z}\to \mathbb{Z}$ is determined by whether $1$ is mapped to $1$ or $-1$.
\begin{lemma}\label{lem:proZ}
    Let $\underline{G} = \{G_n, p^m_n\}$ and $\underline{H} = \{H_n, q^m_n\}$ be inverse systems of groups such that $G_n = H_n = \mathbb{Z}$ for all $n \geq 0$. If the bonding maps satisfy $p^m_n = \mathrm{id}$ and $q^m_n$ is an isomorphism for all $m \geq n$, then $\underline{G}$ and $\underline{H}$ are pro-isomorphic.
\end{lemma}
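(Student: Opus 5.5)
We construct the pro-isomorphism directly, taking the index sequences $i_n = j_n = n$ and letting the cross-homomorphisms be suitable signs. Each $q^{n+1}_n$ is an automorphism of $\mathbb{Z}$, so $q^{n+1}_n = \varepsilon_n \cdot \mathrm{id}$ with $\varepsilon_n \in \{1,-1\}$. By the composition rule $q^m_n = q^{n+1}_n \circ \cdots \circ q^m_{m-1}$, the general bonding map is $q^m_n = \varepsilon_n \varepsilon_{n+1}\cdots \varepsilon_{m-1}\cdot \mathrm{id}$. Define signs $s_n \coloneqq \varepsilon_0\varepsilon_1\cdots\varepsilon_{n-1}$, with $s_0 = 1$. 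These satisfy $s_{n+1} = s_n \varepsilon_n$, and because each $\varepsilon_i^2 = 1$ they also satisfy $s_n s_{n+1} = \varepsilon_n$.

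For the ladder in the definition of pro-isomorphism, set $\mu_n \colon H_n \to G_n$ to be multiplication by $s_n$, and set $\lambda_n \colon G_{n+1} \to H_n$ to be multiplication by $s_n$. Both are homomorphisms of $\mathbb{Z}$. The two triangle identities are then checked directly.
\begin{itemize}
\item $\mu_n \circ \lambda_n = s_n^2 \cdot \mathrm{id} = \mathrm{id} = p^{n+1}_n$.
\item $\lambda_n \circ \mu_{n+1} = s_n s_{n+1}\cdot \mathrm{id} = \varepsilon_n \cdot \mathrm{id} = q^{n+1}_n$.
\end{itemize}
This gives the required commuting ladder, with strictly increasing index sequences $0<1<2<\cdots$.

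There is no real obstacle here. The only point that needs care is to place the sign consistently on both kinds of cross-map, so that the product of consecutive signs reproduces $\varepsilon_n$ exactly. An equivalent way to organize the argument is to note that $\phi_n = s_n\cdot\mathrm{id}$ defines a level-wise isomorphism of inverse systems $\underline{G}\to\underline{H}$. Indeed, $q^{n+1}_n \phi_{n+1} = \varepsilon_n s_{n+1} = s_n = \phi_n p^{n+1}_n$, so these maps commute with the bonding maps, and a level-wise isomorphism of systems is in particular a pro-isomorphism.
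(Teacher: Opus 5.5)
Your proof is correct and takes the same approach as the paper. The paper only remarks that every automorphism of $\mathbb{Z}$ is $\pm\mathrm{id}$, and your cumulative signs $s_n=\varepsilon_0\cdots\varepsilon_{n-1}$ with $\mu_n=\lambda_n=s_n\cdot\mathrm{id}$ (or equivalently the level-wise isomorphism $\phi_n=s_n\cdot\mathrm{id}$) supply the bookkeeping that remark leaves implicit, and you check both triangle identities correctly.
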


\begin{proof}[Proof of Theorem~\ref{thm:contra}]
    Fix a smooth structure on $\mathcal W$. Let $\{K_i\}$ be a regular exhaustion of $\mathcal W$. By Lemma~\ref{lem:oneend}, $\mathcal W$ has exactly one end. Thus, $\mathcal W\setminus K_i$ is connected for every $i\geq 0$. For every integer $r\geq 0$, consider the inverse system $\underline H_r\coloneqq \{H_r(\mathcal W\setminus K_i)\}$, where for each $j>i$, the bonding map $(p_r)^{j}_i\colon H_r(\mathcal W\setminus K_j)\to H_r(\mathcal W\setminus K_i)$ is induced by inclusion.

    Since $\mathcal W\setminus K_i$ is path-connected, $H_0(\mathcal W\setminus K_i)\cong \mathbb Z$ and $(p_0)^{j}_i$ is an isomorphism for $j>i$ \cite[Theorem 4.14]{MR957919}. By Lemma~\ref{lem:proZ}, Theorem~\ref{proinv}, and Theorem~\ref{constant}, $\underline H_0(\mathcal W)\cong\wp(\underline H_0)\cong \wp(\underline{\mathbb Z})\cong \left.\prod_{n \ge 0} \mathbb Z \right/ \bigoplus_{n \ge 0} \mathbb Z$.
    
    Next, by Lemma~\ref{lem:firstcontr}, Theorem~\ref{proinv}, and Theorem~\ref{constant}, $\underline H_1(\mathcal W)\cong\wp(\underline H_1)\cong \wp(\underline{1})\cong 1$. Similarly, by Lemma~\ref{lem:secondcontr}, Lemma~\ref{lem:proZ}, and Theorem~\ref{constant}, $\underline H_2(\mathcal W)\cong \wp(\underline H_2)\cong \left.\prod_{n \ge 0} \mathbb Z \right/ \bigoplus_{n \ge 0} \mathbb Z$. Moreover, since each $\mathcal W\setminus K_i$ is non-compact, $H_3(\mathcal W\setminus K_i)\cong 1$, and thus, $\underline H_3(\mathcal W)\cong\wp(\underline H_3)\cong \wp(\underline{1})\cong 1$. Therefore, by Corollary~\ref{cor:Rn}, $\underline H_r(\mathcal W)\cong \underline H_r(\mathbb R^3)$ for every $r\geq 0$. 

    Finally, assume that $\mathcal W$ is not homeomorphic to $\mathbb R^3$. We show that $\underline \pi_1(\mathcal W)$ is uncountable and perfect. First, recall that $\underline \pi_1(\mathcal W)$ can be computed using $\wp$ \cite{MR356041}. Because $\mathcal W$ is not homeomorphic to $\mathbb R^3$, by \cite[Corollary 3.3]{MR334225} and Lemma~\ref{lem:irr}, $\underline \pi_1(\mathcal W)$ is non-trivial. Thus, by Theorem~\ref{dichotomy}, $\underline \pi_1(\mathcal W)$ is uncountable. Moreover, since $\underline H_1(\mathcal W)\cong 1$, by Theorem~\ref{thm:hur}, $\underline \pi_1(\mathcal W)$ is perfect.
\end{proof}

\subsection{Proper Homology Groups of Brown's String of \texorpdfstring{$k$}{k}-Spheres}\label{section:stringofspheres}
In the setting of the Brown--Grossman proper fundamental group, the analogue of a pointed circle is the string of circles, denoted $\underline{\mathbb S^1}$. It is defined as the base ray $\underline{*} = [0,\infty)$ with a distinct circle $\mathbb S^1$ attached at each integer $n\geq 0$. In particular, $\underline{\mathbb S^1}$ has exactly one end.

A natural approach to computing the first proper homology $\underline{H}_1(\underline{\mathbb S^1})$ would be to mimic the standard calculation of $H_1(\mathbb S^1)$ via a proper analogue of the Mayer--Vietoris sequence. However, the classical Mayer--Vietoris argument for $H_1(\mathbb S^1)$ relies on covering the circle with two contractible open arcs whose intersection is homotopy equivalent to the disconnected space $\mathbb S^0=\{-1,+1\}$, and then decomposing the homology of this intersection into a direct sum over its path components. Because our proper homology groups are defined exclusively for nice spaces---which are, in particular, connected and non-compact---this strategy encounters an obstruction: it is impossible to cover $\underline{\mathbb S^1}$ by two connected, contractible open subsets whose intersection is also connected. Note that restricting proper homology groups to such spaces aligns with Brown's original strategy; he defined proper homotopy groups only for connected, locally finite simplicial complexes \cite[p. 41]{MR356041}.

Consequently, rather than developing a proper analogue of the Mayer--Vietoris sequence, we calculate the proper homology groups of $\underline{\mathbb S^1}$ directly using the $\wp$-functor. This methodology and the resulting calculations apply equally well to the string of $k$-spheres, $\underline{\mathbb S^k}$. In particular, unlike the classical case, where the homology of $\mathbb S^k$ is computed via the Mayer--Vietoris sequence by induction on $k$, the computation of the $q$-th proper homology of $\underline{\mathbb S^k}$ here does not depend on the $(q-1)$-th proper homology of $\underline{\mathbb S^{k-1}}$.

\begin{theorem}\label{thm:properhomologystring}
    The $q$-th proper homology group of $\underline{\mathbb{S}^k}$ is isomorphic to $\prod_{n \ge 0} \mathbb{Z} \big/ \bigoplus_{n \ge 0} \mathbb{Z}$ for $q=0,k$ and is the trivial group otherwise.
\end{theorem}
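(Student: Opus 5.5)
We first identify a neighborhood system of the unique end of $\underline{\mathbb S^k}$. Take $U^{(n)}\coloneqq (n-\tfrac12,\infty)$ together with all spheres $\mathbb S^k_m$ attached at integers $m\ge n$. Each $U^{(n)}$ is open and connected, the $U^{(n)}$ are nested, and $\bigcap_n U^{(n)}=\varnothing$, so they come from an efficient exhaustion and compute $\underline H_q^{\mathsf{alg}}$. Since $U^{(n)}$ deformation retracts onto a wedge of the spheres $\mathbb S^k_m$ ($m\ge n$) joined along a contractible ray, we get the following.
\begin{itemize}
\item $H_0(U^{(n)})\cong\mathbb Z$, and the inclusions induce identities.
\item $H_q(U^{(n)})=0$ for $q\ne 0,k$.
\item $H_k(U^{(n)})\cong A_n\coloneqq\bigoplus_{m\ge n}\mathbb Z$, and the bonding maps are the coordinate inclusions $A_{n+1}\hookrightarrow A_n$.
\end{itemize}
For the last point, singular homology commutes with the directed union of finite wedges, which avoids any fuss about infinite wedges.

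The cases $q\ne k$ then follow directly. For $q=0$ we get a constant system, and Theorem~\ref{constant} gives $\prod\mathbb Z/\bigoplus\mathbb Z$. For $q\ne 0,k$ the system is $\underline{\bm 1}$, so $\wp$ is trivial, for instance by Corollary~\ref{protrivdet}.

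For $q=k$ we describe $\wp(\underline A)$ concretely. Because all bonding maps are injective, asymptotic equivalence of $\{x_{k(n)}\}$ and $\{y_{l(n)}\}$ just means $x_{k(n)}=y_{l(n)}$ as elements of $A_0$ for $n\gg 0$. Hence
\[
\wp(\underline A)\cong S/F ,
\]
where $S\subseteq\prod_{n\ge0}A_0$ consists of sequences of finitely supported vectors $x_n$ with $\min\operatorname{supp}(x_n)\to\infty$, and $F=\bigoplus_{n\ge 0}A_0$. Using this model, we will establish the invariants stated in Theorem~\ref{introthm:stringofspheres}.

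\emph{Torsion-freeness.} If $mx_n=0$ for $n\gg0$ then $x_n=0$ for $n\gg0$.

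\emph{Algebraic compactness.} This is the main obstacle. For a torsion-free group it suffices to show the group is cotorsion, i.e.\ $\operatorname{Ext}(\mathbb Q,S/F)=0$. I would reduce this to solving a countable system $m!\,z_{m+1}=z_m+a_m$ with $a_m\in S/F$, in the style of the classical proof that $\prod/\bigoplus$ over a countable index set is $\aleph_1$-algebraically compact. The steps are:
\begin{itemize}
\item lift the $a_m$ to $S$;
\item observe that each equation holds coordinatewise only for $n$ beyond some threshold $N_m$;
\item choose the $N_m$ increasing;
\item define a diagonal solution in which the $n$-th coordinate of $z_m$ is obtained by back-solving the finitely many equations with $N_{m'}\le n$. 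Back-solving needs divisibility, so I would first pass to $a_m$ adjusted so that these finite systems are solvable in $\mathbb Z$; the freedom to change finitely many coordinates, together with supports escaping to infinity, should make this possible.
\end{itemize}
One must also check that the resulting coordinates still have supports escaping to infinity. This holds because every vector used at coordinate $n$ is built from entries supported beyond $\min_m \min\operatorname{supp}(a_m)_n$, taken over the finitely many $m$ involved.

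\emph{Divisible part.} To see that the maximal divisible subgroup is $\bigoplus^{\mathfrak c}\mathbb Q$, I would exhibit the divisible part as the elements divisible by every integer, namely sequences that are eventually divisible by any given $m$. I would then embed a $\mathbb Q$-vector space of dimension $\mathfrak c$, using an almost-disjoint family of index sets and vectors $(m!\,e_{j(n)})$ arranged so that the divisibility is realized. The cardinality of $S$ bounds the dimension above by $\mathfrak c$.

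\emph{$p$-invariants.} The $p$-th torsion-free number is $\dim_{\mathbb F_p}$ of $G/pG$ modulo divisible contributions. It equals $\mathfrak c$, witnessed by $\mathfrak c$ almost-disjoint sequences of unit vectors that remain independent mod $p$.

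\emph{Conclusion.} By the classification of torsion-free algebraically compact groups, which are determined by the rank of the divisible part and the $p$-adic invariants, and by the same computation applied to $\prod\mathbb Z/\bigoplus\mathbb Z$ (equivalently, Theorem~\ref{constant} with Fuchs' description), we conclude $\underline H_k(\underline{\mathbb S^k})\cong\prod\mathbb Z/\bigoplus\mathbb Z$.
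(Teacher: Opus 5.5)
Your outline follows the paper's proof nearly step for step. You use the same neighborhood system and the same homology computation. Your $S/F$ is exactly the paper's $\operatorname{RCFM}/\operatorname{FSM}$, with row $n$ being $x_n$. You use the same almost-disjoint-family arguments for $\mathcal Q_d\cong\bigoplus^{\mathfrak c}\mathbb Q$ and for $\dim_{\mathbb F_p}\mathcal Q/p\mathcal Q=\mathfrak c$, and you appeal to the same classification of algebraically compact groups.

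The one real difference is how you obtain algebraic compactness. The paper proves $\mathbb Z$-adic completeness directly, truncating the series $\sum_m m!B_m$ at $\min(i,j)$ so that row- and column-finiteness come for free, and then invokes Lemma~\ref{lem:algcom}. You instead use that a torsion-free cotorsion group is algebraically compact, and check $\operatorname{Ext}(\mathbb Q,S/F)=0$. This is a legitimate alternative, and a natural one in this paper: $\operatorname{Ext}(\mathbb Q,G)\cong{\varprojlim}^1\bigl(G\xleftarrow{1!}G\xleftarrow{2!}\cdots\bigr)$, so you are verifying that a shift operator is surjective. The same construction, restricted to diagonal sequences, also shows $\prod\mathbb Z/\bigoplus\mathbb Z$ is cotorsion, so you would not need an outside reference for its algebraic compactness.

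As written, your sketch of this step has two defects.

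\emph{Divisibility is not needed.} At coordinate $n$, let $M(n)=\max\{m: N_m\le n\}$. Set $z^{(n)}_m=0$ for $m\ge M(n)+1$, and define downward $z^{(n)}_m\coloneqq m!\,z^{(n)}_{m+1}-a^{(n)}_m$ for $m=M(n),\dots,1$. This solves the finite system in $\mathbb Z$-vectors with no divisibility at all. So the proposed adjustment of the $a_m$ is unnecessary and should be dropped, rather than left as ``should make this possible.''

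\emph{Increasing thresholds are not enough for $z_m\in S$.} The vector $z^{(n)}_m$ is a combination of $a^{(n)}_m,\dots,a^{(n)}_{M(n)}$, and this set grows with $n$. So the minimum of their supports need not tend to infinity. For example, take $a^{(n)}_j=e_{\max(n-j,0)}$ and $N_m=m$. Then every $z^{(n)}_1$ has $e_0$-coefficient $-\prod_{i<n}i!\neq 0$.

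The fix is to choose $N_m\to\infty$ so that $\min\operatorname{supp}(a^{(n)}_j)\ge m$ for all $j\le m$ and all $n\ge N_m$; this is possible since each $a_j\in S$. Then $\min\operatorname{supp}(z^{(n)}_m)\ge M(n)\to\infty$, so each $z_m$ lies in $S$. Moreover, the $m$-th equation holds exactly for all $n\ge N_m$, so $m!\,z_{m+1}-z_m-a_m\in F$. With these two repairs the cotorsion argument is complete, and the rest of your proof goes through as in the paper.
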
 

The computation of the $q$-th proper homology of $\underline{\mathbb{S}^k}$ for $q < k$ is straightforward. However, the case $q=k$ is less direct. We first identify an intermediate group $\mathcal{Q}$ isomorphic to $\underline{H}_k(\underline{\mathbb{S}^k})$. We then show that $\mathcal{Q}$ is algebraically compact and shares the same algebraic invariants as the group $\prod_{n \ge 0} \mathbb{Z} \big/ \bigoplus_{n \ge 0} \mathbb{Z}$, which implies they are isomorphic. More precisely, we rely on the classification theory of algebraically compact groups, which states that two such groups are isomorphic if and only if they have isomorphic maximal divisible subgroups, equal $p$-th torsion free number  for every prime $p$, and equal $n$-th Ulm invariants with respect to $p$ for every prime $p$ and integer $n \ge 0$ \cite[Theorem 9.1]{MR357652}. The necessary background on the classification of algebraically compact groups will be provided later. First, we define the intermediate group $\mathcal{Q}$.

\begin{definition}\label{def:RCFM}
    Let $\mathbb{N}_0 \coloneqq \{0, 1, 2, \dots\}$ and let $\operatorname{Mat}_{\mathbb{N}_0 \times \mathbb{N}_0}(\mathbb{Z})$ denote the group of all infinite matrices over $\mathbb{Z}$ indexed by $\mathbb{N}_0 \times \mathbb{N}_0$ under coordinate-wise addition. 

Define the subgroup of row-and-column-finite matrices as
\[
\operatorname{RCFM} \coloneqq \left\{ (m_{i,j}) \in \operatorname{Mat}_{\mathbb{N}_0 \times \mathbb{N}_0}(\mathbb{Z}) \;\middle|\; 
\begin{array}{l} 
\forall i \in \mathbb{N}_0, m_{i,j} \neq 0 \text{ for only finitely many } j, \text{ and} \\ 
\forall j \in \mathbb{N}_0, m_{i,j} \neq 0 \text{ for only finitely many } i 
\end{array} 
\right\}
\]
and define the subgroup of finitely supported matrices as
\[
\operatorname{FSM} \coloneqq \left\{ (m_{i,j}) \in \operatorname{Mat}_{\mathbb{N}_0 \times \mathbb{N}_0}(\mathbb{Z}) \;\middle|\; m_{i,j} \neq 0 \text{ for only finitely many pairs } (i,j) \right\}.
\]
For an $M\in \operatorname{RCFM}$, we denote the quotient class in $\mathcal Q\coloneqq\operatorname{RCFM}/\operatorname{FSM}$ of $M$ by $[M]$.
\end{definition}

We now consider an inverse system $\underline{G}$ that arises in the computation of the $k$-th proper homology of $\underline{\mathbb{S}^k}$. In the following lemma, we show that $\wp(\underline{G})$ is isomorphic to $\mathcal{Q}$.

\begin{lemma}\label{lem:rcfm_isomorphism}
Let $\underline{G} = \{G_n, p_n^m\}_{m \geq n \geq 0}$ be the inverse system of groups where $G_n = \bigoplus_{i=n}^\infty \mathbb{Z}$, and the bonding maps $p_n^{n+1} \colon G_{n+1} \to G_n$ are given by $(x_{n+1}, x_{n+2}, \dots) \mapsto (0, x_{n+1}, x_{n+2}, \dots)$. 
Then there is a group isomorphism from $\wp(\underline G)$ onto the quotient group \( \mathcal Q \).
\end{lemma}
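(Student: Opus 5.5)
The plan is to identify every $G_n$ with a subgroup of $G_0=\bigoplus_{i\ge 0}\mathbb{Z}$: $G_n$ is the set of finitely supported integer vectors whose support lies in $[n,\infty)$. Under this identification each bonding map $p^m_n$ is simply the inclusion $G_m\hookrightarrow G_n$, because $p^{n+1}_n$ only inserts a zero in coordinate $n$. Two consequences drive everything.
\begin{enumerate}[(a)]
\item All bonding maps are injective.
\item An element of $G_{k(n)}$, viewed in $G_0$, is a finitely supported row vector vanishing in columns $<k(n)$.
\end{enumerate}

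\emph{Defining the map.} Given a representative $\{x_{k(n)}\}$ of a class in $\wp(\underline{G})$, I form the matrix $M=(m_{n,j})$ whose $n$-th row is $x_{k(n)}\in G_{k(n)}\subseteq G_0$. I then check that $M\in\operatorname{RCFM}$.
\begin{itemize}
\item Each row is finitely supported, since $x_{k(n)}$ lies in a direct sum.
\item For a fixed column $j$, we have $m_{n,j}\neq 0$ only if $k(n)\le j$. Since $k(n)\to\infty$, this happens for only finitely many $n$, so each column is finite.
\end{itemize}
I define $\Theta([\{x_{k(n)}\}])\coloneqq[M]\in\mathcal{Q}$.

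\emph{Well-definedness and homomorphism.} Suppose $\{x_{k(n)}\}\sim\{x'_{l(n)}\}$. Then $p^{k(n)}_{m(n)}(x_{k(n)})=p^{l(n)}_{m(n)}(x'_{l(n)})$ for $n\gg 0$. Because the bonding maps are inclusions into $G_0$, this says exactly that $x_{k(n)}=x'_{l(n)}$ as vectors in $G_0$ for all large $n$. So the two matrices differ in only finitely many rows. Each of those rows is finitely supported, so the difference lies in $\operatorname{FSM}$. Additivity follows because the product in $\wp(\underline{G})$ projects both sequences to level $\min\{k(n),l(n)\}$ and adds them there. After embedding into $G_0$ this is just rowwise addition of matrices.

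\emph{Injectivity.} If $[M]=0$, then $M\in\operatorname{FSM}$, so only finitely many rows of $M$ are nonzero. That means $x_{k(n)}=0$ for $n\gg 0$. Taking the bounding sequence $m(n)=k(n)$ shows $\{x_{k(n)}\}\sim\{0\}$.

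\emph{Surjectivity.} This is the step needing the most care. Let $M\in\operatorname{RCFM}$ be given. I define $k(n)$ to be the smallest column index where row $n$ is nonzero, and set $k(n)=n$ if row $n$ is zero. Then row $n$ lies in $G_{k(n)}$. It remains to show $k(n)\to\infty$. Fix $J$. Columns $0,\dots,J$ each have only finitely many nonzero entries, so there are only finitely many $n$ whose row meets these columns. Hence $k(n)>J$ for all large $n$, and $k(n)\to\infty$. By construction the resulting class in $\wp(\underline{G})$ maps to $[M]$.

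The main obstacle, in my view, is noticing that column-finiteness is precisely what forces $k(n)\to\infty$. The other key point is that injectivity of the bonding maps collapses asymptotic equivalence to eventual equality of rows. This makes the identification much cleaner than a general computation through Theorem~\ref{colimrep}, which is not needed here.
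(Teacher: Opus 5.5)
Your proposal is correct and follows essentially the same route as the paper. You embed each $G_n$ into $\bigoplus_{i\ge 0}\mathbb{Z}$ so that bonding maps become inclusions and asymptotic equivalence reduces to eventual equality. You send a sequence to the matrix whose rows are its terms, and you use column-finiteness to force $k(n)\to\infty$ for surjectivity. The only cosmetic difference is that you define the map directly on $\wp(\underline{G})$ and check well-definedness, whereas the paper defines it on $\mathscr{S}_{\underline{G}}$ and passes to the quotient via the First Isomorphism Theorem.
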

\begin{proof}
    Let $D \coloneqq \bigoplus_{i=0}^\infty \mathbb{Z}$. We identify each group $G_n$ with a subgroup of $D$ via the embedding 
\[
\iota_n \colon G_n \hookrightarrow D, \quad (x_n, x_{n+1}, \dots) \longmapsto (\underbrace{0, \dots, 0}_{n \text{ times}}, x_n, x_{n+1}, \dots).
\]
Under this identification, the sequence of subgroups is descending: $D = G_0 \supseteq G_1 \supseteq G_2 \supseteq \cdots$, and for any $m \ge n \ge 0$, the bonding map $p_n^m \colon G_m \to G_n$ is the inclusion map.

Recall that $\mathscr{S}_{\underline{G}}$ consists of sequences $s=\{g_{k(n)}\}_{n \geq 0}$ such that $g_{k(n)} \in G_{k(n)}$ and $k(n) \to \infty$ as $n \to \infty$. Because all bonding maps are inclusions, two sequences $\{g_{k(n)}\}$ and $\{g'_{l(n)}\}$ in $\mathscr{S}_{\underline{G}}$ are  equivalent if and only if they are eventually equal in $D$; that is, $g_{k(n)} = g'_{l(n)}$ for all sufficiently large $n$. Thus, $\wp(\underline{G}) = \mathscr{S}_{\underline{G}} / {\sim}$ where $\sim$ is the relation of eventual equality. The group operation on $\wp(\underline{G})$ is induced by coordinate-wise addition in $D$.

We construct a homomorphism $\Psi \colon \mathscr{S}_{\underline{G}} \to \operatorname{RCFM}$. Let $s = \{g_{k(n)}\}_{n \geq 0} \in \mathscr{S}_{\underline{G}}$. We define $\Psi(s)$ to be the infinite matrix $M = (m_{n,j})$ whose $n$-th row is the sequence $g_{k(n)} \in D$. 

We must verify that $M \in \operatorname{RCFM}$. 
First, since $g_{k(n)} \in D = \bigoplus_{i=0}^\infty \mathbb{Z}$, it has only finitely many non-zero entries by definition of the direct sum. Thus, every row of $M$ contains finitely many non-zero entries, establishing row-finiteness.

Second, we establish column-finiteness. The condition that the $n$-th row belongs to $G_{k(n)} = \bigoplus_{i=k(n)}^\infty \mathbb{Z}$ means that $m_{n,j} = 0$ for all column indices $j < k(n)$. 
Because the sequence of indices $k(n) \to \infty$ as $n \to \infty$, for any fixed column index $j \ge 0$, there exists an integer $N$ such that $k(n) > j$ for all $n \geq N$, and hence, $m_{n,j} = 0$ for all rows $n \geq N$. This proves that every column of $M$ has only finitely many non-zero entries. Hence, $M \in \operatorname{RCFM}$, and $\Psi$ is well-defined map. 

It is clear that $\Psi$ is a homomorphism since the operations in both $\mathscr{S}_{\underline{G}}$ and $\operatorname{RCFM}$ are coordinate-wise addition.

Next, we show that $\Psi$ is surjective. Let $M = (m_{n,j}) \in \operatorname{RCFM}$. Let $r_n \in D$ be the $n$-th row of $M$. Because $M$ is column-finite, no single column can contain infinitely many non-zero entries. Therefore, the column index of the very first non-zero entry in row $r_n$ cannot be bounded by any fixed integer $J$ as $n \to \infty$ (otherwise, at least one of the columns $0, 1, \dots, J$ would contain infinitely many non-zero entries, violating column-finiteness). Thus, this index must tend to infinity. Let $k(n)$ be the index of this first non-zero entry (if $r_n = 0$, set $k(n) = n$). Then $k(n) \to \infty$. Furthermore, because $m_{n,j} = 0$ for all $j < k(n)$, we have $r_n \in \bigoplus_{i=k(n)}^\infty \mathbb{Z} = G_{k(n)}$. Thus, $s = \{r_n\}_{n \geq 0} \in \mathscr{S}_{\underline{G}}$ and $\Psi(s) = M$, establishing surjectivity.

Finally, we determine the kernel of the composition $\pi \circ \Psi \colon \mathscr{S}_{\underline{G}} \to \mathcal Q$, where $\pi$ is the canonical projection. A sequence $s = \{g_{k(n)}\} \in \mathscr{S}_{\underline{G}}$ maps to the identity in $\mathcal Q$ if and only if the corresponding matrix $M = \Psi(s)$ lies in $\operatorname{FSM}$. A matrix $M$ is in $\operatorname{FSM}$ if and only if it has finitely many non-zero entries in total. Since each row is finitely supported, this is equivalent to $M$ having only finitely many non-zero rows. This occurs if and only if $g_{k(n)} = 0$ for all sufficiently large $n$, which is precisely the condition that $s \sim \{0\}$ in $\mathscr{S}_{\underline{G}}$.

Because $\mathscr{S}_{\underline{G}}$ is a group under coordinate-wise addition and $\sim$ is compatible with this addition, $s \sim s' \iff s - s' \sim \{0\}$. Thus, the equivalence classes of $\sim$ coincide with the cosets of $\ker(\pi \circ \Psi)$. By the First Isomorphism Theorem, $\Psi$ descends to an isomorphism from $\wp(\underline G)$ onto $\mathcal Q$.
\end{proof}

Now, we show that $\mathcal Q$ and $\left.\prod_{m\geq 0} \mathbb Z\right/\bigoplus_{m\geq 0} \mathbb Z$ are isomorphic. Our proof will be based on the classification theorem of algebraically compact groups \cite[Theorem 9.1]{MR357652}. We first recall some standard concepts regarding abelian groups following our reference \cite[\S1 and \S2]{MR534230}.

Let $A$ be an abelian group with identity element $0$. For any integer $n \geq 1$, let $nA \coloneqq \{na \mid a \in A\}$ and $A[n] \coloneqq \{x \in A \mid nx = 0\}$. The \emph{$\mathbb Z$-adic topology} on $A$ is formed by declaring the collection of subgroups $\{nA\}_{n=1}^{\infty}$ to be a base of neighborhoods around $0$. Observe that $\{n! A\}_{n=1}^{\infty}$ is also a base of neighborhoods around $0$. It can be shown that the $\mathbb{Z}$-adic topology on $A$ is Hausdorff if and only if the \emph{first Ulm subgroup} $U(A) \coloneqq \bigcap_{n\in\mathbb{N}} nA=\bigcap_{n\in\mathbb{N}} n!A$ is trivial \cite[p. 343]{MR534230}.

Let $\{x_k\}$ be a sequence in $A$. We say $\{x_k\}$ \emph{converges to a limit} $L \in A$ if for every integer $n \geq 1$, there exists an index $N$ such that $x_k - L \in n! A$ for all $k \ge N$. Any two limits of a convergent sequence differ by an element in the first Ulm subgroup $U(A)$. We say $\{x_k\}$ is \emph{Cauchy} if for every integer $n \geq 1$, there exists an index $N$ such that $x_i - x_j \in n! A$ for all $i> j \ge N$. The group $A$ is called \emph{$\mathbb Z$-adically complete} if every Cauchy sequence converges to a limit within $A$.

Recall that the abelian group $A$ is said to be \emph{torsion-free} if for all $x \in A$ and all integers $n \ge 1$, the equation $nx = 0$ implies $x = 0$. It will be called \emph{divisible} if for every element $x \in A$ and every integer $n\geq 1$, there exists an element $y \in A$ such that $ny = x$. It is easy to show that a torsion-free divisible abelian group admits a vector space structure over $\mathbb{Q}$. The \emph{divisible rank} of a torsion-free divisible abelian group is defined as the dimension of that group viewed as a vector space over $\mathbb{Q}$. 

For the abelian group $A$, there exists a unique divisible subgroup, denoted $A_d$, which is \emph{maximal} in the sense that $A_d$ contains every other divisible subgroup of $A$ (in fact, $A_d$ is the subgroup of $A$ generated by the union of all the divisible subgroups of $A$). Observe that $A_d\subseteq U(A)$ since $A_d$ is a divisible group.

A subgroup $B$ of the abelian group $A$ is called \emph{pure} in $A$ if $nB = nA \cap B$ for all integers $n\geq 1$. The group $A$ is called \emph{algebraically compact} if $A$ is a direct summand of $X$ whenever $A$ is a pure subgroup of $X$.

For a prime $p$, let $A_0(p)$ denote the dimension of $A/(pA + A_t)$ as a vector space over the field $\mathbb{F}_p$. We call $A_0(p)$ the \emph{$p$-th torsion-free number of $A$}. Here, $A_t$ denotes the torsion subgroup of $A$ (i.e., the subgroup of all elements of finite order), and $pA$, as usual, denotes the subgroup $\{pa \mid a \in A\}$.

For a prime $p$ and an integer $n\geq 0$, let $A_p(n)$ denote the dimension of $(p^nA)[p] / (p^{n+1}A)[p]$ as a vector space over $\mathbb{F}_p$. We call $A_p(n)$ the \emph{$n$-th Ulm invariant of $A$ with respect to $p$}.

Now, we are ready to prove several lemmas.

\begin{lemma}\label{lem:algcom}
    If $A$ is torsion-free and $\mathbb Z$-adically complete, then $A$ is algebraically compact.
\end{lemma}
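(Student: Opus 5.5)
The plan is to verify the defining property of algebraic compactness directly. Suppose $A$ is a pure subgroup of $X$. We want a homomorphism $\rho\colon X\to A$ restricting to the identity on $A$, which exhibits $A$ as a direct summand. Since purity and the $\mathbb Z$-adic structure interact well, the natural route is to build $\rho$ first on a larger subgroup where it is forced, and then push it out using completeness. Two preliminary observations will be used throughout.
\begin{enumerate}[(i)]
\item Completeness together with torsion-freeness gives Hausdorffness. If $x\in U(A)$, the constant sequence $0$ converges to both $0$ and $x$, so limits are unique only modulo $U(A)$. To get around this, we first replace $A$ by $A/U(A)$, or argue that the Ulm subgroup is harmless. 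Concretely, $U(A)$ is divisible: for $x\in U(A)$ and $n\ge1$, choose $y_k$ with $(nk!)\,y_k=x$. Then $k!y_k$ satisfies $n(k!y_k)=x$. Torsion-freeness forces all these $n$-th roots to coincide, so $y=k!y_k$ is independent of $k$ and lies in $\bigcap_k k!A$. Hence $U(A)=A_d$ is divisible, therefore injective, and splits off: $A=A_d\oplus A'$.
\item The complement $A'\cong A/A_d$ is again torsion-free and complete, and it is Hausdorff. Since divisible groups are algebraically compact and direct sums of two algebraically compact groups are algebraically compact (a pure embedding of the sum restricts to pure embeddings of each summand, and each summand splits), it suffices to treat the case where $A$ is also Hausdorff.
\end{enumerate}

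For Hausdorff, torsion-free, complete $A$ pure in $X$, the standard approach uses the subgroup $A^\ast=\{x\in X: mx\in A \text{ for some } m\ge1\}$ together with the $\mathbb Z$-adic closure. The cleaner step I would try first is the following: by Zorn's lemma, choose a subgroup $C\le X$ maximal with respect to $C\cap A=0$ and $(A\oplus C)/C$ pure in $X/C$. The argument then proceeds in two steps.
\begin{enumerate}[(i)]
\item Reduce to $X/(A\oplus C)$ being divisible. This is the usual purity argument: if some $\bar x$ fails to be divisible, enlarge $C$ by a suitable $\langle x - a\rangle$, contradicting maximality.
\item With $X/A$ (after quotienting by $C$) divisible and $A$ pure, show that every $x\in X$ is a $\mathbb Z$-adic limit of elements of $A$. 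For each $n$, write $x=a_n+n!\,x_n$ with $a_n\in A$, $x_n\in X$. Purity gives $a_n-a_m\in n!X\cap A=n!A$ for $m\ge n$, so $(a_n)$ is Cauchy in $A$. Define $\rho(x)=\lim a_n$.
\end{enumerate}

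The main obstacle is checking that $\rho$ is well defined and additive, and that its kernel together with $A$ spans $X$. Independence of the choice of the $a_n$ again uses $n!X\cap A=n!A$ together with the Hausdorff property, which makes limits unique. Additivity follows from the fact that sums of approximating sequences approximate the sum. Since $\rho|_A=\mathrm{id}$, we obtain $X/C=A\oplus\ker\rho$. Pulling back along $X\to X/C$ gives the splitting $X=A\oplus(\ker\rho\text{ preimage})$, which completes the proof.
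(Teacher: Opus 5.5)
\textbf{The gap: step (i) of your main argument.} You claim that maximality of $C$ forces $X/(A\oplus C)$ to be divisible, by ``the usual purity argument''. Enlarging $C$ by a cyclic group $\langle x-a\rangle$ preserves your two conditions only when the coset of $x$ generates a pure cyclic subgroup of $G=X/(A\oplus C)$ that lifts isomorphically to $X/C$. This does happen for an element of order $p$ and finite height in $G$, but a torsion-free non-divisible $G$ need not contain any such subgroup.

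In fact the claim is false for pure subgroups in general, and your justification never uses completeness. Here is a counterexample.
\begin{itemize}
\item Take $A=\mathbb Z$ and an extension $0\to\mathbb Z\to X\xrightarrow{\pi}\mathbb Z_{(p)}\to 0$, where $\mathbb Z_{(p)}$ is the integers localized at $p$. Choose its class $\epsilon$ to have infinite order in $\operatorname{Ext}(\mathbb Z_{(p)},\mathbb Z)\cong\bigl(\prod_{q\ne p}\mathbb Z_q\bigr)/\mathbb Z$, with $\mathbb Z_q$ the $q$-adic integers. Then $X$ is torsion-free, and $A$ is pure because $X/A$ is torsion-free.
\item Suppose $C$ satisfies your two conditions, $c\in C$, and $q\ne p$ is prime. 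Since $X=A+q^jX$, write $c=q^jx+a$. Then $a\in(A+C)\cap(q^jX+C)=q^jA+C$, and $C\cap A=0$ gives $a\in q^jA$, so $c\in q^jX$.
\item Hence a nonzero $c\in C$ has unique $t$-th roots in $X$ for every $t$ prime to $p$. These roots give a copy of $\mathbb Z_{(p)}$ in $X$ that $\pi$ maps isomorphically onto $p^k\mathbb Z_{(p)}$ for some $k$.
\item That copy splits $\epsilon$ over $p^k\mathbb Z_{(p)}$, i.e.\ $p^k\epsilon=0$, which is a contradiction.
\end{itemize}
So $C=0$ is maximal, yet $X/A\cong\mathbb Z_{(p)}$ is not divisible.

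Thus completeness of $A$ must be used exactly at the step you pass over, and that step is the real content of the implication you are trying to reprove.

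\textbf{What is sound, and how the paper proceeds.} Your divisibility argument for $U(A)$ (uniqueness of roots in a torsion-free group, hence $U(A)=A_d$) is precisely the paper's proof. The paper then simply cites the characterization that a $\mathbb Z$-adically complete group with $U(A)=A_d$ is algebraically compact (Satz~2.2 of the cited notes). Your step (ii) is also correct: when $A$ is pure, Hausdorff and complete and $X/A$ is divisible, the retraction $\rho(x)=\lim a_n$ works. To make your route self-contained, you need a reduction to the divisible-quotient case that genuinely exploits completeness. Otherwise, cite the standard characterization as the paper does.

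\textbf{Two minor slips.}
\begin{itemize}
\item The first sentence of your observation (i) is false as written: $\mathbb Q$ is torsion-free and complete but not Hausdorff. Your splitting $A=A_d\oplus A'$ repairs this.
\item For $A_d\oplus A'$, split off $A'$ first and then use injectivity of $A_d$ inside the complement. Splitting the two summands off $X$ independently does not by itself give a splitting of their sum.
\end{itemize}
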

\begin{proof}
    By \cite[Satz 2.2]{MR534230}, it is enough to show that $U(A)=A_d$. Since $A_d\subseteq U(A)$ always holds and $A_d$ is the maximal divisible subgroup, it suffices to show that $U(A)$ is divisible. Pick any $x\in U(A)$ and fix a positive integer $m$. Since $x\in mA$, there exists $y\in A$ such that $my=x$. We must now prove that this specific $y$ is an element of $U(A)$. To belong to $U(A)$, $y$ must be an element of $kA$ for every positive integer $k$. This holds because for every positive integer $k$, we have $x \in U(A)\subseteq (mk)A$, meaning there exists $z\in A$ such that $(mk)z=x$. Equating the two expressions for $x$ yields $m(y-kz)=0$, which implies $y=kz$ since $A$ is torsion-free. Therefore, $y \in kA$, completing the proof.
\end{proof}

\begin{lemma}\label{lem:Qcompact}
    The quotient group $\mathcal{Q}= \operatorname{RCFM}/\operatorname{FSM}$ is torsion-free and $\mathbb Z$-adically complete. Thus, $\mathcal Q$ is algebraically compact.
\end{lemma}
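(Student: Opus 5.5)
Once $\mathcal Q$ is shown to be torsion-free and $\mathbb Z$-adically complete, Lemma~\ref{lem:algcom} gives algebraic compactness, so the work splits into two parts.

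\emph{Torsion-freeness.} Let $M\in\operatorname{RCFM}$ with $m[M]=0$ for some $m\ge 1$. Then $mM\in\operatorname{FSM}$, so only finitely many entries of $mM$ are non-zero. Since $\mathbb Z$ is torsion-free, $m\,m_{i,j}=0$ forces $m_{i,j}=0$, so the same finitely many positions are the only possible non-zero entries of $M$. Hence $M\in\operatorname{FSM}$ and $[M]=0$. A key observation I will reuse: $[M]\in n\mathcal Q$ if and only if all but finitely many entries of $M$ are divisible by $n$. For the converse direction, divide those entries by $n$ and set the remaining finitely many entries to zero; the result stays in $\operatorname{RCFM}$.

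\emph{Completeness.} Take a Cauchy sequence $[M^{(k)}]$ in $\mathcal Q$. Passing to a subsequence, which suffices since a Cauchy sequence with a convergent subsequence converges, I may assume $[M^{(k+1)}]-[M^{(k)}]\in (k+1)!\,\mathcal Q$ for every $k$. By the observation, $M^{(k+1)}-M^{(k)}=(k+1)!\,A^{(k)}+F^{(k)}$ with $A^{(k)}\in\operatorname{RCFM}$ and $F^{(k)}\in\operatorname{FSM}$. Replace $M^{(k)}$ by telescoped representatives, so that $D^{(k)}\coloneqq (k+1)!\,A^{(k)}$ is the exact difference of consecutive terms. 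The candidate limit is $L\coloneqq M^{(0)}+\sum_{k\ge0}D^{(k)}$.

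The main obstacle is that this sum need not be entrywise finite, and even where it is, $L$ need not lie in $\operatorname{RCFM}$. I will fix this with a diagonal truncation, using that changing finitely many entries of $D^{(k)}$ does not change its class. Concretely, replace $D^{(k)}$ by $\widetilde D^{(k)}$, obtained by zeroing all entries in rows $<k$ and columns $<k$. Then any fixed entry $(i,j)$ receives contributions only from $k\le\min(i,j)$, so $L$ is defined entrywise.

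To see $L\in\operatorname{RCFM}$, fix a row $i$. Its entries are a finite sum over $k\le i$ of rows of the matrices $\widetilde D^{(k)}$, each of which is finitely supported, so the row is finitely supported. Columns are handled symmetrically. Finally, $L-M^{(k)}$ agrees with $\sum_{l\ge k}\widetilde D^{(l)}$ up to an element of $\operatorname{FSM}$. Every entry of that tail is divisible by $(k+1)!$, so $[L]-[M^{(k)}]\in (k+1)!\,\mathcal Q$, and therefore $[M^{(k)}]\to[L]$.
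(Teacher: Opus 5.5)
Your proposal is correct and follows essentially the same route as the paper. You extract a subsequence whose consecutive differences lie in $(k+1)!\,\mathcal Q$, telescope the representatives, and take as limit the sum of the difference matrices truncated to the region $\min(i,j)\ge k$; this is exactly the paper's matrix $X_{i,j}=\sum_{m=1}^{\min(i,j)} m!\,(B_m)_{i,j}$, followed by the same row/column-finiteness check and the same divisibility of the tail modulo $\operatorname{FSM}$. Your explicit torsion-freeness argument and your criterion ``$[M]\in n\mathcal Q$ iff all but finitely many entries of $M$ are divisible by $n$'' simply fill in details the paper treats as routine.
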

\begin{proof}
    It is straightforward to show that $\mathcal{Q}$ is torsion-free. Thus, by Lemma~\ref{lem:algcom}, it is enough to prove that $\mathcal{Q}$ is $\mathbb{Z}$-adically complete.

    Let $\{[Y_n]\}_{n=1}^\infty$ be a Cauchy sequence in $\mathcal Q$. Thus, for every integer $k > 0$, there exists an index $N_k$ such that $[Y_i] - [Y_j] \in k!\mathcal{Q}$ for all $i, j \ge N_k$. Let $n_0=0$, and define $n_1<n_2<n_3<\cdots$ recursively by $n_k = \max(N_k, n_{k-1} + 1)$. Let $M_k\coloneqq Y_{n_k}$. Then for every $k\geq 1$, we have $[M_{k+1}]-[M_k]= [Y_{n_{k+1}}] - [Y_{n_k}]\in k!\mathcal Q$, which implies $M_{k+1}-M_k= k!B_k+ F_k$ for some $B_k\in \operatorname{RCFM}$ and $F_k\in \operatorname{FSM}$.

    Define $M_k'$ inductively as follows: $M'_1 \coloneqq M_1$. If $M_k'$ is defined, then let $M'_{k+1} \coloneqq M'_k + k! B_k$. Then inductively we can show that $[M'_{k}] = [M_{k}]=[Y_{n_k}]$ for all $k\geq 1$.

    Define $M''_k \coloneqq M'_k - M'_1$ for all $k\geq 1$. Thus, $M''_1 = 0$ and $M''_{k+1} - M''_k = k! B_k$. Hence, $M''_k = \sum_{m=1}^{k-1} (M''_{m+1} - M''_m) = \sum_{m=1}^{k-1} m! B_m$.\medskip

   Define $X \in \operatorname{Mat}_{\mathbb N_0 \times \mathbb N_0}(\mathbb Z)$ as follows: $X_{i,j} \coloneqq \sum_{m=1}^{\min(i, j)} m! (B_m)_{i,j}$. We first show that $X \in \operatorname{RCFM}$. Fix row $i$. For all columns $j \ge i$, the entry $X_{i,j}$ is equal to the $(i,j)$-th entry of $\sum_{m=1}^{i} m! B_m \in \operatorname{RCFM}$. Hence, row $i$ has only finitely many non-zero entries. By a symmetric argument, the columns also have only finitely many non-zero entries. Thus $X \in \operatorname{RCFM}$.

We claim that $\{[M_k'']\}$ converges to $[X]$. We analyze the difference $X - M_k''$ entry-by-entry:$$(X - M_k'')_{i,j} = \sum_{m=1}^{\min(i, j)} m! (B_m)_{i,j} - \sum_{m=1}^{k-1} m! (B_m)_{i,j}$$

\emph{First Region:} Where $\min(i, j) \ge k$. The first $k-1$ terms cancel, leaving a sum starting at $m=k$. Every term is a multiple of $k!$. Factoring out $k!$ yields a matrix $Z^{(k)}$. Thus, $Z^{(k)}_{i,j}$ is $0$ if $\min(i,j)<k$, and is $\frac{1}{k!}\sum_{m=k}^{\min(i,j)}m! (B_m)_{i,j}$ if $\min(i, j) \ge k$. Because any given row $i \ge k$ involves only a finite linear combination of matrices $B_k, \dots, B_i \in \operatorname{RCFM}$, row $i$ of $Z^{(k)}$ has finitely many non-zero entries. By symmetry, $Z^{(k)} \in \operatorname{RCFM}$. \medskip

\emph{Second Region:} Where $\min(i, j) < k$. The remaining terms are $- \sum_{m=\min(i, j)+1}^{k-1} m! (B_m)_{i,j}$. Let $F^{(k)}$ contain exactly these entries. Thus, $F^{(k)}_{i,j}$ is $- \sum_{m=\min(i, j)+1}^{k-1} m! (B_m)_{i,j}$ if $\min(i, j) < k$, and is $0$ if $\min(i, j) \geq k$. Because this region is restricted to the first $k-1$ rows and columns, and relies exclusively on a finite sum of matrices $B_1, \dots, B_{k-1}$ (each belonging to $\operatorname{RCFM}$), $F^{(k)}$ contains a  finite number of non-zero entries overall. Hence, $F^{(k)} \in \operatorname{FSM}$.\medskip

Therefore, $X - M_k'' = k! Z^{(k)} + F^{(k)}$. In $\mathcal{Q}$, this becomes:$$[X]+[M_1]-[M_k] = [X] - ([M_k]-[M_1']) = [X]-[M_k''] \in k!\mathcal{Q}.$$Thus, the subsequence $\{[M_k]\}$ of $\{[Y_n]\}$ converges to $[X]+[Y_{n_1}]$. Because a Cauchy sequence with a convergent subsequence must converge to the same limit, $\{[Y_n]\}$ also converges to $[X]+[Y_{n_1}]$, proving $\mathcal{Q}$ is $\mathbb Z$-adically complete.
\end{proof}

\begin{lemma}\label{lem:divQ}
    The $\mathbb Q$-vector space $\mathcal Q_d$ is isomorphic to the direct sum of $\mathfrak c$ copies of $\mathbb Q$, where $\mathfrak c$ denotes the cardinality of the continuum.
\end{lemma}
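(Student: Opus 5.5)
The statement asks for two things: an upper bound $\dim_{\mathbb Q}\mathcal Q_d\le\mathfrak c$ and a lower bound $\dim_{\mathbb Q}\mathcal Q_d\ge\mathfrak c$. Together they give $\mathcal Q_d\cong\bigoplus^{\mathfrak c}\mathbb Q$, since $\mathcal Q_d$ is torsion-free (Lemma~\ref{lem:Qcompact}) and divisible, hence a $\mathbb Q$-vector space.

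\emph{Upper bound.} $\operatorname{RCFM}$ is a subset of $\mathbb Z^{\mathbb N_0\times\mathbb N_0}$, which has cardinality $\mathfrak c$. So $|\mathcal Q_d|\le|\mathcal Q|\le\mathfrak c$, and the dimension is at most $\mathfrak c$.

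\emph{Lower bound.} The plan is to exhibit $\mathfrak c$ linearly independent elements of $\mathcal Q_d$. It is convenient to use the identification, established in the proof of Lemma~\ref{lem:algcom}, that $\mathcal Q_d=U(\mathcal Q)=\bigcap_k k!\,\mathcal Q$. Thus it suffices to produce many classes $[X]$ with $[X]\in k!\mathcal Q$ for every $k$.

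\emph{Construction.} Fix an almost disjoint family $\{S_\alpha\}_{\alpha<\mathfrak c}$ of infinite subsets of $\mathbb N_0$; for instance, take branches of the binary tree, encoded into $\mathbb N_0$. For each $\alpha$, enumerate $S_\alpha=\{s_0<s_1<\cdots\}$ and define a diagonal-type matrix $X_\alpha\in\operatorname{RCFM}$ with
\[
(X_\alpha)_{s_t,s_t}=t!,
\]
and all other entries $0$.

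\emph{Infinite divisibility.} For each $k$, all but finitely many nonzero entries of $X_\alpha$ are divisible by $k!$. Hence $X_\alpha=k!Z+F$ with $Z\in\operatorname{RCFM}$ and $F\in\operatorname{FSM}$, so $[X_\alpha]\in k!\mathcal Q$. Therefore $[X_\alpha]\in U(\mathcal Q)=\mathcal Q_d$.

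\emph{Linear independence.} Suppose $\sum_{i=1}^r q_i[X_{\alpha_i}]=0$ with $q_i\in\mathbb Q$. Clearing denominators gives integers $n_i$ with $\sum n_iX_{\alpha_i}\in\operatorname{FSM}$. By almost disjointness, $S_{\alpha_1}$ contains infinitely many indices lying in no other $S_{\alpha_j}$. At such a diagonal entry, the combination equals $n_1\,t!\neq0$ unless $n_1=0$, which would contradict finite support. Repeating for each $i$ gives all $n_i=0$.

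\emph{Main obstacle.} The delicate step is justifying $\mathcal Q_d=U(\mathcal Q)$ rather than merely $\mathcal Q_d\subseteq U(\mathcal Q)$. This is exactly the argument in Lemma~\ref{lem:algcom}, which uses torsion-freeness. Alternatively, one can show divisibility of the span directly: given $[X_\alpha]$ and $m$, the matrix $\tfrac1m(X_\alpha)$ with the finitely many non-divisible entries zeroed out gives $m\,[\,\cdot\,]=[X_\alpha]$, and iterating this stays within matrices of the same shape. So the subgroup generated by such classes is divisible.
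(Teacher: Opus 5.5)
Your proof is correct and follows the paper's argument. Both use the cardinality bound $|\mathcal Q|\le\mathfrak c$, an almost disjoint family of size $\mathfrak c$, diagonal matrices with factorial entries that lie in $k!\,\mathcal Q$ for every $k$, and linear independence read off from the entries indexed by $S_{\alpha_i}\setminus\bigcup_{j\ne i}S_{\alpha_j}$.

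The differences are minor. You index the factorials by the position $t$ in $S_\alpha$ rather than by the absolute index, which changes nothing. Your explicit appeal to $U(\mathcal Q)=\mathcal Q_d$ for the torsion-free group $\mathcal Q$ fills a step the paper leaves implicit, namely the passage from ``$[M_A]\in n\mathcal Q$ for all $n$'' to ``$[M_A]\in\mathcal Q_d$''.
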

\begin{proof}
    Because the cardinality of $\mathcal Q$ is at most $\aleph_0^{\aleph_0} = \mathfrak{c}$, it is enough to find $\mathfrak c$ linearly independent elements in $\mathcal Q_d$. Let $\mathcal{A}$ be a family of almost disjoint infinite subsets of the natural numbers $\mathbb{N}$ such that the cardinality of $\mathcal A$ is $\mathfrak c$. Here, two infinite subsets $A$ and $B$ of $\mathbb N$ are said to be \emph{almost disjoint} if their intersection is finite. 

    For each set $A \in \mathcal{A}$, construct a diagonal matrix $M_A$ whose $(k,k)$-th entry is defined as $k!$ if $k \in A$, and $0$ if $k \notin A$. Because $M_A$ is diagonal, $M_A \in \operatorname{RCFM}$.

    For any set $A \in \mathcal{A}$ and any integer $n > 0$, construct a corresponding matrix $X_{A,n}$ whose $(k,k)$-th entry is $k!/n$ for $k \in A$ with $k \ge n$, and $0$ otherwise. Then, $X_{A,n}\in\operatorname{RCFM}$ and $M_A - n X_{A,n}\in \operatorname{FSM}$. Thus, $n[X_{A,n}] = [M_A]$, proving that every $[M_A]$ is an element of $\mathcal{Q}_d$.

    We now check the $\mathbb{Q}$-linear independence of the family $\{[M_A]\}_{A \in \mathcal{A}}$. Assume a finite linear combination equals zero in $\mathcal{Q}$:
\[ c_1[M_{A_1}] + c_2[M_{A_2}] + \dots + c_m[M_{A_m}] = [0] \]
where $c_i \in \mathbb{Q}$ and $A_i$ are distinct sets in $\mathcal{A}$. By clearing denominators, we may assume without loss of generality that $c_i \in \mathbb{Z}$. Then $S \coloneqq \sum_{i=1}^m c_i M_{A_i}\in\operatorname{FSM}$. Choose a specific index $j \in \{1, 2, \dots, m\}$. Since $S$ has only finitely many non-zero entries, for all sufficiently large $k\in A_j\setminus \cup_{i\neq j}A_i$, the $(k,k)$-th entry of $S$ vanishes, i.e., $c_j k! = 0$, forcing $c_j = 0$. By symmetry, all coefficients $c_i = 0$. This confirms that the uncountably infinite family $\{[M_A]\}_{A \in \mathcal{A}}$ of divisible elements is $\mathbb Q$-linearly independent.
\end{proof}

\begin{lemma}\label{lem:torsionfree}
   Let $p$ be a prime. Then, the $p$-th torsion-free number $\mathcal{Q}_0(p)$ of $\mathcal{Q}$ is equal to $\mathfrak{c}$.
\end{lemma}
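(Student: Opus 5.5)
Since $\mathcal Q$ is torsion-free (Lemma~\ref{lem:Qcompact}), $\mathcal Q_t=0$, so $\mathcal Q_0(p)=\dim_{\mathbb F_p}\mathcal Q/p\mathcal Q$. We therefore need to show $\dim_{\mathbb F_p}\mathcal Q/p\mathcal Q=\mathfrak c$. The upper bound is immediate: $|\mathcal Q|\le|\operatorname{Mat}_{\mathbb N_0\times\mathbb N_0}(\mathbb Z)|=\aleph_0^{\aleph_0}=\mathfrak c$, so $|\mathcal Q/p\mathcal Q|\le\mathfrak c$, and hence the dimension is at most $\mathfrak c$. The work lies in producing $\mathfrak c$ elements of $\mathcal Q$ whose images in $\mathcal Q/p\mathcal Q$ are $\mathbb F_p$-linearly independent.

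Following the pattern of Lemma~\ref{lem:divQ}, we take an almost disjoint family $\mathcal A$ of infinite subsets of $\mathbb N$ with $|\mathcal A|=\mathfrak c$. For each $A\in\mathcal A$, let $E_A$ be the diagonal matrix with $(k,k)$-entry $1$ if $k\in A$ and $0$ otherwise; clearly $E_A\in\operatorname{RCFM}$. The key change from Lemma~\ref{lem:divQ} is the use of entries $1$ rather than $k!$, so that the classes are not $p$-divisible.

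Suppose a finite combination $\sum_{i=1}^m c_i[E_{A_i}]$, with $c_i\in\{0,\dots,p-1\}$ and distinct $A_i$, lies in $p\mathcal Q$. Then there exist $B\in\operatorname{RCFM}$ and $F\in\operatorname{FSM}$ with
\[
\sum_i c_iE_{A_i}=pB+F .
\]
Fix $j$. Since the $A_i$ are almost disjoint, $A_j\setminus\bigcup_{i\neq j}A_i$ is infinite, and since $F$ is finitely supported we may pick $k$ in this set with $F_{k,k}=0$. Comparing $(k,k)$-entries gives $c_j=pB_{k,k}$, so $p\mid c_j$ and hence $c_j=0$. Therefore the classes $[E_A]+p\mathcal Q$ are linearly independent over $\mathbb F_p$, which gives $\dim\ge\mathfrak c$.

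The only step needing care is this entrywise comparison, which works because membership in $p\operatorname{RCFM}+\operatorname{FSM}$ forces all but finitely many entries to be divisible by $p$. Combining the two bounds yields $\mathcal Q_0(p)=\mathfrak c$.
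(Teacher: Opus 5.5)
Your proof is correct and follows essentially the same line as the paper. Both reduce to $\dim_{\mathbb F_p}\mathcal Q/p\mathcal Q$ via torsion-freeness, bound it above by $|\mathcal Q|\le\mathfrak c$, and bound it below using an almost disjoint family of $\mathfrak c$ diagonal $0/1$ matrices. The difference is that the paper first identifies $\mathcal Q/p\mathcal Q\cong\operatorname{RCFM}(\mathbb F_p)/\operatorname{FSM}(\mathbb F_p)$ by computing the kernel $p\operatorname{RCFM}+\operatorname{FSM}$, and then only asserts that the argument of Lemma~\ref{lem:divQ} can be mimicked there. You run the independence argument directly in $\mathcal Q/p\mathcal Q$, which needs only the easy containment and explicitly supplies the step the paper leaves implicit.
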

\begin{proof}
    Define the groups of row-and-column-finite matrices $\operatorname{RCFM}(\mathbb{F}_p)$ over $\mathbb{F}_p$ and finitely supported matrices $\operatorname{FSM}(\mathbb{F}_p)$ over $\mathbb{F}_p$ following Definition~\ref{def:RCFM}. Let $\pi\colon \operatorname{RCFM} \to \operatorname{RCFM}(\mathbb{F}_p)$ be the element-wise modulo $p$ reduction map. It is straightforward to check that $\pi$ is a well-defined surjective group homomorphism.
    
    Let $q\colon \operatorname{RCFM}(\mathbb{F}_p) \to \operatorname{RCFM}(\mathbb{F}_p)/\operatorname{FSM}(\mathbb{F}_p)$ be the canonical projection map. Define $\varphi \coloneqq q \circ \pi$. Then $\varphi$ is also a surjective group homomorphism. We now determine the kernel of $\varphi$. By definition, a matrix $M \in \ker(\varphi)$ if and only if the reduced matrix $\pi(M)$ belongs to $\operatorname{FSM}(\mathbb{F}_p)$. We claim that $p\operatorname{RCFM} + \operatorname{FSM} = \ker(\varphi)$.
    
    To prove this claim, let $M = (m_{i,j}) \in \ker(\varphi)$. Then the index set $S_M \coloneqq \{(i,j) \mid m_{i,j} \not\equiv 0 \pmod p\}$ is finite. Define a matrix $F$ such that $F_{i,j} = m_{i,j}$ for all $(i,j) \in S_M$, and $F_{i,j} = 0$ otherwise. Then, $F \in \operatorname{FSM}$. Thus, every entry of $M - F \in \operatorname{RCFM}$ is an integer multiple of $p$. Factoring out $p$, we write $M - F = pK$ for some $K \in \operatorname{RCFM}$. Thus, $M = pK + F$. Therefore, $\ker(\varphi) \subseteq p\operatorname{RCFM} + \operatorname{FSM}$. 
    
    Conversely, if $M = pK + F$ for some $K \in \operatorname{RCFM}$ and $F \in \operatorname{FSM}$, then $\pi(M) = \pi(pK) + \pi(F) = \pi(F) \in \operatorname{FSM}(\mathbb{F}_p)$, and hence $M \in \ker(\varphi)$. 
    
    Combining these inclusions yields $p\operatorname{RCFM} + \operatorname{FSM} = \ker(\varphi)$. By the First Isomorphism Theorem,
    \[
        \frac{\operatorname{RCFM}}{\ker(\varphi)} \cong \operatorname{Im}(\varphi) \implies \frac{\operatorname{RCFM}}{p\operatorname{RCFM} + \operatorname{FSM}} \cong \frac{\operatorname{RCFM}(\mathbb{F}_p)}{\operatorname{FSM}(\mathbb{F}_p)}.
    \]
    
    The group $\mathcal{Q}$ is torsion-free, meaning $\mathcal{Q}_t = 0$. Thus, $\mathcal{Q}_0(p) =  \dim_{\mathbb{F}_p} (\mathcal{Q} / p\mathcal{Q})$. Observe that
    \[
        \mathcal{Q}/p\mathcal{Q} = \frac{\operatorname{RCFM} / \operatorname{FSM}}{p(\operatorname{RCFM} / \operatorname{FSM})} = \frac{\operatorname{RCFM} / \operatorname{FSM}}{(p\operatorname{RCFM}+\operatorname{FSM})/\operatorname{FSM}} \cong \frac{\operatorname{RCFM}}{p\operatorname{RCFM} + \operatorname{FSM}} \cong \frac{\operatorname{RCFM}(\mathbb{F}_p)}{\operatorname{FSM}(\mathbb{F}_p)}.
    \]
    
    The second equality follows from the fact that for an abelian group $A$, if $B$ is a subgroup of $A$, and $n$ is a non-negative integer, then $n(A/B) = \{na + B \mid a \in A\} = (nA + B)/B$.
    
    The proof of Lemma~\ref{lem:divQ} can be mimicked to show that $\dim_{\mathbb{F}_p}(\operatorname{RCFM}(\mathbb{F}_p)/\operatorname{FSM}(\mathbb{F}_p)) = \mathfrak{c}$. Thus, $\mathcal{Q}_0(p) = \mathfrak{c}$. 
\end{proof}
\begin{theorem}\label{thm:QBS}
    The group $\mathcal{Q}$ is isomorphic to $\left.\prod_{m\geq 0} \mathbb{Z}\right/\bigoplus_{m\geq 0} \mathbb{Z}$.
\end{theorem}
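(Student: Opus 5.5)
The plan is to invoke the classification of algebraically compact groups \cite[Theorem 9.1]{MR357652}: two algebraically compact groups are isomorphic if and only if they have isomorphic maximal divisible subgroups, equal $p$-th torsion-free numbers for every prime $p$, and equal Ulm invariants $A_p(n)$ for all primes $p$ and all $n\geq 0$. Write $\mathcal{P}\coloneqq \prod_{m\geq 0}\mathbb Z/\bigoplus_{m\geq 0}\mathbb Z$. By Lemmas~\ref{lem:Qcompact}, \ref{lem:divQ} and \ref{lem:torsionfree}, $\mathcal Q$ is algebraically compact, $\mathcal Q_d\cong\bigoplus^{\mathfrak c}\mathbb Q$, and $\mathcal Q_0(p)=\mathfrak c$. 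Since $\mathcal Q$ is torsion-free, $\mathcal Q[p]=0$, so every Ulm invariant $\mathcal Q_p(n)$ vanishes. It therefore suffices to show that $\mathcal P$ has the same four properties.

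The cleanest route is to realize $\mathcal P$ inside the same matrix framework. The diagonal embedding $\prod\mathbb Z\to\operatorname{RCFM}$ sends a sequence to the corresponding diagonal matrix, and it carries $\bigoplus\mathbb Z$ exactly onto the diagonal matrices in $\operatorname{FSM}$. This gives an injection $\mathcal P\hookrightarrow\mathcal Q$. I do not plan to use this injection for the isomorphism itself. Instead, I will reuse the same arguments that were applied to $\mathcal Q$, now restricted to diagonal matrices.

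\emph{Torsion-free.} If $nx$ is eventually $0$, then $x$ is eventually $0$, so $\mathcal P$ is torsion-free.

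\emph{$\mathbb Z$-adic completeness.} I will repeat the proof of Lemma~\ref{lem:Qcompact} verbatim with diagonal matrices: the $B_k$ may be taken diagonal, and then the constructed $X$ is diagonal. Lemma~\ref{lem:algcom} then gives algebraic compactness.

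\emph{Divisible part.} The matrices $M_A$ built in Lemma~\ref{lem:divQ} are already diagonal, as are the $X_{A,n}$. So the same almost-disjoint family gives $\mathfrak c$ independent divisible elements of $\mathcal P$. Since $|\mathcal P|\le\mathfrak c$, we get $\mathcal P_d\cong\bigoplus^{\mathfrak c}\mathbb Q$.

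\emph{Torsion-free numbers.} Because $\mathcal P$ is torsion-free, $\mathcal P_0(p)=\dim_{\mathbb F_p}\mathcal P/p\mathcal P$. By the argument of Lemma~\ref{lem:torsionfree}, $\mathcal P/p\mathcal P\cong\prod\mathbb F_p/\bigoplus\mathbb F_p$. The diagonal version of the almost-disjoint construction (entries $1$ on $A$) gives $\mathfrak c$ independent elements, and the cardinality bound gives equality, so $\mathcal P_0(p)=\mathfrak c$.

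\emph{Ulm invariants.} Torsion-freeness gives $\mathcal P[p]=0$, so all Ulm invariants of $\mathcal P$ vanish, matching those of $\mathcal Q$.

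Applying \cite[Theorem 9.1]{MR357652} then yields $\mathcal Q\cong\mathcal P$.

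The main obstacle I expect is making sure the invariants I use match exactly the form of the classification theorem being cited. In particular, for torsion-free groups the theorem should reduce to comparing the divisible rank and the numbers $\dim A/pA$, and I need to confirm that $\mathcal Q_d$ being a $\mathbb Q$-vector space of dimension $\mathfrak c$ is the right form of "isomorphic maximal divisible subgroups". A second, minor point is the subsequence step in the completeness proof. When I rerun it for $\mathcal P$, the differences $M_{k+1}-M_k=k!B_k+F_k$ must be chosen with $B_k$ and $F_k$ diagonal. This is immediate if I work directly in $\prod\mathbb Z$ rather than through the embedding.
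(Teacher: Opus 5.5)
Your proposal is correct and follows the paper's proof. Both apply the classification of algebraically compact groups after checking, for $\mathcal Q$ and for $\prod\mathbb Z/\bigoplus\mathbb Z$, that the group is torsion-free, has maximal divisible subgroup $\bigoplus^{\mathfrak c}\mathbb Q$, has $\dim_{\mathbb F_p}(A/pA)=\mathfrak c$, and has vanishing Ulm invariants, using diagonal versions of Lemmas~\ref{lem:divQ} and~\ref{lem:torsionfree}. The only difference is that you get algebraic compactness of $\prod\mathbb Z/\bigoplus\mathbb Z$ by rerunning the completeness argument of Lemma~\ref{lem:Qcompact} on sequences and invoking Lemma~\ref{lem:algcom}, whereas the paper cites \cite{MR108529}; your variant works and makes that step self-contained.
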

\begin{proof}
    Let $G \coloneqq \left.\prod_{m\geq 0} \mathbb{Z}\right/\bigoplus_{m\geq 0} \mathbb{Z}$. Then $G$ is torsion-free. By \cite{MR108529}, $G$ is algebraically compact. An argument similar to that given in the proof of Lemma~\ref{lem:divQ} shows that $G_d$ is isomorphic to the direct sum of $\mathfrak{c}$ copies of $\mathbb{Q}$. Moreover, an argument similar to that given in the proof of Lemma~\ref{lem:torsionfree} shows that for every prime $p$, there exists an isomorphism $G/pG \cong \left.\prod_{m\geq 0} \mathbb{F}_p\right/\bigoplus_{m\geq 0} \mathbb{F}_p$, and thus, the $p$-th torsion-free number $G_0(p)$ of $G$ is equal to $\mathfrak{c}$. Since every subgroup of a torsion-free group is also torsion-free, for every integer $n \geq 0$ and every prime $p$, the $n$-th Ulm invariant of $G$ with respect to $p$ is trivial; i.e., $G_p(n) = \dim_{\mathbb{F}_p}\left((p^nG)[p] / (p^{n+1}G)[p]\right) = 0$. 

    By Lemma~\ref{lem:Qcompact}, $\mathcal{Q}$ is algebraically compact and torsion-free. Lemma~\ref{lem:divQ} shows that $\mathcal{Q}_d$ is isomorphic to the direct sum of $\mathfrak{c}$ copies of $\mathbb{Q}$. Moreover, by Lemma~\ref{lem:torsionfree}, for every prime $p$, we have $\mathcal{Q}_0(p) = \mathfrak{c}$. By reasoning similar to that given in the previous paragraph, for every integer $n \geq 0$ and every prime $p$, the $n$-th Ulm invariant of $\mathcal{Q}$ is trivial; i.e., $\mathcal{Q}_p(n) = \dim_{\mathbb{F}_p}\left((p^n\mathcal{Q})[p] / (p^{n+1}\mathcal{Q})[p]\right) = 0$.

    Therefore, by \cite[Theorem 4.3 and Theorem 9.1]{MR357652}, $G$ is isomorphic to $\mathcal{Q}$.
\end{proof}

\begin{proof}[Proof of Theorem~\ref{thm:properhomologystring}]
    Let $\mathbb S^k$ denote the $k$-sphere, pointed at $t_0$, and define $X \coloneqq \underline{\mathbb S^k}$. We can write $X$ as $\underline{*} \times \{t_0\} \cup \bigcup_{i=0}^\infty \{i\} \times \mathbb S^k$. For each non-negative integer $n$, define $C_n \coloneqq [0, n] \times \{t_0\} \cup \bigcup_{i=0}^n \{i\} \times \mathbb S^k$. Then $\{C_n\}$ is an efficient exhaustion of $X$. Observe that $X \setminus C_n$ is path-connected and strongly deformation retracts onto the infinite wedge of $k$-spheres $\bigvee_{i=n+1}^\infty \mathbb S^k$. Thus, $H_r(X \setminus C_n)$ equals $\mathbb Z$ if $r=0$, equals $\bigoplus_{i=n+1}^\infty \mathbb Z$ if $r=k$, and is zero if $r \notin \{0,k\}$.
    
    Let $n \geq 0$. The inclusion-induced map $H_0(X \setminus C_{n+1}) \to H_0(X \setminus C_n)$ is the identity map on $\mathbb Z$, and the inclusion-induced map $H_k(X \setminus C_{n+1}) \to H_k(X \setminus C_n)$ is the inclusion $\bigoplus_{i=n+2}^\infty \mathbb{Z} \hookrightarrow \bigoplus_{i=n+1}^\infty \mathbb{Z}$ given by $(x_{n+2}, x_{n+3}, \ldots) \mapsto (0, x_{n+2}, x_{n+3}, \ldots)$. Finally, if $r \notin \{0,k\}$, the inclusion-induced map $H_r(X \setminus C_{n+1}) \to H_r(X \setminus C_n)$ is the zero map.
    
    Thus, by Theorem~\ref{constant}, $H_0^\mathsf{alg}(X) = \wp(\underline{\mathbb Z}) \cong \prod_{m \ge 0} \mathbb{Z} \big/ \bigoplus_{m \ge 0} \mathbb{Z}$. By Lemma~\ref{lem:rcfm_isomorphism} and Theorem~\ref{thm:QBS}, $H_k^\mathsf{alg}(X) \cong \mathcal Q \cong \prod_{m \ge 0} \mathbb{Z} \big/ \bigoplus_{m \ge 0} \mathbb{Z}$. Finally, $H_r^\mathsf{alg}(X)$ is trivial for $r \notin \{0,k\}$.
\end{proof}

\section*{Acknowledgments}
\addcontentsline{toc}{section}{Acknowledgments}
The first author acknowledges support from the Institute Postdoctoral Fellowship at IIT Bombay.

\section*{Declaration of Generative AI}
\addcontentsline{toc}{section}{Declaration of Generative AI}
During the preparation of this manuscript, the authors used Google Gemini 3.1 Pro to refine the text, verify calculations, and find relevant literature. The authors independently verified all theorem statements and proofs, and explicitly declare that no mathematical ideas or proof structures were generated by AI. The authors take full responsibility for the final content.

\bibliography{bibliography}
\bibliographystyle{alphaurl}
\addcontentsline{toc}{section}{References}
\vspace{2em} 

\noindent
\textsc{Sumanta Das} \\
\small Department of Mathematics, Indian Institute of Technology Bombay, India \\
\textit{Email address:} \href{mailto:sumantadas@alum.iisc.ac.in}{\texttt{sumantadas@alum.iisc.ac.in}}

\vspace{1.5em} 

\noindent
\textsc{Rekha Santhanam} \\
\small Department of Mathematics, Indian Institute of Technology Bombay, India \\
\textit{Email address:} \href{mailto:reksan@math.iitb.ac.in}{\texttt{reksan@math.iitb.ac.in}}
\end{document}